\NewDocumentCommand{\R}{}{\mathbb{R}}
\NewDocumentCommand{\Z}{}{\mathbb{Z}}
\NewDocumentCommand{\Zg}{}{\Z_{>}}
\NewDocumentCommand{\Zgeq}{}{\Z_{\geq}}
\NewDocumentCommand{\Span}{}{\mathrm{span}}
\NewDocumentCommand{\QCompact}{}{\mathcal{Q}}
\NewDocumentCommand{\Compact}{}{\mathcal{K}}
\NewDocumentCommand{\sS}{}{\mathcal{S}}
\NewDocumentCommand{\sD}{}{\mathcal{D}}
\NewDocumentCommand{\sI}{}{\mathcal{I}}
\NewDocumentCommand{\bt}{}{\tilde{b}}
\NewDocumentCommand{\ct}{}{\tilde{c}}
\NewDocumentCommand{\psih}{}{\hat{\psi}}
\NewDocumentCommand{\Psih}{}{\widehat{\Psi}}
\NewDocumentCommand{\rhot}{}{\tilde{\rho}}
\NewDocumentCommand{\deltah}{}{\hat{\delta}}
\NewDocumentCommand{\xih}{}{\hat{\xi}}
\NewDocumentCommand{\hh}{}{\hat{h}}
\NewDocumentCommand{\ch}{}{\hat{c}}
\NewDocumentCommand{\at}{}{\tilde{a}}
\NewDocumentCommand{\Omegat}{}{\widetilde{\Omega}}
\NewDocumentCommand{\phit}{}{\tilde{\phi}}
\NewDocumentCommand{\supp}{o}{\mathrm{supp}\IfValueT{#1}{(#1)}}
\NewDocumentCommand{\Manifold}{m}{\mathfrak{#1}}
\NewDocumentCommand{\ManifoldM}{}{\Manifold{M}}
\NewDocumentCommand{\ManifoldN}{}{\Manifold{N}}
\NewDocumentCommand{\ManifoldNt}{}{\widetilde{\ManifoldN}}
\NewDocumentCommand{\ManifoldMt}{}{\widetilde{\ManifoldM}}
\NewDocumentCommand{\ManifoldMh}{}{\widehat{\ManifoldM}}
\NewDocumentCommand{\BoundaryN}{}{\partial \ManifoldN}
\NewDocumentCommand{\BoundaryOmega}{}{\partial \Omega}
\NewDocumentCommand{\InteriorN}{}{\mathrm{Int}(\ManifoldN)}
\NewDocumentCommand{\BoundaryNnc}{}{\BoundaryN^{\mathrm{nc}}}
\NewDocumentCommand{\BoundaryNt}{}{\partial \ManifoldNt}
\NewDocumentCommand{\ManifoldNncWWd}{}{\ManifoldN^{\mathrm{nc}}_{\WWd}}
\NewDocumentCommand{\BoundaryNncWWd}{}{\BoundaryN^{\mathrm{nc}}_{\WWd}}
\NewDocumentCommand{\ManifoldNncF}{}{\ManifoldN^{\mathrm{nc}}_{\FilteredSheafF}}
\NewDocumentCommand{\ManifoldNncG}{}{\ManifoldN^{\mathrm{nc}}_{\FilteredSheafG}}
\NewDocumentCommand{\BoundaryNncF}{}{\BoundaryN^{\mathrm{nc}}_{\FilteredSheafF}}
\NewDocumentCommand{\VectorFields}{m}{\mathscr{X}(#1)}
\NewDocumentCommand{\VectorFieldsN}{}{\VectorFields{\ManifoldN}}
\NewDocumentCommand{\VectorFieldsM}{}{\VectorFields{\ManifoldM}}
\NewDocumentCommand{\VectorFieldsBoundaryN}{}{\VectorFields{\BoundaryN}}
\NewDocumentCommand{\SheafVectorFields}{o}{\mathscr{X}\IfValueT{#1}{(#1)}}
\NewDocumentCommand{\TangentSpace}{m m}{T_{#2}#1}
\NewDocumentCommand{\NTangentSpace}{m}{\TangentSpace{\ManifoldN}{#1}}
\NewDocumentCommand{\CmSpace}{m o o}{C^{#1}\IfValueT{#2}{(#2 \IfValueT{#3}{;#3} )}}
\NewDocumentCommand{\CinftySpace}{o o}{\CmSpace{\infty}[#1][#2]}
\NewDocumentCommand{\CzinftySpace}{o}{C_0^{\infty}\IfValueT{#1}{(#1)}}
\NewDocumentCommand{\CmbSpace}{m o o}{C^{#1}_b\IfValueT{#2}{(#2 \IfValueT{#3}{;#3} )}}
\NewDocumentCommand{\CinftybSpace}{o o}{\CmbSpace{\infty}[#1][#2]}
\NewDocumentCommand{\CmbNorm}{m m o o}{\left\|#1\right\|_{\CmbSpace{#2}[#3][#4]}}
\NewDocumentCommand{\LpSpace}{m o o}{L^{#1}\IfValueT{#2}{(#2\IfValueT{#3}{;#3})}}
\NewDocumentCommand{\LpNorm}{m m o o}{\left\| #1 \right\|_{\LpSpace{#2}[#3][#4]}}
\NewDocumentCommand{\Vd}{}{d\hspace*{-0.08em}\widetilde{}\hspace*{0.1em}}
\NewDocumentCommand{\Xd}{}{{d\hspace*{-0.08em}\bar{}\hspace*{0.1em}}}
\NewDocumentCommand{\Wd}{}{d}
\NewDocumentCommand{\Yd}{}{{d\hspace*{-0.10em}\widehat{}\hspace*{0.1em}}}
\NewDocumentCommand{\Zd}{}{{d\hspace*{-0.08em}\widetilde{}\hspace*{0.1em}}}
\NewDocumentCommand{\Wdx}{}{\Wd^x}
\NewDocumentCommand{\Wdxalpha}{}{\Wd^{x(\alpha)}}
\NewDocumentCommand{\Wh}{}{\widehat{W}}
\NewDocumentCommand{\Xh}{}{\widehat{X}}
\NewDocumentCommand{\Zh}{}{\widehat{Z}}
\NewDocumentCommand{\Yh}{}{\widehat{Y}}
\NewDocumentCommand{\Wx}{}{W^x}
\NewDocumentCommand{\Whx}{}{\Wh^x}
\NewDocumentCommand{\XXd}{}{( X, \Xd)}
\NewDocumentCommand{\WWd}{}{( W, \Wd)}
\NewDocumentCommand{\WWdx}{}{( \Wx, \Wdx)}
\NewDocumentCommand{\WhWd}{}{( \Wh, \Wd)}
\NewDocumentCommand{\ZhZd}{}{( \Zh, \Zd)}
\NewDocumentCommand{\XhXd}{}{(\Xh,\Xd)}
\NewDocumentCommand{\VVd}{}{( V, \Vd)}
\NewDocumentCommand{\ZZd}{}{( Z, \Zd)}
\NewDocumentCommand{\partialo}{}{(\partial, 1)}
\NewDocumentCommand{\WWo}{}{( W, 1)}
\NewDocumentCommand{\WhWo}{}{( \Wh, 1)}
\NewDocumentCommand{\Vol}{o}{\mathrm{Vol}\IfValueT{#1}{\left(#1\right)}}
\NewDocumentCommand{\Volh}{o}{\widehat{\mathrm{Vol}}\IfValueT{#1}{\left(#1\right)}}
\NewDocumentCommand{\LebDensity}{o}{\sigma_{\mathrm{Leb}}\IfValueT{#1}{\left(#1\right)}}
\NewDocumentCommand{\psiWhWd}{}{(\psi_{x,\delta}^{*} \Wh,\Wd)}
\NewDocumentCommand{\psiXhXd}{}{(\psi_{x,\delta}^{*} \Xh,\Xd)}
\NewDocumentCommand{\psiVVd}{}{(\psi_{x,\delta}\big|_{\nCube{\sigma_1}}^{*}\delta^{\Vd}V, \Vd)}
\NewDocumentCommand{\BpsideltaWhWd}{m m}{B_{(\psi_{x,\delta}^{*}\delta^{\Wd}\Wh, \Wd)}(#1,#2)}
\NewDocumentCommand{\BpsideltaWWd}{m m}{B_{(\psi_{x,\delta}^{*}\delta^{\Wd}W, \Wd)}(#1,#2)}
\NewDocumentCommand{\BpsideltaXhXd}{m m}{B_{(\psi_{x,\delta}^{*}\delta^{\Xd}\Xh, \Xd)}(#1,#2)}
\NewDocumentCommand{\BpsideltaXXd}{m m}{B_{(\psi_{x,\delta}^{*}\delta^{\Xd}X, \Xd)}(#1,#2)}
\NewDocumentCommand{\BpsideltaVVd}{m m}{B_{(\psi_{x,\delta}\big|_{\nCube{\sigma_1}}^{*}\delta^{\Vd}V, \Vd)}(#1,#2)}
\NewDocumentCommand{\BWWd}{m m}{B_{\WWd}(#1,#2)}
\NewDocumentCommand{\BWWo}{m m}{B_{\WWo}(#1,#2)}
\NewDocumentCommand{\BXXd}{m m}{B_{\XXd}(#1,#2)}
\NewDocumentCommand{\BVVd}{m m}{B_{\VVd}(#1,#2)}
\NewDocumentCommand{\BZZd}{m m}{B_{\ZZd}(#1,#2)}
\NewDocumentCommand{\BWhWd}{m m}{B_{\WhWd}(#1,#2)}
\NewDocumentCommand{\BXhXd}{m m}{B_{\XhXd}(#1,#2)}
\NewDocumentCommand{\BZhZd}{m m}{B_{\ZhZd}(#1,#2)}
\NewDocumentCommand{\BWhWo}{m m}{B_{\WhWo}(#1,#2)}
\NewDocumentCommand{\MetricVFs}{m o o}{\rho_{#1} \IfValueT{#2}{(#2\IfValueT{#3}{,#3})}}
\NewDocumentCommand{\MetricWWd}{o o}{\MetricVFs{\WWd}[#1][#2]}
\NewDocumentCommand{\MetricVVd}{o o}{\MetricVFs{\VVd}[#1][#2]}
\NewDocumentCommand{\MetricXXd}{o o}{\MetricVFs{\XXd}[#1][#2]}
\NewDocumentCommand{\MetricXhXd}{o o}{\MetricVFs{\XhXd}[#1][#2]}
\NewDocumentCommand{\MetricWhWd}{o o}{\MetricVFs{\WhWd}[#1][#2]}
\NewDocumentCommand{\MetricZZd}{o o}{\MetricVFs{\ZZd}[#1][#2]}
\NewDocumentCommand{\MetricZhZd}{o o}{\MetricVFs{\ZhZd}[#1][#2]}
\NewDocumentCommand{\Gen}{m}{\mathrm{Gen}(#1)}
\NewDocumentCommand{\GenWWd}{}{\mathrm{Gen}(W,\Wd)}
\NewDocumentCommand{\GenXXd}{}{\mathrm{Gen}(X,\Xd)}
\NewDocumentCommand{\GenWhWd}{}{\mathrm{Gen}(\Wh,\Wd)}
\NewDocumentCommand{\GenXhXd}{}{\mathrm{Gen}(\Xh,\Xd)}
\NewDocumentCommand{\degParams}{m m o}{\mathrm{deg^{#1}_{#2}}\IfValueT{#3}{(#3)}}
\NewDocumentCommand{\degBoundaryN}{m o}{\mathrm{deg^{\BoundaryN}_{#1}}\IfValueT{#2}{(#2)}}
\NewDocumentCommand{\degBoundaryOmega}{m o}{\mathrm{deg^{\BoundaryOmega}_{#1}}\IfValueT{#2}{(#2)}}
\NewDocumentCommand{\degBoundaryOmegaWWd}{o}{\degBoundaryOmega{\WWd}[#1]}
\NewDocumentCommand{\degBoundaryNWWd}{o}{\degBoundaryN{\WWd}[#1]}
\NewDocumentCommand{\degBoundaryNF}{o}{\degBoundaryN{\FilteredSheafF}[#1]}
\NewDocumentCommand{\nBall}{m}{B^n(#1)}
\NewDocumentCommand{\nmoBall}{m}{B^{n-1}(#1)}
\NewDocumentCommand{\nBallgeq}{m o}{B^n_{\geq \IfValueT{#2}{#2}}(#1)}
\NewDocumentCommand{\nBalleq}{m o}{B^n_{= \IfValueT{#2}{#2}}(#1)}
\NewDocumentCommand{\nUnitBall}{}{\nBall{1}}
\NewDocumentCommand{\nmoUnitBall}{}{\nmoBall{1}}
\NewDocumentCommand{\nUnitBallgeq}{o}{\nBallgeq{1}[#1]}
\NewDocumentCommand{\nUnitBalleq}{o}{\nBalleq{1}[#1]}
\NewDocumentCommand{\CubeCentered}{m m}{Q^n(#1,#2)}
\NewDocumentCommand{\CubeCenteredgeq}{m m o}{Q^n_{\geq \IfValueT{#3}{#3}}(#1,#2)}
\NewDocumentCommand{\CubeCenteredeq}{m m o}{Q^n_{= \IfValueT{#3}{#3}}(#1,#2)}
\NewDocumentCommand{\nCube}{m}{Q^n(#1)}
\NewDocumentCommand{\nmoCube}{m}{Q^{n-1}(#1)}
\NewDocumentCommand{\nCubegeq}{m o}{Q^n_{\geq \IfValueT{#2}{#2}}(#1)}
\NewDocumentCommand{\nCubeeq}{m o}{Q^n_{= \IfValueT{#2}{#2}}(#1)}
\NewDocumentCommand{\nUnitCube}{}{\nCube{1}}
\NewDocumentCommand{\nUnitCubegeq}{o}{\nCubegeq{1}[#1]}
\NewDocumentCommand{\nUnitCubeeq}{o}{\nCubeeq{1}[#1]}
\NewDocumentCommand{\Rn}{}{\R^n}
\NewDocumentCommand{\Rngeq}{o}{\Rn_{\geq \IfValueT{#1}{#1}}}
\NewDocumentCommand{\Rnmo}{}{\R^{n-1}}
\NewDocumentCommand{\RestrictFilteredSheaf}{m m o}{%
  #1\big|_{#2}^{\#}\IfValueT{#3}{(#3)}
}
\NewDocumentCommand{\RestrictFilteredPreSheaf}{m m o o}{%
  \mathrm{Res}_{#2}%
    [#1]_{%
    \IfValueTF{#4}{#4}{\bullet}%
  }%
  \IfValueT{#3}{(#3)}
}
\NewDocumentCommand{\RestrictFilteredPreSheafF}{m o o}{\RestrictFilteredPreSheaf{\FilteredSheafF}{#1}[#2][#3]}
\NewDocumentCommand{\RestrictPreSheaf}{m m o}{\mathrm{Res}_{#2}[#1]\IfValueT{#3}{(#3)}}
\NewDocumentCommand{\RestrictPreSheafF}{m o}{\RestrictPreSheaf{\SheafF}{#1}[#2]}
\NewDocumentCommand{\RestrictSheaf}{m m o}{#1\big|_{#2}^{\#}\IfValueT{#3}{(#3)}}
\NewDocumentCommand{\RestrcitSheafF}{m o}{\RestrictSheaf{\SheafF}{#1}[#2]}
\NewDocumentCommand{\LieFilteredSheaf}{m o o}{%
  \mathrm{Lie}%
    (#1)_{%
    \IfValueTF{#3}{#3}{\bullet}%
  }%
  \IfValueT{#2}{(#2)}
}
\NewDocumentCommand{\LieFilteredSheafF}{o o}{\LieFilteredSheaf{\FilteredSheafF}[#1][#2]}
\NewDocumentCommand{\SheafGenBy}{m o}{\left\langle #1\right\rangle \IfValueT{#2}{(#2)}}
\NewDocumentCommand{\FilteredSheafGenBy}{m o o}{\left\langle #1\right\rangle_{%
\IfValueTF{#3}{#3}{\bullet}%
}%
\IfValueT{#2}{(#2)}}
\NewDocumentCommand{\Sheaf}{m o}{%
  \mathscr{#1}\IfValueT{#2}{(#2)}%
}
\NewDocumentCommand{\SheafHat}{m o}{%
  \widehat{\mathscr{#1}}\IfValueT{#2}{(#2)}%
}
\NewDocumentCommand{\SheafF}{o}{\Sheaf{F}[#1]}
\NewDocumentCommand{\SheafG}{o}{\Sheaf{G}[#1]}
\NewDocumentCommand{\SheafGh}{o}{\SheafHat{G}[#1]}
\NewDocumentCommand{\FilteredModule}{m o}{%
{#1}_{%
  \IfValueTF{#2}{#2}{\bullet}%
}%
}
\NewDocumentCommand{\FilteredModuleM}{o}{\FilteredModule{M}[#1]}
\NewDocumentCommand{\FilteredSheaf}{m o o}{%
  \mathscr{#1}_{%
    \IfValueTF{#3}{#3}{\bullet}%
  }%
  \IfValueT{#2}{(#2)}%
}
\NewDocumentCommand{\FilteredSheafHat}{m o o}{%
  \widehat{\mathscr{#1}}_{%
    \IfValueTF{#3}{#3}{\bullet}%
  }%
  \IfValueT{#2}{(#2)}%
}
\NewDocumentCommand{\FilteredSheafTilde}{m o o}{%
  \widetilde{\mathscr{#1}}_{%
    \IfValueTF{#3}{#3}{\bullet}%
  }%
  \IfValueT{#2}{(#2)}%
}
\NewDocumentCommand{\FilteredSheafNoSet}{m o}{%
  \mathscr{#1}_{%
    \IfValueTF{#2}{#2}{\bullet}%
  }%
}
\NewDocumentCommand{\FilteredSheafTildeNoSet}{m o}{%
  \widetilde{\mathscr{#1}}_{%
    \IfValueTF{#2}{#2}{\bullet}%
  }%
}
\NewDocumentCommand{\FilteredSheafNoSetF}{o}{\FilteredSheafNoSet{F}[#1]}
\NewDocumentCommand{\FilteredSheafNoSetFt}{o}{\FilteredSheafTildeNoSet{F}[#1]}
\NewDocumentCommand{\FilteredSheafF}{o o}{\FilteredSheaf{F}[#1][#2]}
\NewDocumentCommand{\FilteredSheafG}{o o}{\FilteredSheaf{G}[#1][#2]}
\NewDocumentCommand{\FilteredSheafFh}{o o}{\FilteredSheafHat{F}[#1][#2]}
\NewDocumentCommand{\FilteredSheafFt}{o o}{\FilteredSheafTilde{F}[#1][#2]}
\NewDocumentCommand{\GeneratedModule}{m o o}{\left\langle #1  \right\rangle\IfValueT{#2}{_{#2\IfValueTF{#3}{,#3}{,\bullet}}}}
\NewDocumentCommand{\UnderlyingSheaf}{m o}{\mathscr{U}\left[#1\right]\IfValueT{#2}{(#2)}}
\NewDocumentCommand{\rhoF}{}{\rho_{\FilteredSheafF}}
\NewDocumentCommand{\opL}{}{\mathscr{L}}
\NewDocumentCommand{\opP}{}{\mathscr{P}}
\NewDocumentCommand{\opE}{}{\mathscr{E}}
\NewDocumentCommand{\Distributions}{o}{\mathscr{D}'\IfValueT{#1}{(#1)}}
\NewDocumentCommand{\degWd}{m}{\mathrm{deg}_{\Wd}(#1)}
\begin{document}

\newtheorem{theorem}{Theorem}[section]
\newtheorem{corollary}[theorem]{Corollary}
\newtheorem{proposition}[theorem]{Proposition}
\newtheorem{lemma}[theorem]{Lemma}
\newtheorem{conjecture}[theorem]{Conjecture}
\newtheorem{problem}[theorem]{Problem}

\theoremstyle{remark}
\newtheorem{remark}[theorem]{Remark}

\theoremstyle{definition}
\newtheorem{definition}[theorem]{Definition}
\newtheorem{notation}[theorem]{Notation}
\newtheorem{construction}[theorem]{Construction}

\theoremstyle{remark}
\newtheorem{example}[theorem]{Example}

\numberwithin{equation}{section}

\title{Carnot--Carath\'eodory Balls on Manifolds with Boundary}

\author{Brian Street\footnote{The author was partially supported by National Science Foundation Grant 2153069.}}
\date{}

\maketitle

\begin{abstract}
    Nagel, Stein, and Wainger introduced a detailed quantitative study of Carnot--Carath\'eodory balls on a 
smooth manifold without boundary. Most importantly, they introduced scaling maps adapted to Carnot--Carath\'eodory balls
and H\"ormander vector fields. Their work
was extended by many authors and
has since become a key tool in the study of the interior theory of subelliptic
PDEs; in particular, the study of maximally subelliptic PDEs. 
We introduce a generalization of this quantitative theory to manifolds with boundary, where we have scaling maps
both on the interior and on the part of the boundary which is non-characteristic with respect to the vector fields. 
This is the first paper in a forthcoming series devoted to studying maximally subelliptic boundary value problems.

\end{abstract}

\section{Introduction}
In \cite{NagelSteinWaingerBallsAndMetricsDefinedByVectorFieldsI}, Nagel, Stein, and Wainger
gave a detailed quantitative study of Carnot--Carath\'eodory balls on a smooth manifold without boundary.
In particular, they introduced scaling maps adapted to a Carnot--Carath\'eodory geometry which have
since become a central tool in the interior theory of subelliptic PDEs.
The goal of this paper is to generalize this theory to manifolds with boundary,
both on the interior of the manifold and on the ``non-characteristic boundary.''
Just as Nagel, Stein, and Wainger were motivated by applications to interior subelliptic PDEs,
we are motivated by subelliptic boundary value problems. In fact, this is the first paper in a forthcoming
series devoted to introducing a general theory of maximally subelliptic boundary value problems.
\textit{The reader who is familiar with the interior theory may wish to skip straight to 
Sections \ref{Section::Defns} and \ref{Section::GlobalCors} where we give the basic definitions
and some easy to understand corollaries of our main result.}  The main technical result (the scaling theorem)
can be found in Section \ref{Section::Scaling}.
Before entering into technical details, in this introduction we begin by describing some ideas
from the more well-studied interior theory, as that is the best way to explain the motivation 
for the results in this paper.

In the seminal paper \cite{HormanderHypoellipticSecondOrderDifferentialEquations}, H\"ormander introduced
an important class of examples of subelliptic operators. Let \(X_0,X_1,\ldots, X_p\) be smooth vector fields
on a manifold without boundary, \(\ManifoldM\), satisfying \textit{H\"ormander's condition}:
 the Lie algebra generated by \(X_0,X_1,\ldots, X_p\) spans the tangent space at every point (see Definition \ref{Defn::BasicDefns::HormandersCondition}).
Let
\begin{equation*}
    \opL=X_0+X_1^2+X_2^2+\cdots+X_p^2.
\end{equation*}
\(\opL\) (and other maximally subelliptic operators, as described in Definition \ref{Defn::Intro::MaxSub}) arises in a number of settings,
including stochastic calculus and several complex variables--see the introduction of \cite{BramantiBrandoliniHormanderOperators}
for a friendly account.
H\"ormander showed that \(\opL\) is \textit{subelliptic}: roughly speaking that if \(u\in \Distributions[\ManifoldM]\)
and \(\opL u\in L^2_s\) near a point \(x\), then \(u\in L^2_{s+\epsilon}\) near \(x\), where \(L^2_s\)
is the \(\LpSpace{2}\)-Sobolev space of order \(s\), and \(\epsilon=\epsilon(x)>0\).\footnote{In the introduction,
all function spaces are with respect to some chosen smooth, strictly positive density. It does not matter which
such density is used.}
Informally, \(u\) is smoother than \(\opL u\). This is a powerful insight: \(\opL\) can be nowhere elliptic,
and the classical elliptic theory it not useful when studying \(\opL\); nevertheless,
one can still obtain subellipticity for \(\opL\).

Rothschild and Stein \cite{RothschildSteinHypoellipticDifferentialOperatorsAndNilpotentGroups} showed that more is true: 
\(\opL\) is \textit{maximally subelliptic}\footnote{A term which had not yet been defined or studied.
The works of 
Folland and Stein \cite{FollandSteinEstimatesForTheBarPartialBComplex},
Folland \cite{FollandSubellipticEstimatesAndFunctionSpacesOnNilpotentLieGroups},
and 
Rothschild and Stein \cite{RothschildSteinHypoellipticDifferentialOperatorsAndNilpotentGroups}
were the first examples what eventually became general definition. A general definition, under the French name
\textit{hypoellipticit\'e maximale}, first appeared in
work of Helffer and Nourrigat
\cite{HelfferNourrigatHypoellipticiteMaximalePourDesOperateursPolynomesDeChampsDeVecteurs}. See Definition \ref{Defn::Intro::MaxSub}.}. 
Let \(\WWd=\left\{ (W_1,\Wd_1),\ldots, \left( W_r, \Wd_r \right) \right\}\)
be H\"ormander vector fields on \(\ManifoldN\), each paired with a formal degree \(\Wd_j\in \Zg\).
In the case of \(\opL\) above, we take \(\WWd=\left\{ (X_0,2), (X_1,1),\ldots, (X_p,1) \right\}\).
For a list \(\alpha=(\alpha_1,\alpha_2,\ldots,\alpha_L)\in \left\{ 1,\ldots, r \right\}^L\)
let \(W^{\alpha}=W_{\alpha_1}W_{\alpha_2}\cdots W_{\alpha_L}\) and \(\degWd{\alpha}=\Wd_{\alpha_1}+\cdots +\Wd_{\alpha_L}\).

\begin{definition}
    \label{Defn::Intro::MaxSub}
    Let \(\kappa\in \Zg\) be such that \(\Wd_j\) divides \(\kappa\) for every \(1\leq j\leq r\).
    Let \(\opP\) be a partial differential operator of the form
    \begin{equation}\label{Eqn::Intro::MaxSub::opP}
        \opP=\sum_{\degWd{\alpha}\leq \kappa} a_\alpha W^{\alpha}, \quad a_\alpha\in \CinftySpace[\ManifoldM].
    \end{equation}
    We say \(\opP\) is \textit{maximally subelliptic\footnote{Also know as maximally hypoelliptic.} with respect to }\(\WWd\) if for all
    \(\Omega\Subset \ManifoldM\) open and relatively compact,\footnote{We write \(A\Subset B\) if \(A\) is relatively compact
    in \(B\); i.e., if the closure of \(A\) as a subspace of \(B\) is compact.} we have
    \begin{equation*}
        \sum_{j=1}^r\LpNorm{W_j^{\kappa/\Wd_j} f}{2} \lesssim \LpNorm{\opP f}{2} + \LpNorm{f}{2},\quad \forall f\in \CzinftySpace[\Omega].
    \end{equation*}
\end{definition}

Maximally subelliptic operators are subelliptic, but as we will explain much more is true.
Before we do so, we look back to the well-studied area of elliptic operators.
There is by now a vast theory for elliptic PDEs which has been distilled in countless textbooks
and monographs: see, for example the three volume series
\cite{TaylorPartialDifferentialEquationsI,TaylorPartialDifferentialEquationsII,TaylorPartialDifferentialEquationsIII},
or for the related function spaces the many textbooks of Triebel (like \cite{TriebelTheoryOfFunctionSpaces}).
There are many others, too many to list here.
This theory covers linear PDEs, PDEs with rough coefficients, nonlinear (including fully nonlinear) PDEs,
boundary value problems, and much more, where sharp results are known in a variety of the classical function spaces.
Many proofs in the  elliptic theory seem short at first glance, but rely on a huge foundation
of function spaces (like Besov and Triebel--Lizorkin spaces), operators (like pseudodifferential operators),
 geometry (usually Euclidean or Riemannian), and classical transforms like the Fourier transform.

If one takes the H\"ormander vector fields with formal degrees on \(\R^n\) given by
\begin{equation*}
    \partialo:=\left\{ (\partial_{x_1},1),\ldots, (\partial_{x_n},1) \right\},
\end{equation*}
then an operator \(\opP\) is maximally subelliptic with respect to \(\partialo\) if and only if it is (locally) elliptic.
While subellipticity is a \textit{weaker} condition than ellipticity,
maximal subellipticity is instead a \textit{generalization} of ellipticity: ellipticity is the special case of maximal subellipticity
when one choses \(\WWd=\partialo\).  If one changes \(\WWd\), maximal subellipticity becomes different condition, but 
not a weaker one.
When viewed through the lens of the classical elliptic theory, maximally subelliptic operators can look
very degenerate (and are often nowhere elliptic--see \cite[Remark 8.2.7]{StreetMaximalSubellipticity}). However, they satisfy a different condition which in some ways
is as strong as ellipticity.

In light of this, given any classical result from the (vast) elliptic theory, one can ask if it is possible
to prove a more general result for maximally subelliptic operators which specializes to classical result without weakening
the conclusions.
A wide-ranging program was initiated to adapt the elliptic theory to this more general and more degenerate setting;
this has spanned thousands of papers and many books (see \cite{BramantiAnInvitationToHypoellipticOperators,BramantiBrandoliniHormanderOperators} for friendly
introductions and \cite{StreetMaximalSubellipticity} for general results and a further history).
A major difficulty is that the foundation used for elliptic operators (classical function spaces, operators,
geometry, and transforms) is not as useful when studying maximally subelliptic PDEs.
Because of this, to generalize elliptic results, one needs a more general foundation on which to work.
There are many complications in doing so: one major complication is that the Fourier transform is no longer as useful,
and much of the foundational theory for elliptic operators rests on the Fourier transform.
This requires new definitions, new operators, and new proofs.

Despite this major difficulty,
there is now a detailed study for the interior theory of maximally subelliptic PDEs which in many ways
generalizes the classical interior elliptic theory. This includes not only linear operators with smooth coefficients,
but operators with rough coefficients, and nonlinear operators (including general fully nonlinear operators).
Results are often in terms of function spaces which are adapted to \(\WWd\).
See \cite{StreetMaximalSubellipticity} for many of these results, along with a more detailed history.

There is also a deep theory of elliptic boundary value problems; see, for example, \cite{AgranovichEgorovShubinPartialDifferentialEquationsIX}.
While there have been many examples of
what might be called
maximally subelliptic boundary value problems studied
(the first of which may be the work of Jerison \cite{JerisonDirichletProblemForTheKohnLaplacianI}),
there is not yet even a  definition of \textit{maximally subelliptic boundary value probelms}
which generalizes the elliptic case; let alone any general results which generalize the elliptic theory.
The main reason there is currently no such theory is that the huge foundational theory of function
spaces and operators on which the elliptic theory rests does not apply to the more general maximally subelliptic case.
This is the first in a series of papers aimed to fill this gap, and provide a general theory of maximally subelliptic
boundary value problems; here we are given a list of H\"ormander vector fields with formal degrees
\(\WWd\) now on a manifold \textit{with boundary}, \(\ManifoldN\).

One important concept, first pointed out by Kohn and Nirenberg \cite{KohnNirenbergNonCoerciveBoundaryValueProblems},
Derridj \cite{DerridjSurUnTheoremeDeTraces},
and Jerison \cite{JerisonDirichletProblemForTheKohnLaplacianI,JerisonDirichletProblemForTheKohnLaplacianII}
is that even in the simplest cases, the interaction between \(\WWd\) and \(\BoundaryN\) is important.
They showed that whether or not the boundary was ``characteristic'' with respect to \(\WWd\)
had significant impacts on the study of boundary value problems.
When \(\Wd_j=1\), \(\forall 1\leq j\leq r\), we say \(x_0\in \BoundaryN\) is \(\WWo\)-non-characteristic
if \(\exists j\) with \(W_j(x_0)\not \in \TangentSpace{\BoundaryN}{x_0}\). The definition
when some of the \(\Wd_j\) are not equal to \(1\) is a bit more complicated; see Definition \ref{Defn::BasicDefns::NoncharteristicPoint}.

Returning to the interior theory of maximally subelliptic PDEs, the first step towards a general
theory was taken in the important work of Nagel, Stein, and Wainger \cite{NagelSteinWaingerBallsAndMetricsDefinedByVectorFieldsI}
who gave a detailed study of Carnot--Carath\'eodory balls (sometimes known as sub-Riemannian balls).
These are metric balls on a manifold induced by the collection of H\"ormander vector fields with formal degrees
\(\WWd\) (see \eqref{Eqn::BasicDefns::UnitCCBall} and \eqref{Eqn::BasicDefns::CCBallAndMetric}) and are of central importance
in the interior theory of maximally subelliptic PDEs.
Where elliptic PDEs are intimately connected to Riemannian geometry, maximally subelliptic PDEs are intimately connected
to Carnot--Carath\'eodory geometry.
Nagel, Stein, and Wainger showed:
\begin{itemize}
    \item The balls can be defined in several different ways; each definition gives locally comparable balls, 
    and locally Lipschitz equivalent metrics. See Theorem \ref{Thm::Metrics::Results::LocalWeakEquivImpliesEquiv}.
    \item If \(\Vol\) is a smooth, strictly positive density,\footnote{A smooth, strictly positive density can be thought of as a measure which in any local coordinate system is given by integration against a smooth, strictly positive function.
    See the beginning of \cite[Section 3.1]{StreetMaximalSubellipticity} for a quick overview of the properties we will use and references for further reading.}
     then the balls satisfy a local doubling condition with respect to \(\Vol\):
        \(\Vol[B(x,2\delta)]\lesssim \Vol[B(x,\delta)]\), uniformly for \(x\) in a compact set and \(\delta\) small (see Theorem \ref{Thm::Scaling::MainResult} \ref{Item::Scaling::MainResult::Doubling}).
        This implies Carnot-Carath\'eodory manifolds are locally spaces of homogeneous type in the sense of Coifmann and Weiss \cite{CoifmanWeissAnalyseHarmoniqueNonCommutative}.
    \item Most importantly, 
    they introduced scaling maps\footnote{While \cite{NagelSteinWaingerBallsAndMetricsDefinedByVectorFieldsI} did not emphasize the scaling
    nature of the maps they introduced, it was later used by many authors; see \cite{KoenigMaximalSobolevAndHolderEstimatesForTheTangentialCauchyRiemannOperatorAndBoundaryLaplacian}
    for one explicit such example.} which turned small Carnot-Carath\'eodory scales into the unit scale (see Section \ref{Section::Intro::Scaling} and Theorem \ref{Thm::Scaling::MainResult}).
    These scaling maps are the central tool when pursuing a quantitative study of maximally subelliptic PDEs.
\end{itemize}
There have been several works generalizing and refining the results of \cite{NagelSteinWaingerBallsAndMetricsDefinedByVectorFieldsI};
for example, 
\cite{TaoWrightLpImprovingBoundsForAveragesAlongCurves,
FeffermanPhongSubellipticEigenvalueProblems,
FeffermanSanchezCalleFundamentalSolutionsForSecondOrderSubellipticOperators,
MontanariMorbidelliNonsmoothHormanderVectorFieldsAndTheirControlBalls,
StovaStreetCoordinatesAdaptedToVectorFieldsCanonicalCoordinates,
StreetCoordinatesAdaptedToVectorFieldsII,
StreetCoordinatesAdaptedToVectorFieldsIII,
StreetSubHermitianGeometryAndTheQuantitativeNewlanderNirenbergTheorem}.

In this paper, we generalize the theory of \cite{NagelSteinWaingerBallsAndMetricsDefinedByVectorFieldsI}
(incorporating ideas from \cite{StovaStreetCoordinatesAdaptedToVectorFieldsCanonicalCoordinates}) to manifolds
with boundary near a non-characteristic point. Much as \cite{NagelSteinWaingerBallsAndMetricsDefinedByVectorFieldsI}
opened the door to the interior theory of maximally subelliptic PDEs in generality,
this paper is the first step towards obtaining a sharp and general theory of maximally subelliptic boundary value problems,
which will be the topic of future papers.
Analogous to \cite{NagelSteinWaingerBallsAndMetricsDefinedByVectorFieldsI}, we show
\begin{itemize}
    \item The Carnot--Carath\'eodory balls can be defined in several different ways; each definition gives locally comparable balls, 
    and locally Lipschitz equivalent metrics. See Theorems \ref{Thm::Metrics::Results::LocalWeakEquivImpliesEquiv} and
    \ref{Thm::Metrics::Results::ExtendedVFsGiveSameMetric}.
\item If \(\Vol\) is a smooth, strictly positive density,
     then the balls satisfy a local doubling condition with respect to \(\Vol\):
        \(\Vol[B(x,2\delta)]\lesssim \Vol[B(x,\delta)]\), uniformly for \(x\) in a compact set of the interior and \(\WWd\)-non-characteristic boundary and \(\delta\) small (see Theorem \ref{Thm::Scaling::MainResult} \ref{Item::Scaling::MainResult::Doubling}).

\item Most importantly, we introduce scaling maps which turn small Carnot--Carath\'eodory scales (both on the interior
and on near the \(\WWd\)-non-characteristic boundary) into the unit scale (see Theorem \ref{Thm::Scaling::MainResult}).
\end{itemize}
We also show,
\begin{itemize}
\item \(\WWd\) induces a natural Carnot--Carath\'eodory geometry on the  \(\WWd\)-non-characteristic boundary (see Theorem \ref{Thm::Sheaves::Metrics::BoundaryMetricEquivalence}).
\end{itemize}

When studying maximally subelliptic PDEs one often starts with a list of vector fields with formal degrees
\(\WWd\). However, there are many different equivalent lists which give rise to the same definitions
(like Definition \ref{Defn::Intro::MaxSub}). For example, one often only cares about the local Lipschitz equivalence 
class (see Definition \ref{Defn::Metrics::Intro::LocallyEquivalent}) 
of the Carnot--Carath\'eodory metric when studying maximally subelliptic PDEs.
Many works (like \cite{StreetMaximalSubellipticity}) start by making such a choice
of \(\WWd\) and then describe which other choices are equivalent (see Definition \ref{Defn::BasicDefns::StrongWeakEquivalnce}).
Unfortunately, this becomes too unwieldy when studying boundary value problems, because the data on the boundary
is only well-defined up to this equivalence.
To address this problem, in Section \ref{Section::Sheaves} we introduce a different framework,
involving \(\Zg\)-filtrations of sheaves of vector fields on the manifold.
Instead of giving \(\WWd\), one can just give the filtration of sheaves of vector fields
generated by \(\WWd\). This removes the arbitrary choice and allows us to state our results
on boundary metrics (like Theorem \ref{Thm::Sheaves::Metrics::BoundaryMetricEquivalence})
in a natural way.  Nearly all our results are perhaps best stated in the new langugae, except notably
our most important result: the scaling maps in Theorem \ref{Thm::Scaling::MainResult}.
Since these scaling maps are quantitative, it is important we pick a particular choice of \(\WWd\)
with which to state the quantitative estimates.
Thus, we start by working in the familiar setting where \(\WWd\) is given, and where we prove
Theorem \ref{Thm::Scaling::MainResult} and its consequences.
Then, in Section \ref{Section::Sheaves}, we restate many of these consequences in the more abstract
setting of filtrations of sheaves of vector fields.

    \subsection{The importance of scaling}\label{Section::Intro::Scaling}
    The most technical result in this paper, the scaling result (Theorem \ref{Thm::Scaling::MainResult}),
is also the most important for applications to PDEs.  In fact, the other results in this paper are corollaries  
of Theorem \ref{Thm::Scaling::MainResult}, and Theorem \ref{Thm::Scaling::MainResult} is useful in other ways as well.
Similar remarks hold for previous related papers on manifolds without boundary (like the foundational work of Nagel, Stein, and Wainger
\cite{NagelSteinWaingerBallsAndMetricsDefinedByVectorFieldsI}). While it is tempting to focus on the easier to understand
corollaries about metrics (see Section \ref{Section::GlobalCors}), this misses the main use of these types of results.
In this section, we present an overview of why the scaling maps are useful; starting with the ellipic setting,
and then moving to the more general maximally subelliptic setting.

In the elliptic setting, scaling is so simple it is often used implicitly.
Let \(\opE=\sum_{|\alpha|\leq \kappa} a_\alpha(x) \partial_x^\alpha\) be an elliptic operator on \(\Rn\)
with smooth coefficients. Consider the maps \(\Phi_{x,\delta}(t)=x+\delta t:B^n(0,1)\xrightarrow{\sim}B^n(x,\delta)\).
Then, we have
\begin{equation*}
    \opE_{x,\delta}:=\Phi_{x,\delta}^{*} \delta^{\kappa} \opE \left( \Phi_{x,\delta} \right)_{*} = \sum_{|\alpha|\leq \kappa} a_\alpha (x+\delta t) \delta^{\kappa-|\alpha|} \partial_{t}^{\alpha}.
\end{equation*}
\(\opE_{x,\delta}\) is an elliptic operator (in the \(t\)-variable) on \(B^{n}(0,1)\), uniformly as 
\(x\) ranges over a compact set and \(\delta\in (0,1]\).
Thus, when studying the elliptic operator \(\opE\) on a small scale \(B(x,\delta)\), it often suffices to instead study
a different elliptic operator, \(\opE_{x,\delta}\), at the unit scale.
This scale-invariance is central to the interior study of elliptic PDEs; including in creating adapted function spaces
(like the Besov and Triebel--Liorkin spaces).
Similar scaling works for boundary value problems; for example on \(\Rngeq=\left\{ (x',x_n)\in \R^{n-1}\times \R : x_n\geq 1 \right\}\).
Here, when near the boundary, one can rescale to a set of the form\footnote{When working near the boundary,
cubes are sometimes more  convenient to work with (rather than balls). This is merely a convenience
and not a central point.}
\(\nCubegeq{1}:=\left\{ (x',x_n) : |x'|_{\infty}<1, x_n\in [0,1) \right\}\).
This kind of scaling is central to the study of elliptic boundary value problems.

Turning to the maximally subelliptic setting, let \(\opP\) be given by \eqref{Eqn::Intro::MaxSub::opP}
on a manifold without boundary, \(\ManifoldM\), and suppose \(\opP\) is maximally subelliptic 
with respect to \(\WWd\)
(see Definition \ref{Defn::Intro::MaxSub}).
For each \(x\in \ManifoldM\) and \(\delta \in (0,1]\),
Nagel, Stein, and Wainger \cite{NagelSteinWaingerBallsAndMetricsDefinedByVectorFieldsI}
introduced\footnote{The perspective used here is not the one presented in \cite{NagelSteinWaingerBallsAndMetricsDefinedByVectorFieldsI},
though their results can be easily reformulated as described here. See \cite{StovaStreetCoordinatesAdaptedToVectorFieldsCanonicalCoordinates}
for a paper which presents the results in this way.} 
maps \(\Phi_{x,\delta}:B^n(0,1)\xrightarrow{\sim}\Phi_{x,\delta}(B^n(0,1))\)
satisfying the following
\begin{itemize}
    \item \(\Phi_{x,\delta}(B^n(0,1))\) is comparable to the Carnot--Carath\'eodory ball \(\BWWd{x}{\delta}\)
        as defined in \eqref{Eqn::BasicDefns::CCBallAndMetric}.  See Theorem \ref{Thm::Scaling::MainResult} \ref{Item::Scaling::MainResult::Containments}
        for the corresponding result on manifolds with boundary.
    \item \(\Phi_{x,\delta}^{*}\delta^{\Wd_j}W_j\) are smooth vector fields on \(B^n(0,1)\) and satisfy H\"ormander's condition;
        and both are true \textit{uniformly} for \(\delta>0\) small and as \(x\) ranges over a compact set.
        See Theorem \ref{Thm::Scaling::MainResult} \ref{Item::Scaling::MainResults::UniformHormander}.
    \item  \(\opP_{x,\delta}:=\Phi_{x,\delta}^{*} \delta^{\kappa}\opP \left( \Phi_{x,\delta} \right)_{*}\) is maximally subelliptic
        with respect to \(\left\{ \left( \Phi_{x,\delta}^{*}\delta^{\Wd_1}W_1,\Wd_1 \right),\ldots, \left( \Phi_{x,\delta}^{*} \delta^{\Wd_r}W_r, \Wd_r \right) \right\}\),
        \textit{uniformly} for \(\delta>0\) small and as \(x\) ranges over a compact set.
        See \cite[Section 3.3.1]{StreetMaximalSubellipticity}.
\end{itemize}
In light of this, generalizing the elliptic setting, if one wishes to study the maximally subelliptic operator
\(\opP\) on a small Carnot--Carath\'eodory ball, \(\BWWd{x}{\delta}\), it often suffices to instead
study a different maximally subelliptic operator on the unit ball.
Leveraging this scaling, much of the classical interior theory of elliptic PDEs can be generalized to the maximally
subelliptic setting. This includes the sharp regularity theory (both for linear and even fully nonlinear equations),
and adapted function spaces (generalizing the classical Besov and Triebel--Lizorkin spaces).
See \cite{StreetMaximalSubellipticity} for these results, along with a detailed history.

Turing to maximally subelliptic boundary value problems, the first main hurdle is that no analogous scaling results
are known near the boundary. The goal of this paper is to fill this gap, providing scaling maps near the non-characteristic
boundary. These will be used in forthcoming papers to develop the sharp theory of maximally subelliptic boundary value problems.


\section{Basic definitions}\label{Section::Defns}
Let \(\ManifoldN\) be a smooth manifold with boundary; we denote by \(\BoundaryN\) the boundary and \(\InteriorN\) the interior.
We write \(\VectorFieldsN\) for the space of smooth vector fields on \(\ManifoldN\).

\begin{definition}\label{Defn::BasicDefns::HormandersCondition}
    Fix \(m\in \Zg=\left\{ 1,2,3,\ldots \right\}\) and \(W_1,W_2,\ldots, W_r\in \VectorFieldsN\).
    \begin{itemize}
        \item We say \(W_1,\ldots, W_r\) satisfy \textit{H\"ormander's condition of order \(m\) at \(x\in \ManifoldN\)} if:
        \begin{equation*}
            W_1(x), W_2(x), \ldots, W_r(x), \ldots,\,
            \underbrace{[W_i,W_j](x), \ldots}_{\substack{\text{commutators}\\\text{of order 2}}},\,
            \underbrace{[W_i,[W_j,W_k]](x), \ldots}_{\substack{\text{commutators}\\\text{of order 3}}},\,
            \ldots, \ldots,\,
            \text{commutators of order } m
          \end{equation*}
        span \(\NTangentSpace{x}\).
        \item We say \(W_1,\ldots, W_r\) satisfy \textit{H\"ormander's condition of order \(m\) on \(\ManifoldN\)}
          if \(W_1,\ldots, W_r\) satisfy H\"ormander's condition of order \(m\) at \(x\), \(\forall x\in \ManifoldN\).
        \item We say \(W_1,\ldots, W_r\) satisfy \textit{H\"ormander's condition at \(x\in \ManifoldN\)} if \(\exists m\)
          such that \(W_1,\ldots,W_r\) satisfy H\"ormander's condition of order \(m\) at \(x\).
        \item We say \(W_1,\ldots, W_r\) satisfy \textit{H\"ormander's condition on \(\ManifoldN\)}
        if \(W_1,\ldots, W_r\) satisfy H\"ormander's condition at \(x\), \(\forall x\in \ManifoldN\).
        In this case, we say \(W_1,\ldots, W_r\) are \textit{H\"ormander vector fields on \(\ManifoldN\)}.
    \end{itemize}
\end{definition}

\begin{definition}
    Let \(W=\left\{ W_1,\ldots, W_r \right\}\subset \VectorFieldsN\) be H\"ormander vector fields on \(\ManifoldN\).
    Assign to each \(W_j\) a ``formal degree'' \(\Wd_j\in \Zg\). We write
    \begin{equation*}
        \WWd=\left\{ \left( W_1,\Wd_1 \right),\ldots, \left( W_r,\Wd_r \right) \right\}
    \end{equation*}
    and call \(\WWd\) \textit{H\"ormander vector fields with formal degrees}.
\end{definition}

Let \(\WWd=\left\{ \left( W_1,\Wd_1 \right),\ldots, \left( W_r,\Wd_r \right) \right\}\subset \VectorFieldsN\times \Zg\)
be H\"ormander vector fields with formal degrees on \(\ManifoldN\). 
For \(\delta>0\) we write
\(\delta^{\Wd} W=\left\{ \delta^{\Wd_1}W_1,\ldots, \delta^{\Wd_r}W_r \right\}\).
Set, for \(x\in \ManifoldN\),
\begin{equation}\label{Eqn::BasicDefns::UnitCCBall}
\begin{split}
     B_W(x):=\bigg\{
        y\in \ManifoldN \: \bigg|\:& 
        \exists \gamma:[0,1]\rightarrow \ManifoldN, \gamma(0)=x, \gamma(1)=y,
        \\&\gamma\text{ is absolutely continuous},
        \\&\gamma'(t)=\sum_{j=1}^r a_j(t) W_j(\gamma(t)) \text{ for almost every } t,
        \\&a_j\in \LpSpace{\infty}[{[0,1]}], \bigg\|\sum_{j=1}^r |a_j|^2\bigg\|_{\LpSpace{\infty}[{[0,1]}]}<1
        \bigg\},
\end{split}
\end{equation}
\begin{equation}\label{Eqn::BasicDefns::CCBallAndMetric}
    \BWWd{x}{\delta}:=B_{\delta^{\Wd}W}(x), \quad \MetricWWd[x][y]:=\inf\left\{ \delta>0 : y\in \BWWd{x}{\delta} \right\}.
\end{equation}
It follows easily from the definitions that \(\MetricWWd\) is an extended metric\footnote{An extended metric satisfies all the same
axioms as a metric, but may take the value \(\infty\).} on \(\ManifoldN\).

When \(\ManifoldN\) is a connected manifold without boundary, it is a result of Chow \cite{ChowUberSystemeVonLinearenPartiellenDifferentialgleichungenErsterOrdnung}
that \(\MetricWWd\) is a metric on \(\ManifoldN\) and the metric topology induced by \(\MetricWWd\)
corresponds with the usual topology on \(\ManifoldN\) as a manifold; see \cite[Lemma 3.1.7]{StreetMaximalSubellipticity}
for an exposition of this classical result and Theorem \ref{Thm::Metrics::Results::GivesUsualTopology} for a discussion when \(\ManifoldN\) has boundary.

\begin{definition}
    Let \(\sS\subseteq \VectorFieldsN\times \Zg\) be a set. We let \(\Gen{\sS}\subseteq \VectorFieldsN\times \Zg\) be the smallest set
    such that
    \begin{itemize}
        \item \(\sS\subseteq \Gen{\sS}\),
        \item \((X_1,\Xd_1),(X_2,\Xd_2)\in \Gen{\sS}\implies \left( [X_1,X_2],\Xd_1+\Xd_2 \right)\in \Gen{\sS}\).
    \end{itemize}
\end{definition}

\begin{definition}\label{Defn::BasicDefns::StrongWeakEquivalnce}
    Let \(\sS_1,\sS_2\subseteq \VectorFieldsN\times \Zg\). We say:
    \begin{itemize}
        \item \(\sS_1\) \textit{locally strongly controls} \(\sS_2\) on \(\ManifoldN\), if \(\forall x\in \ManifoldN\),
            there exists an open neighborhood \(U\) of \(x\) such that \(\forall (Z,\Zd)\in \sS_2\),
            \(Z\big|_U\) is in the \(\CinftySpace[U]\) module generated by \(\left\{ Y\big|_U : (Y,\Yd)\in \sS_1, \Yd\leq \Zd \right\}\).
        \item \(\sS_1\) and \(\sS_2\) are \textit{locally strongly equivalent} on \(\ManifoldN\) if \(\sS_1\) locally strongly controls \(\sS_2\) on \(\ManifoldN\)
            and \(\sS_2\) locally strongly controls \(\sS_1\) on \(\ManifoldN\).
        \item \(\sS_1\) \textit{locally weakly controls} \(\sS_2\) on \(\ManifoldN\) if \(\Gen{\sS_1}\) locally strongly controls \(\sS_2\) on \(\ManifoldN\).
        \item  \(\sS_1\) and \(\sS_2\) are \textit{locally weakly equivalent} on \(\ManifoldN\) if \(\sS_1\) locally weakly controls \(\sS_2\) and \(\sS_2\) locally weakly controls \(\sS_1\).
    \end{itemize}
\end{definition}

One main conclusion of this paper is that the important properties we discuss only depend on the local weak equivalence
class of \(\WWd\).

\begin{remark}
    Definition \ref{Defn::BasicDefns::StrongWeakEquivalnce} is more simply stated in the language of filtrations of sheaves of vector fields;
    see Section \ref{Section::Sheaves::Control}.
\end{remark}

    \subsection{Non-characteristic points}\label{Section::Defns::NonChar}
    \begin{definition}
    Let \(\WWd=\left\{ \left( W_1,\Wd_1 \right),\ldots, \left( W_r,\Wd_r \right) \right\}\subset \VectorFieldsN\times \Zg\) be H\"ormander vector fields with formal degrees
    on \(\ManifoldN\). We define
    \begin{equation*}
        \degBoundaryNWWd 
        :\BoundaryN\rightarrow \Zg
    \end{equation*}
    by
    \begin{equation*}
        \degBoundaryNWWd[x]=\min\left\{ \Zd : \exists (Z,\Zd)\in \GenWWd, Z(x)\not \in \TangentSpace{\BoundaryN}{x} \right\}.
    \end{equation*}
\end{definition}

    Because the vector fields \(W_1,\ldots, W_r\) are assumed to satisfy H\"ormander's condition, 
    \(\degBoundaryNWWd[x]\) is always finite.
    It is easy to see that \(x\mapsto \degBoundaryNWWd[x]\), \(\BoundaryN\rightarrow \Zg\) is upper semi-continuous.
Moreover, \(\degBoundaryNWWd[x]\) depends only on the local weak equivalence class of \(\WWd\).

\begin{definition}\label{Defn::BasicDefns::NoncharteristicPoint}
    We say \(x_0\in \BoundaryN\) is \textit{\(\WWd\)-non-characteristic} if \(x\mapsto \degBoundaryNWWd[x]\),
    \(\BoundaryN\rightarrow \Zg\) is continuous at \(x_0\). I.e., if \(\degBoundaryNWWd[x]\) is 
    constant on a \(\BoundaryN\)-neighborhood of \(x_0\).
\end{definition}

\begin{remark}\label{Rmk::BasicDefns::NonCharDependsOnlyOnWeakEquiv}
    Since \(\degBoundaryNWWd[x]\) depends only on the weak local equivalence class of \(\WWd\),
    whether a point is \(\WWd\)-non-characteristic also only depends on the weak local equivalence class of \(\WWd\).
\end{remark}

\begin{remark}\label{Rmk::BasicDefns::NonCharInTermsOfWj}
    A simple proof shows \(x_0\in \BoundaryN\) is \(\WWd\)-non-characteristic if and only if \(\exists j\)
    with \(\Wd_j=\degBoundaryNWWd[x_0]\) and \(W_j(x_0)\not \in \TangentSpace{\BoundaryN}{x_0}\).
\end{remark}

\begin{example}\label{Example::BasicDefns::NonCharExamples}
    \begin{enumerate}[(i)]
        \item If \(\exists (W_j,1)\in \WWd\) with \(W_{j}(x_0)\not\in \TangentSpace{\BoundaryN}{x_0}\), then \(x_0\)
            is \(\WWd\)-non-characteristic.
        \item In the case \(\WWd=\WWo=\left\{ \left( W_1,1 \right),(W_2,1),\ldots, (W_r,1) \right\}\), then \(x_0\)
            is \(\WWo\)-non-characteristic if and only if \(x_0\) is non-characteristic for the sub-Laplacian \(\sum W_j^{*}W_j\).
            This is the notion used by Jerison \cite{JerisonDirichletProblemForTheKohnLaplacianI,JerisonDirichletProblemForTheKohnLaplacianII}.
        \item Let \(\ManifoldM\) be a manifold without boundary, and let \(W_1,\ldots, W_r\) be H\"ormander vector fields on \(\ManifoldM\).
            Consider the manifold with \([0,\infty)\times \ManifoldM\) with coordinates \((t,x)\); and let
            \(\ZZd=\left\{ \left( W_1,1 \right),\ldots, \left( W_r,1 \right), \left( \partial_t,2 \right) \right\}\).
            Then every point of the form \((0,x)\in \left\{ 0 \right\}\times \ManifoldM\) is \(\ZZd\)-non-characteristic.
            This is the setting which arises when studying the heat operator \(\partial_t+\sum W_j^{*} W_j\).
        \item\label{Item::BasicDefns::NonCharExamples::Elliptic} Let \(\ManifoldN = \R^{n-1}\times [0,\infty)\) and let \(\partialo:=\left\{ \left( \partial_{x_1},1 \right), \left( \partial_{x_2},1 \right),\ldots, \left( \partial_{x_n},1 \right) \right\}\).
            Then every point of \(\BoundaryN\) is \(\partialo\)-non-characteristic. This is the setting which arises when studying elliptic operators (see \cite[Example 1.1.10(i)]{StreetMaximalSubellipticity}).
            Thus, when studying elliptic boundary value problems, every boundary point is automatically non-characteristic.
        \item Let \(\ManifoldN=\left\{ (x,y)\in \R^2 : y\geq x^2\right\}\) and \(\WWd=\left\{ \left( \partial_x,1 \right),\left( x\partial_y,1 \right) \right\}\).
            Then, \((x,x^2)\) is \(\WWd\)-non-characteristic if and only if \(x\ne 0\).
            In the second paper in this series \cite{StreetFunctionSpacesAndTraceTheoremsForMaximallySubellipticBoundaryValueProblems}
            we show that even in this simple case, the trace map has a complicated image near \(x=0\), and falls outside
            our theory.

    \end{enumerate}
\end{example}

Let \(\BoundaryNncWWd:=\left\{ x\in \BoundaryN : x\text{ is }\WWd\text{-non-characteristic} \right\}\)
which is an open subset of \(\BoundaryN\). Set \(\ManifoldNncWWd:=\BoundaryNncWWd\cup \InteriorN\) with manifold
structure given as an open submanifold of \(\ManifoldN\).
Every point of \(\BoundaryNncWWd\) is \(\WWd\)-non-characterstic. If we are only interested in results near a \(\WWd\)-non-characterstic
point of \(\BoundaryN\), we may usually replace \(\ManifoldN\) with \(\ManifoldNncWWd\) and may thereby assume that
all boundary points are \(\WWd\)-non-characterstic. However, even if \(\ManifoldN\) is compact,
\(\ManifoldNncWWd\) need not be compact.

\section{Global corollaries}\label{Section::GlobalCors}
The main results of this paper are local, taking place near a \(\WWd\)-non-characteristic point of \(\BoundaryN\).
In this section, we state some simple corollaries on certain compact manifolds which are perhaps easier to understand
on a first reading.

\noindent\textbf{Temporary\footnote{In the rest of the paper, these global assumptions are replaced with local assumptions on a possibly non-compact manifold.} Global Assumptions:} Throughout this section, \(\ManifoldN\) is a connected, compact manifold with boundary
of dimension \(n\geq 1\),
\(\WWd=\left\{ \left( W_1,\Wd_1 \right),\ldots, \left( W_r,\Wd_r \right) \right\}\subset\VectorFieldsN\times\Zg\) are H\"ormander
vector fields with formal degrees on \(\ManifoldN\), and \textbf{every point of \(\BoundaryN\) is assumed
to be \(\WWd\)-non-characteristic}.

\begin{corollary}
    \(\MetricVFs{\WWd}\) is a metric on \(\ManifoldN\) and the Lipschitz equivalence class (see Definition \ref{Defn::Metrics::Intro::Equivalent}) of this metric depends only
    on the weak local equivalence class of \(\WWd\).
\end{corollary}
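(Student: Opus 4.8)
The plan is to split the statement into two claims: (a) $\MetricWWd$ is a genuine (finite-valued) metric on the connected compact $\ManifoldN$, and (b) its Lipschitz equivalence class depends only on the weak local equivalence class of $\WWd$. For (a), the only axiom at issue is finiteness, since the excerpt already notes $\MetricWWd$ is always an extended metric. First I would observe that $\InteriorN$ is a connected manifold without boundary and $W_1,\dots,W_r$ restrict to H\"ormander vector fields on it, so Chow's theorem (as invoked in the excerpt via \cite{ChowUberSystemeVonLinearenPartiellenDifferentialgleichungenErsterOrdnung} and \cite[Lemma 3.1.7]{StreetMaximalSubellipticity}) gives that $\MetricWWd$ is finite on $\InteriorN\times\InteriorN$. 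To reach boundary points I would use the non-characteristic hypothesis: by Remark \ref{Rmk::BasicDefns::NonCharInTermsOfWj}, at each $x_0\in\BoundaryN$ some $W_j$ is transverse to $\BoundaryN$, so flowing along $\pm W_j$ for a short time connects $x_0$ to an interior point within finite $\MetricWWd$-distance (the curve $\gamma$ has admissible control after rescaling time). Combined with connectedness and compactness — cover $\ManifoldN$ by finitely many such "finite-diameter" neighborhoods and chain them — this gives $\MetricWWd<\infty$ everywhere, hence a metric. (One should also remark the metric topology matches the manifold topology, which is where Theorem \ref{Thm::Metrics::Results::GivesUsualTopology} enters, though the corollary as stated only asserts "metric.")

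For (b), let $\WWd$ and $\WWd'$ be weakly locally equivalent. The strategy is to show each locally weakly controls the other, and then quote the paper's comparison machinery — Theorem \ref{Thm::Metrics::Results::LocalWeakEquivImpliesEquiv} — which says weak local equivalence yields locally comparable balls and locally Lipschitz equivalent metrics. Concretely: around each point $x\in\ManifoldN$ there is a neighborhood $U_x$ on which $\MetricWWd$ and $\MetricWWd[\WWd']$ are Lipschitz comparable (with constants depending on $x$). By compactness of $\ManifoldN$ extract a finite subcover $U_{x_1},\dots,U_{x_N}$. The remaining point is to upgrade these local comparisons to a global one: since both metrics are finite (by part (a), applied to $\WWd$ and to $\WWd'$ — note $\WWd$-non-characteristic is the same as $\WWd'$-non-characteristic by Remark \ref{Rmk::BasicDefns::NonCharDependsOnlyOnWeakEquiv}), they are continuous, hence bounded on the compact set $\ManifoldN\times\ManifoldN$ and bounded below away from the diagonal on the complement of any neighborhood of the diagonal; near the diagonal the finite local comparisons glue via a Lebesgue-number argument. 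This yields constants $0<c\le C<\infty$ with $c\,\MetricWWd[\WWd']\le\MetricWWd\le C\,\MetricWWd[\WWd']$ globally, i.e. the two metrics are Lipschitz equivalent in the sense of Definition \ref{Defn::Metrics::Intro::Equivalent}.

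I expect the main obstacle to be the passage from local to global comparability in part (b): Theorem \ref{Thm::Metrics::Results::LocalWeakEquivImpliesEquiv} is inherently local (constants blow up, a priori, as one approaches the "bad" set, and here there is no bad set but the constants still depend on the point), so the argument genuinely needs compactness plus the finiteness/continuity from part (a) to rule out degeneration of the comparison constants near the diagonal and to handle pairs of far-apart points. A secondary subtlety worth stating carefully is that near a boundary point the curves realizing $\MetricWWd$ may need to leave toward the interior, so one must check the local ball-comparison theorem is genuinely being applied on $\ManifoldN$ (with boundary) and not just on $\InteriorN$ — but this is exactly what the "with boundary" version of the results in the excerpt (Theorems \ref{Thm::Metrics::Results::LocalWeakEquivImpliesEquiv} and \ref{Thm::Metrics::Results::ExtendedVFsGiveSameMetric}) are designed to provide, so it is a matter of citing them correctly rather than a new difficulty.
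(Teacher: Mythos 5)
Your proposal is correct and reaches the same conclusion as the paper, but it takes a genuinely different (and more hands-on) route for the first half of the statement. The paper's proof is a one-line citation: Theorem \ref{Thm::Metrics::Results::GivesUsualTopology} gives that $\MetricWWd$ is a metric on each connected component of $\ManifoldNncWWd$ ($=\ManifoldN$ under the temporary global assumptions) and that its metric topology is the manifold topology; Theorem \ref{Thm::Metrics::Results::LocalWeakEquivImpliesEquiv} gives local Lipschitz equivalence of $\MetricWWd$ and $\MetricZZd$ for any locally weakly equivalent $\ZZd$; one then passes from local to global by compactness (this is exactly Lemma \ref{Lemma::Metrics::Lemmas::CompactLocalEquivMeansEquiv}).

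Your part (b) is essentially the paper's argument. You cite Theorem \ref{Thm::Metrics::Results::LocalWeakEquivImpliesEquiv} for the local comparison, note via Remark \ref{Rmk::BasicDefns::NonCharDependsOnlyOnWeakEquiv} that the non-characteristic hypothesis transfers to the other list (a point the paper leaves implicit), and describe a Lebesgue-number/compactness argument that is precisely the content of Lemma \ref{Lemma::Metrics::Lemmas::QuantLocalEst} and Lemma \ref{Lemma::Metrics::Lemmas::CompactLocalEquivMeansEquiv}. The only bookkeeping you should make explicit is that both metrics induce the manifold topology on the compact $\ManifoldN$ (Theorem \ref{Thm::Metrics::Results::GivesUsualTopology}), which is the hypothesis of that lemma; you do flag this, so the argument is complete.

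Your part (a) differs from the paper. The paper establishes finiteness topologically: having shown (via the scaling Propositions \ref{Prop::Scaling::EquivTop::Interior} and \ref{Prop::Scaling::EquivTop::Boundary}) that the metric topology equals the manifold topology, it invokes Remark \ref{Rmk::Metrics::lemmas::ExtendedMetricTop} that an extended metric is automatically finite on metric-connected components, then uses connectedness of $\ManifoldN$. You instead run a direct sub-Riemannian argument: Chow on the connected interior $\InteriorN$ gives finiteness there; the non-characteristic hypothesis (Remark \ref{Rmk::BasicDefns::NonCharInTermsOfWj}) lets you flow inward along a transverse $W_{j_0}$ to connect any boundary point to $\InteriorN$ at finite cost; then the triangle inequality closes the gap. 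This is a valid alternative. It is more elementary in that finiteness itself never needs the quantitative scaling machinery, but it is also somewhat redundant in context, since you still need Theorem \ref{Thm::Metrics::Results::GivesUsualTopology} (and with it the scaling results) for the topology-matching step that part (b) requires. Two small cautions worth recording: when flowing along $\pm W_{j_0}$ at a boundary point you must pick the inward direction so the integral curve stays in $\ManifoldN$, since paths in \eqref{Eqn::BasicDefns::UnitCCBall} are required to stay in $\ManifoldN$; and the chaining step is cleanest phrased as an equivalence-relation argument (finiteness of $\MetricWWd(\cdot,\cdot)$ is an equivalence relation, $\InteriorN$ lies in one class by Chow, every boundary point joins that class by the transverse flow, so there is a single class) rather than a finite-cover argument.
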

\begin{proof}
    This follows from Theorems \ref{Thm::Metrics::Results::GivesUsualTopology} and \ref{Thm::Metrics::Results::LocalWeakEquivImpliesEquiv}.
\end{proof}

\begin{corollary}\label{Cor::GlobalCor::EquivTop}
    The metric topology on \(\ManifoldN\) induced by \(\MetricVFs{\WWd}\) is the same as the topology on \(\ManifoldN\)
    as a manifold.
\end{corollary}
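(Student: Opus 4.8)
The plan is to deduce this from the local statement recorded in Theorem~\ref{Thm::Metrics::Results::GivesUsualTopology}, after observing that under the standing assumptions of this section $\MetricVFs{\WWd}$ is finite-valued (indeed a genuine metric) on all of $\ManifoldN$. Concretely, it suffices to show the identity map is a homeomorphism between $\ManifoldN$ with its manifold topology and $\ManifoldN$ with the metric topology, and this is a purely local assertion: at each $x\in\ManifoldN$ one must check (a) every manifold-open neighborhood of $x$ contains a ball $\BWWd{x}{\delta}$ for some $\delta>0$, and (b) every ball $\BWWd{x}{\delta}$ contains a manifold-open neighborhood of $x$.

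Direction (a) is the easy half and does not use the non-characteristic hypothesis. Working in a boundary (or interior) chart around $x$ and using that the coefficients of $W_1,\dots,W_r$ are bounded on a compact neighborhood of $x$, any curve $\gamma$ admissible for $\delta^{\Wd}W$ with $\delta\le 1$ satisfies $|\gamma'(t)|\lesssim\delta$ as long as $\gamma$ remains in that neighborhood; a standard continuation argument then shows $\gamma$ cannot leave a Euclidean ball of radius $\lesssim\delta$ in unit time, so $\BWWd{x}{\delta}$ shrinks to $\{x\}$ as $\delta\downarrow 0$. This gives (a), equivalently the continuity of the identity from the metric topology to the manifold topology.

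Direction (b) is the substance. For $x\in\InteriorN$ it is the classical theorem of Chow: H\"ormander's condition (Definition~\ref{Defn::BasicDefns::HormandersCondition}) lets one move in every direction of $\NTangentSpace{x}$ by composing flows of $\pm W_j$ and of their iterated commutators, so $\BWWd{x}{\delta}$ contains a manifold-open neighborhood of $x$. For $x\in\BoundaryN$ one invokes that $x$ is $\WWd$-non-characteristic (Definition~\ref{Defn::BasicDefns::NoncharteristicPoint}): by Remark~\ref{Rmk::BasicDefns::NonCharInTermsOfWj} there is an iterated commutator $(Z,\Zd)\in\GenWWd$ with $Z(x)\notin\TangentSpace{\BoundaryN}{x}$, and since $[W_i,W_j]=-[W_j,W_i]$ the composition-of-flows construction can follow either orientation of a bracket, hence push strictly into $\ManifoldN$ along $\pm Z$; combining this transversal motion with the interior Chow argument applied from points nudged into $\InteriorN$, one reaches a set containing a relatively open half-neighborhood of $x$ in $\ManifoldN$, again inside $\BWWd{x}{\delta}$. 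Making this precise and uniform is exactly the content of Theorem~\ref{Thm::Metrics::Results::GivesUsualTopology}, which ultimately rests on the quantitative scaling maps of Theorem~\ref{Thm::Scaling::MainResult} and the resulting two-sided comparison of $\BWWd{x}{\delta}$ with explicit coordinate half-boxes near the non-characteristic boundary.

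The main obstacle is precisely direction (b) at the boundary: one must ensure the reachable set is genuinely open in $\ManifoldN$ — that one fills an entire half-neighborhood, not merely the boundary-tangent directions together with a single transversal curve — while only being allowed to travel on the $\ManifoldN$-side of $\BoundaryN$. Controlling this one-sided Chow/ball-box argument uniformly is where the non-characteristic hypothesis is essential (Example~\ref{Example::BasicDefns::NonCharExamples}(v) exhibits failure at a characteristic point), which is why this global statement is most cleanly obtained as a consequence of the local scaling theorem rather than by a short self-contained argument.
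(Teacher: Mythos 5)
Your proposal is correct and follows essentially the same route as the paper: under the standing global assumptions $\ManifoldNncWWd=\ManifoldN$ (since every boundary point is assumed non-characteristic) and $\ManifoldN$ is connected, so the corollary is an immediate specialization of Theorem~\ref{Thm::Metrics::Results::GivesUsualTopology}. Your sketch of why that theorem holds---the easy inclusion via a Picard--Lindel\"of/bounded-coefficient argument, and the hard inclusion via a one-sided ball-box comparison at non-characteristic boundary points ultimately powered by the scaling maps of Theorem~\ref{Thm::Scaling::MainResult} (or, equivalently, Propositions~\ref{Prop::Scaling::EquivTop::Interior} and~\ref{Prop::Scaling::EquivTop::Boundary})---matches the structure of the paper's own proof of that theorem.
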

\begin{proof}
    This follows from Theorem \ref{Thm::Metrics::Results::GivesUsualTopology}.
\end{proof}

As described in Section \ref{Section::BndryVfsWithFormaLDegrees} and Example \ref{Example::Sheaves::Metrics::CompactWithNCBdry}, we define H\"ormander vector fields with formal degrees on \(\BoundaryN\):
\(\VVd:=\left\{ \left( V_1,\Vd_1 \right),\ldots, \left( V_q,\Vd_q \right) \right\}\subset \VectorFieldsBoundaryN\times \Zg\).

\begin{corollary}\label{Cor::GlobalCor::EquivBoundary}
    \(\MetricVFs{\VVd}\) and \(\MetricVFs{\WWd}\big|_{\BoundaryN\times \BoundaryN}\) are Lipschitz equivalent
     (see Definition \ref{Defn::Metrics::Intro::Equivalent})
    on connected components of \(\BoundaryN\).
    In particular, the metric geometry on \(\BoundaryN\) induced by \(\MetricVFs{\WWd}\) is a Carnot-Carath\'eodory
    geometry.
\end{corollary}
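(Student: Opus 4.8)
The plan is to obtain the global comparison by patching together the \emph{local} comparison of boundary metrics supplied by Theorem~\ref{Thm::Sheaves::Metrics::BoundaryMetricEquivalence}, using compactness of \(\BoundaryN\). Since both a metric and its Lipschitz equivalence class are defined component by component, I would fix a connected component \(\Gamma\) of \(\BoundaryN\); under the Global Assumptions \(\Gamma\) is a compact, connected smooth manifold without boundary. First I would record that on \(\Gamma\) the two metrics in play are genuine finite-valued metrics inducing the manifold topology: \(\rho_{\VVd}\) because \(V_1,\dots,V_q\) are H\"ormander vector fields on \(\BoundaryN\), so Chow's theorem applies on the boundaryless manifold \(\Gamma\) (see Theorem~\ref{Thm::Metrics::Results::GivesUsualTopology}); and \(\rho_{\WWd}\big|_{\BoundaryN\times\BoundaryN}\) because \(\rho_{\WWd}\) is a finite metric inducing the manifold topology on \(\ManifoldN\) (Theorem~\ref{Thm::Metrics::Results::GivesUsualTopology}, or the first two Global Corollaries) and \(\Gamma\) carries the subspace topology; in both cases finiteness and boundedness on \(\Gamma\times\Gamma\) then follow from compactness together with continuity of these metrics in their own (hence the manifold) topology. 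It is essential that we restrict to a component: points in distinct components of \(\BoundaryN\) may have finite \(\rho_{\WWd}\)-distance (via a path through \(\InteriorN\)) but infinite \(\rho_{\VVd}\)-distance, so no global statement across all of \(\BoundaryN\) can hold.

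The local input I would invoke is that, by Theorem~\ref{Thm::Sheaves::Metrics::BoundaryMetricEquivalence}, \(\rho_{\VVd}\) and \(\rho_{\WWd}\big|_{\BoundaryN\times\BoundaryN}\) are locally Lipschitz equivalent (Definition~\ref{Defn::Metrics::Intro::LocallyEquivalent}) near every non-characteristic boundary point, hence — as all of \(\BoundaryN\) is non-characteristic here — near every point of \(\Gamma\): each \(x_0\in\Gamma\) has an open \(\BoundaryN\)-neighborhood \(U_{x_0}\subseteq\Gamma\) and a constant \(C_{x_0}\geq 1\) with \(C_{x_0}^{-1}\rho_{\VVd}(x,y)\leq\rho_{\WWd}(x,y)\leq C_{x_0}\rho_{\VVd}(x,y)\) for all \(x,y\in U_{x_0}\). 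I would pass to a finite subcover \(U_{x_1},\dots,U_{x_N}\) of \(\Gamma\), put \(C_0=\max_i C_{x_i}\), and use the Lebesgue number lemma for the compact metric space \((\Gamma,\rho_{\VVd})\) to get \(\lambda>0\) such that any two points of \(\Gamma\) at \(\rho_{\VVd}\)-distance \(<\lambda\) lie in a common \(U_{x_i}\); thus the two-sided bound with constant \(C_0\) holds for all such pairs. For the ``far'' pairs, \(K=\{(x,y)\in\Gamma\times\Gamma:\rho_{\VVd}(x,y)\geq\lambda\}\) is a compact subset of \((\Gamma\times\Gamma)\setminus\mathrm{diag}\), so \(m:=\min_{K}\rho_{\WWd}>0\) (using that \(\rho_{\WWd}\) is a genuine metric), while \(D:=\max_{\Gamma\times\Gamma}\rho_{\WWd}<\infty\) and \(D':=\max_{\Gamma\times\Gamma}\rho_{\VVd}<\infty\); hence on \(K\) one has \(\rho_{\WWd}\geq m\geq (m/D')\,\rho_{\VVd}\) and \(\rho_{\WWd}\leq D\leq (D/\lambda)\,\rho_{\VVd}\).

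Combining the two regimes, \(\rho_{\VVd}\) and \(\rho_{\WWd}\big|_{\BoundaryN\times\BoundaryN}\) are Lipschitz equivalent on \(\Gamma\) with constant \(\max\{C_0,\,D/\lambda,\,D'/m\}\), which is precisely Definition~\ref{Defn::Metrics::Intro::Equivalent} on this component; letting \(\Gamma\) range over the components of \(\BoundaryN\) gives the first assertion. The ``in particular'' clause is then immediate, since \(\rho_{\VVd}\) is by construction the Carnot--Carath\'eodory metric of the H\"ormander vector fields with formal degrees \(\VVd\) on \(\BoundaryN\), so the Lipschitz class of \(\rho_{\WWd}\big|_{\BoundaryN\times\BoundaryN}\), being that of \(\rho_{\VVd}\), is a Carnot--Carath\'eodory geometry. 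The one genuinely substantial ingredient is the local input, Theorem~\ref{Thm::Sheaves::Metrics::BoundaryMetricEquivalence} — proved there via the scaling theorem — namely that near a non-characteristic boundary point, travelling between two boundary points along the full family \(\WWd\) (with possible excursions into \(\InteriorN\)) is, up to constants, neither cheaper nor more expensive than travelling within \(\BoundaryN\) along \(\VVd\); I expect this ``excursions into the interior buy nothing'' estimate to be the main obstacle, whereas the globalization above is routine, its only mild subtlety being the bookkeeping of the Lebesgue number \(\lambda\) and of the constants \(m,D,D'\) on \(K\).
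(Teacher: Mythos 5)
Your argument is correct and follows essentially the same route as the paper (Example \ref{Example::Sheaves::Metrics::CompactWithNCBdry}): the real content is Theorem \ref{Thm::Sheaves::Metrics::BoundaryMetricEquivalence}, which you correctly identify as the ``excursions into the interior buy nothing'' step, and the rest is a standard compactness globalization plus translating between \(\rho_{\WWd},\rho_{\VVd}\) and the \(\wedge 1\) sheaf-level metrics of Notation \ref{Notation::Sheaves::Metrics::WellDefined}. The one small inefficiency is that you read Theorem \ref{Thm::Sheaves::Metrics::BoundaryMetricEquivalence} as only a local statement and then redo the Lebesgue-number globalization by hand, whereas that theorem already asserts Lipschitz equivalence on every compact subset of \(\BoundaryNncF\), so one can take \(\Compact=\BoundaryN\) directly (as the paper does) and avoid re-proving Lemma \ref{Lemma::Metrics::Lemmas::CompactLocalEquivMeansEquiv}.
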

\begin{proof}
    See Example \ref{Example::Sheaves::Metrics::CompactWithNCBdry}.
\end{proof}

For the next results, fix \(\Vol\), a strictly positive, smooth density on \(\ManifoldN\).

\begin{corollary}\label{Cor::GlobalCor::EsimateVol}
    Fix \(m\in \Zg\) such that
    \(W_1,\ldots,W_r\) satisfy H\"ormander's condition of order \(m\) on \(\ManifoldN\).
    Then, for \(x\in \ManifoldN\) and \(\delta\in (0,1]\),
    \begin{equation}\label{Eqn::GlobalCor::EsimateVol}
        \Vol[\BWWd{x}{\delta}] \approx \max\left\{ \Vol(x)\left( \delta^{\Xd_1}X_1(x),\ldots, \delta^{\Xd_n}X_n(x) \right) \: | \: \left( X_1,\Xd_1\right),\ldots,\left( X_1,\Xd_n \right)\in \GenWWd, \Xd_j\leq m \max\{\Wd_j\}  \right\},
    \end{equation}
    where on the left-hand side we are treating \(\Vol\) as a measure, and on the right-hand side as a density.
\end{corollary}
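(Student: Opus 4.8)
The plan is to deduce the volume estimate \eqref{Eqn::GlobalCor::EsimateVol} from the local scaling theorem (Theorem \ref{Thm::Scaling::MainResult}) via a compactness argument. Since $\ManifoldN$ is compact and every boundary point is $\WWd$-non-characteristic, I would cover $\ManifoldN$ by finitely many neighborhoods on each of which the conclusions of the scaling theorem hold uniformly: on each such neighborhood there are scaling maps $\Phi_{x,\delta}$ (defined either on $\nBall{1}$ in the interior case or on $\nCubegeq{1}$ near the non-characteristic boundary) whose image is comparable to $\BWWd{x}{\delta}$, and such that the pulled-back vector fields $\Phi_{x,\delta}^{*}\delta^{\Wd_j}W_j$ are smooth and satisfy H\"ormander's condition uniformly in $x$ (over the compact piece) and $\delta\in(0,1]$. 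The Jacobian factor of $\Phi_{x,\delta}$ is, up to uniform constants, exactly the quantity $\max\{\Vol(x)(\delta^{\Xd_1}X_1(x),\ldots,\delta^{\Xd_n}X_n(x))\}$ appearing on the right-hand side, because that maximum is the natural ``volume of a box of scaled commutators'' that the Nagel--Stein--Wainger construction selects; this is the content of the containment/Jacobian part of Theorem \ref{Thm::Scaling::MainResult}.

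The key steps, in order, are: (1) Reduce to a local statement on each neighborhood in the finite cover, noting that $\approx$ with uniform constants on each piece gives $\approx$ with uniform constants globally, and that $\delta>1$ (in particular the whole manifold) is handled trivially by compactness since $\Vol(\ManifoldN)$ is a fixed finite constant and the right-hand maximum at $\delta=1$ is bounded above and below. (2) On a fixed neighborhood, invoke the scaling theorem to get $\Vol[\BWWd{x}{\delta}]\approx \Vol[\Phi_{x,\delta}(\nBall{1})]$ (or with $\nCubegeq{1}$), and then compute the latter by the change-of-variables formula: $\Vol[\Phi_{x,\delta}(\text{unit set})] = \int_{\text{unit set}} |\det D\Phi_{x,\delta}|\, d(\text{Leb})$ in a coordinate chart, which is comparable to $|\det D\Phi_{x,\delta}(0)|$ times the volume of the unit set, uniformly, because $\Phi_{x,\delta}$ has uniformly controlled derivatives and uniformly controlled Jacobian oscillation on the unit set (again from the scaling theorem). (3) Identify $|\det D\Phi_{x,\delta}(0)|$ (times the fixed smooth density weight at $x$) with the maximum over scaled $n$-tuples of commutators $\delta^{\Xd_j}X_j(x)$, using the explicit description of how $\Phi_{x,\delta}$ is built from a choice of spanning $n$-tuple of elements of $\GenWWd$ with $\Xd_j\le m\max\{\Wd_j\}$ — the chosen tuple realizes the maximum up to a uniform constant, and no tuple can exceed the Jacobian by more than a uniform constant, both being standard facts in this circle of ideas (cf. the treatment in \cite{NagelSteinWaingerBallsAndMetricsDefinedByVectorFieldsI} and \cite{StovaStreetCoordinatesAdaptedToVectorFieldsCanonicalCoordinates}).

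The main obstacle I expect is step (3): carefully matching the degree bound $\Xd_j\le m\max\{\Wd_j\}$ in the statement with whatever degree bound the scaling theorem's construction actually produces, and checking that the ``$\max$ over all admissible $n$-tuples'' genuinely equals the Jacobian up to uniform constants rather than merely bounding it on one side. The upper bound ``$\mathrm{Jacobian}\gtrsim$ every admissible tuple's box volume'' follows from the ball containment $\Phi_{x,\delta}(\nBall{1})\subseteq \BWWd{x}{C\delta}$ together with a lower volume bound for each box (each admissible $n$-tuple spans only when it is one of the ``good'' tuples, so one must argue the max is attained on a good tuple), while ``$\mathrm{Jacobian}\lesssim$ the max'' follows from the reverse containment $\BWWd{x}{c\delta}\subseteq \Phi_{x,\delta}(\nBall{1})$ applied to the specific tuple used to build $\Phi_{x,\delta}$. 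A secondary technical point is bookkeeping the two cases (interior chart on $\nBall{1}$ versus boundary chart on $\nCubegeq{1}$) simultaneously; I would phrase the change-of-variables and Jacobian comparison uniformly enough that the same argument covers both, since near the boundary $\Phi_{x,\delta}$ is still a diffeomorphism onto its image with the analogous uniform control coming from Theorem \ref{Thm::Scaling::MainResult}.
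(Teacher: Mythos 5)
Your proposal is correct in outline, but it does substantially more work than necessary because it misses that the key estimate is already packaged as a stated conclusion of the scaling theorem. Specifically, the quantity $\Lambda(x,\delta)$ defined just before Theorem~\ref{Thm::Scaling::MainResult} is \emph{exactly} the right-hand side of \eqref{Eqn::GlobalCor::EsimateVol}, and Theorem~\ref{Thm::Scaling::MainResult}~\ref{Item::Scaling::MainResult::EstimateVolume} already asserts $\Vol[\BWWd{x}{\delta}]\approx\Lambda(x,\delta)$ for $x\in\Compact$, $\delta\in(0,\delta_1]$. Under the Global Assumptions $\ManifoldN$ is compact and $\ManifoldN=\ManifoldNncWWd$, so one may take $\Compact=\ManifoldN$ directly --- no finite cover is needed. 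The paper's entire proof is then: invoke \ref{Item::Scaling::MainResult::EstimateVolume} for $\delta\le\delta_1$, and for $\delta\in[\delta_1\wedge 1,1]$ observe by compactness that both sides of \eqref{Eqn::GlobalCor::EsimateVol} are $\approx 1$.

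Your steps (2) and (3) --- comparing $\Vol[\BWWd{x}{\delta}]$ with $\Vol[\Psi_{x,\delta}(\nCubegeq{1}[c_0])]$ via the containments, and then computing the latter from the Jacobian/density control --- are precisely how \ref{Item::Scaling::MainResult::EstimateVolume} is proved \emph{inside} the proof of Theorem~\ref{Thm::Scaling::MainResult} (using parts \ref{Item::Scaling::MainResult::Containments}, \ref{Item::Scaling::MainResults::hxdeltaSmooth}, \ref{Item::Scaling::MainResults::hxdeltaPositive}), so you are re-deriving a result you could simply cite. Relatedly, the ``main obstacle'' you worry about in step (3) --- matching the degree bound $\Xd_j\le m\max\{\Wd_j\}$ and showing the max over admissible tuples equals the Jacobian up to constants --- dissolves once you notice this is how $\Lambda(x,\delta)$ is defined in the theorem statement, so there is nothing to re-match. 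One small slip to flag: you mention handling ``$\delta>1$'' by compactness, but the corollary only concerns $\delta\in(0,1]$; the regime that genuinely needs the compactness argument is $\delta\in[\delta_1\wedge 1,1]$, where the scaling theorem is not directly applicable.
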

\begin{proof}
    Let \(\delta_0>0\) be as in Theorem \ref{Thm::Scaling::MainResult}. Then, for \(\delta\leq \delta_0\),
    this follows from Theorem \ref{Thm::Scaling::MainResult} \ref{Item::Scaling::MainResult::EstimateVolume}.
    For \(\delta\in [\delta_0\wedge 1, 1]\), we use that, by compactness, both sides of \eqref{Eqn::GlobalCor::EsimateVol}
    are \(\approx 1\).
\end{proof}

\begin{corollary}
    For \(x\in \ManifoldN\), \(\delta>0\),
    \(
        \Vol[\BWWd{x}{2\delta}]\lesssim \Vol[\BWWd{x}{\delta}].
    \)    
\end{corollary}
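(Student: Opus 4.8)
The plan is to derive the doubling inequality directly from the volume estimate in Corollary~\ref{Cor::GlobalCor::EsimateVol} together with the compactness of $\ManifoldN$. Fix $m\in\Zg$ with $W_1,\dots,W_r$ satisfying H\"ormander's condition of order $m$ on $\ManifoldN$, put $M:=m\max_j\Wd_j$ and $C_0:=nM$. The structural observation I would rely on is that, for each fixed $\delta$, the right-hand side of \eqref{Eqn::GlobalCor::EsimateVol} is a maximum over a \emph{finite} index set: an $n$-tuple $(X_1,\Xd_1),\dots,(X_n,\Xd_n)\in\GenWWd$ with every $\Xd_j\le M$ amounts to a choice of $n$ iterated brackets of $W_1,\dots,W_r$, each of bracket-length at most $M$ (since every leaf has degree $\ge 1$), and there are only finitely many such brackets. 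Hence there are finitely many continuous functions $c_1,\dots,c_N\colon\ManifoldN\to[0,\infty)$ and exponents $e_1,\dots,e_N\in\{0,1,\dots,C_0\}$ with the right side of \eqref{Eqn::GlobalCor::EsimateVol} equal to $\max_{1\le\ell\le N}c_\ell(x)\,\delta^{e_\ell}$; moreover, since H\"ormander's condition of order $m$ at $x$ supplies a basis of $\NTangentSpace{x}$ among brackets of degree $\le M$, we have $\max_\ell c_\ell(x)>0$ at every $x$.

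\textbf{Case $\delta\le\tfrac12$.} Then $2\delta\le 1$, so Corollary~\ref{Cor::GlobalCor::EsimateVol} applies at both $\delta$ and $2\delta$. Since $c_\ell(x)(2\delta)^{e_\ell}=2^{e_\ell}c_\ell(x)\delta^{e_\ell}\le 2^{C_0}c_\ell(x)\delta^{e_\ell}$ for every $\ell$, taking the maximum over $\ell$ and invoking Corollary~\ref{Cor::GlobalCor::EsimateVol} twice gives
\begin{equation*}
    \Vol[\BWWd{x}{2\delta}]\;\approx\;\max_{\ell}c_\ell(x)(2\delta)^{e_\ell}\;\le\;2^{C_0}\max_{\ell}c_\ell(x)\delta^{e_\ell}\;\approx\;\Vol[\BWWd{x}{\delta}],
\end{equation*}
with all implied constants independent of $x$ and $\delta$, which is the claimed bound in this range.

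\textbf{Case $\delta\ge\tfrac12$.} Here $\Vol[\BWWd{x}{2\delta}]\le\Vol[\ManifoldN]<\infty$ trivially, while $\BWWd{x}{\tfrac12}\subseteq\BWWd{x}{\delta}$ (balls increase with the radius) gives $\Vol[\BWWd{x}{\delta}]\ge\Vol[\BWWd{x}{\tfrac12}]$. By Corollary~\ref{Cor::GlobalCor::EsimateVol} with $\delta=\tfrac12$, $\Vol[\BWWd{x}{\tfrac12}]\approx\max_\ell c_\ell(x)2^{-e_\ell}$, which is a maximum of finitely many nonnegative continuous functions that is strictly positive at every point of the compact set $\ManifoldN$, hence bounded below by a positive constant. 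Therefore $\Vol[\BWWd{x}{2\delta}]/\Vol[\BWWd{x}{\delta}]$ is bounded above uniformly over $x\in\ManifoldN$ and $\delta\ge\tfrac12$, completing the proof.

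The only substantive input is Corollary~\ref{Cor::GlobalCor::EsimateVol} (hence, ultimately, the scaling theorem, Theorem~\ref{Thm::Scaling::MainResult}); everything else is bookkeeping and I do not anticipate a genuine obstacle. The two points requiring a little care are: observing that the maximum in \eqref{Eqn::GlobalCor::EsimateVol} runs over finitely many $\delta$-monomials with exponents bounded by $C_0$, so that replacing $\delta$ by $2\delta$ costs only the fixed factor $2^{C_0}$; and, in the large-$\delta$ regime, extracting a uniform positive lower bound for $\Vol[\BWWd{x}{\tfrac12}]$ from continuity of the $c_\ell$ and compactness of $\ManifoldN$.
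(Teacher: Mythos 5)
Your proof is correct and follows essentially the same route as the paper's: split on $\delta\le\tfrac12$ versus $\delta\ge\tfrac12$; in the small-$\delta$ range invoke Corollary~\ref{Cor::GlobalCor::EsimateVol} at both scales, and in the large-$\delta$ range use $\Vol[\BWWd{x}{2\delta}]\le\Vol[\ManifoldN]\lesssim 1$ together with a uniform lower bound on $\Vol[\BWWd{x}{1/2}]$ coming from \eqref{Eqn::GlobalCor::EsimateVol} and compactness. The only difference is that you unpack the small-$\delta$ step (finitely many $\delta$-monomials with exponents at most $nM$, so replacing $\delta$ by $2\delta$ costs at most $2^{nM}$), whereas the paper simply states it "follows immediately"; your elaboration is correct and harmless.
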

\begin{proof}
    For \(\delta\in (0,1/2]\), this follows immediately from Corollary \ref{Cor::GlobalCor::EsimateVol}.
    The result for \(\delta\geq 1/2\), follows from the next equation:
    \begin{equation}\label{Eqn::GlobalCor::VolLargeBallsIs1}
        \Vol[\BWWd{x}{\delta}]\approx 1,\quad \forall x\in \ManifoldN, \delta\geq 1/2.
    \end{equation}
    To see \eqref{Eqn::GlobalCor::VolLargeBallsIs1}, note that \(\Vol[\BWWd{x}{\delta}]\leq \Vol[\ManifoldN]\lesssim 1\), \(\forall x\in \ManifoldN\), \(\forall \delta>0\), by
    the compactness of \(\ManifoldN\).
    For the reverse inequality, we have
    \begin{equation*}
    \begin{split}
         &\inf_{\substack{x\in \ManifoldN\\ \delta\geq 1/2}} \Vol[\BWWd{x}{\delta}]
         = \inf_{x\in \ManifoldN} \Vol[\BWWd{x}{1/2}]\gtrsim 1,
    \end{split}
    \end{equation*}
    where the final inequality uses \eqref{Eqn::GlobalCor::EsimateVol} and the compactness of \(\ManifoldN\).
\end{proof}

\begin{corollary}\label{Cor::GlobalCor::InsideAndOutsideMetricsAreTheSame}
    Suppose \(\ManifoldN\) is a co-dimension \(0\) closed, embedded submanifold of a manifold without boundary \(\ManifoldM\),
    and \(\WhWd=\left\{ ( \Wh_1,\Wd_1 ),\ldots, ( \Wh_r,\Wd_r ) \right\}\subset \VectorFieldsM\times \Zg\)
    are H\"ormander vector fields with formal degrees on \(\ManifoldM\) such that \(\Wh_j\big|_{\ManifoldN}=W_j\).
    Then, the metrics \(\MetricVFs{\WWd}\) and \(\MetricVFs{\WhWd}\big|_{\ManifoldN\times\ManifoldN}\) are Lipschitz  equivalent (see Definition \ref{Defn::Metrics::Intro::Equivalent}).
\end{corollary}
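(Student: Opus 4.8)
The plan is to show the two inequalities $\MetricVFs{\WWd}[x][y] \lesssim \MetricVFs{\WhWd}[x][y]$ and $\MetricVFs{\WhWd}[x][y] \lesssim \MetricVFs{\WWd}[x][y]$ for $x,y \in \ManifoldN$, where on the left we compute in $\ManifoldN$ and on the right we compute in $\ManifoldM$ and restrict. One direction is nearly immediate: any admissible curve for $B_{\delta^{\Wd}W}(x)$ inside $\ManifoldN$ is, after composing with the inclusion $\ManifoldN \hookrightarrow \ManifoldM$, an admissible curve for $B_{\delta^{\Wd}\Wh}(x)$ in $\ManifoldM$ (since $\Wh_j\big|_{\ManifoldN} = W_j$, the ODE $\gamma' = \sum a_j \delta^{\Wd_j}\Wh_j(\gamma)$ is satisfied with the same coefficients). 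Hence $\BWWd{x}{\delta} \subseteq \BWhWd{x}{\delta}$ for all $x \in \ManifoldN$, $\delta > 0$, which gives $\MetricVFs{\WhWd}[x][y] \leq \MetricVFs{\WWd}[x][y]$ — in fact with constant $1$.

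The substantive direction is the reverse: a curve realizing $\MetricVFs{\WhWd}$ may wander outside $\ManifoldN$, and we must produce a comparable curve staying inside $\ManifoldN$. First I would reduce to a local statement: by compactness of $\ManifoldN$ it suffices to show that for each $x_0 \in \ManifoldN$ there is a neighborhood $U$ (in $\ManifoldN$) and constants $C, \delta_1 > 0$ so that $\BWhWd{x}{\delta} \cap U \subseteq \BWWd{x}{C\delta}$ for $x \in U$, $\delta \leq \delta_1$; the large-scale case $\delta \geq \delta_1$ then follows since both metrics induce the manifold topology on the connected compact $\ManifoldN$ (Corollary \ref{Cor::GlobalCor::EquivTop}, together with the analogous statement for $\ManifoldM$ from Theorem \ref{Thm::Metrics::Results::GivesUsualTopology}), so both are bounded above and below away from $0$ outside a fixed neighborhood of the diagonal. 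For interior points $x_0 \in \InteriorN$ this local comparison is trivial, since a small $\ManifoldN$-neighborhood of $x_0$ is an open subset of $\ManifoldM$ on which $\WWd$ and $\WhWd$ literally agree, and small Carnot--Carath\'eodory balls are contained in any fixed neighborhood. So the real content is at boundary points $x_0 \in \BoundaryN$, where by hypothesis $x_0$ is $\WWd$-non-characteristic — and this is precisely the regime governed by the scaling theorem.

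The hard part will be the boundary case, and here I expect the argument to go through Theorem \ref{Thm::Scaling::MainResult} (and its companions on containment of balls, Theorem \ref{Thm::Scaling::MainResult} \ref{Item::Scaling::MainResult::Containments}, and on the $\WWd$-non-characteristic boundary geometry). The idea is that the scaling maps $\Phi_{x,\delta}$ for $\WWd$ on $\ManifoldN$ near $x_0$ turn $\BWWd{x}{\delta}$ into (something comparable to) a fixed unit model set $\nCubegeq{1}$, uniformly in $x$ and $\delta$; one then wants to see that $\BWhWd{x}{\delta}$, pulled back by the same $\Phi_{x,\delta}$, lies in a uniformly bounded set of the model — equivalently, that the extended vector fields $\Phi_{x,\delta}^{*}\delta^{\Wd_j}\Wh_j$ are uniformly controlled by $\Phi_{x,\delta}^{*}\delta^{\Wd_j}W_j$ on the model. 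Since $\Wh_j$ restricts to $W_j$ on $\ManifoldN$, the difference $\Wh_j - $ (any smooth extension of $W_j$) vanishes on $\ManifoldN$, and the $\delta$-weighted pullback of such a vector field is $O(\delta)$-small in $\CinftySpace$ on the model; thus the pulled-back $\WhWd$-balls are contained in the pulled-back $\WWd$-balls at a uniformly comparable scale, which unwinds to $\BWhWd{x}{\delta} \subseteq \BWWd{x}{C\delta}$ near $x_0$. An alternative, possibly cleaner route is to invoke the sheaf-theoretic comparison results of Sections \ref{Section::Sheaves}–\ref{Section::GlobalCors}: the sheaf of vector fields on $\ManifoldNncWWd$ generated by $\WWd$ and the restriction to $\ManifoldNncWWd$ of the sheaf generated by $\WhWd$ are locally weakly equivalent (they agree on the interior and, at non-characteristic boundary points, the relevant generators are the $W_j$ themselves by Remark \ref{Rmk::BasicDefns::NonCharInTermsOfWj}), whence Theorems \ref{Thm::Metrics::Results::LocalWeakEquivImpliesEquiv} and \ref{Thm::Metrics::Results::ExtendedVFsGiveSameMetric} give the Lipschitz equivalence directly. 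I would develop the scaling-map argument as the primary proof and cite the sheaf-theoretic statements as the conceptual reason it works.
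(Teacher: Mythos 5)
Your overall strategy matches the paper's: the easy inequality $\MetricWhWd[x][y]\le\MetricWWd[x][y]$ is immediate (a path admissible inside $\ManifoldN$ is admissible in $\ManifoldM$), the hard inequality reduces by compactness to a local containment, which is trivial in the interior and is handled near the boundary by the scaling theorem. In fact your ``alternative, possibly cleaner route'' is essentially the paper's actual proof: the paper simply cites Theorem~\ref{Thm::Metrics::Results::ExtendedVFsGiveSameMetric} (local Lipschitz equivalence of $\MetricWhWd$ and $\MetricWWd$ on $\ManifoldNncWWd=\ManifoldN$), and the upgrade from local to global Lipschitz equivalence on the compact connected $\ManifoldN$ is Lemma~\ref{Lemma::Metrics::Lemmas::LocallyEquivIffEquivOnCptSets}. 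That theorem's proof in turn relies on exactly the containment $\BWhWd{x}{\xi_1\delta}\cap\ManifoldN\subseteq\BWWd{x}{\delta}$ from Theorem~\ref{Thm::Scaling::MainResult}~\ref{Item::Scaling::MainResult::Containments} which you cite. Note you do not need Theorem~\ref{Thm::Metrics::Results::LocalWeakEquivImpliesEquiv} or Remark~\ref{Rmk::BasicDefns::NonCharInTermsOfWj} here: $\WWd$ and $\left\{(\Wh_j\big|_{\ManifoldN},\Wd_j)\right\}$ are literally equal, not just weakly equivalent, so Theorem~\ref{Thm::Metrics::Results::ExtendedVFsGiveSameMetric} applies directly.

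The one step of your ``primary'' sketch that does not close is the claim that, because $\Wh_j\big|_{\ManifoldN}=W_j$, ``the difference $\Wh_j-(\text{any smooth extension of }W_j)$ vanishes on $\ManifoldN$ and the $\delta$-weighted pullback is $O(\delta)$-small, hence the pulled-back $\WhWd$-balls sit inside the pulled-back $\WWd$-balls.'' Since $\Wh_j$ \emph{is} a smooth extension of $W_j$, that difference is identically zero and there is nothing to estimate; the difficulty is not the size of the vector fields but the geometry of paths. An admissible path realizing $\MetricWhWd[x][y]$ for $x,y\in\ManifoldN$ may exit $\ManifoldN$, and the actual content of Theorem~\ref{Thm::Scaling::MainResult}~\ref{Item::Scaling::MainResult::Containments} is that near a $\WWd$-non-characteristic boundary point such an excursion can be replaced by a path of comparable length staying inside $\ManifoldN$. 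This is established in Proposition~\ref{Prop::Scaling::EquivTop::Boundary}: the reductions normalize so that one distinguished vector field has $\partial_{x_n}$-component $\equiv 1$ and the rest have $\partial_{x_n}$-component vanishing on the boundary, and then Lemma~\ref{Lemma::Scaling::EquivTop::SjminusIsCancelled} controls the commutator maps $C_l$ so the constructed replacement path never drops below $\{x_n=0\}$. Nothing in this argument is about the smallness of a vanishing vector field; a clean write-up of your proof should drop that sentence and simply invoke Theorem~\ref{Thm::Scaling::MainResult}~\ref{Item::Scaling::MainResult::Containments} as you already do.
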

\begin{proof}
    See Theorem \ref{Thm::Metrics::Results::ExtendedVFsGiveSameMetric}.
\end{proof}

\begin{remark}
    In Corollary \ref{Cor::GlobalCor::InsideAndOutsideMetricsAreTheSame}, the definitions of
    \(\MetricVFs{\WWd}\) and \(\MetricVFs{\WhWd}\big|_{\ManifoldN\times\ManifoldN}\)
    are similar. The difference is that when defining \(\MetricVFs{\WhWd}[x][y]\) (for \(x,y\in\ManifoldN\)),
    the paths are allowed to leave \(\ManifoldN\); whereas in \(\MetricVFs{\WWd}[x][y]\) the paths
    are restricted to stay inside of \(\ManifoldN\). Because of this,
    the inequality \(\MetricVFs{\WhWd}\big|_{\ManifoldN\times\ManifoldN}(x,y)\leq \MetricVFs{\WWd}[x][y]\)
    is obvious. Corollary \ref{Cor::GlobalCor::InsideAndOutsideMetricsAreTheSame} shows that not much is gained
    by allowing the paths to leave \(\ManifoldN\).
\end{remark}

Most importantly, we introduce scaling maps adapted to this geometry. See Theorem \ref{Thm::Scaling::MainResult} for a precise statement.
For \(\delta>0\), let 
\begin{equation}\label{Eqn::GlobalCor::QnDefn}
    \nCube{\delta}:=\left\{ x\in \R^n : |x|_{\infty}<\delta \right\}, \quad \nCubegeq{\delta}[c] := \left\{ x=(x_1,\ldots,x_n)\in \nCube{\delta} : x_n\geq c \right\}, \quad \nCubegeq{\delta}:=\nCubegeq{\delta}[0].
\end{equation}
Roughly speaking, for each \(x\in \ManifoldN\) and each \(\delta>0\) small, we provide
coordinate charts \(\psi_{x,\delta}\), whose domain are either \(\nUnitCube\) or \(\nUnitCubegeq[-c]\) for some \(c>0\),
and whose image is comparable to \(\BWWd{x}{\delta}\). Moreover,
\(\psi_{x,\delta}^{*}\delta^{\Wd_1}W_1,\ldots, \psi_{x,\delta}^{*}\delta^{\Wd_r}W_r\) are H\"ormander vector fields, uniformly for \(x\in \ManifoldN\) 
and \(\delta>0\) small.
Informally, this allows us to rescale small Carnot--Carath\'eodory scales (either on the interior or at a \(\WWd\)-non-characteristic point
on the boundary) to turn them into the unit scale, where quantitative methods can be applied.

\begin{remark}
    In this paper, whenever we use 
    the cubes
    \(\nCube{\eta}\) and \(\nCubegeq{\eta}\), or the ball analogs
    \(\nBall{\eta}=\{x\in \Rn : |x|<\eta\}\) and \(\nBallgeq{\eta}=\{x\in \nBall{\eta}:x_n\geq 0\}\), 
    we think of \(\eta\) as \(\approx 1\) and treat this as the ``unit-scale.''
    The choice between using the balls and cubes is always done for convenience and is not a central point in any of our results.
\end{remark}

\section{Boundary vector fields with formal degrees}\label{Section::BndryVfsWithFormaLDegrees}
Let \(\ManifoldN\) be a smooth manifold with boundary, and let
\(\WWd=\left\{ \left( W_1,\Wd_1 \right),\ldots, \left( W_r,\Wd_r \right) \right\}\subset \VectorFieldsN\times \Zg\)
be H\"ormander vector fields with formal degrees on \(\ManifoldN\).
Fix \(x_0\in \BoundaryNncWWd\). We define a list of vector fields with formal degrees,
\(\VVd\), near \(x_0\) on \(\BoundaryN\) by the following procedure:

\begin{enumerate}[(1)]
    \item Let \(\Omega_1\subset\ManifoldN\) be an open \(\ManifoldN\)-neighborhood of \(x_0\) and \(m\in \Zg\) 
        be such that \(W_1,\ldots,W_r\)
        satisfy H\"ormander's condition of order \(m\) on \(\Omega_1\).

    \item\label{Item::BoundaryVfs::DefineZZd} Let 
    \begin{equation*}
        \ZZd:=\left\{ \left( Z_1,\Zd_1 \right),\ldots, \left( Z_q,\Zd_q \right) \right\}
        :=\left\{ \left( Y,e \right)\in \GenWWd : e\leq m\max\{\Wd_1,\ldots, \Wd_r\} \right\}.
    \end{equation*}
    By H\"ormander's condition, \(Z_1(x),\ldots, Z_q(x)\) span \(\TangentSpace{\ManifoldN}{x}\), \(\forall x\in \Omega_1\).
    Note that \(\WWd\subseteq \ZZd\) and
    \begin{equation}\label{Eqn::BoundaryVfs::NSW::Z}
        [Z_j,Z_k]=\sum_{\Zd_l\leq \Zd_j+\Zd_k} c_{j,k}^l Z_l,\quad c_{j,k}^l\in \CinftySpace[\Omega_1].
    \end{equation}
    Indeed, when \(\Zd_j+\Zd_k \leq m\max\{\Wd_1,\ldots, \Wd_r\}\), then \(\left( [Z_j,Z_k],\Zd_j+\Zd_k \right)\in \ZZd\)
    and \eqref{Eqn::BoundaryVfs::NSW::Z} is obvious. When \(\Zd_j+\Zd_k >  m\max\{\Wd_1,\ldots, \Wd_r\}\) then we
    use that \(Z_1(x),\ldots, Z_q(x)\) span \(\TangentSpace{\ManifoldN}{x}\), \(\forall x\in \Omega_1\),
    and \eqref{Eqn::BoundaryVfs::NSW::Z} follows.

    \item\label{Item::BoundaryVfs::FindWj0} Since \(x_0 \in\BoundaryNncWWd\), \(\exists j_0\) with \(\Wd_{j_0}=\degBoundaryNWWd[x_0]\) and \(W_{j_0}(x_0)\not \in \TangentSpace{\BoundaryN}{x_0}\); see Remark \ref{Rmk::BasicDefns::NonCharInTermsOfWj}.
        Let \(\Omega_2\subseteq \Omega_1\) be an open \(\ManifoldN\)-neighborhood of \(x_0\) so small that
        \(\forall x'\in \Omega_2\cap \BoundaryN\) we have:
        \begin{itemize}
            \item \(W_{j_0}(x)\not \in \TangentSpace{\BoundaryN}{x}\),
            \item \(\Wd_{j_0}=\degBoundaryNWWd[x]\).
        \end{itemize}

    \item Set \(X_0:=W_{j_0}\), \(\Xd_0:=\Wd_{j_0}\).
    \item By construction, \(\exists! b_j\in \CinftySpace[\Omega_2\cap \BoundaryN]\) such that
        \begin{equation*}
            Z_j(x')-b_j(x')X_0(x')\in \TangentSpace{\BoundaryN}{x'},\quad \forall x'\in \Omega_2\cap \BoundaryN.
        \end{equation*}
        Note that \(b_j\equiv 0\) if \(\Zd_j<\Xd_0=\Wd_{j_0}=\degBoundaryNWWd[x']\).
    
    \item Extend each \(b_j\) to a function \(\bt_j\in \CinftySpace[\Omega_2]\) with \(\bt_j\equiv 0\) if \(\Zd_j<\Xd_0\).
    
    \item\label{Item::BoundaryVfs::DefineXj} Set, for \(1\leq j\leq q\), \(\Xd_j:=\Zd_j\) and
        \begin{equation*}
            X_j(x):=Z_j(x)-\bt_j(x)X_0(x)
            =\begin{cases}
                Z_j(x)-\bt_j(x) X_0(x), & \Zd_j\geq \Xd_0,\\
                Z_j(x), & \Zd_j<\Xd_0,
            \end{cases}
        \end{equation*}
        so that \(X_j\in \VectorFields{\Omega_2}\)
        and \(X_j(x)\in \TangentSpace{\BoundaryN}{x'}\), \(\forall x'\in \Omega_2\cap \BoundaryN\). One of the vector fields \(X_j\)  is identically \(0\) (the one corresponding to \(Z_j=W_{j_0}\)), but this is not important for what follows.
        Set \(\XXd=\left\{ \left( X_0,\Xd_0 \right),\left( X_1,\Xd_1 \right),\ldots, \left( X_q,\Xd_q \right) \right\}\).
        Note that \(\XXd\) and \(\ZZd\) are locally strongly equivalent on \(\Omega_2\).
        It follows from this strong equivalence 
        that \(X_0(x),\ldots, X_q(x)\) span \(\TangentSpace{\ManifoldN}{x}\), \(\forall x\in \Omega_2\)
        and using \eqref{Eqn::BoundaryVfs::NSW::Z} 
        \begin{equation}\label{Eqn::BoundaryVfs::NSW::X}
            [X_j,X_k]=\sum_{\Xd_l\leq \Xd_j+\Xd_k} \ct_{j,k}^l X_l,\quad \ct_{j,k}^l\in \CinftySpace[\Omega_2].
        \end{equation}

    \item\label{Item::BoundaryVfs::DefineVj} Set, for \(1\leq j\leq q\), \(\Vd_j:=\Xd_j\),
        \begin{equation*}
            V_j:=X_j\big|_{\Omega_2\cap \BoundaryN}\in \VectorFields{\Omega_2\cap\BoundaryN},
        \end{equation*}
        and \(\VVd:=\left\{ \left( V_1,\Vd_1 \right),\ldots, \left( V_q,\Vd_q \right) \right\}\).
        For \(x'\in \Omega_2\cap \BoundaryN\),
        \begin{equation*}
            \dim\Span \left\{ V_1(x'),\ldots, V_q(x') \right\}\geq \dim \Span\left\{ X_0(x'),\ldots, X_q(x') \right\}-1=\dim \TangentSpace{\ManifoldN}{x'}-1=\dim \TangentSpace{\BoundaryN}{x'},
        \end{equation*}
        and therefore \(V_1(x'),\ldots, V_q(x')\) span \(\TangentSpace{\BoundaryN}{x'}\), \(\forall x'\in \Omega_2\cap \BoundaryN\).

    \item In light of \eqref{Eqn::BoundaryVfs::NSW::X}, the fact that \([V_j,V_k](x')\in \TangentSpace{\BoundaryN}{x'}\), \(\forall x'\in \Omega_2\cap \BoundaryN\),
        and that \(X_0(x')\) is the only \(X_j(x')\) which is not tangent to \(\BoundaryN\) at each \(x'\in \Omega_2\cap\BoundaryN\), we see
        we have
        \begin{equation}\label{Eqn::BoundaryVfs::NSW::V}
            [V_j,V_k]=\sum_{\Vd_l\leq \Vd_j+\Vd_k} \ct_{j,k}^l\big|_{\Omega_2\cap\BoundaryN} V_l.
        \end{equation}
\end{enumerate}

\begin{remark}
    The local strong equivalence class of \(\VVd\) does not depend on any choices in the above construction,
    and depends only on the local weak equivalence class of \(\WWd\). Indeed, if one replaced \(\ZZd\)
    with a different locally strongly equivalent list on a neighborhood of \(x_0\)  and
    went through the above process, then the corresponding \(\XXd\) and \(\VVd\) would be locally strongly equivalent to the
    ones constructed above (on appropriate neighborhoods of \(x_0\)).
    This is perhaps better understood in the language of sheaves; see Construction \ref{Construction::Sheaves::Restrict::Bndry::VVd}.
\end{remark}

\begin{remark}
    Suppose \(Y\in \VectorFields{\Omega_2}\), \(e\in \Zgeq\), \(Y=\sum_{\Zd_l\leq e} f_l Z_l\), \(f_l\in \CinftySpace[\Omega_2]\),
    and \(Y(x')\in \TangentSpace{\BoundaryN}{x'}\), \(\forall x'\in \Omega_2\cap \BoundaryN\).
    Then,
    \begin{equation}\label{Eqn::BoundaryVFs::RestrictGenerators}
        Y\big|_{\Omega_2\cap\BoundaryN} = \sum_{\Vd_l \leq e} g_l V_l, \quad g_l\in \CinftySpace[\Omega_2\cap \BoundaryN].
    \end{equation}
    Indeed, by the construction of \(\XXd\), we have
    \(Y=\sum_{\Xd_l\leq e} h_l X_l\), \(h_l\in \CinftySpace[\Omega_2]\). Using that \(Y(x')\in \TangentSpace{\BoundaryN}{x'}\), \(\forall x'\in \Omega_2\cap \BoundaryN\)
    and \(X_0(x')\) is the only \(X_j(x')\) which is not tangent to \(\BoundaryN\) at each \(x'\in \Omega_2\cap\BoundaryN\),
    we have 
    \begin{equation*}
        Y\big|_{\Omega_2\cap\BoundaryN} = \sum_{\Vd_l \leq e} h_l\big|_{\Omega_2\cap \BoundaryN} V_l,
    \end{equation*}
    establishing \eqref{Eqn::BoundaryVFs::RestrictGenerators}.
\end{remark}

\begin{remark}
    The above procedure and definitions could have been completed on any co-dimension \(1\) embedded submanifold 
    of \(\ManifoldN\); it need not be the boundary.
\end{remark}

\section{Scaling}\label{Section::Scaling}
The heart of this paper are scaling maps which behave well near a non-characteristic point of the boundary.
In this section, we state and prove these scaling results. Roughly speaking, we give scaling maps
which turn small Carnot--Carath\'eodory scales into the ``unit scale.''
In Section \ref{Section::Scaling::UnitScale} we describe the unit scale before stating the main scaling
theorem (Theorem \ref{Thm::Scaling::MainResult}) in Section \ref{Section::Scaling::MainResult}. The remainder of this section is devoted to the proof
of Theorem Theorem \ref{Thm::Scaling::MainResult} and establishing some other technical points.

    \subsection{The unit scale}\label{Section::Scaling::UnitScale}
Our main scaling result (Theorem \ref{Thm::Scaling::MainResult}) allows us to rescale H\"ormander vector fields at a small
scale, so that they are H\"ormander vector fields at the unit scale ``uniformly'' in various parameters.
This uniformity is explained in this section.

For \(U\subseteq \Rn\) open
and \(L\in \Zgeq=\left\{ 0,1,2,\ldots \right\}\), set
\begin{equation*}
    \CmbSpace{L}[U]:=\left\{ f\in \CmSpace{L}[U] : \partial_x^{\alpha}\text{ is bounded }\forall |\alpha|\leq L \right\},
\end{equation*}
\begin{equation*}
    \CmbNorm{f}{L}[U]:=\sup_{x\in U}\max_{|\alpha|\leq L} \left| \partial_x^{\alpha}f(x) \right|.
\end{equation*}
Set \(\CinftybSpace[U]:=\bigcap_{L}\CmbSpace{L}[U]\), which we give the usual Fr\'echet topology.
We similarly define the vector valued analogs \(\CmbSpace{L}[U][\R^m]\) and \(\CinftybSpace[U][\R^m]\).

For a vector field \(X\in \VectorFields{U}\), we write \(X\) with respect to the standard basis,
\(X(x)=\sum a_j(x) \partial_{x_j}\), and identify \(X\) with the function \(x\mapsto (a_1(x),\ldots, a_n(x))\in \CinftySpace[U][\R^n]\).
It therefore makes sense to ask if \(X\in \CinftybSpace[U][\R^n]\) and to consider the norms
\(\CmbNorm{X}{L}[U][\R^n]\).

\begin{definition}\label{Defn::Scaling::UnitScale::UniformSpan}
    Let \(\sI\) be an index set, and for \(\iota \in \sI\) let \(X_1^{\iota},\ldots, X_{q_\iota}^{\iota}\in \CinftySpace[U][\R^n]\)
    be smooth vector fields on \(U\). We say \(X_1^{\iota},\ldots, X_q^{\iota}\) \textit{span the tangent space to \(U\), uniformly for \(\iota\in \sI\)}
    if
    \begin{equation*}
        \inf_{\iota\in \sI} \inf_{x\in U} \max_{j_1,\ldots,j_n\in \{1,\ldots, q_{\iota}\}} \left| \det\left( X_{j_1}^{\iota}(x)| X_{j_2}^{\iota}(x)|\cdots| X_{j_n}^{\iota}(x) \right) \right|>0,
    \end{equation*}
    where \(\left( X_{j_1}^{\iota}(x)| X_{j_2}^{\iota}(x)|\cdots| X_{j_n}^{\iota}(x) \right)\) is the \(n\times n\) matrix whose \(k\)-th column is \(X_{j_k}^{\iota}(x)\)
    written as a column vector in the standard basis.
\end{definition}

\begin{definition}\label{Defn::Scaling::UnitScale::UniformHormander}
    Let \(\sI\) be an index set, and for \(\iota\in \sI\) let \(W_1^{\iota},\ldots, W_{r_\iota}^\iota\in \CinftybSpace[U][\R^n]\)
    be smooth vector fields on \(U\). We say \(W_1^{\iota},\ldots, W_{r_\iota}^{\iota}\) are \textit{H\"ormander vector fields on \(U\),
    uniformly for \(\iota\in \sI\)} if:
    \begin{itemize}
        \item \(\sup_{\iota\in \sI} r_\iota<\infty\).
        \item \(\left\{ W_j^{\iota} : \iota\in \sI, 1\leq j\leq r_\iota \right\}\subseteq \CinftybSpace[U][\R^n]\) is a bounded set.
        \item \(\exists m\in \Zg\) (independent of \(\iota\)) such that if \(X_1^{\iota},\ldots, X_{q_\iota}^{\iota}\) is
            an enumeration of the commutators of \(W_1^{\iota},\ldots, W_{r_\iota}^{\iota}\) up to order \(m\),
            then \(X_1^{\iota},\ldots, X_{q_\iota}^{\iota}\) span the tangent space to \(U\), uniformly for \(\iota\in \sI\).
    \end{itemize}
\end{definition}

\begin{remark}
    The main relevance of Definition \ref{Defn::Scaling::UnitScale::UniformHormander} is that most standard proofs
    regarding H\"ormander vector fields go through ``uniformly'' in \(\iota\) when the hypotheses of
    Definition \ref{Defn::Scaling::UnitScale::UniformHormander} are satisfied.
\end{remark}

    \subsection{The main scaling result}\label{Section::Scaling::MainResult}
    Let \(\ManifoldN\) be a smooth manifold with boundary, let
\(\WWd=\left\{ \left( W_1, \Wd_1 \right),\ldots, \left( W_r,\Wd_r \right) \right\}\subset \VectorFieldsN\times \Zg\)
be H\"ormander vector fields with formal degrees on \(\ManifoldN\), and let \(\Vol\)
be a smooth, strictly positive density on \(\ManifoldM\).

Let \(\ManifoldM\) be a smooth manifold (without boundary)
with smooth, strictly positive density \(\Volh\) such that \(\ManifoldN\) is a co-dimension \(0\), closed,
embedded submanifold of \(\ManifoldM\) with \(\Vol=\Volh\big|_{\ManifoldN}\), and such that there are H\"ormander vector fields with formal degrees
\(\WhWd=\left\{ ( \Wh_1,\Wd_1 ),\ldots, ( \Wh_r,\Wd_r ) \right\}\subset \VectorFieldsM\times \Zg\)
satisfying \(\Wh_j\big|_{\ManifoldN}=W_j\).
Such a \(\ManifoldM\), \(\WhWd\), and \(\Volh\) always exist; see Remark \ref{Rmk::Scaling::MainResult::DontNeedAmbientMfld}. Thus, even if they are not given in an application,
the parts of Theorem \ref{Thm::Scaling::MainResult} which only refer to \(\WWd\) and \(\ManifoldN\) still apply;
see Remark \ref{Rmk::Scaling::MainResult::RestrictingHats} for further comments.

For \(\alpha=(\alpha_1,\ldots, \alpha_L)\in \{1,\ldots, r\}^L\), we write
\begin{equation}\label{Eqn::Scaling::OrderedMultiIndex}
    W^{\alpha}=W_{\alpha_1}W_{\alpha_2}\cdots W_{\alpha_r}, \quad |\alpha|=L.
\end{equation}

Let \(\Compact\Subset \ManifoldNncWWd\) be compact\footnote{Here, and in the rest of the paper, we write
\(A\Subset B\) to mean that \(A\) is a relatively compact subset of \(B\).}
 and let \(\Omega\Subset\ManifoldM\) be an open, relatively compact
subset of \(\ManifoldM\) with \(\Compact\Subset \Omega\).
Let \(m\in \Zg\) be such that \(W_1,\ldots, W_r\) satisfy H\"ormander's condition of order \(m\)
on \(\overline{\Omega}\), and set for \(x\in \Omega\), \(\delta>0\),
\begin{equation*}
    \Lambda(x,\delta):=\max\left\{ \Vol(x)\left( \delta^{\Xd_1}X_1(x),\ldots, \delta^{\Xd_n}X_n(x) \right) \: | \: \left( X_1,\Xd_1\right),\ldots,\left( X_n,\Xd_n \right)\in \GenWWd, \Xd_j\leq m \max\{\Wd_j\}  \right\}.
\end{equation*}

\begin{remark}
    By definition (see \eqref{Eqn::BasicDefns::UnitCCBall} and \eqref{Eqn::BasicDefns::CCBallAndMetric}), \(\BWWd{x}{\delta}\subseteq\ManifoldN\) while \(\BWhWd{x}{\delta}\subseteq \ManifoldM\)--the balls
    \(\BWhWd{x}{\delta}\) are allowed to go outside of \(\ManifoldN\).
    We also have \(\BWWd{x}{\delta}\subseteq \BWhWd{x}{\delta}\cap \ManifoldN\), though this may be a strict containment.
\end{remark}
Let \(\nCube{\delta}\) and \(\nCubegeq{\delta}[c_0]\) be as in \eqref{Eqn::GlobalCor::QnDefn}
and set \(\nCubeeq{\delta}[c_0]=\left\{ x=(x_1,\ldots, x_n)\in \nCube{\delta} : x_n=c_0 \right\}\). Note that if
\(c_0\leq -1\), then \(\nCubegeq{1}[c_0]=\nCube{1}\) and \(\nCubeeq{1}[c_0]=\emptyset\).

We write \(A\lesssim B\) for \(A\leq CB\) where \(C\geq 1\) does not depend on the variables \(x\) and \(\delta\).
We write \(A\approx B\) for \(A\leq C B\) and \(B\leq C A\).

\begin{theorem}\label{Thm::Scaling::MainResult}
    \(\exists \delta_1\in (0,1]\), \(\forall x\in \Compact\), \(\forall \delta\in (0,\delta_1]\),
    \(\exists \Psi_{x,\delta}:\nUnitCube\rightarrow \BWhWd{x}{\delta}\) such that:
    \begin{enumerate}[(a)]
        \item\label{Item::Scaling::MainResult::InOmega} \(\forall x\in \Compact\), \(\delta\in (0,\delta_1]\), \(\BWhWd{x}{\delta}\subseteq \Omega\).
        \item\label{Item::Scaling::MainResult::EstimateVolume}  \(\forall x\in \Compact\), \(\forall \delta\in (0,\delta_1]\),
            \begin{equation*}
                \Vol[\BWWd{x}{\delta}] \approx \Lambda(x,\delta).
            \end{equation*}

            \item\label{Item::Scaling::MainResult::EstimateVolumeWedge1}  \(\forall x\in \Compact\), \(\forall \delta>0\),
            \begin{equation*}
                \Vol[\BWWd{x}{\delta}]\wedge 1 \approx \Lambda(x,\delta)\wedge 1\approx \Vol[\BWWd{x}{\delta\wedge \delta_1}].
            \end{equation*}

        \item\label{Item::Scaling::MainResult::Doubling} \(\forall x\in \Compact\), \(\forall \delta\in (0,\delta_1]\),
            \begin{equation*}
                \Vol[\BWWd{x}{2\delta}] \lesssim  \Vol[\BWWd{x}{\delta}].
            \end{equation*}
        
        \item\label{Item::Scaling::MainResult::DoublingWedge} \(\forall x\in \Compact\), \(\forall \delta>0\),
            \begin{equation*}
                \Vol[\BWWd{x}{2\delta}]\wedge 1 \lesssim  \Vol[\BWWd{x}{\delta}]\wedge 1,\quad 
                \Vol[\BWWd{x}{(2\delta)\wedge \delta_1}] \lesssim  \Vol[\BWWd{x}{\delta\wedge \delta_1}].
            \end{equation*}
        
        \item\label{Item::Scaling::MainResult::ImageOf0} \(\Psi_{x,\delta}(0)=x\).
        \item\label{Item::Scaling::MainResult::Diffeo} \(\Psi_{x,\delta}(\nUnitCube)\subseteq \Omega\) is open and
            \(\Psi_{x,\delta}:\nUnitCube\xrightarrow{\sim}\psi_{x,\delta}(\nUnitCube)\) is a \(\CinftySpace\)-diffeomorphism.
        
        \item\label{Item::Scaling::MainResult::Existencec0} \(\forall x\in \Compact\), \(\forall \delta\in (0,\delta_1]\), \(\exists c_0=c_0(x,\delta)\in [-1,0]\) such that
            \begin{equation*}
                \Psi_{x,\delta}(\nUnitCube)\cap \ManifoldN=\Psi_{x,\delta}(\nUnitCubegeq[c_0]),
                \quad \Psi_{x,\delta}(\nUnitCube)\cap \BoundaryN=\Psi_{x,\delta}(\nUnitCubeeq[c_0]).
            \end{equation*}
            Here, \(c_0=-1\) corresponds to the case \(\Psi_{x,\delta}(\nUnitCube)\subseteq \InteriorN\)
            and \(\Psi_{x,\delta}(\nUnitCube)\cap \BoundaryN=\emptyset\), and
            \(c_0=1\) corresponds to the case \(x\in \BoundaryN\).
        
        \item\label{Item::Scaling::MainResult::Containments} \(\forall \eta_1\in (0,1/2)\), \(\exists \xi_1\in (0,1]\), \(\forall x\in \Compact\), \(\forall \delta\in (0,\delta_1]\),
            \begin{equation*}
                \BWhWd{x}{\xi_1\delta}\subseteq \Psi_{x,\delta}(\nCube{\eta_1})\subseteq \Psi_{x,\delta}(\nUnitCube)\subseteq \BWhWd{x}{\delta},
            \end{equation*}
            and with \(c_0=c_0(x,\delta)\) as in \ref{Item::Scaling::MainResult::Existencec0},
            \begin{equation*}
                \BWhWd{x}{\xi_1\delta}\cap \ManifoldN \subseteq \Psi_{x,\delta}(\nCubegeq{\eta_1}[c_0])
                \subseteq \Psi_{x,\delta}(\nUnitCubegeq[c_0]) \subseteq \BWWd{x}{\delta}.
            \end{equation*}
    \end{enumerate}
    Let \(\Wh_j^{x,\delta}:=\Psi_{x,\delta}^{*}\delta^{\Wd_j}\Wh_j\). 
    \begin{enumerate}[(a),resume]
        \item\label{Item::Scaling::MainResults::UniformHormander} \(\Wh_1^{x,\delta},\ldots, \Wh_{r}^{x,\delta}\) are H\"ormander vector fields on \(\nUnitCube\), uniformly
            for \(x\in \Compact\) and \(\delta\in (0,\delta_1]\) (see Definition \ref{Defn::Scaling::UnitScale::UniformHormander}).

        \item\label{Item::Scaling::MainResults::PulledBackIsPartialxn} \(\exists D\geq 1\), \(\forall x\in \Compact\), \(\forall \delta\in (0,\delta_1]\), if
            \(\Psi_{x,\delta}(\nUnitCube)\cap \BoundaryN\neq \emptyset\), then \(\exists j_0=j_0(x,\delta)\in\{1,\ldots,r\}\) with
            \begin{equation*}
                \Wh_{j_0}^{x,\delta}=\Psi_{x,\delta}^{*} \delta^{\Wd_{j_0}} \Wh_{j_0}=\pm D \partial_{t_n}.
            \end{equation*}

    \end{enumerate}
    Define \(h_{x,\delta}\in \CinftySpace[\nCube{1}]\) by \(\Psi_{x,\delta}^{*}\Volh=h_{x,\delta} \Lambda(x,\delta)\LebDensity\),
    where \(\LebDensity\) denotes the usual Lebesgue density on \(\Rn\).
    \begin{enumerate}[(a),resume]
        \item\label{Item::Scaling::MainResults::hxdeltaSmooth} \(\left\{ h_{x,\delta} : x\in \Compact, \delta\in (0,1] \right\}\subset \CinftybSpace[\nUnitCube]\) is a bounded set.
        \item\label{Item::Scaling::MainResults::hxdeltaPositive} \(\inf_{x\in \Compact} \inf_{\delta\in (0,\delta_1]} \inf_{t\in \nUnitCube} h_{x,\delta}(t)>0\).
    \end{enumerate}
\end{theorem}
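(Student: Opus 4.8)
The plan is to reduce the theorem to the classical boundaryless scaling theorem for H\"ormander vector fields with formal degrees (in the form of \cite{NagelSteinWaingerBallsAndMetricsDefinedByVectorFieldsI,StovaStreetCoordinatesAdaptedToVectorFieldsCanonicalCoordinates}), applied twice --- once to $\WhWd$ on the interior of $\ManifoldM$ and once to the induced boundary list $\VVd$ on $\BoundaryN$ --- and to assemble the boundary scaling maps with the flow of one transverse vector field. First I would reduce to a local statement: cover $\Compact$ by finitely many open sets, each contained in $\InteriorN$ or in a small $\ManifoldN$-neighbourhood of a point $x_0\in\BoundaryNncWWd$, prove the conclusions one such set at a time, and take the minimum of the resulting parameters $\delta_1,\xi_1,D$. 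On a set inside $\InteriorN$ one has $\ManifoldN=\ManifoldM$ locally and the statement --- with $c_0\equiv-1$ and item~\ref{Item::Scaling::MainResults::PulledBackIsPartialxn} vacuous --- is exactly the boundaryless theorem; so assume we work near a fixed $x_0\in\BoundaryNncWWd$.

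Near $x_0$ I would run the construction of Section~\ref{Section::BndryVfsWithFormaLDegrees}, carried out for the ambient list $\WhWd$ on $\ManifoldM$ with $\BoundaryN$ regarded as a codimension-one embedded hypersurface. This yields a transverse vector field $\Xh_0=\Wh_{j_0}$ of minimal degree $\Xd_0=\degBoundaryNWWd[x_0]$, vector fields $\Xh_1,\dots,\Xh_q$ tangent to $\BoundaryN$ along $\BoundaryN$ with $\{(\Xh_0,\Xd_0),(\Xh_1,\Xd_1),\dots\}$ locally strongly equivalent to the generators $\GenWhWd$ of degree $\le m\max_j\Wd_j$ and satisfying the structure equations \eqref{Eqn::BoundaryVfs::NSW::X}, and the boundary list $\VVd$ on $\BoundaryN$ by restriction. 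Straighten the flow of $\Xh_0$ off $\BoundaryN$, so that in the new coordinates $\Xh_0=\partial_{x_n}$, $\BoundaryN=\{x_n=0\}$, and $\ManifoldN=\{x_n\ge0\}$ locally; write $\pi(x',x_n)=(x',0)$ for the associated retraction. Let $\Psi^{\BoundaryN}_{x',\delta}:\nmoCube{1}\to\BoundaryN$ be the boundaryless scaling maps for $\VVd$ on $\BoundaryN$. For $x$ near $x_0$ and $\delta$ small, set $c_0=c_0(x,\delta):=\max\{-1,\,-D\,x_n(x)/\delta^{\Xd_0}\}$ for a constant $D\ge1$ to be chosen, and --- in the regime $c_0>-1$, which for $D$ large enough is exactly when $\BWhWd{x}{\delta}$ meets $\BoundaryN$ (here one uses that $\Xd_0$ is the minimal transverse degree \emph{uniformly} near $x_0$) --- define, in the straightened coordinates,
\[
    \Psi_{x,\delta}(t_1,\dots,t_n):=\Big(\Psi^{\BoundaryN}_{\pi(x),\delta}(t_1,\dots,t_{n-1}),\ \tfrac{\delta^{\Xd_0}}{D}(t_n-c_0)\Big)\in\R^{n-1}\times\R,
\]
using the interior construction centred at $x$ when $c_0=-1$. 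Being a product of $\Psi^{\BoundaryN}_{\pi(x),\delta}$ with an affine map in the last coordinate, $\Psi_{x,\delta}$ immediately has $\Psi_{x,\delta}(0)=x$ (item~\ref{Item::Scaling::MainResult::ImageOf0}), $\Psi_{x,\delta}^{*}\delta^{\Xd_0}\Xh_0=\pm D\partial_{t_n}$ (item~\ref{Item::Scaling::MainResults::PulledBackIsPartialxn}, with $j_0$ as above), and, since $x_n\circ\Psi_{x,\delta}=\tfrac{\delta^{\Xd_0}}{D}(t_n-c_0)$ is $\ge0$ exactly for $t_n\ge c_0$ and $=0$ exactly for $t_n=c_0$, item~\ref{Item::Scaling::MainResult::Existencec0} (with $c_0=0$ precisely when $x\in\BoundaryN$).

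The remaining, quantitative, conclusions are transferred across this product structure. The one genuinely new input is the comparison $\Lambda(x,\delta)\approx\delta^{\Xd_0}\Lambda^{\BoundaryN}(\pi(x),\delta)$, where $\Lambda^{\BoundaryN}$ is the analogous density for $\VVd$ on $\BoundaryN$: a near-maximal $n$-tuple of rescaled generators at $x$ may be taken to be $\delta^{\Xd_0}\Xh_0$ together with a near-maximal $(n-1)$-tuple of the $\delta^{\Vd_l}V_l$, because near $x_0$ every generator transverse to $\BoundaryN$ has degree $\ge\Xd_0$ and (by Section~\ref{Section::BndryVfsWithFormaLDegrees}) equals a uniformly bounded multiple of $\Xh_0$ plus a tangential generator of no larger degree while the $V_l$ span $T\BoundaryN$ --- so the substitution costs only a bounded factor, uniformly --- and since $\Lambda$ is slowly varying at scale $\delta$. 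Granting this, the rest is standard Nagel--Stein--Wainger bookkeeping: using \eqref{Eqn::BoundaryVfs::NSW::X} and Cramer's rule, write each $\delta^{\Wd_j}\Wh_j$ as a uniformly bounded $\CinftySpace$-combination of $\delta^{\Xd_0}\Xh_0$ and the $\delta^{\Xd_l}\Xh_l$ (no power of $\delta$ blows up since $\Wd_j\ge\Xd_l$, resp. $\Wd_j\ge\Xd_0$, wherever that field occurs), and pull back through $\Psi_{x,\delta}$ --- whose last coordinate is affine and whose first $n-1$ are $\Psi^{\BoundaryN}_{\pi(x),\delta}$, so that $\delta^{\Xd_0}\Xh_0$ becomes the constant $\pm D\partial_{t_n}$, the tangential part of $\delta^{\Xd_l}\Xh_l$ becomes $(\Psi^{\BoundaryN}_{\pi(x),\delta})^{*}\delta^{\Vd_l}V_l$ up to an $O(\delta^{\Xd_0})$ correction, and the transverse part of $\delta^{\Xd_l}\Xh_l$ (which vanishes on $\BoundaryN$) becomes an $O(\delta^{\Xd_l})$ multiple of $\partial_{t_n}$; one reads off that $\Wh_j^{x,\delta}$ lies in a bounded subset of $\CinftybSpace[\nUnitCube][\R^n]$ and, being the standard basis at $t=0$, spans uniformly, hence is uniformly H\"ormander of order $m$ (item~\ref{Item::Scaling::MainResults::UniformHormander}). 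Item~\ref{Item::Scaling::MainResult::Diffeo} is then immediate ($\Psi_{x,\delta}$ is the fixed straightening chart composed with a product of the diffeomorphism $\Psi^{\BoundaryN}_{\pi(x),\delta}$ and an affine bijection); the containments in item~\ref{Item::Scaling::MainResult::Containments} follow by the usual two-sided argument (the bounded pulled-back fields give $\Psi_{x,\delta}(\nUnitCube)\subseteq\BWhWd{x}{\delta}$; the unit-scale ball--box theorem for the uniformly H\"ormander family $\Wh_j^{x,\delta}$ puts a small ball inside the image, which does not escape the chart), their $\ManifoldN$-versions coming from intersecting with $\{x_n\ge0\}$ via item~\ref{Item::Scaling::MainResult::Existencec0}; pulling $\Volh$ back and using $|\det D\Psi_{x,\delta}|=\tfrac{\delta^{\Xd_0}}{D}|\det D\Psi^{\BoundaryN}_{\pi(x),\delta}|\approx\Lambda(x,\delta)$ gives items~\ref{Item::Scaling::MainResults::hxdeltaSmooth}, \ref{Item::Scaling::MainResults::hxdeltaPositive}, and --- integrating over $\nUnitCubegeq[c_0]$, of Lebesgue measure $\approx1$ --- item~\ref{Item::Scaling::MainResult::EstimateVolume}; item~\ref{Item::Scaling::MainResult::InOmega} holds because Carnot--Carath\'eodory balls shrink to points (Chow, \cite{ChowUberSystemeVonLinearenPartiellenDifferentialgleichungenErsterOrdnung}); and items~\ref{Item::Scaling::MainResult::Doubling}, \ref{Item::Scaling::MainResult::EstimateVolumeWedge1}, \ref{Item::Scaling::MainResult::DoublingWedge} follow from item~\ref{Item::Scaling::MainResult::EstimateVolume} together with the elementary facts that $\Lambda$ is a finite maximum of densities times monomials $\prod_k\delta^{\Xd_{i_k}}$ with exponents $\le m\max_j\Wd_j$ (so $\Lambda(x,2\delta)\lesssim\Lambda(x,\delta)$) and that $\Lambda(x,\delta_1)\approx1$ uniformly on $\Compact$.

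I expect the main obstacle to be the comparison $\Lambda(x,\delta)\approx\delta^{\Xd_0}\Lambda^{\BoundaryN}(\pi(x),\delta)$, and more broadly the verification that all the uniformity survives the passage across the collar: one must choose, at each $(x,\delta)$, an $n$-frame that simultaneously nearly achieves $\Lambda(x,\delta)$, has $\Xh_0$ as its last member (so the $\Xh_0$-flow genuinely is the $t_n$-coordinate and item~\ref{Item::Scaling::MainResults::PulledBackIsPartialxn} holds), and restricts on $\BoundaryN$ to members of $\VVd$ (so the boundaryless theorem on $\BoundaryN$ applies) --- losing no budget in the process. This is precisely where the non-characteristic hypothesis --- the continuity of $x'\mapsto\degBoundaryNWWd[x']$ near $x_0$, equivalently the uniform validity near $x_0$ of $\Wd_j\ge\Xd_0$ for transverse $W_j$ and of the bounds on the coefficients $b_j$ of Section~\ref{Section::BndryVfsWithFormaLDegrees} --- is used; its failure is exactly what places characteristic boundaries outside the reach of the theorem, as the cautionary case in Example~\ref{Example::BasicDefns::NonCharExamples} illustrates.
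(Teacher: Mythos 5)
Your plan is not what the paper does, and it has a genuine (though fillable) gap in the step you treat as "standard bookkeeping."

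The paper applies the boundaryless scaling theorem exactly once, to $\XhXd$ on the ambient $\ManifoldM$ with the distinguished frame choice $j_n=0$, obtaining $\Phi_{x,\delta}$; it then composes with the exponential map $\psih_{x,\delta}(t)=\exp(t_n\,\Phi_{x,\delta}^*\delta^{\Xd_0}\Xh_0)\exp(\sum_{l<n} t_l\,\Phi_{x,\delta}^*\delta^{\Xd_{j_l}}\Xh_{j_l})\,0$ built out of the \emph{already pulled-back} (hence $\CinftybSpace$-bounded) fields, and finally translates in the rescaled chart to hit interior points $y$ near $x$ via $\Psih_{y,x,\delta,k}(t)=\Phi_{x,\delta,k}(t+\Phi_{x,\delta,k}^{-1}(y))$. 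Boundedness of $\psi_{x,\delta}^*\delta^{\Xd_j}\Xh_j$ is then automatic because both factors have bounded derivatives. You instead apply NSW twice --- once on $\BoundaryN$ for $\VVd$, once in the interior --- and glue them with an affine map in the collar coordinate. That is a fundamentally different construction, and whether it produces \emph{bounded} pullbacks of the $\delta^{\Xd_l}\Xh_l$ is precisely the step that does not follow for free.

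The gap: you claim the tangential part of $\delta^{\Xd_l}\Xh_l$ pulls back to $(\Psi^{\BoundaryN}_{\pi(x),\delta})^{*}\delta^{\Vd_l}V_l$ "up to an $O(\delta^{\Xd_0})$ correction," but the correction to the \emph{coefficients} is $O(y_n)=O(\delta^{\Xd_0})$ \emph{before} applying $(D\Psi^{\BoundaryN}_{\pi(x),\delta})^{-1}$, which has entries of size $\delta^{-\Vd_{j_k}}$ in the $k$-th slot. For a generic bounded tangential vector field $Y$, the pulled-back term $y_n\delta^{\Xd_l}(D\Psi^{\BoundaryN})^{-1}Y$ has $k$-th component $\sim\delta^{\Xd_0+\Xd_l-\Vd_{j_k}}$, which is unbounded whenever the near-maximal boundary frame uses a $V_{j_k}$ of degree $\Vd_{j_k}>\Xd_0+\Xd_l$ --- which can and does happen for generic Hörmander systems. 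The correction term is saved only by the structure equations: since $\Xh_0=\partial_{y_n}$ in your straightened coordinates, $\partial_{y_n}^m(\Xh_l)\big|_{\BoundaryN}=\operatorname{ad}_{\Xh_0}^m(\Xh_l)\big|_{\BoundaryN}$, which by \eqref{Eqn::BoundaryVfs::NSW::X} is a $\CinftySpace$-combination of the $\Xh_j$ of degree $\leq m\Xd_0+\Xd_l$, and hence (taking tangential parts) of the $V_j$ of degree $\leq m\Xd_0+\Xd_l$. Only after rewriting the Taylor coefficients in $y_n$ in this bracket basis --- so that each appearance of $y_n^m$ is matched against generators of degree at most $m\Xd_0+\Xd_l$ --- does the exponent in $\delta$ become nonnegative. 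This is not "$O(\delta^{\Xd_0})$" and it is not automatic; it is an essential use of the bracket-generating hierarchy that your proposal does not perform.

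Two smaller points. First, the intrinsic containment $\Psi_{x,\delta}(\nUnitCubegeq[c_0])\subseteq\BWWd{x}{\delta}$ (as opposed to $\subseteq\BWhWd{x}{\delta}\cap\ManifoldN$) cannot be obtained by just "intersecting with $\{x_n\geq0\}$"; one must produce connecting paths that stay in $\ManifoldN$, which is the content of Proposition~\ref{Prop::Scaling::EquivTop::Boundary} (proved in the paper by a careful choice of composed exponentials, Lemma~\ref{Lemma::Scaling::EquivTop::UnderstandDs}), and your construction would need an analogous argument. Second, your equivalence "$c_0>-1$ for $D$ large $\Leftrightarrow\BWhWd{x}{\delta}\cap\BoundaryN\ne\emptyset$" requires a Grönwall estimate showing that no $\delta$-scale CC path can decrease $y_n$ by more than $\approx\delta^{\Xd_0}$; this is true but should be stated. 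On the other hand, your identified "main obstacle," the comparison $\Lambda(x,\delta)\approx\delta^{\Xd_0}\Lambda^{\BoundaryN}(\pi(x),\delta)$, is indeed provable by the argument you sketch (at boundary points any nonzero frame must contain $\Xh_0$, so the determinant factors; then move off $\BoundaryN$ by the slowly-varying estimate). The more serious issue is the pullback boundedness above.
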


\begin{remark}\label{Rmk::Scaling::MainResult::RestrictingHats}
    Let \(c_0=c_0(x,\delta)\) as in \ref{Item::Scaling::MainResult::Existencec0}.
    Note that \(\Psi_{x,\delta}\big|_{\nUnitCubegeq[c_0]}^{*} \delta^{\Wd_j}W_j = \Wh_j^{x,\delta}\big|_{\nUnitCubegeq[c_0]}\)
    and \(\Psi_{x,\delta}\big|_{\nUnitCubegeq[c_0]}^{*} \Vol = \Psi_{x,\delta}^{*} \Volh\big|_{\nUnitCubegeq[c_0]} = h_{x,\delta}\big|_{\nUnitCubegeq[c_0]} \Lambda(x,\delta)\LebDensity\).
    Thus, \ref{Item::Scaling::MainResults::UniformHormander} and \ref{Item::Scaling::MainResults::PulledBackIsPartialxn}
    imply corresponding results about \(W_j\) and \ref{Item::Scaling::MainResults::hxdeltaSmooth}
    and \ref{Item::Scaling::MainResults::hxdeltaPositive} imply corresponding results about \(\Vol\).
    This is especially useful when combined with Remark \ref{Rmk::Scaling::MainResult::DontNeedAmbientMfld}.
\end{remark}

\begin{remark}\label{Rmk::Scaling::MainResult::DontNeedAmbientMfld}
    Theorem \ref{Thm::Scaling::MainResult} assumes the existence of an ambient manifold \(\ManifoldM\)
    with corresponding ambient vector fields \(\Wh_j\) and density \(\Volh\). In some applications, these
    are given. However, in some applications one is only given the data on \(\ManifoldN\).
    In this case, Theorem \ref{Thm::Scaling::MainResult} still applies: there always exists an appropriate choice
    of \(\ManifoldM\), \(\Wh_1,\ldots, \Wh_r\), and 
    \(\Volh\).
    To see this, let \(\ManifoldMh\) be any smooth manifold without boundary such that \(\ManifoldN\subseteq \ManifoldM\)
    is a co-dimension \(0\) closed embedded submanifold (for example, \(\ManifoldMh\) could be the double of \(\ManifoldN\)).
    Then, extend \(W_1,\ldots, W_r\) to smooth vector fields \(\Wh_1,\ldots, \Wh_r\) on \(\ManifoldMh\)
    and extend \(\Vol\) to a smooth density \(\Volh\) on \(\ManifoldMh\). By continuity, \(\Wh_1,\ldots, \Wh_r\)
    satisfy H\"ormander's condition on a neighborhood of \(\ManifoldN\) and \(\Volh\) is a strictly positive
    on a neighborhood of \(\ManifoldN\). Letting \(\ManifoldM\) be this common neighborhood, Theorem \ref{Thm::Scaling::MainResult}
    applies.
    This is particularly useful in combination with Remark \ref{Rmk::Scaling::MainResult::RestrictingHats}.
\end{remark}

\begin{remark}
    \ref{Item::Scaling::MainResults::PulledBackIsPartialxn} is only established in the case 
    \(\Psi_{x,\delta}(\nUnitCube)\cap \BoundaryN\neq \emptyset\). It is straight-forward to 
    modify the proof and establish
    the same result in the case \(\Psi_{x,\delta}(\nUnitCube)\cap \BoundaryN= \emptyset\);
    however, there is no reason to distinguish the variable \(t_n\) in that case, so such a conclusion
    is usually not relevant.
\end{remark}

The proof of Theorem \ref{Thm::Scaling::MainResult} is separated into two parts:
when \(x\) is near the boundary (relative to \(\delta\)) and when \(x\) is far away.
When \(x\) is far from the boundary,
the boundary can be ignored and  Theorem \ref{Thm::Scaling::MainResult} has
been established by Nagel, Stein, and Wainger \cite{NagelSteinWaingerBallsAndMetricsDefinedByVectorFieldsI}
and the author and Stovall \cite{StovaStreetCoordinatesAdaptedToVectorFieldsCanonicalCoordinates}; this is described
in Section \ref{Section::Scaling::WithoutBoundary}.

    \subsection{Scaling on manifolds without boundary}\label{Section::Scaling::WithoutBoundary}
    In this section, we review scaling results on manifolds without boundary.
The original results of this type were proved by Nagel, Stein, and Wainger \cite{NagelSteinWaingerBallsAndMetricsDefinedByVectorFieldsI};
however, we more closely follow the work of the author and Stovall \cite{StovaStreetCoordinatesAdaptedToVectorFieldsCanonicalCoordinates}
as presented in \cite[Section 3.5]{StreetMaximalSubellipticity}.

Let \(\ManifoldM\) be a smooth manifold without boundary, \(\Vol\) a smooth, strictly positive density on \(\ManifoldM\),
and \(\WWd=\left\{ ( W_1,\Wd_1 ),\ldots, ( W_r,\Wd_r ) \right\}\subset \VectorFieldsM\times \Zg\)
H\"ormander vector fields with formal degrees on \(\ManifoldM\).

Fix \(\Compact\Subset\Omega\Subset\ManifoldM\) open with 
\(\Compact\) compact and \(\Omega\) open and relatively compact in \(\ManifoldM\).
Let \(\XhXd=\left\{ ( X_1,\Xd_1 ),\ldots,( X_q,\Xd_q ) \right\}\subset \VectorFields{\Omega}\times \Zg\) satisfy:
\begin{itemize}
    \item \(\forall x\in \Omega\), \(\Span\left\{ X_1(x),\ldots, X_q(x) \right\}=\TangentSpace{\ManifoldM}{x}\),
    \item \(\left[ X_j,X_k \right]=\sum_{\Xd+l\leq \Xd_j+\Xd_k} c_{j,k}^l X_l\), \(c_{j,k}^l\in \CinftySpace[\Omega]\),
    \item \(\XXd\) locally strongly controls \(\WWd\) on \(\Omega\),
    \item \(\WWd\) locally weakly controls \(\WWd\) on \(\Omega\).
\end{itemize}

\begin{remark}\label{Rmk::Scaling::WithoutBoundary::XXdAlwaysExists}
    Such a choice of \(\XXd\) always exists. Indeed, take \(m\in \Zg\) such that
    \(W_1,\ldots, W_r\) satisfy H\"ormader's condition of order \(m\) on \(\Omega\).
    Then, we may take
    \begin{equation*}
        \XXd = \left\{ (Z,e)\in \GenWWd : e\leq m\max\{\Wd_j\} \right\}.
    \end{equation*}
    See the proof of \cite[Lemma 3.3.5]{StreetMaximalSubellipticity}.
\end{remark}

Set
\begin{equation*}
    \Lambda(x,\delta):= \max_{j_1,\ldots, j_n\in \left\{ 1,\ldots, q \right\}}
    \Vol(x)\left( \delta^{\Xd_{j_1}}X_{j_1}(x),\ldots, \delta^{\Xd_{j_n}}X_{j_n}(x) \right).
\end{equation*}
Since \(X_1(x),\ldots, X_q(x)\) span \(\TangentSpace{\ManifoldM}{x}\), \(\forall x\in \Omega\),
and \(\Vol\) is a strictly positive density, it follows that \(\Lambda(x,\delta)>0\) \(\forall x\in \Omega\), \(\delta>0\).

Fix \(\zeta\in (0,1]\) and for each \(x\in \Compact\), \(\delta\in (0,1]\)
pick \(j_1=j_1(x,\delta),\ldots, j_n=j_n(x,\delta)\in \{1,\ldots,q\}\)
such that \(\Span\left\{ X_{j_1}(x),\ldots, X_{j_n}(x) \right\}=\TangentSpace{\ManifoldM}{x}\)
and moreover,
\begin{equation}\label{Eqn::Scaing::WithoutBoundary::PickingXBasis}
    \max_{k_1,\ldots, k_n\in {1,\ldots, q}}
    \left| \frac{\delta^{\Xd_{k_1}}X_{k_1}(x)\wedge \cdots \wedge \delta^{\Xd_{k_n}}X_{k_n}(x) }{ \delta^{\Xd_{j_1}}X_{j_1}(x)\wedge\cdots\wedge \delta^{\Xd_{j_n}}X_{j_n}(x)  } \right|
    \leq \zeta^{-1}.
\end{equation}
See \cite[Definition 3.6.2]{StreetMaximalSubellipticity} for an explanation of the quotient in \eqref{Eqn::Scaing::WithoutBoundary::PickingXBasis}.
It is always possible to pick \(j_1(x,\delta),\ldots, j_n(x,\delta)\) so that the left-hand side of \eqref{Eqn::Scaing::WithoutBoundary::PickingXBasis}
equals \(1\); however, it is sometimes convienient to take \(\zeta<1\).

We write \(A\lesssim B\) for \(A\leq CB\) where \(C\geq 1\) is a constant which does not depend on \(x\) or \(\delta\);
we write \(A\approx B\) for \(A\lesssim B\) and \(B\lesssim A\).

\begin{theorem}\label{Thm::Scaling::WithoutBoundary::MainThm}
    \(\exists \delta_0>0\) such that
    \begin{enumerate}[(a)]
        \item\label{Item::Scaling::WithoutBdry::BallsInOmega} \(\forall x\in \Compact\), \(\forall \delta\in (0,\delta_0]\), 
        \begin{equation*}
            \bigcup_{\substack{x\in \Compact\\ \delta\in (0,\delta_0]}}\BWWd{x}{\delta}\cup \BXXd{x}{\delta}\Subset \Omega.
        \end{equation*}
        \item \(\forall x\in \Compact\), \(\delta\in (0,\delta_0]\),
            \begin{equation*}
                \Vol[\BWWd{x}{\delta}]\approx \Vol[\BXXd{x}{\delta}]\approx \Lambda(x,\delta).
            \end{equation*}
    \end{enumerate}
    For each \(x\in \Compact\) and \(\delta\in (0,\delta_0]\), there exists \(\Phi_{x,\delta}:\nUnitBall\rightarrow \BWWd{x}{\delta}\cap \BXXd{x}{\delta}\)
    and \(\xi_0\in (0,1)\) such that
    \begin{enumerate}[(a),resume]
        \item\label{Item::Scaling::WithoutBdry::PhixdeltaOf0} \(\Phi_{x,\delta}(0)=x\).
        \item \(\Phi_{x,\delta}(\nUnitBall)\subseteq \Omega\) is open and \(\Phi_{x,\delta}:\nUnitBall\xrightarrow{\sim}\Phi_{x,\delta}(\nUnitBall)\)
            is a \(\CinftySpace\) diffeomorphism.
        \item\label{Item::Scaling::WithoutBdry::HardContainment} \(\forall x\in \Compact\), \(\delta\in (0,\delta_0]\), \(\forall \eta_0\in (0,1]\), \(\exists \xi_0\in (0,1]\),
            \begin{equation*}
                \BWWd{x}{\xi_0\delta}\cup \BXXd{x}{\xi_0\delta}
                \subseteq \Phi_{x,\delta}(\nBall{\eta_0})
                \subseteq \Phi_{x,\delta}(\nBall{1})
                \subseteq \BWWd{x}{\delta}\cap \BXXd{x}{\delta}.
            \end{equation*}
    \end{enumerate}
    Set \(W_k^{x,\delta}:=\Phi_{x,\delta}^{*}\delta^{\Wd_k}W_k\) and \(X_k^{x,\delta}:=\Phi_{x,\delta}^{*}\delta^{\Xd_k}X_k\).
    \begin{enumerate}[(a),resume]
        \item\label{Item::Scaling::WithoutBdry::PulledBackSmooth} \(\left\{ W_k^{x,\delta}, X_l^{x,\delta} : x\in \Compact,\delta\in (0,\delta_0], 1\leq k\leq r, 1\leq l\leq q \right\}\subset \CinftybSpace[\nUnitBall]\) is a bounded set.
        \item\label{Item::Scaling::WithoutBdry::PulledBackHormander} \(W_1^{x,\delta},\ldots, W_r^{x,\delta}\) are H\"ormander vector fields on \(\nUnitBall\), uniformly for \(x\in \Compact\), \(\delta\in (0,\delta_0]\),
            in the sense of Definition \ref{Defn::Scaling::UnitScale::UniformHormander}.
        \item\label{Item::Scaling::WithoutBdry::PulledBackSpan} With \(j_1=j_1(x,\delta),\ldots, j_n=j_n(x,\delta)\) as in \eqref{Eqn::Scaing::WithoutBoundary::PickingXBasis}, we have
            \begin{equation*}
                \inf_{\substack{x\in \Compact \\ \delta\in (0,\delta_0]}} \inf_{u\in \nUnitBall} \left| \det\left( X_{j_1}^{x,\delta}(u)| \cdots | X_{j_n}^{x,\delta}(u) \right) \right|>0.
            \end{equation*}
        \item\label{Item::Scaling::WithoutBdry::PulledBackNorm}
            \(\forall x\in \Compact\), \(\forall \delta\in (0,\delta_0]\),
            \begin{equation*}
                \CmbNorm{f}{L}[\nUnitBall] \approx \sum_{|\alpha|\leq L} \CmbNorm{ \left( X^{x,\delta} \right)^{\alpha} f}{0}[\nUnitBall],
            \end{equation*}
            where the implicit constants depend on \(L\in \Zgeq\), but not on \(x\in \Compact\), \(\delta\in (0,\delta_0]\),
            or \(f\in \CmbSpace{L}[\nUnitBall]\).
    \end{enumerate}
    Define \(h_{x,\delta}\in \CinftySpace[\nUnitBall]\) by
    \(\Phi_{x,\delta}^{*}\Vol=h_{x,\delta}\Lambda(x,\delta)\LebDensity\). Then,
    \begin{enumerate}[(a),resume]
        \item\label{Item::Scaling::WithoutBdry::hIsSmooth}  \(\left\{ h_{x,\delta}:x\in \Compact,\delta\in (0,\delta_0] \right\}\subset \CinftybSpace[\nUnitBall]\) is a bounded set.
        \item\label{Item::Scaling::WithoutBdry::hIsPositive} \(\inf_{\substack{x\in \Compact \\ \delta\in (0,\delta_0]}} \inf_{t\in \nUnitBall}h_{x,\delta}(t)>0\).
    \end{enumerate}
\end{theorem}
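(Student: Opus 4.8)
The plan is to recognize Theorem \ref{Thm::Scaling::WithoutBoundary::MainThm} as a repackaging of the scaling theorem of Nagel, Stein, and Wainger \cite{NagelSteinWaingerBallsAndMetricsDefinedByVectorFieldsI}, in the sharp form established by the author and Stovall \cite{StovaStreetCoordinatesAdaptedToVectorFieldsCanonicalCoordinates} and exposited in \cite[Section 3.5]{StreetMaximalSubellipticity}; once the notational dictionary is fixed, each conclusion transcribes from the results there. For the reader's benefit I would indicate the construction and the mechanism behind the estimates. Fix $x\in\Compact$ and $\delta\in(0,\delta_0]$, and let $j_1=j_1(x,\delta),\dots,j_n=j_n(x,\delta)$ be as in \eqref{Eqn::Scaing::WithoutBoundary::PickingXBasis}. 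I would take $\Phi_{x,\delta}$ to be the canonical coordinate map
\[
  \Phi_{x,\delta}(u) = \exp\Bigl( \sum_{k=1}^{n} u_k\, \delta^{\Xd_{j_k}} X_{j_k} \Bigr)(x),
\]
restricted to a ball $\nBall{\eta_0}$ and then rescaled to $\nUnitBall$, where $\eta_0\in(0,1]$ is a small constant, independent of $x$ and $\delta$, fixed during the proof. Then $\Phi_{x,\delta}(0)=x$, which is \ref{Item::Scaling::WithoutBdry::PhixdeltaOf0}, and before rescaling $d\Phi_{x,\delta}|_0$ carries $\partial_{u_k}$ to $\delta^{\Xd_{j_k}}X_{j_k}(x)$, so the pulled-back fields $X_k^{x,\delta}=\Phi_{x,\delta}^{*}\delta^{\Xd_k}X_k$ satisfy $X_{j_k}^{x,\delta}(0)=\partial_{u_k}$ up to the fixed factor $\eta_0$; in particular the determinant in \ref{Item::Scaling::WithoutBdry::PulledBackSpan} is a fixed positive constant at the origin, and $h_{x,\delta}(0)=\Vol(x)(\delta^{\Xd_{j_1}}X_{j_1}(x),\dots,\delta^{\Xd_{j_n}}X_{j_n}(x))/\Lambda(x,\delta)$ lies in $[\zeta,1]$ by \eqref{Eqn::Scaing::WithoutBoundary::PickingXBasis}, again up to the fixed factor $\eta_0^n$.

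The technical heart, from which \ref{Item::Scaling::WithoutBdry::HardContainment}, \ref{Item::Scaling::WithoutBdry::PulledBackSmooth}, \ref{Item::Scaling::WithoutBdry::PulledBackHormander}, \ref{Item::Scaling::WithoutBdry::PulledBackSpan}, and \ref{Item::Scaling::WithoutBdry::hIsSmooth} all flow, is an ODE bootstrap: along the radial flow $s\mapsto su$, the functions $X_k^{x,\delta}$ solve a linear system whose coefficients are the structure functions $c_{j,k}^l$ evaluated along $\Phi_{x,\delta}$, multiplied only by \emph{nonnegative} powers of $\delta$---this is exactly where the hypothesis $\Xd_l\le\Xd_j+\Xd_k$ in the structure equations is essential, since it forbids negative powers of $\delta$. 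Since $\Compact\Subset\Omega$ and the $c_{j,k}^l$ are smooth on $\Omega$, Gronwall's inequality together with repeated differentiation in $u$ of this system produces bounds on all derivatives of $X_k^{x,\delta}$ that are uniform in $x$ and $\delta$ on $\nBall{\eta_0}$ for a suitably small $\eta_0$; rescaling by $\eta_0$ transfers everything to $\nUnitBall$ and proves \ref{Item::Scaling::WithoutBdry::PulledBackSmooth}. The uniform spanning \ref{Item::Scaling::WithoutBdry::PulledBackSpan} then propagates from its value at the origin by these uniform bounds, shrinking $\eta_0$ further if needed, and \ref{Item::Scaling::WithoutBdry::PulledBackHormander} follows because the commutators up to order $m$ of the $X_k^{x,\delta}$ contain a uniformly spanning family and, via the pulled-back local strong control of $\WWd$ by $\XXd$, dominate the $W_k^{x,\delta}$. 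Running the same flow estimates on the absolutely continuous curves defining the Carnot--Carath\'eodory balls, and invoking the local weak and strong equivalences among $\WWd$, $\XXd$, and $\GenWWd$, gives the diffeomorphism claim together with the two-sided containment $\BWWd{x}{\xi_0\delta}\cup\BXXd{x}{\xi_0\delta}\subseteq\Phi_{x,\delta}(\nBall{\eta_0})\subseteq\Phi_{x,\delta}(\nUnitBall)\subseteq\BWWd{x}{\delta}\cap\BXXd{x}{\delta}$, i.e.\ \ref{Item::Scaling::WithoutBdry::HardContainment}; a compactness argument shrinking $\delta_0$ then gives \ref{Item::Scaling::WithoutBdry::BallsInOmega}.

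The remaining items are consequences. Writing $\Phi_{x,\delta}^{*}\Vol=h_{x,\delta}\Lambda(x,\delta)\LebDensity$, the Jacobian $h_{x,\delta}$ is a smooth function of the uniformly bounded pulled-back frame $\{X_{j_k}^{x,\delta}\}$, so $\{h_{x,\delta}\}$ is bounded in $\CinftybSpace[\nUnitBall]$ (item \ref{Item::Scaling::WithoutBdry::hIsSmooth}), while its uniform positivity \ref{Item::Scaling::WithoutBdry::hIsPositive} propagates from the lower bound $h_{x,\delta}(0)\gtrsim\zeta$. Integrating $h_{x,\delta}\Lambda(x,\delta)$ over the nested sets in \ref{Item::Scaling::WithoutBdry::HardContainment} yields the volume comparison $\Vol[\BWWd{x}{\delta}]\approx\Vol[\BXXd{x}{\delta}]\approx\Lambda(x,\delta)$, and \ref{Item::Scaling::WithoutBdry::PulledBackNorm} is the standard equivalence of $\CmbNorm{f}{L}[\nUnitBall]$ with the sup-norms of derivatives of $f$ along the uniformly nondegenerate, uniformly bounded smooth frame $\{X_{j_k}^{x,\delta}\}$. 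The main obstacle throughout is the ODE bootstrap of the second paragraph---obtaining the uniform smoothness of the $X_k^{x,\delta}$ and the two-sided ball containment simultaneously; everything else is bookkeeping with the structure equations and continuity arguments anchored at the origin.
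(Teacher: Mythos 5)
Your proposal is correct and takes essentially the same route as the paper: the paper's own "proof" is a set of comments explaining how to transcribe and adapt \cite[Theorems 3.5.1 and 3.5.4]{StreetMaximalSubellipticity} (originally from \cite{StovaStreetCoordinatesAdaptedToVectorFieldsCanonicalCoordinates}), and your sketch of the canonical-coordinate map $\Phi_{x,\delta}(u)=\exp\bigl(\sum_k u_k\delta^{\Xd_{j_k}}X_{j_k}\bigr)(x)$ together with the Gronwall/ODE bootstrap (whose viability hinges on the degree constraint $\Xd_l\leq\Xd_j+\Xd_k$ forbidding negative powers of $\delta$) is exactly the mechanism behind that citation. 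The one point the paper's comments stress that your sketch only touches in passing is that the cited results apply directly to the $\XXd$ parts of the theorem but not to the $\WWd$ parts, because the hypothesized relationship between $\WWd$ and $\XXd$ here is weaker than in the reference; one must then deduce the $W$-conclusions from the $X$-conclusions using that $\XXd$ locally strongly controls $\WWd$ (giving the smoothness and one ball containment) and that $\WWd$ locally weakly controls $\XXd$ (giving uniform Hörmander and the reverse containment), and finally rescale $\delta\mapsto c_2\delta$ so that $\Phi_{x,\delta}$ lands in both families of balls simultaneously. Your phrase "invoking the local weak and strong equivalences among $\WWd$, $\XXd$, and $\GenWWd$" is the right ingredient, but it is worth making explicit that this is the step that does not come for free from the citation.
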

\begin{proof}[Comments on the proof]
   This is largely contained in \cite[Theorems 3.5.1 and 3.5.4]{StreetMaximalSubellipticity}
   and follows from \cite[Theorem 3.6.5]{StreetMaximalSubellipticity}, which was originally proved in \cite{StovaStreetCoordinatesAdaptedToVectorFieldsCanonicalCoordinates}. 
   We make a few comments on the differences between the statement here the statements of
   \cite[Theorems 3.5.1 and 3.5.4]{StreetMaximalSubellipticity}.

   \ref{Item::Scaling::WithoutBdry::PulledBackSpan} is stated in a slightly stronger way than
   \cite[Theorem 3.5.1(h)]{StreetMaximalSubellipticity}, but the proof their establishes 
   \ref{Item::Scaling::WithoutBdry::PulledBackSpan} directly.
    In the proof of \cite[Theorems 3.5.1 and 3.5.4]{StreetMaximalSubellipticity},
   \(\zeta\) is chosen to be \(1/2\), but that is not relevant to the
   proof, and the application of \cite[Theorem 3.6.5]{StreetMaximalSubellipticity} goes through with any fixed \(\zeta\).

   More significantly, the relationship between \(\WWd\) and \(\XXd\) here is less explicit
   than the one in \cite[Section 3.5]{StreetMaximalSubellipticity}. In light of this,
   the proof of \cite[Theorems 3.5.1 and 3.5.4]{StreetMaximalSubellipticity} goes through to prove
   all of the parts of Theorem \ref{Thm::Scaling::WithoutBoundary::MainThm} concerning \(\XXd\),
   but it does not directly apply to \(\WWd\).

    \ref{Item::Scaling::WithoutBdry::BallsInOmega} follows from the Picard--Lindel\"of Theorem by taking \(\delta_0\) small.
    Since \(\XXd\) locally strongly controls \(\WWd\) on \(\Omega\), it follows from \ref{Item::Scaling::WithoutBdry::BallsInOmega}
    that \ref{Item::Scaling::WithoutBdry::PulledBackSmooth} for \(W_k^{x,\delta}\)
    follows from the corresponding (already proved) result for \(X_k^{x,\delta}\), and
    \(\exists c_1\gtrsim 0\) such that \(\forall x\in \Compact\) and \(\delta>0\) small
    \begin{equation}\label{Eqn::Scaling::WithoutBdry::WWdBallsInXXdBalls}
        \BWWd{x}{c_1\delta}\subseteq \BXXd{x}{\delta}.
    \end{equation}

    Since \(\WWd\) locally weakly controls \(\XXd\) on \(\Omega\), \ref{Item::Scaling::WithoutBdry::PulledBackHormander} follows from \ref{Item::Scaling::WithoutBdry::BallsInOmega}
    and a straight-forward modification of the proof of \cite[Theorem 3.5.1(i)]{StreetMaximalSubellipticity}.
    From here, that \(\exists c_2\gtrsim 1\) such that \(\forall x\in \Compact\), \(\delta>0\) small
    \begin{equation}\label{Eqn::Scaling::WithoutBdry::XXdBallsInWWdBalls}
        \BWWd{x}{c_1\delta}\subseteq \BXXd{x}{\delta},
    \end{equation}
    follows just as in \cite[(3.25)]{StreetMaximalSubellipticity}.

    \eqref{Eqn::Scaling::WithoutBdry::WWdBallsInXXdBalls} and \eqref{Eqn::Scaling::WithoutBdry::XXdBallsInWWdBalls}
    show that the balls \(\BXXd{x}{\delta}\) and \(\BWWd{x}{\delta}\) are comparable.
    From here, the remainder of the theorem follows by using
    \(\Phi_{x,c_2\delta}\) in place of \(\Phi_{x,\delta}\), where
    \(\Phi_{x,\delta}\)
    is the map from \cite[Theorems 3.5.1 and 3.5.4]{StreetMaximalSubellipticity}, and some simple modifications
    of the proof, which we leave to the reader.
\end{proof}

    \subsection{Quantitative equivalence of topologies}
    As described in Corollary \ref{Cor::GlobalCor::EquivTop} and Theorem \ref{Thm::Metrics::Results::GivesUsualTopology},
the metric topology on \(\ManifoldNncWWd\) induced by \(\MetricWWd\) is the same as the standard topology on
\(\ManifoldNncWWd\) as manifold. In this section we prove the central result needed to establish this equality;
as will be required in the proof of Theorem \ref{Thm::Scaling::NearBdry::MainThm}, we state and prove a quantitative version of this.
See the proof of Theorem \ref{Thm::Metrics::Results::GivesUsualTopology} for how the results in this section imply the topologies are the same.

We let \(\CubeCentered{y}{a}=\left\{ x\in \R^n : |x-y|_{\infty}<a \right\}\) and
\(\CubeCenteredgeq{y}{a}=\left\{ x=(x_1,\ldots,x_n)\in \CubeCentered{y}{a}  : x_n\geq 0 \right\}\).
Note that \(\nCube{a}=\CubeCentered{0}{a}\)
and \(\nCubegeq{a}=\CubeCenteredgeq{0}{a}\).

Fix \(a\in (0,1]\). Let \(\Wh_0,\Wh_1,\ldots, \Wh_r\in \CinftybSpace[\nCube{a}][\R^n]\)
be smooth vector fields on \(\nCube{a}\). Assign to each \(\Wh_j\) a formal degree
\(\Wd_j\in \Zg\). Set
\begin{equation*}
    \WhWd=\left\{ (\Wh_0,\Wd_0),(\Wh_1,\Wd_1),\ldots, (\Wh_r,\Wd_r) \right\}\subset \CinftybSpace[\nCube{a}][\R^n]\times \Zg.
\end{equation*}

Fix \(m\in \Zg\) and let \(\Xh_1,\ldots, \Xh_q\in \CinftybSpace[\nCube{a}][\R^n]\) denote the commutators
of \(\Wh_0,\ldots, \Wh_r\) up to order \(m\). We assume that \(\Wh_0,\ldots, \Wh_r\)
satisfy H\"ormander's condition at \(0\) of order \(m\) in the sense that:
\begin{equation}\label{Eqn::Scaling::EquivTop::HorCond}
    \gamma_0:=\max_{j_1,\ldots, j_n\in \{1,\ldots,q\}} \left| \det\left( \Xh_{j_1}(0)| \Xh_{j_2}(0)|\cdots|\Xh_{j_n}(0) \right) \right|>0.
\end{equation}

\begin{definition}
    For a parameter \(\iota\), we say \(C\geq 0\) is an \textit{\(\iota\)-admissible constant} if 
    \(\exists L\in \Zgeq\), depending only on upper bounds for \(n\), \(r\), and \(m\),
    such that \(C\) can be chosen to depend only on \(\iota\) and upper bounds
    for \(n\), \(r\), \(m\), \(\gamma_0^{-1}\), 
    \(\max\{\Wd_j\}\),
    and
    \begin{equation*}
        \max_{0\leq j\leq r} \CmbNorm{\Wh_j}{L}[\nCube{a}][\R^n]. 
    \end{equation*}
    We say \(C\geq 0\) is an \textit{admissible constant}, if it is a \(0\)-admissible constant.
\end{definition}

\begin{proposition}\label{Prop::Scaling::EquivTop::Interior}
    \(\exists\) an admissible constant \(a_1\in (0,a]\), \(\forall a_2\in (0,a_1]\),
    \(\exists\) an \(a_2\)-admissible constant \(\delta_1\in(0,1]\),
    \(\forall \delta\in (0,\delta_1]\),
    \(\exists\) an \((a_2,\delta)\)-admissible constant \(a_3\in(0,a_2/2]\),
    \(\forall y\in \nCube{a_2/2}\),
    \begin{equation*}
        \CubeCentered{y}{a_3}\subseteq \BWWd{y}{\delta}\subseteq \nCube{a_2}.
    \end{equation*}
\end{proposition}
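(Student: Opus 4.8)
The plan is to establish the two inclusions separately; the inclusion $\BWhWd{y}{\delta}\subseteq\nCube{a_2}$ is essentially trivial, while $\CubeCentered{y}{a_3}\subseteq\BWhWd{y}{\delta}$ is the whole point. For the easy inclusion: if $\gamma$ is any $\WhWd$-admissible curve starting at $y$ — so $\gamma'=\sum_j a_j\,\delta^{\Wd_j}\Wh_j(\gamma)$ a.e.\ with $\|\sum_j|a_j|^2\|_{L^\infty}<1$ — then using $\delta^{\Wd_j}\le\delta$ and boundedness of the $\Wh_j$ one gets $|\gamma(1)-y|_\infty<\sqrt{r+1}\,\delta\max_{0\le j\le r}\CmbNorm{\Wh_j}{0}[\nCube a][\R^n]=:C_0\delta$ with $C_0$ admissible, so $\BWhWd{y}{\delta}\subseteq\CubeCentered{y}{a_2/2}\subseteq\nCube{a_2}$ as soon as $\delta\le a_2/(2C_0)$; this same length bound also keeps every curve appearing below inside $\nCube{a_2}\subseteq\nCube a$. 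I would then record a quantitative form of H\"ormander's condition near $0$: the functions $\det(\Xh_{j_1}(\cdot)| \cdots |\Xh_{j_n}(\cdot))$ are polynomials in the entries of the $\Xh_i$, which are universal polynomial expressions in $\Wh_0,\dots,\Wh_r$ and their derivatives of order $\le m$, hence Lipschitz on $\nCube a$ with admissible Lipschitz constants; since their maximum is $\ge\gamma_0$ at $0$ by \eqref{Eqn::Scaling::EquivTop::HorCond}, there is an admissible $a_1\in(0,a)$ so that this maximum is $\ge\gamma_0/2$ throughout $\nCube{a_1}$, and then at each $y\in\nCube{a_2/2}\subseteq\nCube{a_1}$ I may choose indices with $|\det(\Xh_{j_1(y)}(y)| \cdots |\Xh_{j_n(y)}(y))|\ge\gamma_0/2$ and formal-degree sum $\Xd_{j_1(y)}+\cdots+\Xd_{j_n(y)}\le Q_0:=n\,m\max_j\Wd_j$.

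For the hard inclusion I would apply the scaling theorem on a manifold without boundary — Theorem \ref{Thm::Scaling::WithoutBoundary::MainThm}, in the quantitative form (with admissible constants) established in \cite{StovaStreetCoordinatesAdaptedToVectorFieldsCanonicalCoordinates} (see also \cite{StreetMaximalSubellipticity}) — on the ambient manifold $\nCube a$, with $\XhXd$ taken to be the commutators of $\Wh_0,\dots,\Wh_r$ of order $\le m$ (with their natural formal degrees, sums of the $\Wd$'s), $\Omega:=\nCube{a_1}$, $\Compact:=\overline{\nCube{a_2/2}}$, and Lebesgue density. This yields an $a_2$-admissible $\delta_1'\in(0,1]$ and, for $y\in\nCube{a_2/2}$ and $\delta\in(0,\delta_1']$, a diffeomorphism $\Psi_{y,\delta}\colon\nUnitBall\to\Psi_{y,\delta}(\nUnitBall)$ onto an open set with $\Psi_{y,\delta}(0)=y$ and $\Psi_{y,\delta}(\nUnitBall)\subseteq\BWhWd{y}{\delta}$ (the containment of the image into the ball being the analog of Theorem \ref{Thm::Scaling::WithoutBoundary::MainThm}\ref{Item::Scaling::WithoutBdry::HardContainment}), together with an admissible upper bound for $\|d\Psi_{y,\delta}\|$ on $\nUnitBall$ and the identity $|\det d\Psi_{y,\delta}(u)|=h_{y,\delta}(u)\,\Lambda(y,\delta)$ with $h_{y,\delta}$ bounded below by an admissible constant. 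Since the $n$-tuple chosen above shows $\Lambda(y,\delta)\ge\tfrac12\gamma_0\,\delta^{Q_0}$, the smallest singular value of $d\Psi_{y,\delta}(u)$, bounded below by $|\det d\Psi_{y,\delta}(u)|/\|d\Psi_{y,\delta}(u)\|^{n-1}$, is $\ge\lambda$ for an $(a_2,\delta)$-admissible $\lambda\approx\gamma_0\delta^{Q_0}$. A standard fact — a diffeomorphism $\Psi\colon\nUnitBall\hookrightarrow\R^n$ onto an open set with $\|(d\Psi)^{-1}\|\le1/\lambda$ has image containing the Euclidean ball of radius $\lambda$ about $\Psi(0)$ (take a boundary point of the image nearest $\Psi(0)$ and use that $\Psi^{-1}$ is $\lambda^{-1}$-Lipschitz) — then gives $\BWhWd{y}{\delta}\supseteq\Psi_{y,\delta}(\nUnitBall)\supseteq\CubeCentered{y}{a_3}$ for an $(a_2,\delta)$-admissible $a_3$, which we further shrink to lie in $(0,a_2/2]$. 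Setting $\delta_1:=\min\{1,\,a_2/(2C_0),\,\delta_1'\}$, an $a_2$-admissible constant, completes the argument.

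The step to watch is the admissibility bookkeeping inside the scaling theorem for manifolds without boundary, rather than any one estimate: its core is the realization of the flow along each rescaled iterated commutator $\delta^{\Xd_{j_k}}\Xh_{j_k}$ by an honest $\WhWd$-admissible curve whose $L^\infty$-coefficient bound is $\approx|u_k|^{1/(\text{order of }\Xh_{j_k})}$ and, crucially, does \emph{not} degenerate as $\delta\downarrow0$ — the Nagel--Stein--Wainger scale-invariance, coming from the cancellation of $\delta^{\Xd_{j_k}}$ once the group-commutator time is tuned to $|u_k|^{1/(\text{order})}$. This is inherited from \cite{NagelSteinWaingerBallsAndMetricsDefinedByVectorFieldsI,StovaStreetCoordinatesAdaptedToVectorFieldsCanonicalCoordinates}; what must be verified is only that every constant produced along the way depends solely on admissible data, which holds because on the fixed cube $\nCube a$ the entire construction is assembled from the $\Wh_j$ and their derivatives, the lower bound $\gamma_0$, and fixed combinatorial quantities. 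Granting that, the passage from the diffeomorphism and the Jacobian lower bound to a genuine coordinate cube inside the Carnot--Carath\'eodory ball is routine.
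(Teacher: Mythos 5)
Your proposal takes a genuinely different route from the paper. The paper's proof of Proposition~\ref{Prop::Scaling::EquivTop::Interior} is obtained by simplifying the argument it gives for the boundary case (Proposition~\ref{Prop::Scaling::EquivTop::Boundary}): it is the direct Nagel--Stein--Wainger argument, which composes the iterated group commutators $C_l(t,S_1,\ldots,S_l)$, reparametrizes by $|t|^{1/k_j}$ to obtain $C^1$ maps $E_j$, assembles $F_y(t_1,\ldots,t_n)=e^{t_1\Wh_0}E_2(t_2)\cdots E_n(t_n)y$ whose differential at $0$ is the invertible matrix $(Y_1(y)|\cdots|Y_n(y))$, and then applies the quantitative Inverse Function Theorem. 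That argument is self-contained on $\nCube{a}$, it tracks admissible constants by hand, and it never touches the scaling theorem.

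You instead import Theorem~\ref{Thm::Scaling::WithoutBoundary::MainThm} as a black box. This is not formally circular inside the paper, since that theorem is quoted from the literature, but in the cited sources a quantitative equivalence-of-topologies statement of exactly this type is one of the ingredients used in the construction of the scaling maps, so your argument runs the logical dependency backwards. More concretely, the version of Theorem~\ref{Thm::Scaling::WithoutBoundary::MainThm} actually stated in the paper does not claim that $\delta_0$, $\xi_0$, and the implicit constants are \emph{admissible} in the sense of Proposition~\ref{Prop::Scaling::EquivTop::Interior}; they are only asserted to be independent of $x$ and $\delta$, and could a~priori depend on $\Compact$, $\Omega$, and the vector fields in a nonquantitative way. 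You flag this gap yourself in the last paragraph, but that is exactly the verification that is needed, and it is close to being the whole content of the direct proof you are replacing. You also invoke an admissible upper bound for $\|d\Psi_{y,\delta}\|$, which the theorem as stated does not record (it bounds the pulled-back vector fields, not $d\Phi_{x,\delta}$ itself); that too is true and extractable from the exponential-map construction, but it is an additional claim.

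One smaller inaccuracy: you state that the smallest singular value of $d\Psi_{y,\delta}$ is $\geq\lambda$ with $\lambda\approx\gamma_0\delta^{Q_0}$. What your argument gives is $\lambda\gtrsim\gamma_0\delta^{Q_0}/\|d\Psi_{y,\delta}\|^{n-1}$, and with $\|d\Psi_{y,\delta}\|\lesssim\delta$ this is $\lambda\gtrsim\gamma_0\delta^{Q_0-n+1}$. This is harmless, since $a_3$ is permitted to be $(a_2,\delta)$-admissible, but the $\approx$ as written is not correct.
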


For the next result, we suppose in addition that
the boundary \(\{x_n=0\}\) is \textbf{non-characteristic} for \(\Wh_0\) near \(0\) in the sense that
\(\Wh_0=\sum_{k=1}^n b_0^k(x) \partial_{x_k}\) and
\(|b_0^n(0)|\geq c_0>0\).  Define \(W_j:=\Wh_j\big|_{\nCubegeq{a}}\).

\begin{proposition}\label{Prop::Scaling::EquivTop::Boundary}
    \(\exists\) a \(c_0\)-admissible constant \(a_1\in (0,a]\),
    \(\forall a_2\in (0,a_1]\),
    \(\exists\) an \((a_2,c_0)\)-admissible constant \(\delta_1\in (0,1]\),
    \(\forall \delta\in (0,\delta_1]\),
    \(\exists\) an \((a_2,c_0,\delta)\)-admissible constant \(a_3\in (0,a_2/2]\),
    \(\forall y\in \nCubegeq{a_2/2}\),
    \begin{equation}\label{Eqn::Scaling::EquivTop::Boundary::MainContainment}
        \CubeCenteredgeq{y}{a_3} \subseteq \BWWd{y}{\delta}\subseteq \nCubegeq{a_2}.
    \end{equation}
\end{proposition}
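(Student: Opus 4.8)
The plan is to prove the two inclusions of \eqref{Eqn::Scaling::EquivTop::Boundary::MainContainment} separately. The inclusion $\BWWd{y}{\delta}\subseteq\nCubegeq{a_2}$ is elementary: every competitor path $\gamma:[0,1]\to\nCubegeq{a}$ in the definition of $\BWWd{y}{\delta}=B_{\delta^{\Wd}W}(y)$ satisfies $|\gamma'(t)|\le\big(\sum_j|a_j(t)|^2\big)^{1/2}\big(\sum_j\delta^{2\Wd_j}|W_j(\gamma(t))|^2\big)^{1/2}\le\delta\,(r+1)^{1/2}\max_j\CmbNorm{\Wh_j}{0}[\nCube{a}][\R^n]$ since each $\Wd_j\ge1$; hence $\BWWd{y}{\delta}$ lies in the Euclidean ball of radius $\lesssim\delta$ about $y$, and it also lies in $\{x_n\ge0\}$, so choosing $\delta_1$ an $a_2$-admissible constant making this radius smaller than $a_2/2$ forces $\BWWd{y}{\delta}\subseteq\nCubegeq{a_2}$ for every $y\in\nCubegeq{a_2/2}$.

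For the lower inclusion I would first \emph{straighten} $\Wh_0$. Let $\Theta(x',s)$ denote the time-$s$ flow of $\Wh_0$ issued from $(x',0)$; since $|b_0^n(0)|\ge c_0$, this is a $\CinftySpace$-diffeomorphism of an admissible neighbourhood of $0$ with $\CinftySpace$-inverse whose derivatives are $c_0$-admissibly bounded, it restricts to the identity on $\{x_n=0\}$, it carries $\{x_n\ge0\}$ to $\{x_n\ge0\}$ near $0$ (as $\Wh_0$ points inward), and $\Theta^{*}\Wh_0=\partial_{x_n}$. A diffeomorphism of this kind transports the balls $\BWWd{\cdot}{\cdot}$ and the half-cubes $\CubeCenteredgeq{\cdot}{\cdot}$ to comparable sets and preserves \eqref{Eqn::Scaling::EquivTop::HorCond} with a $c_0$-admissible loss in $\gamma_0$, so after shrinking the outermost admissible constant $a_1$ so that everything below happens where $\Theta$ is well behaved, it suffices to prove the lower inclusion under the extra normalization $\Wh_0=\partial_{x_n}$ on $\nCube{a}$; all constants named henceforth are (at worst) $c_0$-admissible.

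Under this normalization, put $e_0:=\min_j\Wd_j\ge1$; along any competitor path $\gamma$ for $B_{r^{\Wd}\Wh}(\cdot)$ one has $|\tfrac{d}{dt}x_n(\gamma)|\le\sum_j|a_j|\,r^{\Wd_j}|b_j^n(\gamma)|\le C_0 r^{e_0}$ (using $\Wd_j\ge e_0$, $r\le1$, and $b_0^n\equiv1$), hence $|x_n(\gamma(t))-x_n(\gamma(0))|\le C_0 r^{e_0}$ on $[0,1]$; so a path of radius $<r$ issued from a point of height $\ge C_0 r^{e_0}$ stays in $\{x_n\ge0\}$, and above that height the unrestricted ball $B_{r^{\Wd}\Wh}$ and the restricted ball $\BWWd{\cdot}{r}$ coincide. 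Now fix a large admissible integer $N$, set $r:=\delta^{1+N}$ and $\theta:=C_0 r^{e_0}$, and for $y=(y',y_n)\in\nCubegeq{a_2/2}$ and $z=(z',z_n)\in\CubeCenteredgeq{y}{a_3}$ I would connect $y$ to $z$ inside $\{x_n\ge0\}$ in three moves: (i) flow $\pm\partial_{x_n}=\pm\Wh_0$ from $(y',y_n)$ to $p_1:=(y',z_n+\theta)$, which is monotone in $x_n$ between the non-negative heights $y_n$ and $z_n+\theta$, hence stays in $\{x_n\ge0\}$, at cost $\lesssim(a_3+\theta)^{1/\Wd_0}$; (ii) from $p_1$, whose height is $\ge\theta=C_0 r^{e_0}$, move to $p_2:=(z',z_n+\theta)$ using Proposition~\ref{Prop::Scaling::EquivTop::Interior} applied to $\partial_{x_n},\Wh_1,\dots,\Wh_r$ on $\nCube{a}$ at radius $r$ (valid once $a_3$ is at most the admissible constant it supplies and $p_1$ lies in the required sub-cube, which holds after a further harmless shrinking of $a_1$); the path it gives has radius $<r$ and, by the previous sentence, stays in $\{x_n\ge0\}$, so $p_2\in\BWWd{p_1}{r}$; (iii) flow $-\partial_{x_n}$ from $p_2$ back down to $z$, again monotone in $x_n$ and hence in $\{x_n\ge0\}$, at cost $\lesssim\theta^{1/\Wd_0}$. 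Choosing $N$ large and $\delta_1$ a small $(a_2,c_0)$-admissible constant makes the costs in (i),(iii) at most $\delta/10$ each and makes $r\le\delta/10$, so the triangle inequality for the restricted Carnot--Carath\'eodory distance gives $z\in\BWWd{y}{\delta}$, while $a_3$ (the interior constant at radius $r=\delta^{1+N}$) is exactly an $(a_2,c_0,\delta)$-admissible constant, as required.

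The real obstacle is maintaining $x_n\ge0$ along the path within the very tight length budget $\delta$: the transverse coordinate cannot be moved by an $O(1)$ amount on that budget, only by $O(\delta^{\Wd_0})$, so the scheme succeeds only because $a_3$ may be a small admissible power of $\delta$, which forces every tangential excursion that is actually needed to be smaller than the affordable transverse ``safety margin'' $\theta$; the straightening is what lets the lifting moves (i),(iii) change \emph{only} $x_n$, so that this margin need not compete with $a_3$. Once that is in place, the remainder — the Gr\"onwall estimate, the diffeomorphism bookkeeping, and the chain $a_1\rightsquigarrow\delta_1\rightsquigarrow a_3$ together with the auxiliary $N$ and $r$ — is routine tracking of admissible constants; the only delicate point is invoking the interior proposition with a center $p_1$ and a sub-cube that still contain the relevant half-cube, which is why $a_1$ is shrunk twice.
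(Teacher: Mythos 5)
Your argument is correct, but it takes a genuinely different route from the paper's. The paper re-derives the Nagel–Stein–Wainger commutator-exponential map $F_y(t_1,\ldots,t_n)=e^{t_1\Wh_0}E_2(t_2)\cdots E_n(t_n)y$ from scratch, but with a carefully chosen \emph{ordering} of the exponentials in each $D_j^\pm$: Lemma \ref{Lemma::Scaling::EquivTop::SjminusIsCancelled} guarantees that every backward flow $e^{-t\Wh_0}$ in the tangential moves $E_j$ is preceded (to its right) by a compensating forward flow $e^{t\Wh_0}$, so after the reductions \ref{Item::Scaling::EquivTop::b0nequals1}--\ref{Item::Scaling::EquivTop::bjnequals1} the partial flows never dip below $\{x_n=y_n\}$; the $\Wh_0$-normal move is factored out as the leftmost $e^{t_1\Wh_0}$, and the Inverse Function Theorem applied to $F_y$ finishes. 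Your scheme instead treats Proposition \ref{Prop::Scaling::EquivTop::Interior} as a black box (it is an independently established NSW result) and uses a lift–move–lower decomposition, exploiting the simple Gr\"onwall-type observation that \emph{any} competitor path at radius $r$ can move $x_n$ by at most $C_0 r^{e_0}$, so that lifting by a safety margin $\theta=C_0r^{e_0}$ makes the entire tangential move automatically stay in $\{x_n\ge0\}$. This avoids the paper's technical commutator-ordering lemmas entirely and is more modular, though the paper's proof is self-contained and, as it notes, simplifies to reprove the interior proposition.

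A few bookkeeping points you should make explicit if writing this out in full: (1) when invoking Proposition \ref{Prop::Scaling::EquivTop::Interior} at the moving center $p_1=(y',z_n+\theta)$, one needs $p_1$ to lie in the sub-cube required by that proposition, which does not follow directly from $y\in\nCubegeq{a_2/2}$ and $z\in\CubeCenteredgeq{y}{a_3}$ without a further harmless shrinking of $a_1$ or $a_2$; (2) for the cost of move (i) to be $<\delta/10$ you need $a_3\lesssim\theta$, which does hold for the interior $a_3^{\mathrm{int}}(r)$ (since $\BWhWd{p_1}{r}$ has Euclidean extent $\lesssim r^{e_0}\sim\theta$), but this deserves one sentence; (3) the straightening diffeomorphism $\Theta$ is not cube-preserving, so the transport of $\CubeCenteredgeq{y}{a_3}$ and $\nCubegeq{a_2/2}$ costs an admissible constant factor in $a_3$ and $a_2$, which should be absorbed. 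None of these affect the validity of the approach. It is also worth noting that you handle general formal degrees $\Wd_j$ directly, whereas the paper first reduces to $\Wd_j\equiv1$ via $\BWWo{y}{\delta}\subseteq\BWWd{y}{\delta^{1/\max\Wd_j}}$; both are fine.
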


\begin{remark}
    Proposition \ref{Prop::Scaling::EquivTop::Interior} could be stated more simply
    by considering only \(y=0\); and the more general result is a simple corollary.
    However, this is not true of Proposition \ref{Prop::Scaling::EquivTop::Boundary},
    where \(x_n=0\) is distinguished to be the boundary. By stating 
    Proposition \ref{Prop::Scaling::EquivTop::Interior} in the same format
    as Proposition \ref{Prop::Scaling::EquivTop::Boundary}, later proofs are simplified as we are
    able to address both cases simultaneously.
\end{remark}

Proposition \ref{Prop::Scaling::EquivTop::Interior} is a result of Nagel, Stein, and Wainger \cite{NagelSteinWaingerBallsAndMetricsDefinedByVectorFieldsI};
see \cite[Lemma 3.2.4]{StreetMaximalSubellipticity} for an exposition (though the statement is slightly different,
the proof can be easily adapted to prove Proposition \ref{Prop::Scaling::EquivTop::Interior}). 
Proposition \ref{Prop::Scaling::EquivTop::Boundary} has a similar, though more complicated, proof, which we present.
The proof we present can be easily simplified to give Proposition \ref{Prop::Scaling::EquivTop::Interior} (an in doing so
one would recreate the proof from \cite{NagelSteinWaingerBallsAndMetricsDefinedByVectorFieldsI}).

The remainder of this section is devoted to the proof of Proposition \ref{Prop::Scaling::EquivTop::Boundary}.

The second containment in \eqref{Eqn::Scaling::EquivTop::Boundary::MainContainment}
follows by taking \(\delta_1\) small and applying the Picard--Lindel\"of Theorem,
so we focus only on the first. We write \(A\lesssim B\) for \(A\leq CB\) where \(C\geq 1\) is an
appropriate admissible constant.  Write \(\Wh_j=\sum_{k} b_j^k(x)\partial_{x_n}\).
We begin with some reductions:
\begin{enumerate}[(I)]
    \item By shirking \(a>0\) to a \(c_0\)-admissible constant, we may assume \(|b_0^n(x)|\geq c_0/2\gtrsim 1\),
        \(\forall x\in \nCube{a}\).
        
    \item\label{Item::Scaling::EquivTop::b0nequals1} By replacing \(\Wh_0\) with \((b_0^n)^{-1}\Wh_0\), we may assume \(b_0^n\equiv 1\).
    \item\label{Item::Scaling::EquivTop::bjnequals1} By replacing \(\Wh_j\) with \(\Wh_j-b_j^n \Wh_0\), we may assume \(b_j^n\equiv 0\), \(\forall j\geq 1\).
    \item Since \(\BWWo{y}{\delta}\subseteq \BWWd{y}{ \delta^{1/\max{\Wd_j}}}\), it suffices to prove the first containment
        in \eqref{Eqn::Scaling::EquivTop::Boundary::MainContainment} when \(\Wd_j=1\), \(\forall j\).
\end{enumerate}

For smooth vector fields \(S_1,S_2,\ldots\), and for \(t\in \R\) small enough that the expressions make sense, we set
\begin{equation*}
    C_1(t,S_1)=e^{t S_1}, \quad C_2(t,S_1,S_2)=e^{-tS_2}e^{-tS_1}e^{tS_2}e^{tS_1},
\end{equation*}
and  recursively,
\begin{equation}\label{Eqn::Scaling::EquivTop::Boundary::FormulaForCl}
    C_l(t,S_1,\ldots, S_l)=e^{-tS_l} C_{l-1}(t,S_1,\ldots, S_{l-1})^{-1}e^{t S_l} C_{l-1}(t, S_1,\ldots, S_{l-1}),
\end{equation}
so that
\begin{equation}\label{Eqn::Scaling::EquivTop::Boundary::FormulaForClInv}
    C_l(t,S_1,\ldots, S_l)^{-1} = C_{l-1}(t, S_1,\ldots, S_{l-1})^{-1} e^{-tS_l} C_{l-1}(t,S_1,\ldots, S_{l-1})e^{tS_l}
\end{equation}

\begin{lemma}\label{Lemma::Scaling::EquivTop::SjminusIsCancelled}
    Each time \(e^{-tS_j}\) appears in either \(C_l(t,S_1,\ldots, S_l)\) (when \(l\geq 1\))
    or \(C_l(t,S_1,\ldots, S_l)^{-1}\) (when \(l\geq 2\)), a copy of \(e^{tS_j}\) appears to the right of it.
\end{lemma}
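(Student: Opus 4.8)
The plan is to prove this by a single induction on $l$, treating the two assertions — the one about $C_l(t,S_1,\ldots,S_l)$ and the one about $C_l(t,S_1,\ldots,S_l)^{-1}$ — simultaneously, since the recursions \eqref{Eqn::Scaling::EquivTop::Boundary::FormulaForCl} and \eqref{Eqn::Scaling::EquivTop::Boundary::FormulaForClInv} express each of $C_l$ and $C_l^{-1}$ in terms of \emph{both} $C_{l-1}$ and $C_{l-1}^{-1}$. Throughout, one regards the word $C_l$ (and $C_l^{-1}$) as a finite concatenation of elementary factors $e^{\pm tS_j}$, with ``to the right of'' meaning ``at a later position in this concatenation''; both concatenation and passage to inverses (which reverses the order of factors and flips their signs) keep us inside such words, so the statement is meaningful.

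For the base of the induction I would check $l=1,2$ directly. The word $C_1=e^{tS_1}$ contains no factor $e^{-tS_j}$, so its assertion is vacuous; note $C_1^{-1}=e^{-tS_1}$ genuinely fails the property, which is precisely why the claim for inverses is made only for $l\ge 2$. For $l=2$, in $C_2=e^{-tS_2}e^{-tS_1}e^{tS_2}e^{tS_1}$ the factor $e^{-tS_2}$ is followed by $e^{tS_2}$ and the factor $e^{-tS_1}$ by $e^{tS_1}$; likewise, \eqref{Eqn::Scaling::EquivTop::Boundary::FormulaForClInv} gives $C_2^{-1}=e^{-tS_1}e^{-tS_2}e^{tS_1}e^{tS_2}$, where $e^{-tS_1}$ is followed by $e^{tS_1}$ and $e^{-tS_2}$ by $e^{tS_2}$.

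For the inductive step, with $l\ge 3$ and both assertions known at level $l-1$, I would simply read off \eqref{Eqn::Scaling::EquivTop::Boundary::FormulaForCl}: a factor $e^{-tS_j}$ occurring in $C_l=e^{-tS_l}\,C_{l-1}^{-1}\,e^{tS_l}\,C_{l-1}$ is either (i) the leading explicit $e^{-tS_l}$, which is followed by the explicit $e^{tS_l}$ appearing as the third block; or (ii) an occurrence inside the sub-word $C_{l-1}^{-1}$, in which case the inductive hypothesis for $C_{l-1}^{-1}$ furnishes a matching $e^{tS_j}$ to its right \emph{within that sub-word}; or (iii) an occurrence inside the trailing $C_{l-1}$, handled the same way by the inductive hypothesis for $C_{l-1}$. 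In each case the matching factor stays to the right in $C_l$. The argument for $C_l^{-1}=C_{l-1}^{-1}\,e^{-tS_l}\,C_{l-1}\,e^{tS_l}$ via \eqref{Eqn::Scaling::EquivTop::Boundary::FormulaForClInv} is word-for-word the same: the explicit $e^{-tS_l}$ is followed by the explicit trailing $e^{tS_l}$, while occurrences inside $C_{l-1}^{-1}$ or $C_{l-1}$ are dealt with by the inductive hypothesis applied to those sub-words. This closes the induction.

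There is no real obstacle here; the only thing needing care is bookkeeping — running the induction on the two statements jointly (each recursion calls on the other), and isolating the genuinely exceptional word $C_1^{-1}=e^{-tS_1}$, which only ever appears inside a larger word where the compensating $C_1=e^{tS_1}$ already sits to its right. This last point is automatic from the shape of \eqref{Eqn::Scaling::EquivTop::Boundary::FormulaForCl}–\eqref{Eqn::Scaling::EquivTop::Boundary::FormulaForClInv} and is absorbed into the $l=2$ base case, so no separate argument is required.
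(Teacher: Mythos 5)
Your proof is correct and follows essentially the same route as the paper: direct verification at $l=1,2$, then a joint induction on the two assertions via \eqref{Eqn::Scaling::EquivTop::Boundary::FormulaForCl} and \eqref{Eqn::Scaling::EquivTop::Boundary::FormulaForClInv}. The paper states this as ``clearly true for $l=1$ and $l=2$; the full claim follows by induction''; you have simply spelled out the bookkeeping, including correctly flagging $C_1^{-1}=e^{-tS_1}$ as the exceptional word that forces the base case to start at $l=2$ for inverses.
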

\begin{proof}
    This is clearly true for \(l=1\) and \(l=2\). The full claim follows by induction and 
    \eqref{Eqn::Scaling::EquivTop::Boundary::FormulaForCl} and \eqref{Eqn::Scaling::EquivTop::Boundary::FormulaForClInv}.
\end{proof}

Note that, \(C_l(t,S_1,\ldots, S_l)x\) is jointly smooth in \(t\) and \(x\) (on its domain) and for \(1\leq k\leq l\),
\begin{equation}\label{Eqn::Scaling::EquivTop::DerivOfC}
    \partial_t^k\big|_{t=0} C_l(t,S_1,\ldots, S_l)x=
    \begin{cases}
        0, &k<l,\\
        [S_1,[S_2,[\cdots,[S_{l-1},S_l]]]](x), &k=l, l>1,\\
        S_1(x), & k=1, l=1.
    \end{cases}
\end{equation}

Let \(Y_1=\Wh_0\). In light of the reductions \ref{Item::Scaling::EquivTop::b0nequals1} and \ref{Item::Scaling::EquivTop::bjnequals1}
and H\"ormander's condition \eqref{Eqn::Scaling::EquivTop::HorCond}, we may pick \(Y_2,\ldots, Y_n\) of the form
\begin{equation}\label{Eqn::Scaling::EquivTop::DefineYj}
    Y_j = \left[\Wh_{j_1^j},
    \left[\Wh_{l_2^j},
    \left[\cdots
    \left[\Wh_{l_{k_{j-1}-1}^j}, \Wh_{l_{k_j}^j}
    \right] 
    \right] 
    \right]  
    \right]
\end{equation}
with \(1\leq k_j\lesssim 1\) and such that there exists a \(c_0\)-admissible constant \(a_1\in (0,a]\)
with
\begin{equation}\label{Eqn::Scaling::EquivTop::DetYs}
    \left| \det\left( Y_1(y)|\cdots|Y_n(y) \right) \right|\gtrsim 1, \quad \forall |y|\leq a_0.
\end{equation}

Set, for \(j\geq 2\),
\begin{equation}\label{Eqn::Scaling::EquivTop::DefineDjPlus}
    D_j^{+}(t)=C_{k_j}(t, \Wh_{l_1^j}, \Wh_{l_2^j},\ldots, \Wh_{l_{k_j}^j})
\end{equation}
So that \(D_j^{+}(t)x\) is smooth in \(t\) and \(x\) on its domain and by 
\eqref{Eqn::Scaling::EquivTop::DerivOfC} and
\eqref{Eqn::Scaling::EquivTop::DefineYj} we have
\begin{equation}\label{Eqn::Scaling::EquivTop::DerivDjPlus}
    \partial_t^{l}\big|_{t=0} D_j^{+}(t)x=
    \begin{cases}
        0, & 1\leq l\leq k_j-1,\\
        Y_j(x),&l=k_j.
    \end{cases}
\end{equation}

For \(j\geq 2\) at least one of the vector fields in \eqref{Eqn::Scaling::EquivTop::DefineYj}
is not easy to \(\Wh_0\); i.e., \(l_k^j\ne 0\) for some \(k\).
Pick such a 
\(k\) and define \(D_j^{-}(t)\) by replacing \(\Wh_{l_k^j}\) with \(-\Wh_{l_k^j}\)
in \eqref{Eqn::Scaling::EquivTop::DefineDjPlus}.
For example, if \(l_1^j\ne 0\) then we could take
\begin{equation*}
    D_j^{-}(t)=C_{k_j}(t, -\Wh_{l_1^j}, \Wh_{l_2^j},\ldots, \Wh_{l_{k_j}^k}).
\end{equation*}
By \eqref{Eqn::Scaling::EquivTop::DerivOfC} and \eqref{Eqn::Scaling::EquivTop::DefineYj}, we have
\begin{equation}\label{Eqn::Scaling::EquivTop::DerivDjMinus}
    \partial_t^{l}\big|_{t=0} D_j^{-}(t)x=
    \begin{cases}
        0, & 1\leq k\leq k_j-1,\\
        -Y_j(x),&l=k_j.
    \end{cases}
\end{equation}

\begin{lemma}\label{Lemma::Scaling::EquivTop::UnderstandDs}
    \(\exists N\lesssim 1\) such that for \(t_2,\ldots, t_n\geq 0\) (small enough that everything makes sense),
    and \(y\in \nCubegeq{a/2}\), if
    \begin{equation}\label{Eqn::Scaling::EquivTop::ItteratedDs}
        y'=D_2^{\pm}(t_2) D_3^{\pm}(t_3)\cdots D_n^{\pm}(t_n)y,
    \end{equation}
    then
    \begin{equation}\label{Eqn::Scaling::EquivTop::yprimeInBall}
        y'\in \BWWo{x}{N(t_2+\cdots+t_n)}
    \end{equation}
    and \(y_n'=y_n\).
    In \eqref{Eqn::Scaling::EquivTop::ItteratedDs}, each \(D_j^{\pm}\) can be either \(D_j^{+}\) or \(D_j^{-}\).
\end{lemma}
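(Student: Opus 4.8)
The plan uses the reductions \ref{Item::Scaling::EquivTop::b0nequals1}--\ref{Item::Scaling::EquivTop::bjnequals1} together with (IV): writing $\Wh_j=\sum_k b_j^k\partial_{x_k}$, we have $b_0^n\equiv 1$, $b_j^n\equiv 0$ for $j\geq 1$, and all formal degrees equal $1$. Hence, with $\pi_n(z):=z_n$, the flow of $\Wh_0$ translates this coordinate at unit speed, $\pi_n(e^{s\Wh_0}z)=\pi_n(z)+s$; the flow of $\pm\Wh_j$ ($j\geq 1$) fixes $\pi_n$; and each $\Wh_j$ ($j\geq 1$) is tangent to $\{x_n=0\}$, so $e^{\pm s\Wh_j}$ preserves $\{x_n\geq 0\}$.

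\emph{First I would prove $y_n'=y_n$ (the second assertion).} By \eqref{Eqn::Scaling::EquivTop::Boundary::FormulaForCl}--\eqref{Eqn::Scaling::EquivTop::Boundary::FormulaForClInv}, each $D_j^{\pm}(t)$ is a product of $\lesssim 1$ exponentials $e^{\pm t\Wh_l}$, so $\pi_n$ of the output differs from $\pi_n(y)$ exactly by the signed total of the $\Wh_0$-exponents occurring in that product. This total vanishes for every $D_j^{\pm}(t_j)$: if $\Wh_0$ does not occur among $\Wh_{l_1^j},\ldots,\Wh_{l_{k_j}^j}$ there is nothing to show; otherwise $k_j\geq 2$ (as $\Wh_0=Y_1$ alone cannot equal $Y_j$ for $j\geq 2$), and an induction on $l$ via \eqref{Eqn::Scaling::EquivTop::Boundary::FormulaForCl}--\eqref{Eqn::Scaling::EquivTop::Boundary::FormulaForClInv} shows that for $l\geq 2$ each argument $S_i$ of $C_l(t,S_1,\ldots,S_l)$ occurs equally often with exponent $+t$ and $-t$ (the base case $l=2$ is immediate, the block $C_{l-1}^{-1}C_{l-1}$ supplying the needed pairing). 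This kills the signed $\Wh_0$-total of $D_j^+(t_j)$, and the sign flip producing $D_j^-(t_j)$ is applied to some $\Wh_{l_k^j}\neq\Wh_0$, so it leaves that total unchanged. Therefore each $D_j^{\pm}(t_j)$, and hence their composition, fixes $\pi_n$.

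\emph{Next, the containment \eqref{Eqn::Scaling::EquivTop::yprimeInBall}.} Concatenate the flow segments making up $D_2^{\pm}(t_2)\cdots D_n^{\pm}(t_n)$ into one path $\gamma$ from $y$ to $y'$ (finitely many segments, each a flow of $\pm\Wh_l$ over ``time'' $\pm t_j$). A single such segment $\tau\mapsto e^{\pm\tau\Wh_l}(z)$, $\tau\in[0,t_j]$, witnesses $e^{\pm t_j\Wh_l}(z)\in\BWWo{z}{\delta}$ for every $\delta>t_j$ (the constraint $\|\sum_l|a_l|^2\|_{\infty}<1$ costs only $t_j$ small), so the triangle inequality for $\MetricVFs{\WWo}$ gives $\MetricVFs{\WWo}[y][y']\lesssim t_2+\cdots+t_n$, which yields \eqref{Eqn::Scaling::EquivTop::yprimeInBall} with $N\lesssim 1$ --- \emph{provided $\gamma$ stays inside $\nCubegeq{a}$}, since the balls $\BWWo{\cdot}{\cdot}$ are defined via paths constrained to $\nCubegeq{a}$. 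That $\gamma$ stays in $\nCube{a}$ is immediate for the $t_j$ small (total displacement $\lesssim\sum t_j$, $y\in\nCubegeq{a/2}$); the delicate point is that it must never cross $\{x_n=0\}$.

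\emph{This last point is the main obstacle}, and it forces a strengthening of Lemma \ref{Lemma::Scaling::EquivTop::SjminusIsCancelled}: reading the word for $C_l(t,S_1,\ldots,S_l)$ ($l\geq 2$) from the right, i.e.\ in order of application, the partial sums of the $\pm1$'s attached to any fixed argument $S_i$ stay $\geq 0$ (a Dyck-type property), proved by the same induction as Lemma \ref{Lemma::Scaling::EquivTop::SjminusIsCancelled}, and stable both under passing to sub-products and under interleaving --- so it survives identifying several of the $S_i$ with $\Wh_0$. Along $\gamma$ the coordinate $x_n$ equals $y_n$ plus the running signed $\Wh_0$-total; while the block $D_m^{\pm}(t_m)$ is being applied, the earlier-applied blocks $D_{m+1}^{\pm},\ldots,D_n^{\pm}$ have each contributed $0$ (Step 1) and $D_m^{\pm}$ contributes $\geq 0$ (Dyck), so $x_n\geq y_n\geq 0$ at every intermediate point. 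Since non-$\Wh_0$ segments hold $x_n$ constant and $\Wh_0$-segments move it monotonically between consecutive intermediate points, $\gamma$ never leaves $\{x_n\geq 0\}$, and \eqref{Eqn::Scaling::EquivTop::yprimeInBall} follows. Everything else (the $x_n$-translation structure of the reduced fields, the $\MetricVFs{\WWo}$-length bound, the control of the transverse coordinates) is routine.
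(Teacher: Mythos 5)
Your proof is correct and follows the same route as the paper: concatenate the flow segments into a path $\gamma$, use reductions \ref{Item::Scaling::EquivTop::b0nequals1}--\ref{Item::Scaling::EquivTop::bjnequals1} to isolate the effect of the $\Wh_0$-exponentials on $x_n$, and invoke a combinatorial property of the bracket word $C_l$ to conclude that $\gamma$ stays above $\{x_n=0\}$ and that the total $x_n$-displacement is zero. The one point worth flagging is that you are right that the property actually needed is slightly stronger than the literal statement of Lemma \ref{Lemma::Scaling::EquivTop::SjminusIsCancelled}: one needs the Dyck-type running-sum property (partial sums of the $\pm1$ coefficients on each fixed $S_i$, reading in application order, stay $\geq 0$) and its stability under identifying several $S_i$'s; the paper's lemma as stated (a copy of $e^{tS_j}$ exists to the right) is read there as producing an injective pairing, which is equivalent, but your explicit formulation is cleaner. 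The same induction via \eqref{Eqn::Scaling::EquivTop::Boundary::FormulaForCl}--\eqref{Eqn::Scaling::EquivTop::Boundary::FormulaForClInv} indeed yields both the Dyck property and the balance property you use for $y_n'=y_n$.
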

\begin{proof}
    The choice of \(\pm\) does not affect the argument which follows so we consider only the case
    \begin{equation*}
        y'=D_2^{+}(t_2) D_3^{+}(t_3)\cdots D_n^{+}(t_n)y.
    \end{equation*}
    There we define a path \(\gamma:[0,t_1+\cdots+t_n]\rightarrow \nCube{a}\), with \(\gamma(0)=y\)
    and \(\gamma(t_2+\cdots+t_n)=y'\) by
    \begin{equation}\label{Eqn::Scaling::EquivTop::gammaPath}
        \gamma(s)=
        \begin{cases}
            D_n^{+}(s)y, &0\leq s< t_n,\\
            D_{n-1}^{+}(s-t_n)D_{n}^{+}(t_n)y, &t_n\leq s<t_n+t_{n-1},\\
            \cdots,\\
            D_2^{+}(s-t_n-t_{n-1}-\cdots-t_3) D_3^{+}(t_3)\cdots D_n^{+}(t_n)y, & t_n+t_{n-1}+\cdots+t_3\leq s\leq t_n+t_{n-1}+\cdots +t_2.
        \end{cases}
    \end{equation}
    Since each \(D_j^{+}(t)\) is a composition of exponentials of \(\Wh_j\),
    this shows \(y'\in \BWhWo{y}{N(t_2+\cdots+t_n)}\) for some \(N\lesssim 1\).
    To establish \eqref{Eqn::Scaling::EquivTop::yprimeInBall} we require that \(\gamma(s)\) never goes below \(x_n=0\).

    By Lemma \ref{Lemma::Scaling::EquivTop::SjminusIsCancelled}, each time
    \(e^{-{t}\Wh_0}\) appears in the expression \eqref{Eqn::Scaling::EquivTop::gammaPath} for \(\gamma\),
    it is preceded by \(e^{t_j\Wh_0}\) for some \(t_j\geq t\).
    Since \(\Wh_0\) has \(\partial_{x_n}\) component equal to \(1\) (by \ref{Item::Scaling::EquivTop::b0nequals1})
    and the other \(\Wh_j\) have \(\partial_{x_n}\) component equal to \(0\) (by \ref{Item::Scaling::EquivTop::bjnequals1})
    it follows that \(\gamma\) always stays above the line \(\{x_n=y_n\}\) and therefore above \(\{x_n=0\}\).
    \eqref{Eqn::Scaling::EquivTop::yprimeInBall} follows.
    Finally, that \(y_n'=y_n\) follows from the same argument, since the \(\partial_{x_n}\) components in the
    exponentials exactly cancel out when considering \(y'=\gamma(t_2+\cdots+t_n)\), by Lemma \ref{Lemma::Scaling::EquivTop::SjminusIsCancelled}.
\end{proof}

Set, for \(j\geq 2\),
\begin{equation*}
    E_j(t)=
    \begin{cases}
        D_j^{+}(|t|^{1/k_j}), & t\geq 0,\\
        D_j^{-}(|t|^{1/k_j}), & t<0.
    \end{cases}
\end{equation*}
By \eqref{Eqn::Scaling::EquivTop::DerivDjPlus} and \eqref{Eqn::Scaling::EquivTop::DerivDjMinus},
we have \(E_j(t)x\) is \(\CmSpace{1}\) in \((t,x)\) and \(\partial_t\big|_{t=0} E_j(t)x=Y_j(x)\).
Set, for \(y\in \nCube{a_1}\),
\begin{equation}\label{Eqn::Scaling::EquivTop::FormulaForF}
    F_y(t_1,\ldots, t_n):=e^{t_1\Wh_0} E_2(t_2)E_3(t_3)\cdots E_n(t_n)y,
\end{equation}
so that by \eqref{Lemma::Scaling::EquivTop::UnderstandDs} and the fact that the \(\partial_{x_n}\)
component of \(\Wh_0\) equals \(1\) (by \ref{Item::Scaling::EquivTop::b0nequals1}),
the \(n\)-th component of
\(F_y(t_1,\ldots, t_n)\)
is \(\geq 0\) if and only if \(-t_1\geq y_n\).

By the definition of \(F_y\), we have \(\CmbNorm{F_y}{1}\lesssim 1\)
and \(d_t\big|_{t=0} F_y(t_1,\ldots,t_n)=\left( Y_1(y)|\cdots|Y_n(y) \right)\).
In light of \eqref{Eqn::Scaling::EquivTop::DetYs}, the Inverse Function Theorem implies
\(\forall \epsilon>0\), \(\exists\) a \((c_0,\epsilon)\)-admissible constant
\(a_3>0\) with
\begin{equation}\label{Eqn::Scaling::EquivTop::CubeIn}
    \CubeCentered{y}{a_3}\subseteq F_y(\nBall{\epsilon})
\end{equation}
and therefore
\begin{equation*}
    \CubeCenteredgeq{y}{a_3}\subseteq F_y\left( \left\{ t\in \nBall{\epsilon} : t_1\geq -y_n \right\} \right).
\end{equation*}
But if \(\epsilon>0\) is a sufficiently small \((\delta,c_0)\)-admissible constant, using \eqref{Eqn::Scaling::EquivTop::yprimeInBall}
and \eqref{Eqn::Scaling::EquivTop::FormulaForF} we see
\begin{equation}\label{Eqn::Scaling::EquivTop::ImageIn}
    F_y\left( \left\{ t\in \nBall{\epsilon} : t_1\geq -y_n \right\} \right)\subseteq \BWWo{y}{\delta}.
\end{equation}
Combining \eqref{Eqn::Scaling::EquivTop::CubeIn} and \eqref{Eqn::Scaling::EquivTop::ImageIn}
completes the proof.

    \subsection{Scaling near the boundary}\label{Section::Scaling::NearBoundary}
    Let
\(\WhWd=\left\{ (\Wh_1,\Wd_1),\ldots, (\Wh_r,\Wd_r) \right\}\subset \CinftybSpace[\nUnitBall][\R^n]\times \Zg\)
be H\"ormander vector fields with formal degrees on \(\nUnitBall\);
where \(\Wh_1,\ldots, \Wh_r\) satisfy H\"ormander's condition of order \(m\in \Zg\) on \(\nUnitBall\). Set
\begin{equation*}
    W_j:=\Wh_j\big|_{\nUnitBallgeq[1]}.
\end{equation*}
Write \(\Wh_j=\sum_{k=1}^n a_j^k \partial_{x_k}\). We assume that the boundary \(\left\{ (x_1,\ldots, x_n)\in \nUnitBall : x_n=0 \right\}\cong \nmoUnitBall\)
is \textbf{non-characteristic} in the sense that
\begin{equation}\label{Eqn::Scaling::NearBdry::WoIsTheNonCharVF}
    |a_1^n(x',0)|\geq c_0>0,\quad \forall x'\in \nmoUnitBall,
\end{equation}
\begin{equation}\label{Eqn::Scaling::NearBdry::NonCharLowerDegreesVanish}
    \Wd_j<\Wd_1\implies a_j^n(x',0)=0, \quad \forall x'\in \nmoUnitBall.
\end{equation}
Let \(h(x)\in \CinftybSpace[\nUnitBall]\) with \(h(x)\geq c_1>0\), \(\forall x\in \nUnitBall\).
Set \(\Vol:=h \LebDensity\); where \(\LebDensity\) is the usual Lebesgue density on \(\Rn\).

We follow the procedure as described in Section \ref{Section::BndryVfsWithFormaLDegrees},
with \(\BoundaryN\) replaced by \(\nmoUnitBall\) and with \(W_{j_0}\) replaced by \(\Wh_0\).
Namely, we set
\begin{equation*}
    \ZhZd:=\left\{(\Zh_0,\Zd_0), ( \Zh_1,\Zd_1 ),\ldots, ( \Zh_q,\Zd_q ) \right\}
            :=\left\{ \left( Y,e \right)\in \GenWhWd : e\leq m\max\{\Wd_1,\ldots, \Wd_r\} \right\}.
\end{equation*}
Ordered so that \((\Zh_0,\Zd_0)=(\Wh_1,\Wd_1)\).
Let \(Z_j=\Zh_j\big|_{\nUnitBallgeq}\) and
\(\ZZd=\left\{ (Z_0,\Zd_0),\ldots, (Z_q,\Zd_q) \right\}\).

Let \((\Xh_0,\Xd_0)=(\Wh_0,\Wd_0)\), and with this choice define \(\bt_j\in \CinftybSpace[\nUnitBall]\)
as in  Section \ref{Section::BndryVfsWithFormaLDegrees}.
Define \(\XhXd=\left\{ (\Xh_0,\Xd_0),\ldots, (\Xh_q,\Xd_q) \right\}\subset \CinftybSpace[\nUnitBall][\Rn]\) as in Step \ref{Item::BoundaryVfs::DefineXj} and 
\(\VVd=\left\{ (V_1,\Vd_1),\ldots, (V_q,\Vd_q) \right\}\subset \CinftybSpace[\nmoUnitBall][\Rnmo]\) as in Step \ref{Item::BoundaryVfs::DefineVj};
so that \((V_j,\Vd_j)=(\Xh_j\big|_{\nmoUnitBall},\Xd_j)\), \(1\leq j\leq q\).
Note that \(\XhXd\) and \(\ZhZd\) are locally strongly equivalent on \(\nUnitBall\).
Set \(X_j:=\Xh_j\big|_{\nUnitBallgeq}\) and \(\XXd:=\left\{ (X_1,\Xd_1),\ldots, (X_q,\Xd_q) \right\}\).

For \(x\in \nmoUnitBall\) and \(\delta>0\), let
\begin{equation*}
    \Lambda(x,\delta):=\max_{j_1,\ldots, j_n\in \{0,\ldots,q\}} \left| \det\left( \delta^{\Xd_{j_1}}\Xh_{j_1}(x)| \delta^{\Xd_{j_2}}\Xh_{j_2}(x)|\cdots|\delta^{\Xd_{j_n}}\Xh_{j_n}(x) \right) \right|.
\end{equation*}
\(\Lambda(x,\delta)>0\) since \(\Xh_0,\ldots, \Xh_q\) span the tangent space at every point (see Step \ref{Item::BoundaryVfs::DefineXj}
in Section \ref{Section::BndryVfsWithFormaLDegrees}).
For each \(x\in \nmoUnitBall\), \(\delta>0\) pick \(j_1=j_1(x,\delta),\ldots, j_n=j_n(x,\delta)\) such that
\begin{equation*}
    \left| \det\left( \delta^{\Xd_{j_1}}\Xh_{j_1}(x)| \delta^{\Xd_{j_2}}\Xh_{j_2}(x)|\cdots|\delta^{\Xd_{j_n}}\Xh_{j_n}(x) \right) \right|
    =\Lambda(x,\delta).
\end{equation*}
By the above construction (see Section \ref{Section::BndryVfsWithFormaLDegrees}), \(\Xh_0\)
is the only \(\Xh_j\) with a non-zero \(\partial_{x_n}\) component on \(\nmoUnitBall\).
Therefore, one of the \(j_k(x,\delta)\) equals \(0\); without loss of generality, let \(j_n(x,\delta)=0\).

Note that \(\BWWd{x}{\delta}\subseteq \nBallgeq{1}\)
while \(\BWhWd{x}{\delta}\subseteq \nBall{1}\) with \(\BWWd{x}{\delta}\subseteq \BWhWd{x}{\delta}\cap \nBallgeq{1}\),
and similarly for \(\XhXd\) and \(\XXd\) and \(\ZhZd\) and \(\ZZd\).

The \(\partial_{x_n}\) component of \(\Xh_{0}\) is either always positive or always negative on \(\nmoUnitBall\);
let \(\omega=\pm 1\) be the sign of this component.
Set, for \(x\in \nmoBall{1/2}\), \(\delta>0\),
\begin{equation}\label{Eqn::Scaling::NearBdry::FormulaForpsixdelta}
    \psi_{x,\delta}(t_1,\ldots, t_n)
    = \exp\left(
        t_n \omega\delta^{\Xd_0} \Xh_0
    \right)
    \exp\left(
        t_1 \delta^{\Xd_{j_1}}\Xh_{j_1} + t_2 \delta^{\Xd_{j_2}}\Xh_{j_2}+\cdots t_{n-1}\delta^{\Xd_{j_{n-1}}}\Xh_{j_{n-1}}
    \right) 
    x.
\end{equation}

\begin{theorem}\label{Thm::Scaling::NearBdry::MainThm}
    \(\exists \delta_0\in (0,1]\), \(\exists \sigma_1\in (0,1]\), \(\forall x\in \nmoBall{1/2}\), \(\forall \delta\in (0,\delta_0]\),
    \begin{enumerate}[(i)]
        \item\label{Item::Scaling::NearBdry::InBall3/4}
            \(
                \BXhXd{x}{\delta}\cup \BZhZd{x}{\delta}\cup \BWhWd{x}{\delta}\subseteq \nBall{3/4}.
            \)
        \item\label{Item::Scaling::NearBdry::MapsAreDiffeos} 
            \begin{enumerate}[(a)]
                \item\label{Item::Scaling::NearBdry::ImagesOpen} \(\psi_{x,\delta}(\nCube{\sigma_1})\subseteq \nBall{3/4}\), \(\psi_{x,\delta}(\nCubegeq{\sigma_1})\subseteq \nBallgeq{3/4}\),
                    and \(\psi_{x,\delta}(\nmoCube{\sigma_1})\subseteq \nmoBall{3/4}\), and each image is an open subset of the respective space.
                \item\label{Item::Scaling::NearBdry::Diffeos} The following maps are smooth diffeomorphims: \(\psi_{x,\delta}:\nCube{\sigma_1}\xrightarrow{\sim}\psi_{x,\delta}(\nCube{\sigma_1})\subseteq \nBall{3/4}\),
                    \(\psi_{x,\delta}:\nCubegeq{\sigma_1}\xrightarrow{\sim}\psi_{x,\delta}(\nCubegeq{\sigma_1})\subseteq \nBallgeq{3/4}\),
                    and \(\psi_{x,\delta}:\nmoCube{\sigma_1}\xrightarrow{\sim}\psi_{x,\delta}(\nmoCube{\sigma_1})\subseteq \nmoBall{3/4}\).
            \end{enumerate}
        \item\label{Item::Scaling::NearBdry::IntersectionCommutes} 
        \(\psi_{x,\delta}(\nCube{\sigma_1})\cap \nUnitBallgeq = \psi_{x,\delta}(\nCubegeq{\sigma_1})\) and
            \(\psi_{x,\delta}(\nCube{\sigma_1})\cap \nmoUnitBall = \psi_{x,\delta}(\nmoCube{\sigma_1})\).
        \item\label{Item::Scaling::NearBdry::UniformlySmooth} The following are bounded sets:
            \begin{equation*}
                \left\{ \psi_{x,\delta}^{*}\delta^{\Wd_j}\Wh_j : x\in \nmoBall{1/2}, \delta\in (0,\delta_0], 1\leq j\leq r  \right\}
                \subset \CinftybSpace[\nCube{\sigma_1}][\R^n],
            \end{equation*}
            \begin{equation*}
                \left\{ \psi_{x,\delta}^{*} \delta^{\Xd_k} \Xh_k : x\in \nmoBall{1/2}, \delta\in (0,\delta_0], 0\leq k\leq q  \right\}
                \subset \CinftybSpace[\nCube{\sigma_1}][\R^n],
            \end{equation*}
            \begin{equation*}
                \left\{ \psi_{x,\delta}^{*}\delta^{\Zd_k}\Zh_l : x\in \nmoBall{1/2}, \delta\in (0,\delta_0], 0\leq k\leq q  \right\}
                \subset \CinftybSpace[\nCube{\sigma_1}][\R^n],
            \end{equation*}
            \begin{equation*}
                \left\{ \psi_{x,\delta}\big|_{\nmoCube{\sigma_1}}^{*} \delta^{\Vd_k}V_k : x\in \nmoBall{1/2}, \delta\in (0,\delta_0], 1\leq k\leq q  \right\}
                \subset \CinftybSpace[\nmoCube{\sigma_1}][\Rnmo].
            \end{equation*}

        \item\label{Item::Scaling::NearBdry::UniformlySpan} 
        In the sense of Definition \ref{Defn::Scaling::UnitScale::UniformSpan},
            \begin{enumerate}[(a)]
                \item\label{Item::Scaling::NearBdry::UniformlySpan::X} \(\psi_{x,\delta}^{*}\delta^{\Xd_0}\Xh_0,\ldots, \psi_{x,\delta}^{*}\delta^{\Xd_q}\Xh_q\) span the tangent
                    space to \(\nCube{\sigma_1}\), uniformly for \(x\in \nmoBall{1/2}\), \(\delta\in (0,\delta_0]\).
                \item\label{Item::Scaling::NearBdry::UniformlySpan::Z} \(\psi_{x,\delta}^{*}\delta^{\Zd_0}\Zh_0,\ldots, \psi_{x,\delta}^{*}\delta^{\Zd_q}\Zh_q\) span the tangent
                    space to \(\nCube{\sigma_1}\), uniformly for \(x\in \nmoBall{1/2}\), \(\delta\in (0,\delta_0]\).
                \item\label{Item::Scaling::NearBdry::UniformlySpan::V} \(\psi_{x,\delta}\big|_{\nmoCube{\sigma_1}}^{*}\delta^{\Vd_1}V_1,\ldots, \psi_{x,\delta}\big|_{\nmoCube{\sigma_1}}^{*}\delta^{\Vd_q}V_q\) span the tangent
                    space to \(\nmoCube{\sigma_1}\), uniformly for \(x\in \nmoBall{1/2}\), \(\delta\in (0,\delta_0]\).
            \end{enumerate}
        
        \item\label{Item::Scaling::NearBdry::UniformHormander} \(\psi_{x,\delta}^{*}\delta^{\Wd_1}\Wh_1,\ldots, \psi_{x,\delta}^{*}\delta^{\Wd_r}\Wh_r\)
            are H\"ormander vector fields on \(\nCube{\sigma_1}\), uniformly for \(x\in \nmoBall{1/2}\), \(\delta\in (0,\delta_0]\)
            in the sense of Definition \ref{Defn::Scaling::UnitScale::UniformHormander}.
        
        \item\label{Item::Scaling::NearBdry::PullBackW1IsDxn} \(\psi_{x,\delta}^{*} \delta^{\Xd_0}\Xh_0=\psi_{x,\delta}^{*}\delta^{\Wd_1}\Wh_1 = \omega \partial_{t_n}\),
            and \(\psi_{x,\delta}^{*} \delta^{\Xd_j}\Xh_j(u',0)\) does not have a \(\partial_{t_n}\) component
            for \(1\leq j\leq q\), \(u\in \nmoCube{\sigma_1}\).
        
        \item\label{Item::Scaling::NearBdry::PullBackDensity} \(\psi_{x,\delta}^{*}\Vol = f_{x,\delta}(t) \Lambda(x,\delta)\LebDensity(t)\), where
            \begin{equation*}
                \left\{ f_{x,\delta} : x\in \nmoBall{1/2}, \delta\in (0,\delta_0] \right\}\subset \CinftybSpace[\nCube{\sigma_1}]
            \end{equation*}
            is a bounded set and
            \begin{equation*}
                \inf_{x\in \nmoBall{1/2}} \inf_{\delta\in (0,\delta_0]} \inf_{t\in \nCube{\sigma_1}} f_{x,\delta}(t)>0.
            \end{equation*}

        \item\label{Item::Scaling::NearBdry::xBallsInImage} 
        \(\forall \sigma_2\in (0,\sigma_1)\), \(\exists c_1\in (0,1)\), \(\forall x\in \nmoBall{1/2}\), \(\forall \delta\in (0,\delta_0]\),
            \begin{equation*}
                \left( \BWhWd{x}{c_1\delta} \cup \BXhXd{x}{c_1\delta} \right)\cap \nUnitBallgeq
                \subseteq
                \psi_{x,\delta}(\nCubegeq{\sigma_2}).
            \end{equation*}

        \item\label{Item::Scaling::NearBdry::Containments} 
        \(\forall \sigma_2\in (0,\sigma_1)\), \(\exists \sigma_3\in (0,\sigma_2/2]\), \(\forall \sigma_4\in (0,\sigma_3)\),
            \(\exists c_2\in (0,1)\), \(\forall x\in \nmoBall{1/2}\), \(\forall \delta\in (0,\delta_0]\),
            \(\forall y\in \psi_{x,\delta}(\nCubegeq{\sigma_2/2})\), we have the following containments:
            \begin{equation*}
                \begin{split}
                    &\left( \BWhWd{y}{c_2\delta}\cup \BXhXd{y}{c_2\delta} \right)\cap \nUnitBallgeq
                    \subseteq \psi_{x,\delta}( \CubeCenteredgeq{\psi_{x,\delta}^{-1}(y)}{\sigma_4})
                    \\&\subseteq \psi_{x,\delta}( \CubeCenteredgeq{\psi_{x,\delta}^{-1}(y)}{\sigma_3})
                    \subseteq \BXXd{y}{\delta}\cap \BWWd{y}{\delta},
                \end{split}            
            \end{equation*}
            \begin{equation*}
            \begin{split}
                 &\BWhWd{y}{c_2\delta}\cup \BXhXd{y}{c_2\delta} 
                 \subseteq \psi_{x,\delta}(\CubeCentered{\psi_{x,\delta}^{-1}(y)}{\sigma_4})
                 \\&\subseteq \psi_{x,\delta}(\CubeCentered{\psi_{x,\delta}^{-1}(y)}{\sigma_3})
                 \subseteq \BXhXd{y}{\delta}\cap \BWhWd{y}{\delta},
            \end{split}
            \end{equation*}
            \begin{equation*}
            \begin{split}
                 &\BVVd{x}{c_2\delta}
                 \cup \left( \BXXd{x}{c_2\delta} \cap \nmoUnitBall \right)
                 \cup \left( \BWWd{x}{c_2\delta} \cap \nmoUnitBall \right)
                 \\&\subseteq \psi_{x,\delta}(\nmoCube{\sigma_4})
                 \subseteq \psi_{x,\delta}(\nmoCube{\sigma_3})
                 \\&\subseteq 
                 \BVVd{x}{\delta} \cap \left( \BXXd{x}{\delta} \cap \nmoUnitBall \right) \cap \left( \BWWd{x}{\delta}\cap \nmoUnitBall \right).
            \end{split}
            \end{equation*}
    \end{enumerate}
\end{theorem}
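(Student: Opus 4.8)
The plan is to reduce the statement to the interior (boundaryless) scaling theory --- Theorem~\ref{Thm::Scaling::WithoutBoundary::MainThm}, applied both to the ambient data on $\nUnitBall$ and to the boundary data $\VVd$ on $\nmoUnitBall$ --- combined with the quantitative equivalence-of-topologies results Propositions~\ref{Prop::Scaling::EquivTop::Interior} and~\ref{Prop::Scaling::EquivTop::Boundary}, all transported through the explicit map $\psi_{x,\delta}$ of~\eqref{Eqn::Scaling::NearBdry::FormulaForpsixdelta}, whose two-factor structure is arranged precisely so that the single direction transverse to $\nmoUnitBall$ sits in the last exponential. First I would take $\Compact:=\overline{\nBall{1/2}}$, $\Omega:=\nBall{3/4}$ and apply Theorem~\ref{Thm::Scaling::WithoutBoundary::MainThm} to $\WhWd$ with the auxiliary list $\XhXd$: its hypotheses hold by the construction of Section~\ref{Section::BndryVfsWithFormaLDegrees} (in particular $\XhXd$ is strongly equivalent to $\ZhZd$, which contains $\WhWd$, and $\ZhZd\subseteq\GenWhWd$, cf.\ Remark~\ref{Rmk::Scaling::WithoutBoundary::XXdAlwaysExists}), so the theorem supplies a uniform $\delta_0\in(0,1]$ with $\BWhWd{x}{\delta}\cup\BXhXd{x}{\delta}\cup\BZhZd{x}{\delta}\subseteq\nBall{3/4}$ for $\delta\in(0,\delta_0]$ (part~\ref{Item::Scaling::NearBdry::InBall3/4}, after shrinking $\delta_0$), the mutual comparability of the $\Wh$-, $\Xh$- and $\Zh$-balls, and $\Vol[\BWhWd{x}{\delta}]\approx\Vol[\BXhXd{x}{\delta}]\approx\Lambda(x,\delta)$. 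Applying the theorem to $\VVd$ on $\nmoUnitBall$ (legitimate since $\VVd$ controls itself strongly and weakly by~\eqref{Eqn::BoundaryVfs::NSW::V}) controls the Carnot--Carath\'eodory geometry on the boundary face.

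Next I would show $\psi_{x,\delta}$ is a uniform diffeomorphism with uniformly controlled pullbacks. Since $\delta\le1$, each $\delta^{\Xd_k}\Xh_k$ lies in a fixed bounded subset of $\CinftybSpace[\nUnitBall][\Rn]$ and $\psi_{x,\delta}$ is a composition of time-$\lesssim1$ flows of such fields; a Gronwall/Picard--Lindel\"of estimate shows that for a small absolute constant $\sigma_1$ the map $\psi_{x,\delta}$ sends $\nCube{\sigma_1}$ into $\nBall{3/4}$ and that $\psi_{x,\delta}^{*}\delta^{\Wd_j}\Wh_j$, $\psi_{x,\delta}^{*}\delta^{\Xd_k}\Xh_k$, $\psi_{x,\delta}^{*}\delta^{\Zd_k}\Zh_k$ form bounded subsets of $\CinftybSpace[\nCube{\sigma_1}][\Rn]$ --- part~\ref{Item::Scaling::NearBdry::UniformlySmooth}. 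The differential of $\psi_{x,\delta}$ at $0$ has columns $\delta^{\Xd_{j_1}}\Xh_{j_1}(x),\dots,\delta^{\Xd_{j_{n-1}}}\Xh_{j_{n-1}}(x),\omega\delta^{\Xd_0}\Xh_0(x)$, of determinant modulus $\Lambda(x,\delta)$ by the choice of $j_1,\dots,j_n$; running the algebraic argument of \cite[Theorem~3.6.5]{StreetMaximalSubellipticity}, \cite{StovaStreetCoordinatesAdaptedToVectorFieldsCanonicalCoordinates} --- which exploits the commutator relations~\eqref{Eqn::BoundaryVfs::NSW::X} and near-maximality of this minor, and which is insensitive to the grouping of the exponentials in~\eqref{Eqn::Scaling::NearBdry::FormulaForpsixdelta} --- upgrades this to uniform spanning of the tangent space by $\psi_{x,\delta}^{*}\delta^{\Xd_0}\Xh_0,\dots,\psi_{x,\delta}^{*}\delta^{\Xd_q}\Xh_q$ on all of $\nCube{\sigma_1}$, which is part~\ref{Item::Scaling::NearBdry::UniformlySpan}\ref{Item::Scaling::NearBdry::UniformlySpan::X} (and~\ref{Item::Scaling::NearBdry::UniformlySpan::Z} follows from the strong equivalence of $\XhXd$ and $\ZhZd$). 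Hence the Jacobian of $\psi_{x,\delta}$ is uniformly bounded above and below on $\nCube{\sigma_1}$, so the quantitative inverse function theorem (after one more harmless shrink of $\sigma_1$) gives parts~\ref{Item::Scaling::NearBdry::ImagesOpen} and~\ref{Item::Scaling::NearBdry::Diffeos}; part~\ref{Item::Scaling::NearBdry::UniformHormander} follows by naturality of commutators together with the uniform spanning and the weak control of $\XhXd$ by $\WhWd$, and part~\ref{Item::Scaling::NearBdry::PullBackDensity} by expressing $\psi_{x,\delta}^{*}\Vol$ in these coordinates and reading the density factor off~\ref{Item::Scaling::NearBdry::UniformlySmooth}--\ref{Item::Scaling::NearBdry::UniformlySpan}.

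The interaction with the boundary is then forced by the shape of $\psi_{x,\delta}$. Because each $\Xh_j$ with $j\ge1$ is tangent to $\nmoUnitBall$ along $\nmoUnitBall$ and $\Xh_0$ has $\partial_{x_n}$-component of constant sign $\omega$ there, the inner flow $\exp\bigl(\sum_{k<n}t_k\delta^{\Xd_{j_k}}\Xh_{j_k}\bigr)$ preserves $\nmoUnitBall$ when started at $x\in\nmoUnitBall$, while the outer flow $\exp(t_n\omega\delta^{\Xd_0}\Xh_0)$ lands in $\nUnitBallgeq$ exactly when $t_n\ge0$; this gives part~\ref{Item::Scaling::NearBdry::IntersectionCommutes} and identifies $\psi_{x,\delta}$ restricted to $\nmoCube{\sigma_1}$ with the boundary scaling map built from $\VVd$, so the $\VVd$-clauses of~\ref{Item::Scaling::NearBdry::UniformlySmooth} and~\ref{Item::Scaling::NearBdry::UniformlySpan}\ref{Item::Scaling::NearBdry::UniformlySpan::V} follow by restriction to $\{t_n=0\}$. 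Since $\psi_{x,\delta}(t)=\exp(t_n\omega\delta^{\Xd_0}\Xh_0)(\,\cdot\,)$ with the argument independent of $t_n$, one reads off $(\psi_{x,\delta})_{*}\partial_{t_n}=\omega\delta^{\Xd_0}\Xh_0$, i.e.\ $\psi_{x,\delta}^{*}\delta^{\Xd_0}\Xh_0=\psi_{x,\delta}^{*}\delta^{\Wd_1}\Wh_1=\omega\partial_{t_n}$; tangency of the remaining $\Xh_j$ to $\nmoUnitBall$, and of each $\Wh_j$ with $\Wd_j<\Wd_1$ by~\eqref{Eqn::Scaling::NearBdry::NonCharLowerDegreesVanish}, forces the corresponding pullbacks to have vanishing $\partial_{t_n}$-component on $\{t_n=0\}$ --- this is part~\ref{Item::Scaling::NearBdry::PullBackW1IsDxn}.

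It remains to prove the containments~\ref{Item::Scaling::NearBdry::xBallsInImage} and~\ref{Item::Scaling::NearBdry::Containments}, which is the heart of the matter. By~\ref{Item::Scaling::NearBdry::UniformlySmooth}--\ref{Item::Scaling::NearBdry::PullBackW1IsDxn} the pulled-back fields $\psi_{x,\delta}^{*}\delta^{\Wd_j}\Wh_j$ on $\nCube{\sigma_1}$ --- respectively their restrictions to $\nCubegeq{\sigma_1}$, respectively to $\nmoCube{\sigma_1}$ --- satisfy H\"ormander's condition at $0$ with uniformly admissible constants, meeting the hypotheses of Propositions~\ref{Prop::Scaling::EquivTop::Interior} and~\ref{Prop::Scaling::EquivTop::Boundary}, and for the half-space restriction the distinguished non-characteristic field is $\omega\partial_{t_n}$, whose $\partial_{t_n}$-component equals $1$, so the normalizations~\eqref{Eqn::Scaling::NearBdry::WoIsTheNonCharVF}--\eqref{Eqn::Scaling::NearBdry::NonCharLowerDegreesVanish} hold for the pulled-back system by the previous paragraph. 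Invoking those Propositions at the unit scale with base point $\psi_{x,\delta}^{-1}(y)$, pushing the conclusions forward by $\psi_{x,\delta}$, and using the scale covariance of Carnot--Carath\'eodory balls (so that $\psi_{x,\delta}^{-1}$ carries $\BWhWd{y}{c\delta}$ to the radius-$\approx c$ ball of the pulled-back system) together with the $\Wh$-, $\Xh$-, $\Zh$-ball comparabilities from the first step and the corresponding $\VVd$-comparabilities, yields~\ref{Item::Scaling::NearBdry::xBallsInImage} and all three chains of containments in~\ref{Item::Scaling::NearBdry::Containments}; the remaining volume and density assertions then follow by transporting the boundaryless estimates through $\psi_{x,\delta}$. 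The main obstacle is exactly this last step at the boundary: one must know that the half-cube $\nCubegeq{\sigma_1}$ and the half-balls $\BWhWd{\,\cdot\,}{\,\cdot\,}\cap\nUnitBallgeq$ match up \emph{uniformly} in $(x,\delta)$ and uniformly over the base point $y$ in the image --- both the containment into them, for which admissible curves must be forced to stay in $\{x_n\ge0\}$ as in Lemma~\ref{Lemma::Scaling::EquivTop::UnderstandDs} and the proof of Proposition~\ref{Prop::Scaling::EquivTop::Boundary}, and the reverse inclusion. Proposition~\ref{Prop::Scaling::EquivTop::Boundary} is designed to deliver exactly this at the unit scale, so the real work is verifying its hypotheses with uniform admissible constants and transporting its conclusion correctly through $\psi_{x,\delta}$.
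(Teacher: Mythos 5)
Your overall decomposition---interior scaling theory plus Propositions \ref{Prop::Scaling::EquivTop::Interior} and \ref{Prop::Scaling::EquivTop::Boundary} pushed through $\psi_{x,\delta}$---matches the paper's architecture, and the boundary-interaction analysis (parts \ref{Item::Scaling::NearBdry::IntersectionCommutes} and \ref{Item::Scaling::NearBdry::PullBackW1IsDxn}, and the applications of the equivalence-of-topologies propositions for the containments) is essentially correct. But there is a genuine gap in the second paragraph, which is the technical heart of the theorem.

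You assert that ``a Gronwall/Picard--Lindel\"of estimate shows \ldots that $\psi_{x,\delta}^{*}\delta^{\Wd_j}\Wh_j$, $\psi_{x,\delta}^{*}\delta^{\Xd_k}\Xh_k$, $\psi_{x,\delta}^{*}\delta^{\Zd_k}\Zh_k$ form bounded subsets of $\CinftybSpace[\nCube{\sigma_1}][\Rn]$.'' This does not follow. Gronwall/Picard--Lindel\"of controls $\psi_{x,\delta}$ and its derivatives uniformly, but a pullback $\psi_{x,\delta}^{*}Y$ involves $(d\psi_{x,\delta})^{-1}$, and as you yourself note the Jacobian at $0$ has determinant $\Lambda(x,\delta)$, which degenerates to $0$ as $\delta\to 0$. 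The uniform boundedness of these particular pullbacks is a nontrivial algebraic fact tied to the specific relation between $\psi_{x,\delta}$ and the fields whose flows compose it; it cannot be read off from a crude flow estimate. You gesture at ``the algebraic argument of \cite[Theorem~3.6.5]{StreetMaximalSubellipticity}'' with the claim that it ``is insensitive to the grouping of the exponentials in~\eqref{Eqn::Scaling::NearBdry::FormulaForpsixdelta},'' but this is precisely the claim that would require proof, and it is not argued.

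The paper avoids this entirely by a factorization you do not use: it applies Theorem \ref{Thm::Scaling::WithoutBoundary::MainThm} to produce the interior scaling map $\Phi_{x,\delta}$ (with $j_n=0$ in the basis choice \eqref{Eqn::Scaing::WithoutBoundary::PickingXBasis}), defines
\[
\psih_{x,\delta}(t)=\exp\bigl(t_n\,\Phi_{x,\delta}^{*}\delta^{\Xd_0}\Xh_0\bigr)\exp\Bigl(\textstyle\sum_{k<n}t_k\,\Phi_{x,\delta}^{*}\delta^{\Xd_{j_k}}\Xh_{j_k}\Bigr)0,
\]
and observes that $\psi_{x,\delta}=\Phi_{x,\delta}\circ\psih_{x,\delta}$. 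After this reduction, the $\Phi_{x,\delta}^{*}\delta^{\Xd_k}\Xh_k$ are already known to be uniformly bounded and to span uniformly (by Theorem \ref{Thm::Scaling::WithoutBoundary::MainThm}\ref{Item::Scaling::WithoutBdry::PulledBackSmooth},\ref{Item::Scaling::WithoutBdry::PulledBackSpan}), so $\psih_{x,\delta}$ is a genuinely uniformly non-degenerate composition of flows of bounded fields, and the ordinary quantitative inverse function theorem plus a straightforward pullback computation finishes parts \ref{Item::Scaling::NearBdry::MapsAreDiffeos}, \ref{Item::Scaling::NearBdry::UniformlySmooth}, \ref{Item::Scaling::NearBdry::UniformlySpan}, \ref{Item::Scaling::NearBdry::UniformHormander}, \ref{Item::Scaling::NearBdry::PullBackDensity}. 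Without this factorization (or an explicit reworking of the \cite{StovaStreetCoordinatesAdaptedToVectorFieldsCanonicalCoordinates} machinery for the two-exponential map), your proof of \ref{Item::Scaling::NearBdry::UniformlySmooth} and \ref{Item::Scaling::NearBdry::UniformlySpan} does not go through, and everything downstream of it (the Jacobian bounds for \ref{Item::Scaling::NearBdry::PullBackDensity}, the uniform admissible constants needed to invoke Propositions \ref{Prop::Scaling::EquivTop::Interior} and \ref{Prop::Scaling::EquivTop::Boundary} for \ref{Item::Scaling::NearBdry::xBallsInImage} and \ref{Item::Scaling::NearBdry::Containments}) inherits the gap.
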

\begin{proof}
    We proceed in the case \(\omega=1\) (i.e., the \(\partial_{x_n}\) component of \(\Xh_0=\Wh_1\)
    is positive on \(\nmoUnitBall\)). The case when \(\omega=-1\) follows in the same way, merely
    by replacing \(\Wh_1\) and \(\Xh_0\) with \(-\Wh_1\) and \(-\Xh_1\), throughout.

    We apply Theorem \ref{Thm::Scaling::WithoutBoundary::MainThm} with
    \(\Compact=\overline{\nmoBall{1/2}}\), \(\Omega=\nBall{3/4}\), \(\Vol\) as given,
    \(\WWd\) and \(\XXd\) replaced by \(\WhWd\) and \(\XhXd\),
    and \(j_1=j_1(x,\delta),\ldots, j_n=j_n(x,\delta)\) as described above (so that \(j_n=0\)).
    We obtain \(\Phi_{x,\delta}\) and \(\delta_0>0\) as in that theorem.

     \ref{Item::Scaling::NearBdry::InBall3/4} for \(\WhWd\) and \(\XhXd\)
     follows from Theorem \ref{Thm::Scaling::WithoutBoundary::MainThm} \ref{Item::Scaling::WithoutBdry::BallsInOmega}.
     By possibly shrinking \(\delta_0\),  \ref{Item::Scaling::NearBdry::InBall3/4} holds for \(\ZhZd\)
     as well by the Picard--Lindel\"of Theorem.

    Define, for \(t=(t_1,\ldots,t_n)\) sufficiently small,
    \begin{equation*}
        \psih_{x,\delta}(t)=\exp\left( t_n \Phi_{x,\delta}^{*} \delta^{\Xd_0}\Xh_0 \right)
        \exp\left( t_1 \Phi_{x,\delta}^{*} \delta^{\Xd_{j_1}}\Xh_{j_1}+ \cdots+ t_{n-1} \Phi_{x,\delta}^{*} \delta^{\Xd_{j_{n-1}}}\Xh_{j_{n-1}} \right)0.
    \end{equation*}
    Using that \(\Phi_{x,\delta}(0)=x\) (Theorem \ref{Thm::Scaling::WithoutBoundary::MainThm} \ref{Item::Scaling::WithoutBdry::PhixdeltaOf0}),
    we see 
    \begin{equation}\label{Eqn::Scaling::NearBoundary::psiIsCompPsihAndPhi}
        \psi_{x,\delta} = \Phi_{x,\delta}\circ \psih_{x,\delta}.
    \end{equation}

    By Theorem \ref{Thm::Scaling::WithoutBoundary::MainThm} \ref{Item::Scaling::WithoutBdry::PulledBackSmooth},
    \begin{equation*}
        \left\{ \Phi_{x,\delta}^{*}\delta^{\Xd_k}\Xh_k : 0\leq k\leq q, x\in \overline{\nmoBall{1/2}}, \delta\in (0,\delta_0] \right\}\subset \CinftybSpace[\nUnitBall][\Rn]
    \end{equation*}
    is a bounded set, and standard theorems from ODEs apply to show
    \(\exists \sigma_0\in (0,1]\), \(\forall x\in \overline{\nmoBall{1/2}}\), \(\forall \delta\in (0,\delta_0]\),
    \(\psih_{x,\delta}:\nBall{\sigma_0}\rightarrow \nBall{3/4}\) and
    \begin{equation}\label{Eqn::Scaling::NearBoundary::psihIsBoundedSet}
        \left\{ \psih_{x,\delta} : x\in \overline{\nmoBall{1/2}},\delta\in (0,\delta_0] \right\}
        \subset \CinftybSpace[\nBall{\sigma_0}][\Rn]
    \end{equation}
    is a bounded set.

    Note, \(\partial_{t_l}\big|_{t=0} \psih_{x,\delta}(t)=\Phi_{x,\delta}^{*} \delta^{\Xd_{j_l}}\Xh_{j_l}(0)\).
    Combining this with Theorem \ref{Thm::Scaling::MainResult} \ref{Item::Scaling::WithoutBdry::PulledBackSpan}
    and using that \eqref{Eqn::Scaling::NearBoundary::psihIsBoundedSet} is a bounded set,
    the Inverse Function Theorem shows \(\exists \sigma_1\in (0,\sigma_0/\sqrt{n}]\) with
    \begin{equation*}
        \psih_{x,\delta}:\nCube{\sigma_1} \xrightarrow{\sim}\psih_{x,\delta}(\nCube{\sigma_1})\subseteq \nBall{3/4}
    \end{equation*}
    is a smooth diffeomorphism, \(\psih_{x,\delta}(\nCube{\sigma_1})\subseteq \nBall{3/4}\) is open,
    and, \(\forall \alpha\),
    \begin{equation}\label{Eqn::Scaling::NearBoundary::psiInverseSmooth}
        \sup_{\substack{x\in \overline{\nmoBall{1/2}}\\\delta\in (0,\delta_0]}}\: \sup_{t\in \psih_{x,\delta}(\nCube{\sigma_1})} \left| \partial_t^{\alpha} \psi_{x,\delta}^{-1}(t) \right|<\infty.
    \end{equation}
    This establishes part of \ref{Item::Scaling::NearBdry::MapsAreDiffeos} \ref{Item::Scaling::NearBdry::ImagesOpen} and \ref{Item::Scaling::NearBdry::Diffeos}; namely, 
    \(\psi_{x,\delta}(\nCube{\sigma_1})\subseteq \nBall{3/4}\) is open and \(\psi_{x,\delta}:\nCube{\sigma_1}\xrightarrow{\sim}\psi_{x,\delta}(\nCube{\sigma_1})\subseteq \nBall{3/4}\) is a smooth diffeomorphism.

    Recall, \(\Xh_0\) has a positive \(\partial_{x_n}\) component and by Section \ref{Section::BndryVfsWithFormaLDegrees}, Step \ref{Item::BoundaryVfs::DefineXj},
    \(\Xh_j\big|_{\nmoUnitBall}\) does not have a \(\partial_{x_n}\) for \(j\ne 0\).
    Using this and \eqref{Eqn::Scaling::NearBdry::FormulaForpsixdelta},
    we see that the \(n\)-th component of \(\psi_{x,\delta}(t)\) is positive/negative/zero, precisely when
    \(t_n\) is positive/negative/zero. Using this observation, 
    \ref{Item::Scaling::NearBdry::IntersectionCommutes} and
    the remainder of \ref{Item::Scaling::NearBdry::MapsAreDiffeos}
    follow.

    For \(Y\in \VectorFields{\nUnitBall}\), \(\psi_{x,\delta}^{*} Y = \psih_{x,\delta}^{*}\Phi_{x,\delta}^{*}Y\).
    Using this, Theorem \ref{Thm::Scaling::WithoutBoundary::MainThm} \ref{Item::Scaling::WithoutBdry::PulledBackSmooth},
    that \eqref{Eqn::Scaling::NearBoundary::psihIsBoundedSet} is a bounded set, and \eqref{Eqn::Scaling::NearBoundary::psiInverseSmooth},
    we see
    \begin{equation}\label{Eqn::Scaling::NearBoundary::PulledBackXandWAreBounded}
        \left\{ \psi_{x,\delta}^{*} \delta^{\Xd_j}\Xh_j, \psi_{x,\delta}^{*} \delta^{\Wd_k}\Wh_k : x\in \overline{\nmoBall{1/2}}, \delta\in (0,\delta_0], 0\leq j\leq q, 1\leq k\leq r \right\}
        \subset \CinftybSpace[\nCube{\sigma_1}][\Rn]
    \end{equation}
    is a bounded set.
    Since \(\XhXd\) locally strongly controls \(\ZhZd\) on \(\nUnitBall\), we have
    \begin{equation*}
        \delta^{\Zd_j} \Zh_j= \sum_{\Xd_k\leq \Zd_j} \left( \delta^{\Zd_j-\Xd_k} c_{j}^k \right) \delta^{\Xd_k} \Xh_k, \quad c_j^k\in \CinftySpace[\nUnitBall],
    \end{equation*}
    and so
    \begin{equation}\label{Eqn::Scaling::NearBoundary::Tmp::PullBackZ}
        \psi_{x,\delta}^{*} \delta^{\Zd_j} \Zh_j = \sum_{\Xd_k\leq \Zd_j} \left( \delta^{\Zd_j-\Xd_k} c_{j}^k\circ \psi_{x,\delta} \right) \psi_{x,\delta}^{*} \delta^{\Xd_k}\Xh_k.
    \end{equation}
    Using Theorem \ref{Thm::Scaling::WithoutBoundary::MainThm} \ref{Item::Scaling::WithoutBdry::PulledBackNorm}, we have
    \(\forall L\in \Zgeq\), \(x\in \Compact\), \(\delta\in (0,\delta_0]\),
    with implicit constants depending on \(L\), but not on \(x\) or \(\delta\),
    \begin{equation}\label{Eqn::Scaling::NearBoundary::Tmp::ZCoefsPullBackBounded}
    \begin{split}
         &\CmbNorm{\delta^{\Zd_j-\Xd_k} c_{j}^k\circ \psi_{x,\delta}}{L}[\nCube{\sigma_1}]
        \leq \CmbNorm{ c_{j}^k\circ \psi_{x,\delta}}{L}[\nCube{\sigma_1}]
        \approx \sum_{|\alpha|\leq L}\CmbNorm{\left( \psi_{x,\delta}^{*} \delta^{\Xd} \Xh \right)^{\alpha} c_j^k}{0}[\nCube{\sigma_1}]
        \\&\leq \sum_{|\alpha|\leq L} \CmbNorm{\left( \delta^{\Xd}X \right)^{\alpha}c_j^k}{0}[\nBall{3/4}]
        \leq \sum_{|\alpha|\leq L} \CmbNorm{X^{\alpha}c_j^k}{0}[\nBall{3/4}]
        \lesssim \CmbNorm{c_j^k}{L}[\nBall{3/4}]\lesssim 1.
    \end{split}
    \end{equation}
    Combining \eqref{Eqn::Scaling::NearBoundary::Tmp::ZCoefsPullBackBounded} with \eqref{Eqn::Scaling::NearBoundary::Tmp::PullBackZ}
    shows that
    \begin{equation}\label{Eqn::Scaling::NearBoundary::ZInTermsOfX}
        \psi_{x,\delta}^{*} \delta^{\Zd_j}\Zh_j = \sum_{k}c_j^{k,x,\delta} \psi_{x,\delta}^{*} \delta^{\Xd_k}\Xh_k,
    \end{equation}
    where \(\left\{ c_{j}^{k,x,\delta} :0\leq j,k\leq q, x\in \overline{\nmoBall{1/2}}, \delta\in (0,\delta_0] \right\}\subset \CinftybSpace[\nCube{\sigma_1}]\) is a bounded set.
    We similarly have
    \begin{equation}\label{Eqn::Scaling::NearBoundary::XInTermsOfZ}
        \psi_{x,\delta}^{*} \delta^{\Xd_j}\Xh_j = \sum_{k}\ct_j^{k,x,\delta} \psi_{x,\delta}^{*} \delta^{\Zd_k}\Zh_k,
    \end{equation}
    where  \(\left\{ \ct_{j}^{k,x,\delta} :0\leq j,k\leq q, x\in \overline{\nmoBall{1/2}}, \delta\in (0,\delta_0] \right\}\subset \CinftybSpace[\nCube{\sigma_1}]\) is a bounded set.
    Combining \eqref{Eqn::Scaling::NearBoundary::ZInTermsOfX} with the fact that
    \eqref{Eqn::Scaling::NearBoundary::PulledBackXandWAreBounded} is a bounded set shows
    \begin{equation*}
        \left\{ \psi_{x,\delta}^{*} \delta^{\Zd_j}\Zh_j : x\in \overline{\nmoBall{1/2}}, \delta\in (0,\delta_0], 0\leq j\leq q \right\}
        \subset \CinftybSpace[\nCube{\sigma_1}][\Rn]
    \end{equation*}
    is a bounded set. 
    
    Since \(\Xh_{j_k}\) does not have a \(\partial_{x_n}\) component for \(j_k\ne 0\) (i.e., \(k\ne n\)),
    and since \(\psi_{x,\delta}(\nmoCube{\sigma_1})\subseteq \nBallgeq{3/4}\) (by \ref{Item::Scaling::NearBdry::MapsAreDiffeos} \ref{Item::Scaling::NearBdry::ImagesOpen}),
    the formula \eqref{Eqn::Scaling::NearBdry::FormulaForpsixdelta} for \(\psi_{x,\delta}\)
    shows that if \(Y\in \CinftybSpace[\nUnitBall][\Rn]\) is such that
    \(Y\big|_{\nmoUnitBall}\) does not have a \(\partial_{x_n}\) component, then
    \(\psi_{x,\delta}^{*}Y\big|_{\nmoCube{\sigma_1}}\) does not have a \(\partial_{t_n}\) component.
    In particular (using Section \ref{Section::BndryVfsWithFormaLDegrees}, Step \ref{Item::BoundaryVfs::DefineXj}),
    \(\psi_{x,\delta}^{*} \delta^{\Xd_j} X_j\big|_{\nmoCube{\sigma_1}}\) does not have a \(\partial_{t_n}\)
    component for \(j\ne 0\); this is part of \ref{Item::Scaling::NearBdry::PullBackW1IsDxn}.

    Note that, using  \(\psi_{x,\delta}(\nmoCube{\sigma_1})\subseteq \nBallgeq{3/4}\) (by \ref{Item::Scaling::NearBdry::MapsAreDiffeos} \ref{Item::Scaling::NearBdry::ImagesOpen}),
    and the previous paragraph, for \(j\ne 0\) we have
    \begin{equation}\label{Eqn::Scaling::NearBoundary::PullBackVIsPullBackXRestricted}
        \psi_{x,\delta}^{*} \delta^{\Xd_j} \Xh_j\big|_{\nmoCube{\sigma_1}}
        =\psi_{x,\delta}\big|_{\nmoCube{\sigma_1}}^{*} \delta^{\Vd_j} V_j\in \CinftybSpace[\nmoCube{\sigma_1}][\Rnmo].
    \end{equation}
    Combining this the fact that \eqref{Eqn::Scaling::NearBoundary::PulledBackXandWAreBounded}
    is bounded, we see
    \begin{equation*}
        \left\{ \psi_{x,\delta}\big|_{\nmoCube{\sigma_1}}^{*} \delta^{\Vd_j} V_j : x\in \overline{\nmoBall{1/2}}, \delta\in (0,\delta_0], 1\leq j\leq q \right\}
        \subset \CinftybSpace[\nmoCube{\sigma_1}][\Rnmo]
    \end{equation*}
    is a bounded set.  This completes the proof of \ref{Item::Scaling::NearBdry::UniformlySmooth}.

    That \(\psi_{x,\delta}^{*} \delta^{\Xd_0} \Xh_0=\partial_{x_n}\) follows from \eqref{Eqn::Scaling::NearBdry::FormulaForpsixdelta}.
    Since \((\Xh_0,\Xd_0)=(\Wh_1,\Wd_1)\), this completes the proof of \ref{Item::Scaling::NearBdry::PullBackW1IsDxn}.

    \ref{Item::Scaling::NearBdry::UniformHormander} follows from 
    Theorem \ref{Thm::Scaling::WithoutBoundary::MainThm} \ref{Item::Scaling::WithoutBdry::PulledBackHormander},
    \eqref{Eqn::Scaling::NearBoundary::psiIsCompPsihAndPhi},
    that \eqref{Eqn::Scaling::NearBoundary::psihIsBoundedSet} is a bounded set,
    and \eqref{Eqn::Scaling::NearBoundary::psiInverseSmooth}.
    
    \ref{Item::Scaling::NearBdry::UniformlySpan} \ref{Item::Scaling::NearBdry::UniformlySpan::X}
    follows from Theorem \ref{Thm::Scaling::WithoutBoundary::MainThm} \ref{Item::Scaling::WithoutBdry::PulledBackSpan},
    \eqref{Eqn::Scaling::NearBoundary::psiIsCompPsihAndPhi},
    that \eqref{Eqn::Scaling::NearBoundary::psihIsBoundedSet} is a bounded set,
    and \eqref{Eqn::Scaling::NearBoundary::psiInverseSmooth}.
    \ref{Item::Scaling::NearBdry::UniformlySpan} \ref{Item::Scaling::NearBdry::UniformlySpan::Z}
    follows from \ref{Item::Scaling::NearBdry::UniformlySpan} \ref{Item::Scaling::NearBdry::UniformlySpan::X}
    and \eqref{Eqn::Scaling::NearBoundary::XInTermsOfZ}.
    \ref{Item::Scaling::NearBdry::UniformlySpan} \ref{Item::Scaling::NearBdry::UniformlySpan::V}
    follows from \ref{Item::Scaling::NearBdry::UniformlySpan} \ref{Item::Scaling::NearBdry::UniformlySpan::X},
    \ref{Item::Scaling::NearBdry::PullBackW1IsDxn}, and \eqref{Eqn::Scaling::NearBoundary::PullBackVIsPullBackXRestricted}.

    \ref{Item::Scaling::NearBdry::PullBackDensity}: Using \eqref{Eqn::Scaling::NearBoundary::psiIsCompPsihAndPhi}
    and letting \(h_{x,\delta}\) be as in
    Theorem \ref{Thm::Scaling::WithoutBoundary::MainThm}, we have
    \begin{equation}\label{Eqn::Scaling::NearBdry::fInTermsOfh}
        f_{x,\delta}(t)=\left( h_{x,\delta}\circ \psih_{x,\delta}(t) \right) \left| \det d_t \psih_{x,\delta}(t) \right|.
    \end{equation}
    \ref{Item::Scaling::NearBdry::PullBackDensity}
    follows from \eqref{Eqn::Scaling::NearBdry::fInTermsOfh},
    Theorem \ref{Thm::Scaling::WithoutBoundary::MainThm}\ref{Item::Scaling::WithoutBdry::hIsSmooth},\ref{Item::Scaling::WithoutBdry::hIsPositive},
    that \eqref{Eqn::Scaling::NearBoundary::psihIsBoundedSet} is a bounded set,
    and \eqref{Eqn::Scaling::NearBoundary::psiInverseSmooth}.

    From the definitions, we obtain
    \begin{equation}\label{Eqn::Scaling::NearBoundary::ImageOfHatCCBall}
    \begin{split}
         &\psi_{x,\delta}\left( \BpsideltaWhWd{t}{c} \right)
         =\BWhWd{\psi_{x,\delta}(t)}{c\delta},\quad \psi_{x,\delta}\left( \BpsideltaXhXd{t}{c} \right)
         =\BXhXd{\psi_{x,\delta}(t)}{c\delta}.
    \end{split}
    \end{equation}
    Similarly, using \ref{Item::Scaling::NearBdry::IntersectionCommutes},
    \begin{equation}\label{Eqn::Scaling::NearBoundary::ImageOfCCBall}
        \begin{split}
             &\psi_{x,\delta}\left( \BpsideltaWWd{t}{c} \right)
             =\BWWd{\psi_{x,\delta}(t)}{c\delta},\quad \psi_{x,\delta}\left( \BpsideltaXXd{t}{c} \right)
             \BXXd{\psi_{x,\delta}(t)}{c\delta},
             \\&\psi_{x,\delta}\left( \BpsideltaVVd{t}{c} \right)
             =\BVVd{\psi_{x,\delta}(t)}{c\delta}.
        \end{split}
    \end{equation}

    For the remainder of the proof, we let \(\Rngeq=\left\{ (x_1,\ldots, x_n)\in \Rn : x_n\geq 0 \right\}\)
    and identify \(\Rnmo\) with \(\left\{ (x_1,\ldots, x_{n-1}, 0)\in \Rn \right\}\).

    \ref{Item::Scaling::NearBdry::xBallsInImage}:
    For \(\sigma_2\in (0,\sigma_1)\), by \ref{Item::Scaling::NearBdry::UniformlySmooth}
    and the Picard--Lindel\"of Theorem, \(\exists c_1\in (0,1]\),
    \(\forall x\in \nBall{1/2}\), \(\delta\in (0,\delta_0]\),
    \begin{equation*}
    \begin{split}
         &\BpsideltaWhWd{0}{c_1} \cup \BpsideltaXhXd{0}{c_1}
         \subseteq \nCube{\sigma_2/2},
    \end{split}
    \end{equation*}
    and therefore,
    \begin{equation}\label{Eqn::Scaling::NearBoundary::Findc1}
        \begin{split}
             &\left( \BpsideltaWhWd{0}{c_1} \cup \BpsideltaXhXd{0}{c_1} \right)\cap \Rngeq
             \subseteq \nCubegeq{\sigma_2/2},
        \end{split}
        \end{equation}
    Applying \(\psi_{x,\delta}\) to \eqref{Eqn::Scaling::NearBoundary::Findc1},
    using \(\psi_{x,\delta}(0)=x\) (by \eqref{Eqn::Scaling::NearBdry::FormulaForpsixdelta}), 
    \ref{Item::Scaling::NearBdry::IntersectionCommutes},
    and \eqref{Eqn::Scaling::NearBoundary::ImageOfHatCCBall}, \ref{Item::Scaling::NearBdry::xBallsInImage} follows.

    \ref{Item::Scaling::NearBdry::Containments}:
    In light of \ref{Item::Scaling::NearBdry::UniformlySmooth}, \ref{Item::Scaling::NearBdry::UniformlySpan},
    and \ref{Item::Scaling::NearBdry::UniformHormander},
    Proposition \ref{Prop::Scaling::EquivTop::Interior} applies 
    with \(\WhWd\) replaced by each of
    \(\psiWhWd\), \(\psiXhXd\), and \(\psiVVd\),
    and any \(\iota\)-admissible constants as in the conclusion of Proposition \ref{Prop::Scaling::EquivTop::Interior}
    can be taken independent of \(x\in \nmoBall{1/2}\), \(\delta\in (0,\delta_0]\), with \(a>0\)
    in that proposition replaced by \(\sigma_1\). Here, for \(\psiVVd\) we have replaced \(n\) with \(n-1\).
    Let \(a_1^1>0\) be \(a_1>0\) from that proposition, applied to all these instances.

    Similarly, using \ref{Item::Scaling::NearBdry::PullBackW1IsDxn}, Proposition \ref{Prop::Scaling::EquivTop::Boundary}
    applies with \(\WhWd\) replaced by each of
    \(\psiWhWd\) and \(\psiXhXd\)
    and any \((\iota,c_0)\)-admissible constants as in the conclusion of Proposition \ref{Prop::Scaling::EquivTop::Interior}
    can be taken independent of \(x\in \nmoBall{1/2}\), \(\delta\in (0,\delta_0]\), with \(a>0\)
    in that proposition replaced by \(\sigma_1\).
    Let \(a_1^2>0\)  be \(a_1>0\) from that proposition, applied to all these instances.

    Set \(a_1=\min\{a_1^1,a_1^2\}\). 
    For \(\sigma_2\in (0,\sigma_1)\) set \(a_2=\min\{a_1,\sigma_2\}\).
    Let \(\delta_1^1, \delta_1^2\in (0,1]\)  be \(\delta_1\) from the above applications of
    Propositions \ref{Prop::Scaling::EquivTop::Interior} and \ref{Prop::Scaling::EquivTop::Boundary} (respectively)
    and set \(\deltah_1:=\min\{\delta_1^1,\delta_1^2\}\).
    Let \(a_3^1,a_3^2\in (0,a_2/2]\) be \(a_3\) from the above applications of
    Propositions \ref{Prop::Scaling::EquivTop::Interior} and \ref{Prop::Scaling::EquivTop::Boundary} (respectively)
    with \(\delta=\deltah_1\) and set \(\sigma_3:=\min\{a_3^2,a_3^2\}\).

    We have, from 
    Propositions \ref{Prop::Scaling::EquivTop::Interior} and \ref{Prop::Scaling::EquivTop::Boundary},
    the following containments:
    \begin{enumerate}[(A)]
        \item\label{Item::Scaling::NearBdry::TmpA} 
    \(\forall y\in \nCube{\sigma_2/2}\subseteq \nCube{a_2/2}\),
    \begin{equation*}
    \begin{split}
         &\CubeCentered{y}{\sigma_3}
         \subseteq \CubeCentered{y}{a_3^1}
         \subseteq \BpsideltaWhWd{y}{\deltah_1}\cap \BpsideltaXhXd{y}{\deltah_1}
         \\&\subseteq \BpsideltaWhWd{y}{1}\cap \BpsideltaXhXd{y}{1}.
    \end{split}
    \end{equation*}

    \item \(\nmoCube{\sigma_3}\subseteq \BpsideltaVVd{0}{\deltah_1}\subseteq
    \BpsideltaVVd{0}{1}\), and combining this with \ref{Item::Scaling::NearBdry::TmpA},
        we have
        \begin{equation*}
        \begin{split}
             &\nmoCube{\sigma_3}\subseteq
             \BpsideltaVVd{0}{1}\cap \BpsideltaWhWd{0}{1}\cap \BpsideltaXhXd{0}{1}.
        \end{split}
        \end{equation*}

    \item \(\forall y\in \nCubegeq{\sigma_2/2}\),
    \begin{equation*}
        \begin{split}
        &\CubeCenteredgeq{y}{\sigma_3} \subseteq \CubeCenteredgeq{y}{a_3^2}
        \subseteq \BpsideltaWWd{y}{\deltah_1}\cap \BpsideltaXXd{y}{\deltah_1}
        \\&\subseteq \BpsideltaWWd{y}{1}\cap \BpsideltaXXd{y}{1}.
    \end{split}
    \end{equation*}
    \end{enumerate}

    For \(\sigma_4\in (0,\sigma_3)\),
    using \ref{Item::Scaling::NearBdry::UniformlySmooth},
    the Picard--Lindel\"of Theorem shows \(\exists c_2\in (0,1)\)
    such that:
    \begin{enumerate}[(A),resume]
        \item \(\forall y\in \nCube{\sigma_2/2}\),
        \begin{equation*}
            \BpsideltaWhWd{y}{c_2}
            \cup \BpsideltaXhXd{y}{c_2}
            \subseteq \CubeCentered{y}{\sigma_4}.
        \end{equation*}
        \item \(\forall y\in \nCubegeq{\sigma_2/2}\),
        \begin{equation*}
            \left( \BpsideltaWhWd{y}{c_2}\cap \Rngeq \right)\cup \left( \BpsideltaXhXd{y}{c_2}\cap \Rngeq\right)
            \subseteq \CubeCenteredgeq{y}{\sigma_4}.
        \end{equation*}
        \item\label{Item::Scaling::NearBdry::TmpF}  
            \begin{equation*}
                \BpsideltaVVd{0}{c_2} \cup \left( \BpsideltaXhXd{0}{c_2}\cap \Rnmo \right) \cup\left( \BpsideltaWhWd{0}{c_2}\cap \Rnmo \right)
                \subseteq \nmoCube{\sigma_4}.
            \end{equation*}
    \end{enumerate}

    Applying \(\psi_{x,\delta}\) to \ref{Item::Scaling::NearBdry::TmpA}--\ref{Item::Scaling::NearBdry::TmpF},
    using \eqref{Eqn::Scaling::NearBoundary::ImageOfHatCCBall}, \eqref{Eqn::Scaling::NearBoundary::ImageOfCCBall},
    and \ref{Item::Scaling::NearBdry::IntersectionCommutes}, \ref{Item::Scaling::NearBdry::Containments} follows.
\end{proof}

We close this section with a lemma which will be used in the proof of Theorem \ref{Thm::Sheaves::Metrics::BoundaryMetricEquivalence}.

\begin{lemma}\label{Lemma::Scaling::NearBdry::VVdLessThanXXd}
    \(\exists \delta_1>0\), \(\exists C_0\geq 1\), \(\forall x,y\in \nmoBall{1/2}\),
    \begin{equation*}
        \MetricXXd[x][y]<\delta_1\implies \MetricVVd[x][y]\leq C_0 \MetricXXd[x][y].
    \end{equation*}
\end{lemma}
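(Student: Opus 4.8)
The plan is to deduce the lemma almost immediately from Theorem~\ref{Thm::Scaling::NearBdry::MainThm}\ref{Item::Scaling::NearBdry::Containments}, which already contains, near a boundary point $x$, a comparison of the balls $\BXXd{x}{\cdot}$ with the boundary balls $\BVVd{x}{\cdot}$; the remaining work is purely a translation of set inclusions into a metric estimate, together with a trivial monotonicity property of Carnot--Carath\'eodory balls.

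First I would fix admissible values of the auxiliary parameters appearing in \ref{Item::Scaling::NearBdry::Containments}. Let $\delta_0\in(0,1]$ and $\sigma_1\in(0,1]$ be as in Theorem~\ref{Thm::Scaling::NearBdry::MainThm}. Take $\sigma_2:=\sigma_1/2\in(0,\sigma_1)$, let $\sigma_3\in(0,\sigma_2/2]$ be as produced by the theorem for this $\sigma_2$, take $\sigma_4:=\sigma_3/2\in(0,\sigma_3)$, and let $c_2\in(0,1)$ be as produced by the theorem for these choices. Discarding from the third chain of inclusions in \ref{Item::Scaling::NearBdry::Containments} (the one involving $\VVd$) every term except those I need, I obtain, for all $x\in\nmoBall{1/2}$ and all $\delta\in(0,\delta_0]$,
\[
    \BXXd{x}{c_2\delta}\cap\nmoUnitBall
    \;\subseteq\;\psi_{x,\delta}(\nmoCube{\sigma_4})
    \;\subseteq\;\psi_{x,\delta}(\nmoCube{\sigma_3})
    \;\subseteq\;\BVVd{x}{\delta}.
\]
(That chain is preceded by a ``$\forall y\in\psi_{x,\delta}(\nCubegeq{\sigma_2/2})$''; since that set is nonempty and the chain does not involve $y$, it holds for all $x,\delta$ as stated.) In particular $\BXXd{x}{c_2\delta}\cap\nmoUnitBall\subseteq\BVVd{x}{\delta}$ for all $x\in\nmoBall{1/2}$, $\delta\in(0,\delta_0]$.

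Then I would set $\delta_1:=c_2\delta_0/2$ and $C_0:=2/c_2$ and verify the claimed implication. Given $x,y\in\nmoBall{1/2}$ with $\rho:=\MetricXXd[x][y]<\delta_1$, we may assume $\rho>0$ (if $\rho=0$ then $x=y$ and the conclusion is trivial). Put $\delta:=2\rho/c_2$; then $\delta<2\delta_1/c_2=\delta_0$, so $\delta\in(0,\delta_0]$, and $\MetricXXd[x][y]=\rho<2\rho=c_2\delta$. Since the balls $\BXXd{x}{\cdot}$ are monotone in the radius -- immediate from \eqref{Eqn::BasicDefns::UnitCCBall}--\eqref{Eqn::BasicDefns::CCBallAndMetric} by rescaling the coefficients $a_j$ -- the inequality $\MetricXXd[x][y]<c_2\delta$ forces $y\in\BXXd{x}{c_2\delta}$; moreover $y\in\nmoBall{1/2}\subseteq\nmoUnitBall$, so the displayed inclusion yields $y\in\BVVd{x}{\delta}$, whence $\MetricVVd[x][y]\le\delta=(2/c_2)\rho=C_0\,\MetricXXd[x][y]$.

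Because all of the analytic content is already packaged in Theorem~\ref{Thm::Scaling::NearBdry::MainThm}, I do not anticipate a genuine obstacle; the only points needing routine care are the bookkeeping of the nested quantifiers in \ref{Item::Scaling::NearBdry::Containments} (choosing $\sigma_2$, then receiving $\sigma_3$, then choosing $\sigma_4$, then receiving $c_2$, all before quantifying over $x$ and $\delta$) and the elementary remark that $\BXXd{x}{\delta'}\subseteq\BXXd{x}{\delta}$ whenever $\delta'\le\delta$, which is exactly what makes ``$\MetricXXd[x][y]<c_2\delta$'' actually imply ``$y\in\BXXd{x}{c_2\delta}$''.
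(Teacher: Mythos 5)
Your proposal is correct and takes essentially the same approach as the paper's proof: both fix $\sigma_2=\sigma_1/2$, $\sigma_4=\sigma_3/2$, and $c_2$ from Theorem~\ref{Thm::Scaling::NearBdry::MainThm}~\ref{Item::Scaling::NearBdry::Containments}, then read the inclusion $\BXXd{x}{c_2\delta}\cap\nmoUnitBall\subseteq\BVVd{x}{\delta}$ off the third containment chain. The only difference is bookkeeping: the paper takes the infimum over all admissible $\delta$ to obtain $C_0=c_2^{-1}$ and $\delta_1=c_2\delta_0$, whereas you fix a single $\delta=2\rho/c_2$, losing a factor of $2$ — both are fine since the lemma only asserts existence of $C_0$ and $\delta_1$.
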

\begin{proof}
    Let \(\delta_0\in (0,1]\), \(\sigma_1\in (0,1]\) be as in Theorem \ref{Thm::Scaling::NearBdry::MainThm}.
    Set \(\sigma_2=\sigma_1/2\), let \(\sigma_3\in (0,\sigma_2/2]\) be as in Theorem \ref{Thm::Scaling::NearBdry::MainThm} \ref{Item::Scaling::NearBdry::Containments},
    take \(\sigma_4=\sigma_3/2\), and \(c_2\in (0,1)\) as in Theorem \ref{Thm::Scaling::NearBdry::MainThm} \ref{Item::Scaling::NearBdry::Containments}.
    
    Suppose \(\MetricXXd[x][y]<c_2\delta_0\) with \(x,y\in \nmoBall{1/2}\).
    Take any \(\delta\in (0,\delta_0]\) with \(\MetricXXd[x_1][x_2]<c_2\delta\).
    Then we have from Theorem \ref{Thm::Scaling::NearBdry::MainThm} \ref{Item::Scaling::NearBdry::Containments}
    \begin{equation*}
    \begin{split}
         & y\in \BXXd{x}{c_2\delta} \cap \nmoBall{1/2}
         \subseteq \BVVd{x}{\delta},
    \end{split}
    \end{equation*}
    and therefore \(\MetricVVd[x][y]<\delta\).
    Taking the infimum over such \(\delta\) shows \(\MetricVVd[x][y]\leq c_2^{-1} \MetricXXd[x][y]\),
    completing the proof with \(\delta_1=c_2\delta_0\) and \(C_0=c_2^{-1}\).
\end{proof}

    \subsection{Proof of \texorpdfstring{Theorem \ref{Thm::Scaling::MainResult}}{Theorem \ref*{Thm::Scaling::MainResult}}}
    \begin{lemma}\label{Lemma::Scaling::Proof::TwoCases}
    \(\exists\) a compact set \(\QCompact\Subset \BoundaryNncWWd\) and \(\delta_2\in (0,1]\)
    such that \(\forall x\in \Compact\), \(\forall \delta\in (0,\delta_2]\) either:
    \begin{itemize}
        \item \(\BWWd{x}{\delta}\cap \BoundaryN=\emptyset\), or
        \item \(\exists x_0\in \QCompact\) with
            \(\BWWd{x_0}{2\delta}\supseteq \BWWd{x}{\delta}\).
    \end{itemize}
\end{lemma}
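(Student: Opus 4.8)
The plan is to confine the Carnot--Carath\'eodory balls based at points of $\Compact$ to a fixed compact subset of $\ManifoldNncWWd$, let $\QCompact$ be the trace of that compact set on $\BoundaryN$, and then prove the containment by a direct path argument. For the first part: since $\ManifoldN$ is a closed embedded submanifold of $\ManifoldM$, the characteristic set $\ManifoldN\setminus\ManifoldNncWWd=\BoundaryN\setminus\BoundaryNncWWd$ is closed in $\ManifoldM$, and it is disjoint from the compact set $\Compact\Subset\ManifoldNncWWd$. Hence I would fix a relatively compact open $U\subseteq\ManifoldM$ with $\Compact\subseteq U$ and $\overline{U}\cap(\ManifoldN\setminus\ManifoldNncWWd)=\emptyset$, and set $\Compact':=\overline{U}\cap\ManifoldN$; then $\Compact'$ is compact and $\Compact'\subseteq\ManifoldNncWWd$. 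Define $\QCompact:=\Compact'\cap\BoundaryN$: a closed subset of the compact set $\Compact'$, hence compact, with $\QCompact\subseteq\ManifoldNncWWd\cap\BoundaryN=\BoundaryNncWWd$, so $\QCompact\Subset\BoundaryNncWWd$.

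Next I would produce $\delta_2$ via the Picard--Lindel\"of theorem, applied to the \emph{ambient} vector fields $\WhWd$ on $\ManifoldM$ exactly as in the proof of Theorem~\ref{Thm::Scaling::WithoutBoundary::MainThm}~\ref{Item::Scaling::WithoutBdry::BallsInOmega}; passing to $\ManifoldM$ is what removes the obstruction coming from $\BoundaryN$. Since $\Wh_1,\ldots,\Wh_r$ are smooth they are bounded on $\overline{U}$, so any absolutely continuous curve $\gamma$ with $\gamma'=\sum_j a_j\delta^{\Wd_j}\Wh_j(\gamma)$, $\|\sum_j|a_j|^2\|_{\LpSpace{\infty}}<1$, $\delta\in(0,1]$, and $\gamma(0)\in\Compact$ has speed $\lesssim\delta$ in any fixed atlas and therefore stays in $U$ over the parameter interval $[0,1]$ provided $\delta$ lies in a sufficiently small interval $(0,\delta_2]$. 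Consequently $\BWhWd{x}{\delta}\subseteq U$ for all $x\in\Compact$ and $\delta\in(0,\delta_2]$, and since $\BWWd{x}{\delta}\subseteq\BWhWd{x}{\delta}\cap\ManifoldN$ we obtain $\BWWd{x}{\delta}\subseteq U\cap\ManifoldN\subseteq\Compact'$.

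Now fix $x\in\Compact$ and $\delta\in(0,\delta_2]$. If $\BWWd{x}{\delta}\cap\BoundaryN=\emptyset$ the first alternative holds. Otherwise pick $x_0\in\BWWd{x}{\delta}\cap\BoundaryN$; by the previous paragraph $x_0\in\Compact'\cap\BoundaryN=\QCompact$, so it remains to check $\BWWd{x}{\delta}\subseteq\BWWd{x_0}{2\delta}$. Given $z\in\BWWd{x}{\delta}$, choose absolutely continuous curves $\gamma,\eta:[0,1]\to\ManifoldN$, admissible for $\delta^{\Wd}W$ with $\LpSpace{\infty}$-coefficient bounds $<1$, joining $x$ to $x_0$ and $x$ to $z$ respectively. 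Reverse $\gamma$, concatenate with $\eta$, and reparametrize the result onto $[0,1]$ to obtain an absolutely continuous curve $\sigma:[0,1]\to\ManifoldN$ from $x_0$ to $z$ with $\sigma'=\sum_j c_j\delta^{\Wd_j}W_j(\sigma)$; the time-doubling introduces a factor $2$ in each coefficient, so $\|\sum_j|c_j|^2\|_{\LpSpace{\infty}}<4$. Rewriting $\delta^{\Wd_j}W_j=(1/2)^{\Wd_j}(2\delta)^{\Wd_j}W_j$ and using $\Wd_j\geq 1$ (hence $(1/2)^{\Wd_j}\leq 1/2$), the coefficients of $\sigma'$ with respect to $(2\delta)^{\Wd}W$ satisfy $\|\sum_j|(1/2)^{\Wd_j}c_j|^2\|_{\LpSpace{\infty}}\leq\tfrac14\|\sum_j|c_j|^2\|_{\LpSpace{\infty}}<1$; thus $\sigma$ witnesses $z\in\BWWd{x_0}{2\delta}$, as desired (note $\sigma$ stays in $\ManifoldN$ because $\gamma$ and $\eta$ do).

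The only delicate point is this last estimate: one must weigh the factor $4$ produced by reparametrizing a concatenation of two unit-time curves back onto $[0,1]$ against the gain $(1/2)^{\Wd_j}\leq 1/2$ from re-expressing the degree-$\Wd_j$--scaled fields, and it is precisely their balance that forces the loss from $\delta$ to $2\delta$ in the statement (and, implicitly, that $\Wd_j\ge 1$). Everything else is routine: the construction of $\QCompact$ is point-set topology using that $\ManifoldN$ is closed in $\ManifoldM$, and the confinement of the balls is the standard small-time estimate for integral curves, carried out for the ambient fields $\Wh_j$ so that $\BoundaryN$ causes no difficulty.
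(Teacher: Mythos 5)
Your proof is correct and follows the same outline as the paper's: confine the balls \(\BWWd{x}{\delta}\) to a compact set via a Picard--Lindel\"of escape-time estimate, take \(\QCompact\) to be its trace on \(\BoundaryN\), and finish with the path-reversal/concatenation argument giving the \(2\delta\) doubling. The only cosmetic differences are that you route the confinement through the ambient balls \(\BWhWd{x}{\delta}\) (the paper works directly with \(\MetricWWd\) on \(\ManifoldN\) and a closed set \(C=\BoundaryN\setminus V\)), and that you spell out the concatenation/reparametrization estimate that the paper treats as immediate from the triangle inequality.
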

\begin{proof}
    Since \(\BoundaryNncWWd\) is a closed subset
    of \(\ManifoldNncWWd\)
    and \(\Compact\Subset \ManifoldNncWWd\) is compact,
    it follows
    \(\Compact \cap\BoundaryN=  \Compact\cap \BoundaryNncWWd\) is a compact subset of \(\BoundaryNncWWd\).
    Let \(V\Subset \BoundaryNncWWd\) be an open, relatively compact set with
    \(\Compact \cap \BoundaryN\subseteq V\). Since \(\BoundaryNncWWd\) is open in \(\BoundaryN\),
    \(V\) is open in \(\BoundaryN\) as well.
    Let \(C:=\BoundaryN\setminus V\), a closed subset of \(\BoundaryN\), with \(C\cap \Compact=\emptyset\).
    As \(C\) is closed and \(\Compact\) is compact, the Picard--Lindel\"of Theorem shows
    \begin{equation*}
        \delta_2:=\inf \left\{ \MetricWWd[x][y] : x\in \Compact, y\in C \right\}>0.
    \end{equation*}

    Set \(\QCompact:=\overline{V}\).
    If \(\delta\in (0,\delta_2]\) then for \(x\in \Compact\),
    \(\BWWd{x}{\delta}\cap \BoundaryN \subseteq V\subseteq \QCompact\).
    We conclude that either \(\BWWd{x}{\delta}\cap \BoundaryN=\emptyset\)
    or \(\exists x_0\in \BWWd{x}{\delta}\cap \QCompact\) and therefore \(\BWWd{x_0}{2\delta}\supseteq \BWWd{x}{\delta}\).
\end{proof}

\begin{proof}[Completion of the proof of Theorem \ref{Thm::Scaling::MainResult}]
    \ref{Item::Scaling::MainResult::InOmega} follows by taking \(\delta_1\) small and using the Picard--Lindel\"of Theorem,
    so we focus on the remainder of the statement of the theorem.

    Let \(\delta_2\in (0,1]\) and \(\QCompact \Subset \BoundaryNncWWd\)
    be as in Lemma \ref{Lemma::Scaling::Proof::TwoCases}. We separate the proof
    which follows into the two cases described in that lemma; we being with the second case, which has
    a more involved proof.

    For each \(x\in \QCompact\), using \(x\in \BoundaryNncWWd\), we may pick \(j_0=j_0(x)\)
    and a neighborhood \(U_x\subseteq \BoundaryN\) of \(x\) such that
    \(\forall y\in U_x\), \(\Wh_{j_0}(y)\not \in \TangentSpace{\BoundaryN}{y}\)
    and \(\Wd_k\leq \Wd_{j_0}\implies \Wh_k(y)\in \TangentSpace{\BoundaryN}{y}\).

    Thus, for each \(x\in \QCompact\), we may find a smooth coordinate chart
    \(g_x:\nUnitBall\xrightarrow{\sim} g_x(\nUnitBall)\subseteq \Omega\)
    such that
    \(g_x(0)=x\),
    \(g_x(\nUnitBall)\) is open in \(\ManifoldM\), \(g_x(\nUnitBallgeq) = g_x(\nUnitBall)\cap \ManifoldN\),
    \(g_x(\nmoUnitBall)=g_x(\nUnitBall)\cap \BoundaryN\), \(g_x(\nUnitBallgeq)\Subset \ManifoldN\) is relatively compact,
    \(g_x^{*}\Wh_1,\ldots, g_x^{*}\Wh_r\in \CinftybSpace[\nUnitBall][\Rn]\),
    \(g_x^{*}\Wh_1,\ldots, g_x^{*}\Wh_r\) satisfy H\"ormander's condition of order \(m=m(x)\in \Zg\) on \(\nUnitBall\),
    if \(b_{x,k}^n\in \CinftybSpace[\nUnitBall]\) is the \(\partial_{t_n}\) component of \(g_x^{*}\Wh_{k}\)
    then
    \begin{equation*}
        \inf_{t'\in \nmoUnitBall} \left| a_{x,j_0}^n(t',0) \right|>0, \quad \Wd_k\leq \Wd_{j_0}\implies a_{x,k}^n(t',0)=0,\forall t'\in\nmoUnitBall,
    \end{equation*}
    and if \(h_x\) is defined by \(g_x^{*} \Volh=h_x(t)\LebDensity\) then
    \(h_x\in \CinftybSpace[\nUnitBall]\) and \(\inf_{t\in \nUnitBall} h_x(t)>0\).

    In short, the hypotheses of Theorem \ref{Thm::Scaling::NearBdry::MainThm}
    hold with \(\Wh_1,\ldots, \Wh_r\) replaced by \(g_x^{*} \Wh_1,\ldots, g_x^{*} \Wh_r\)
    with \(g_x^{*} \Wh_{j_0}\) playing the role of \(\Wh_1\) in that theorem.

    Cover the compact set \(\QCompact\) by a finite collection of sets of the form
    \(g_{x_k}(\nmoBall{1/2})\), \(k=1,\ldots,L\), \(x_k\in \QCompact\).
    As described above, we may apply Theorem \ref{Thm::Scaling::NearBdry::MainThm}
    to each of the lists
    \begin{equation}\label{Eqn::Scaling::Proof::gxkWPullBack}
        \left( g_{x_k}^{*}\Wh_1,\Wd_1 \right),\ldots, \left( g_{x_k}^{*}\Wh_r,\Wd_r \right),
    \end{equation}
    for \(k=1,\ldots, L\).
    Let \(\delta_{0,k}\in (0,1]\), \(\sigma_{1,k}\in (0,1]\),
    and \(\psi_{u,\delta,k}:\nCube{\sigma_{1,k}}\rightarrow \nUnitBall\)
    be 
    \(\delta_0\), \(\sigma_1\), and \(\psi_{u,\delta}\) from
    Theorem \ref{Thm::Scaling::NearBdry::MainThm} when applied to \eqref{Eqn::Scaling::Proof::gxkWPullBack}.

    Let \(\sigma_1:=\min\{\sigma_{1,k}\}\in (0,1)\), \(\sigma_2:=\sigma_1/2\),
    \(\sigma_{3,k}\in (0,\sigma_2/2]\) as in Theorem \ref{Thm::Scaling::NearBdry::MainThm} \ref{Item::Scaling::NearBdry::Containments},
    and
    \(\sigma_3:=\min\{\sigma_{3,k}\}\in (0,\sigma_2/2]\).
    Given \(\eta_1\in (0,1/2)\) (as in \ref{Item::Scaling::MainResult::Containments}), let \(\sigma_4:=\eta_1 \sigma_3\).
    Let \(c_{2,k}\in (0,1)\) be as in Theorem \ref{Thm::Scaling::NearBdry::MainThm} \ref{Item::Scaling::NearBdry::Containments}
    with this choice of \(\sigma_4\) and set \(c_2:=\min\{c_{2,k}\}\in(0,1)\).
    Let \(c_{1,k}\in (0,1)\) be as in Theorem \ref{Thm::Scaling::NearBdry::MainThm} \ref{Item::Scaling::NearBdry::xBallsInImage}
    with \(\sigma_2/2\) playing the role of \(\sigma_2\) there, and set \(c_1:=\min\{c_{1,k}\}\in (0,1)\).

    Set, for \(x\in g_{x_k}(\nmoBall{1/2})\), \(\delta\in (0,\min\{\delta_{0,k}\}]\),
    \begin{equation*}
        \Phi_{x,\delta,k}(t):=g_{x_k}\circ \psi_{g_{x_k}^{-1}(x),\delta,k}(t):\nCube{\sigma_1}\xrightarrow{\sim} \Phi_{x,\delta,k}(\nCube{\sigma_1}).
    \end{equation*}
    By Theorem \ref{Thm::Scaling::NearBdry::MainThm} \ref{Item::Scaling::NearBdry::MapsAreDiffeos} \ref{Item::Scaling::NearBdry::Diffeos},
    \(\Phi_{x,\delta,k}(\nCube{\sigma_1})\) is open in \(\ManifoldM\) and \(\Phi_{x,\delta,k}:\nCube{\sigma_1}\xrightarrow{\sim}\Phi_{x,\delta,k}\)
    is a smooth diffeomorphism.

    We have:
    \begin{enumerate}[(I)]
        \item\label{Item::Scaling::Proofs::Tmp::I} Using Theorem \ref{Thm::Scaling::NearBdry::MainThm} \ref{Item::Scaling::NearBdry::IntersectionCommutes}
            and the properties of \(g_{x_k}\),
            \begin{equation*}
                \Phi_{x,\delta,k}\left( \nCube{\sigma_{1}} \right)\cap \ManifoldN =  \Phi_{x,\delta,k}\left( \nCubegeq{\sigma_{1}} \right),
                \quad
                \Phi_{x,\delta,k}\left( \nCube{\sigma_{1}} \right)\cap \BoundaryN =  \Phi_{x,\delta,k}\left( \nmoCube{\sigma_{1}} \right).
            \end{equation*}
        
        \item\label{Item::Scaling::Proofs::Tmp::II} Using Theorem \ref{Thm::Scaling::NearBdry::MainThm} \ref{Item::Scaling::NearBdry::PullBackW1IsDxn},
            \begin{equation*}
                \Phi_{x,\delta,k}^{*} \delta^{\Wd_{j_0}} \Wh_{j_0}=\pm \partial_{t_n}.
            \end{equation*}
        
        \item\label{Item::Scaling::Proofs::Tmp::III} By Theorem \ref{Thm::Scaling::NearBdry::MainThm} \ref{Item::Scaling::NearBdry::UniformHormander},
            \(\Phi_{x,\delta,k}^{*}\delta^{\Wd_1}\Wh_1,\ldots, \Phi_{x,\delta,k}^{*}\delta^{\Wd_r}\Wh_r\)
            are H\"ormander vector fields on \(\nCube{\sigma_1}\), uniformly for \(k\in \{1,\ldots, L\}\),
            \(x\in g_{x_k}(\nmoBall{1/2})\), \(\delta\in (0,\min\{\delta_{0,l}\}]\).

        \item\label{Item::Scaling::Proofs::Tmp::IV} Define \(\hh_{x,\delta,k}\) by \(\Phi_{x,\delta,k}^{*}\Volh = \hh_{x,\delta,k}\Lambda(x,\delta)\LebDensity\).
            Then, by Theorem \ref{Thm::Scaling::NearBdry::MainThm} \ref{Item::Scaling::NearBdry::PullBackDensity},
            \begin{equation*}
                \left\{ \hh_{x,\delta,k} : k\in \{1,\ldots, L\},  x\in g_{x_k}(\nmoBall{1/2}), \delta\in (0,\min\{\delta_{0,l}\}] \right\}
                \subset \CinftybSpace[\nCube{\sigma_1}]
            \end{equation*}
            is a bounded set and
            \begin{equation*}
                \inf_{\substack{k\in \{1,\ldots, L\}
                \\  x\in g_{x_k}(\nmoBall{1/2})}}
                \inf_{\delta\in (0,\min\{\delta_{0,l}\}]}
                \inf_{t\in \nCube{\sigma_1}}
                \hh_{x,\delta,k}(t)>0.
            \end{equation*}
        
        \item\label{Item::Scaling::Proofs::Tmp::V} 
        By Theorem \ref{Thm::Scaling::NearBdry::MainThm}\ref{Item::Scaling::NearBdry::xBallsInImage},\ref{Item::Scaling::NearBdry::Containments}
        and the properties of \(g_{x_k}\) and the choices of \(c_1\), \(c_2\), \(\sigma_3\), \(\sigma_4\),
        we have
        \(\forall k\in \{1,\ldots,L\}\), \(\forall x\in g_{x_k}\left( \nmoBall{1/2} \right)\), 
            \(\forall \delta\in (0,\min\{\delta_0,l\}]\)
        \begin{equation*}
            \BWhWd{x}{c_1\delta}\cap \ManifoldN \subseteq \Phi_{x,\delta,k}\left( \nCubegeq{\sigma_2/2} \right),
        \end{equation*}
        and \(\forall y\in \BWhWd{x}{c_1\delta}\subseteq \Phi_{x,\delta,k}\left( \nCubegeq{\sigma_2/2} \right)\),
        \begin{equation*}
        \begin{split}
             &\BWhWd{y}{c_2\delta}\cap \ManifoldN
             \subseteq \Phi_{x,\delta,k}\left( \CubeCenteredgeq{\Phi_{x,\delta,k}^{-1}(y)}{\sigma_4} \right)
             \subseteq \Phi_{x,\delta,k}\left( \CubeCenteredgeq{\Phi_{x,\delta,k}^{-1}(y)}{\sigma_3} \right)
             \subseteq \BWWd{y}{\delta}.
        \end{split}
        \end{equation*}
        and 
        \begin{equation*}
            \begin{split}
                 &\BWhWd{y}{c_2\delta}
                 \subseteq \Phi_{x,\delta,k}\left( \CubeCentered{\Phi_{x,\delta,k}^{-1}(y)}{\sigma_4} \right)
                 \subseteq \Phi_{x,\delta,k}\left( \CubeCentered{\Phi_{x,\delta,k}^{-1}(y)}{\sigma_3} \right)
                 \subseteq \BWhWd{y}{\delta}.
            \end{split}
            \end{equation*}

    \end{enumerate}

    For \(x\in g_{x_k}\left( \nmoBall{1/2} \right)\),
    \(\delta\in (0,\min\{\delta_{0,k}]\),
    and \(y\in \BWhWd{x}{c_1\delta}\cap \ManifoldN\subseteq \Phi_{x,\delta,k}(\sigma_2/2)\) set
    \begin{equation*}
        \Psih_{y,x,\delta,k}(t):=\Phi_{x,\delta,k}\left( t+\Phi_{x,\delta,k}^{-1}(y) \right),
    \end{equation*}
    so that \(\Psih_{y,x,\delta,k}(0)=y\).

    Let \(\ch_0=\ch_0(y,x,\delta,k)\) equal \(-1\) times the \(n\)-th component of \(\Phi_{x,\delta,k}^{-1}(y)\).
    Since \(|\Phi_{x,\delta,k}^{-1}(y)|_{\infty}<\sigma_2/2=\sigma_1/4\)
    and \(\sigma_3\leq \sigma_2/2\), we have
    \begin{equation*}
        \Psih_{y,x,\delta,k}:\nCube{\sigma_3}\xrightarrow{\sim} \Psih_{y,x,\delta,k}(\nCube{\sigma_3})\subseteq \ManifoldM.
    \end{equation*}
    Using \ref{Item::Scaling::Proofs::Tmp::I}, we have
    \begin{equation*}
        \Psih_{y,x,\delta,k}(\nCubegeq{\sigma_3}[\ch_0])=\Psih_{y,x,\delta,k}(\nCube{\sigma_3})\cap \ManifoldN,
        \quad
        \Psih_{y,x,\delta,k}(\nCubeeq{\sigma_3}[\ch_0])=\Psih_{y,x,\delta,k}(\nCube{\sigma_3})\cap \BoundaryN.
    \end{equation*}
    By \ref{Item::Scaling::Proofs::Tmp::V}, we have
    \begin{equation*}
    \begin{split}
         &\BWhWd{y}{c_2\delta}\cap \ManifoldN 
         \subseteq \Psih_{y,x,\delta,k}\left( \nCubegeq{\sigma_4}[\ch_0] \right)
         \subseteq \Psih_{y,x,\delta,k}\left( \nCubegeq{\sigma_3}[\ch_0] \right)
         \subseteq \BWWd{y}{\delta}
    \end{split}
    \end{equation*}
    and
    \begin{equation*}
        \begin{split}
             &\BWhWd{y}{c_2\delta}
             \subseteq \Psih_{y,x,\delta,k}\left( \nCube{\sigma_4} \right)
             \subseteq \Psih_{y,x,\delta,k}\left( \nCube{\sigma_3} \right)
             \subseteq \BWhWd{y}{\delta}.
        \end{split}
        \end{equation*}

    For any \(\delta\in (0,\min\{\delta_0,k\}]\), \(y\in \left( \bigcup_{x\in \QCompact} \BWhWd{x}{c_1\delta} \cap \ManifoldN \right)\),
    pick and \(x\in \QCompact\) such that \(y\in \BWhWd{x}{c_1\delta}\cap \ManifoldN\)
    and pick and \(k\) such that \(x\in g_{x_k}(\nmoBall{1/2})\). Set
    \begin{equation*}
        \Psi_{x,\delta}(t):=\Psih_{y,x,\delta,k}(t/\sigma_3).
    \end{equation*}
    With \(c_0=\ch_0/\sigma_3\) and using \(\sigma_4=\eta_1\sigma_3\),
    \ref{Item::Scaling::MainResult::ImageOf0}-\ref{Item::Scaling::MainResults::hxdeltaPositive} follow
    with \(x\) replaced by \(y\).
    Thus, by taking \(\delta_1\leq \min \{\delta_{0,k}\}\), we have proved the entire result,
    except for \ref{Item::Scaling::MainResult::EstimateVolume}, \ref{Item::Scaling::MainResult::EstimateVolumeWedge1}, \ref{Item::Scaling::MainResult::Doubling},
    and \ref{Item::Scaling::MainResult::DoublingWedge} when
    \begin{equation*}
        x\in \left( \bigcup_{x_0\in \QCompact} \BWhWd{x_0}{c_1\delta} \right)\cap \ManifoldN.
    \end{equation*}

    We apply Theorem \ref{Thm::Scaling::WithoutBoundary::MainThm} to \(\WhWd\),
    with \(\Compact\) and \(\Omega\) as given,
     \(\XhXd\)
    as in Remark \ref{Rmk::Scaling::WithoutBoundary::XXdAlwaysExists}, and \(\zeta=1\).
    Let \(\delta_0>0\) and \(\Phi_{x,\delta}\) be as in that theorem.

    Take \(x\in \Compact\), \(\delta\in \left(0, 2\delta_0/c_1\wedge 2\delta_2/c_1\wedge \min\{\delta_{0,k}\}\right]\).
    If \(\exists x_0\in \QCompact\) with \(\BWWd{x}{c_1\delta/2}\subseteq \BWWd{x_0}{c_1\delta}\),
    then \(x\in \BWhWd{x_0}{c_1\delta}\cap \ManifoldN\) and we have established 
    \ref{Item::Scaling::MainResult::ImageOf0}-\ref{Item::Scaling::MainResults::hxdeltaPositive} for this \(x\)
    and \(\delta\).

    Otherwise, by Lemma \ref{Lemma::Scaling::Proof::TwoCases} we have
    \(\BWWd{x}{c_1\delta/2}\cap \BoundaryN=\emptyset\); and therefore
    \(\BWWd{x}{c_1\delta/2}\subseteq \InteriorN\). It follows from the definitions (using that \(\ManifoldN\) is a
    closed co-dimension \(0\)
    embedded submanifold of \(\ManifoldM\)) that in this case
    \(\BWWd{x}{c_1\delta/2}=\BWhWd{x}{c_1\delta/2}\subseteq \InteriorN\).
    Set
    \begin{equation*}
        \psi_{x,\delta}(t):=\Phi_{x,c_1\delta/2}(t/\sqrt{n}).
    \end{equation*}

    Let \(\eta_1\in (0,1/2)\). Let \(\xih_0\) be \(\xi_0\) from 
    Theorem \ref{Thm::Scaling::WithoutBoundary::MainThm} \ref{Item::Scaling::WithoutBdry::HardContainment} with
    \(\eta_0=\eta_1/\sqrt{n}\). We have, with \(\xi_0=c_1\xih_0/2\),
    \begin{equation*}
    \begin{split}
         &\BWhWd{x}{\xi_0 \delta}=\BWhWd{x}{c_1 \xih_0 \delta/2}
         \subseteq \Phi_{x,c_1\delta/2}\left( \nBall{\eta_0} \right)
         =\psi_{x,\delta}(\nBall{\eta_1})
         \\&\subseteq \psi_{x,\delta}(\nCube{\eta_1})
         \subseteq \psi_{x,\delta}(\nCube{1})
         \subseteq \Phi_{x,c_1\delta/2}\left( \nBall{1} \right)
         \subseteq \BWhWd{x}{c_1\delta/2}\subseteq \BWhWd{x}{\delta}.
    \end{split}
    \end{equation*}
    Since \( \BWhWd{x}{c_1\delta/2}=\BWWd{x}{c_1\delta/2}\), we also have
    \begin{equation*}
        \begin{split}
             &\BWhWd{x}{\xi_0 \delta}
            \subseteq \psi_{x,\delta}(\nCube{\eta_1})
             \subseteq \psi_{x,\delta}(\nCube{1})
             \subseteq \BWWd{x}{c_1\delta/2}\subseteq \BWWd{x}{\delta},
        \end{split}
        \end{equation*}
    which establishes \ref{Item::Scaling::MainResult::Containments} with \(c_0=-1\);
    and similarly \ref{Item::Scaling::MainResult::Existencec0} holds with \(c_0=-1\)
    since \(\psi_{x,\delta}(\nCube{1})
             \subseteq \BWWd{x}{c_1\delta/2}\subseteq \InteriorN\).
    \ref{Item::Scaling::MainResult::ImageOf0}, \ref{Item::Scaling::MainResult::Diffeo}, \ref{Item::Scaling::MainResults::UniformHormander},
    \ref{Item::Scaling::MainResults::PulledBackIsPartialxn}, \ref{Item::Scaling::MainResults::hxdeltaSmooth},
    and \ref{Item::Scaling::MainResults::hxdeltaPositive}
    follow from the corresponding results for \(\Phi_{x,c_1\delta/2}\) in Theorem \ref{Thm::Scaling::WithoutBoundary::MainThm}.

    Thus far, we have shown \(\exists \deltah_1>0\) such that all of the result except for
    \ref{Item::Scaling::MainResult::EstimateVolume}, \ref{Item::Scaling::MainResult::EstimateVolumeWedge1}, \ref{Item::Scaling::MainResult::Doubling},
    and \ref{Item::Scaling::MainResult::DoublingWedge} holds for \(x\in \Compact\)
    and \(\delta\in (0,\deltah_1]\).

    \ref{Item::Scaling::MainResult::EstimateVolume}:
    For \(x\in \Compact\), \(\delta\in (0,\deltah_1]\) let \(\xi_1\in (0,1]\) be as in
    \ref{Item::Scaling::MainResult::Containments} with \(\eta_1=1/4\).
    We have,
    \begin{equation*}
    \begin{split}
         & \Vol[\BWWd{x}{\xi_1\delta}]
         \leq \Vol[\BWhWd{x}{\xi_1\delta}\cap \ManifoldN]
         \lesssim \Vol\left( \psi_{x,\delta}\left( \nCubegeq{1}[c_0] \right) \right)
         \lesssim \Vol[\BWWd{x}{\delta}].
    \end{split}
    \end{equation*}
    But, by \ref{Item::Scaling::MainResults::hxdeltaSmooth} and \ref{Item::Scaling::MainResults::hxdeltaPositive},
    \begin{equation*}
        \Vol\left( \psi_{x,\delta}\left( \nCubegeq{1}[c_0] \right) \right)
        \approx \int_{\nCubegeq{1}[c_0]} h_{x,\delta}(t) \Lambda(x,\delta)\: dt
        \approx \Lambda(x,\delta) \LebDensity[\nCubegeq{1}[c_0]]
        \approx \Lambda(x,\delta).
    \end{equation*}
    We conclude
    \begin{equation*}
        \Vol[\BWWd{x}{\xi_1\delta}]\lesssim \Lambda(x,\delta)\lesssim \Vol[\BWWd{x}{\delta}].
    \end{equation*}
    Since \(\Lambda(x,\delta)\approx \Lambda(x,\xi_1\delta)\) (for \(x\in \Compact\) and all \(\delta>0\) by the formula
    for \(\Lambda\)),
    this implies for \(\delta\in (0,\xi_1\deltah_1]\) and \(x\in \Compact\),
    \begin{equation*}
        \Vol[\BWWd{x}{\delta}]\approx \Lambda(x,\delta),
    \end{equation*}
    establishing  \ref{Item::Scaling::MainResult::EstimateVolume} with \(\delta_1:=\xi_1\deltah_1\).

    \ref{Item::Scaling::MainResult::Doubling} follows from \ref{Item::Scaling::MainResult::EstimateVolume}
    and the formula for \(\Lambda(x,\delta)\).

    Using \(x\mapsto \Lambda(x,\delta_1)\) is continuous, we have
    \begin{equation}\label{Eqn::Scaling::Proof::TmpInf}
        \inf_{x\in \Compact}\Lambda(x,\delta_1)>0.
    \end{equation}
    With \eqref{Eqn::Scaling::Proof::TmpInf} in hand, \ref{Item::Scaling::MainResult::EstimateVolumeWedge1}
    follows from \ref{Item::Scaling::MainResult::EstimateVolume}.
    Finally, \ref{Item::Scaling::MainResult::DoublingWedge} follows from \ref{Item::Scaling::MainResult::EstimateVolumeWedge1}
    and the formula for \(\Lambda(x,\delta)\).
\end{proof}

\section{Metrics}
A main consequence of Theorem \ref{Thm::Scaling::MainResult} is that several ways of introducing
metrics from H\"ormander vector fields with formal degrees give locally Lipschitz equivalent metrics both on the
interior and near non-characteristic points on the boundary. In this section, we state and prove these results.
We rephrase these results in the language of sheaves in  Section \ref{Section::Sheaves::Metrics}; and defer discussion of metrics
on the boundary to Section \ref{Section::Sheaves::Metrics} as it is best described in the language of sheaves.

\begin{definition}\label{Defn::Metrics::Intro::Equivalent}
    Let \(X\) be a set and let \(\rho_1,\rho_2:X\times X\rightarrow [0,\infty]\)
    be extended metrics on \(X\). We say \(\rho_1\) and \(\rho_2\) are \textit{Lipschitz equivalent} on \(X\)
    if \(\exists C\geq 1\),
    \begin{equation*}
        C^{-1}\rho_2(x,y)\leq \rho_1(x,y)\leq C\rho_2(x,y),\quad \forall x,y\in X.
    \end{equation*}
\end{definition}

\begin{definition}\label{Defn::Metrics::Intro::LocallyEquivalent}
    Let \(X\) be a topological space and let \(\rho_1,\rho_2:X\times X\rightarrow [0,\infty]\)
    be two extended metrics on \(X\) (not necessarily compatible with the topology).
    We say \(\rho_1\) and \(\rho_2\) are \textit{locally Lipschitz equivalent} on \(X\)
    if \(\forall x\in X\), there exists an open neighborhood \(U\) of \(x\),
    such that \(\rho_1\big|_{U\times U}\) and \(\rho_2\big|_{U\times U}\)
    are equivalent on \(U\).
\end{definition}

\begin{remark}\label{Rmk::Metrics::lemmas::ExtendedMetricTop}
    An extended metric induces a topology on a set \(X\)
    in the same way a metric does: \(U\subseteq X\) is open if and only if \(\forall x\in U\), \(\exists \delta>0\), \(B_{\rho}(x,\delta):=\{y\in X : \rho(x,y)<\delta\}\subseteq U\).
    The extended metric is finite (and therefore a metric) on each connected component (with respect to this topology).
    Indeed, if \(X_0\) is a connected component of \(X\), then
    for each \(x\in X_0\), \(\left\{ y\in X_0 : \rho_1(x,y)<\infty \right\}\)
    and \(\left\{ y\in X_0 :\rho(x,y)=\infty \right\}\) are disjoint open sets whose union is \(X_0\);
    and therefore one is empty.
    Since \(x\in \left\{ y\in X_0 : \rho(x,y)<\infty \right\}\), we conclude
    \(\left\{ y\in X_0 :\rho_1(x,y)<\infty \right\}=X_0\).
\end{remark}


    \subsection{Main results}\label{Section::Metrics::MainResults}
    Let \(\ManifoldN\) be manifold (possibly with boundary\footnote{When \(\ManifoldN\) does not have boundary, then
\(\ManifoldNncWWd=\ManifoldN\): every boundary point is \(\WWd\)-non-characteristic, vacuously.}) and let
\(\WWd=\left\{ \left( W_1,\Wd_1 \right),\ldots, \left( W_r,\Wd_r \right) \right\}\subseteq \VectorFieldsN\times \Zg\)
be H\"ormander vector fields with formal degrees on \(\ManifoldN\).
As in \eqref{Eqn::BasicDefns::CCBallAndMetric} we obtain an extended metric \(\MetricWWd\) on \(\ManifoldN\).

\begin{theorem}\label{Thm::Metrics::Results::GivesUsualTopology}
    \(\MetricWWd\) is a metric on each connected component of \(\ManifoldNncWWd\) and the  topology induced by \(\MetricWWd\)
    agrees with the topology on \(\ManifoldNncWWd\) as a manifold.
\end{theorem}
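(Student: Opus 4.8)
The plan is to deduce Theorem \ref{Thm::Metrics::Results::GivesUsualTopology} from the quantitative containments established in Propositions \ref{Prop::Scaling::EquivTop::Interior} and \ref{Prop::Scaling::EquivTop::Boundary}, which already encode the fact that Carnot--Carath\'eodory balls are ``sandwiched'' between Euclidean cubes (or half-cubes, near the boundary) of comparable size. First I would dispose of the metric-versus-extended-metric issue: by Remark \ref{Rmk::Metrics::lemmas::ExtendedMetricTop}, \(\MetricWWd\) is automatically finite on each connected component of \(\ManifoldNncWWd\) with respect to the topology it induces, so it suffices to prove that this induced topology coincides with the manifold topology on \(\ManifoldNncWWd\). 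Since both statements are local, fix \(x_0 \in \ManifoldNncWWd\). There are two cases: either \(x_0 \in \InteriorN\), or \(x_0 \in \BoundaryNncWWd\).

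In the interior case, choose a coordinate chart around \(x_0\) identifying a neighborhood with \(\nCube{a}\) and \(x_0\) with \(0\), pushing forward the \(W_j\) to bounded smooth vector fields \(\Wh_j\) on \(\nCube{a}\) (rescaling the chart if necessary so the \(\CinftybSpace\)-norms are finite); H\"ormander's condition at \(x_0\) gives \eqref{Eqn::Scaling::EquivTop::HorCond}. Applying Proposition \ref{Prop::Scaling::EquivTop::Interior} with, say, \(y = 0\) and \(a_2\) ranging over \((0,a_1]\): for every sufficiently small \(\delta\) there is \(a_3 = a_3(a_2,\delta) > 0\) with \(\CubeCentered{0}{a_3} \subseteq \BWWd{0}{\delta} \subseteq \nCube{a_2}\). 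The right containment shows that every manifold-neighborhood of \(x_0\) contains a \(\MetricWWd\)-ball \(\BWWd{x_0}{\delta}\), so \(\MetricWWd\)-open sets are manifold-open at \(x_0\); the left containment shows every \(\MetricWWd\)-ball \(\BWWd{x_0}{\delta}\) contains a Euclidean cube \(\CubeCentered{0}{a_3}\), i.e.\ a manifold-neighborhood of \(x_0\), so manifold-open sets are \(\MetricWWd\)-open at \(x_0\). Hence the two topologies agree near an interior point.

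For \(x_0 \in \BoundaryNncWWd\), the argument is identical but uses Proposition \ref{Prop::Scaling::EquivTop::Boundary} in place of Proposition \ref{Prop::Scaling::EquivTop::Interior}. Here I would choose a boundary chart identifying a neighborhood of \(x_0\) with \(\nCubegeq{a}\), with \(x_0 \mapsto 0\); since \(x_0\) is \(\WWd\)-non-characteristic, Remark \ref{Rmk::BasicDefns::NonCharInTermsOfWj} provides an index \(j_0\) with \(\Wd_{j_0} = \degBoundaryNWWd[x_0]\) and \(W_{j_0}(x_0) \notin \TangentSpace{\BoundaryN}{x_0}\); relabeling this as \(\Wh_0\) and shrinking the chart so that the hypotheses \(|b_0^n(0)| \geq c_0 > 0\) and \eqref{Eqn::Scaling::NearBdry::NonCharLowerDegreesVanish} (the lower-degree vector fields tangent to the boundary) hold, Proposition \ref{Prop::Scaling::EquivTop::Boundary} yields \(\CubeCenteredgeq{0}{a_3} \subseteq \BWWd{0}{\delta} \subseteq \nCubegeq{a_2}\) for all small \(\delta\). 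The same sandwiching argument as in the interior case, now with half-cubes \(\nCubegeq{\cdot}\) playing the role of the relative manifold neighborhoods of a boundary point, shows the \(\MetricWWd\)-topology and the manifold topology agree near \(x_0\). Combining the two cases over all \(x_0 \in \ManifoldNncWWd\) proves the topologies agree, and finiteness on connected components then follows from Remark \ref{Rmk::Metrics::lemmas::ExtendedMetricTop}.

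The main obstacle is not the topological deduction, which is routine once the containments are in hand, but rather the careful setup at a non-characteristic boundary point: one must verify that the defining hypotheses of Proposition \ref{Prop::Scaling::EquivTop::Boundary} --- in particular that \emph{some} vector field of minimal formal degree is transverse to \(\BoundaryN\) while all those of strictly smaller degree are tangent --- can actually be arranged in a chart, which is exactly the content of the non-characteristic condition together with Remark \ref{Rmk::BasicDefns::NonCharInTermsOfWj}. A minor additional point is to confirm that the manifold topology of \(\ManifoldNncWWd\) as an open submanifold of \(\ManifoldN\) is the subspace topology, so that working in boundary charts is legitimate; this is immediate from the definition of \(\ManifoldNncWWd\).
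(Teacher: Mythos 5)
Your proposal follows the alternative route the paper itself sketches at the end of its proof: rather than invoking Theorem~\ref{Thm::Scaling::MainResult}, apply Propositions~\ref{Prop::Scaling::EquivTop::Interior} and~\ref{Prop::Scaling::EquivTop::Boundary} directly in local coordinates with \(y=0\), treat the interior and non-characteristic boundary points separately, and use Remark~\ref{Rmk::Metrics::lemmas::ExtendedMetricTop} for the finiteness claim. The paper notes this amounts to the same proof, since those propositions are exactly what underlies Theorem~\ref{Thm::Scaling::MainResult}. So the overall structure matches, and the argument is correct.

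Two small inaccuracies are worth fixing. First, you have the attributions of the two containments to the two topology-comparison directions swapped. The right containment \(\BWWd{x_0}{\delta}\subseteq\nCube{a_2}\) (resp.\ \(\nCubegeq{a_2}\)) shows that any manifold-open set, being a union of coordinate cubes, contains a CC-ball around each of its points; hence manifold-open sets are CC-open, i.e., the CC-topology is at least as fine. The left containment \(\CubeCentered{x_0}{a_3}\subseteq\BWWd{x_0}{\delta}\) shows that any CC-ball contains a coordinate cube, hence CC-open sets are manifold-open, i.e., the manifold topology is at least as fine. As written, you have exactly the reverse in both places; since you cite both containments and conclude that the topologies agree, the final conclusion is unaffected, but the two intermediate ``so'' clauses are false as stated. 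Second, Proposition~\ref{Prop::Scaling::EquivTop::Boundary} only requires the single transversality hypothesis \(|b_0^n(0)|\geq c_0>0\) for the distinguished vector field \(\Wh_0\); it does not require \eqref{Eqn::Scaling::NearBdry::NonCharLowerDegreesVanish}, which is a hypothesis of the scaling Theorem~\ref{Thm::Scaling::NearBdry::MainThm} in Section~\ref{Section::Scaling::NearBoundary} rather than of the proposition. Your setup still works (the stronger hypothesis is also satisfiable by the non-characteristic condition), but you are importing a condition that is not needed for the containment you actually invoke.
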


\begin{theorem}\label{Thm::Metrics::Results::LocalWeakEquivImpliesEquiv}
    Let \(\ManifoldNt\subseteq \ManifoldNncWWd\) be an open subset and let 
    \(\ZZd=\left\{ \left( Z_1,\Zd_1 \right),\ldots, \left( Z_s,\Zd_s \right) \right\}\subset \ManifoldNt\times \Zg\) 
    be H\"ormander vector fields
    with formal degrees on \(\ManifoldNt\). Suppose \(\ZZd\) and \(\WWd\) are locally weakly equivalent on \(\ManifoldNt\).
    Then, \(\MetricWWd\) and \(\MetricZZd\) are locally Lipschitz equivalent on \(\ManifoldNt\).
\end{theorem}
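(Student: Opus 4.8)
The plan is to prove that two locally weakly equivalent lists of Hörmander vector fields with formal degrees induce locally Lipschitz equivalent Carnot--Carathéodory metrics by reducing, via the scaling theorem and the quantitative equivalence-of-topologies propositions, to a purely local statement in coordinates near an arbitrary point $x_0 \in \ManifoldNt$. Fix such an $x_0$. I would first treat the case $x_0 \in \InteriorN$, which is already the theorem of Nagel--Stein--Wainger and is contained in the cited interior theory (it follows from Theorem \ref{Thm::Scaling::WithoutBoundary::MainThm} applied to a common refinement $\XXd$ of $\WWd$ and $\ZZd$, or directly from Proposition \ref{Prop::Scaling::EquivTop::Interior}); so the substantive new content is the case $x_0 \in \BoundaryNncWWd$.

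For $x_0 \in \BoundaryNncWWd$, the strategy is as follows. Since local weak equivalence is symmetric, it suffices to produce, on some neighborhood $U$ of $x_0$, a constant $C$ with $\MetricWWd[x][y] \le C\,\MetricZZd[x][y]$ for $x,y\in U$ (and then swap the roles of $\WWd$ and $\ZZd$). Shrinking to a small neighborhood, pick a coordinate chart $g\colon \nUnitBall \xrightarrow{\sim} g(\nUnitBall)$ with $g(0)=x_0$, $g(\nUnitBallgeq) = g(\nUnitBall)\cap\ManifoldN$, $g(\nmoUnitBall) = g(\nUnitBall)\cap\BoundaryN$, in which the pulled-back vector fields $g^*W_j$ and $g^*Z_k$ are bounded in $\CinftybSpace[\nUnitBall]$, satisfy Hörmander's condition of some common order $m$, and (using that $x_0$ is $\WWd$-non-characteristic, together with Remark \ref{Rmk::BasicDefns::NonCharInTermsOfWj}) some $g^*W_{j_0}$ is transverse to $\{x_n=0\}$ with lowest-degree transversality. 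This puts us in the hypotheses of Proposition \ref{Prop::Scaling::EquivTop::Boundary} and Proposition \ref{Prop::Scaling::EquivTop::Interior}. The key point I would extract is: \emph{weak control plus the two propositions gives a containment of balls}. Concretely, since $\GenZZd$ locally strongly controls $\WWd$ on $\ManifoldNt$, every $W_j$ is a $\CinftySpace$-combination of commutators of the $Z_k$ of no greater degree, and this has two consequences: (i) by the Picard--Lindelöf/exponential-flow comparison argument used in the proof of Theorem \ref{Thm::Scaling::WithoutBoundary::MainThm} (cf.\ \eqref{Eqn::Scaling::WithoutBdry::WWdBallsInXXdBalls}), one gets $\BWWd{x}{c\,\delta} \subseteq \BWWdx{x}{\delta}$ where $\WWdx$ is the commutator list generating the $\ZZd$-geometry, and (ii) the commutator list is strongly equivalent to $\ZZd$ so $\BWWdx{x}{\delta}$ and $\BZZd{x}{\delta}$ are comparable in both directions. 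Combining these, $\BWWd{x}{c\delta}\subseteq\BZZd{x}{\delta}$ for small $\delta$, uniformly for $x$ near $x_0$, which is exactly the inequality $\MetricZZd \lesssim \MetricWWd$; the reverse inequality comes from the symmetric argument using that $\GenWWd$ locally strongly controls $\ZZd$. The one subtlety relative to the interior case is that all these ball-containments must be taken \emph{inside} $\ManifoldN$, i.e.\ for the $\nUnitBallgeq$-restricted flows; this is handled by the reduction steps (I)--(III) in the proof of Proposition \ref{Prop::Scaling::EquivTop::Boundary} (normalizing $b_0^n\equiv 1$, $b_j^n\equiv 0$ for $j\ne j_0$) which guarantee the relevant commutator paths never cross $\{x_n=0\}$, exactly as in Lemma \ref{Lemma::Scaling::EquivTop::SjminusIsCancelled} and Lemma \ref{Lemma::Scaling::EquivTop::UnderstandDs}.

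The main obstacle is verifying that the flow-comparison argument producing $\BWWd{x}{c\delta}\subseteq\BZZd{x}{\delta}$ genuinely respects the boundary: when one expresses a $W_j$-segment as an iterated commutator of $Z_k$'s and replaces it by a product of $Z_k$-exponentials, one must ensure the resulting path stays in $\ManifoldN$. This is not automatic for an arbitrary chart, but it \emph{is} guaranteed once we use the non-characteristic normalization, because then the only vector field with nonzero $\partial_{x_n}$-component is $Z_0 = W_{j_0}$, its component is identically $1$, and the cancellation Lemma \ref{Lemma::Scaling::EquivTop::SjminusIsCancelled} shows every $e^{-tZ_0}$ is preceded by an $e^{t'Z_0}$ with $t'\ge t$, so the path stays above $\{x_n = y_n\}\ge\{x_n=0\}$ — precisely the mechanism already in the proof of Proposition \ref{Prop::Scaling::EquivTop::Boundary}. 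I would therefore organize the proof so that, after the chart reduction, the boundary case is deduced from Proposition \ref{Prop::Scaling::EquivTop::Boundary} (for the containments hugging the boundary) together with Proposition \ref{Prop::Scaling::EquivTop::Interior} (for the interior directions), applied to the commutator-generated list, with the strong-control hypothesis supplying the coefficient functions. Finally, patching: each $x_0\in\ManifoldNt$ gets such a neighborhood $U_{x_0}$ with its constant, which is exactly the assertion of local Lipschitz equivalence in the sense of Definition \ref{Defn::Metrics::Intro::LocallyEquivalent}, and no global/compactness argument is needed.
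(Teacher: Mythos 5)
The proposal is aimed in the right direction but has two genuine gaps, one conceptual and one structural.

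First, the conceptual gap: your point (ii) claims that ``the commutator list is strongly equivalent to $\ZZd$ so $\BXXd{x}{\delta}$ and $\BZZd{x}{\delta}$ are comparable in both directions.'' This is false. The commutator list $\XXd$ of $\ZZd$ (as in Remark \ref{Rmk::Scaling::WithoutBoundary::XXdAlwaysExists}) strongly controls $\ZZd$ trivially, but $\ZZd$ only \emph{weakly} controls $\XXd$: a nontrivial commutator $[Z_j,Z_k]$ is not in general a $\CinftySpace$-linear combination of the $Z_l$ of degree $\leq \Zd_j+\Zd_k$. The equivalence of balls $\BXXd{x}{\delta}\approx \BZZd{x}{\delta}$ is exactly the quantitative content of the Nagel--Stein--Wainger scaling theorem (on the interior) and of Theorem \ref{Thm::Scaling::MainResult} (near the boundary); it cannot be obtained ``for free'' from a strong-equivalence that does not hold. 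Once you fix this, the argument you outline collapses to ``apply the boundary scaling theorem,'' which is a legitimate route --- but then your claim that the case reduces to Propositions \ref{Prop::Scaling::EquivTop::Interior} and \ref{Prop::Scaling::EquivTop::Boundary} alone is too optimistic, since those propositions give a fixed Euclidean cube inside a fixed-scale ball, not the scale-invariant ball comparisons you need. In the paper, those propositions are indeed used, but only \emph{after rescaling} by the maps $\psi_{x,\delta}$ of Theorem \ref{Thm::Scaling::NearBdry::MainThm}; you have omitted this rescaling step.

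Second, the structural gap: you assert that ``no global/compactness argument is needed'' in the patching step. But producing, for each $x_0$, a neighborhood $U_{x_0}$ on which $\MetricWWd\leq C\MetricZZd$ holds \emph{for all} $x,y\in U_{x_0}$ (as Definition \ref{Defn::Metrics::Intro::LocallyEquivalent} demands) requires more than a small-distance estimate: the flow-comparison arguments only give $\MetricZZd[x][y]<\delta_0\implies \MetricWWd[x][y]\leq C\MetricZZd[x][y]$. To upgrade this to an estimate on all of $U_{x_0}\times U_{x_0}$ one either needs the diameter of $U_{x_0}$ to be controlled in both metrics, which in turn requires the equivalence of topologies established by Theorem \ref{Thm::Metrics::Results::GivesUsualTopology}, or one needs a compactness bootstrap as in Lemma \ref{Lemma::Metrics::Lemmas::QuantLocalEst}. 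The paper's proof routes through compact subsets via Lemma \ref{Lemma::Metrics::Lemmas::LocallyEquivIffEquivOnCptSets} precisely to handle this.

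By contrast, the paper decomposes the inequality $\MetricWWd\lesssim\MetricZZd$ on a compact set into three steps along the chain $\MetricWWd\lesssim\MetricWhWd\lesssim\MetricXhXd\lesssim\MetricZZd$: the first step is handled by the boundary scaling via Theorem \ref{Thm::Metrics::Results::ExtendedVFsGiveSameMetric}, the middle step is the interior scaling of Theorem \ref{Thm::Scaling::WithoutBoundary::MainThm}, and the last step (Lemma \ref{Lemma::Metrics::Proofs::LocalWeakEquiv::XhXdBoundedByZZd}) is the genuinely elementary direction, where one merely re-expresses the coefficients of a given $\ZZd$-path that already lies in $\ManifoldN$ (no new flows are constructed, so the path staying in $\ManifoldN$ is automatic). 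That last step is where the strong-control hypothesis is used directly, and it is carefully arranged so as not to require any flow to leave $\ManifoldN$. Your correct observation that flows must stay inside $\ManifoldN$ is addressed in the paper not by applying Proposition \ref{Prop::Scaling::EquivTop::Boundary} to the $Z$-list, but by avoiding the issue entirely: the one step that would need boundary-respecting flows is delegated to the already-established boundary scaling theorem for $\WWd$ alone.
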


\begin{theorem}\label{Thm::Metrics::Results::ExtendedVFsGiveSameMetric}
    Suppose \(\ManifoldM\) is a manifold without boundary such that \(\ManifoldN\subseteq \ManifoldM\)
    is a closed, co-dimension \(0\) embedded submanifold of \(\ManifoldM\). Suppose \(\WhWd=\left\{ ( \Wh_1,\Wd_1 ),\ldots, (\Wh_r,\Wd_r) \right\}\subset \VectorFieldsM\times \Zg\)
    are H\"ormander vector fields with formal degrees on \(\ManifoldM\) such that \(\Wh_j\big|_{\ManifoldN}=W_j\).
    Then, \(\MetricWhWd\) and \(\MetricWWd\) are locally Lipschitz equivalent on \(\ManifoldNncWWd\).
\end{theorem}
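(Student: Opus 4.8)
The plan is to deduce this directly from the scaling theorem, Theorem \ref{Thm::Scaling::MainResult}. One direction is free: since $\Wh_j\big|_{\ManifoldN}=W_j$, any path in $\ManifoldN$ that is admissible for $\BWWd{x}{\delta}$ (in the sense of the definition of the unit ball) is, viewed inside $\ManifoldM$, also admissible for $\BWhWd{x}{\delta}$, so $\BWWd{x}{\delta}\subseteq\BWhWd{x}{\delta}$ and hence $\MetricWhWd[x][y]\le\MetricWWd[x][y]$ for all $x,y\in\ManifoldN$. Thus the work is entirely in the reverse inequality, which must be established locally and with the loss of a multiplicative constant.

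For a fixed $x_0\in\ManifoldNncWWd$, I would first choose a compact set $\Compact$ that is a neighborhood of $x_0$ in $\ManifoldNncWWd$ (possible since $\ManifoldNncWWd$ is a manifold with boundary, hence locally compact), and a relatively compact open $\Omega$ with $\Compact\Subset\Omega\Subset\ManifoldM$; then I apply Theorem \ref{Thm::Scaling::MainResult} to this data with, say, $\eta_1=1/4$. The second containment in part \ref{Item::Scaling::MainResult::Containments} of that theorem reads $\BWhWd{x}{\xi_1\delta}\cap\ManifoldN\subseteq\BWWd{x}{\delta}$ for all $x\in\Compact$ and $\delta\in(0,\delta_1]$; this says precisely that a point of $\ManifoldN$ which is $\WhWd$-close to $x$ can be reached from $x$ by a comparably short path \emph{staying inside $\ManifoldN$}, which is exactly the nontrivial assertion here. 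A routine limiting argument (monotonicity of the balls in the radius and the definition of $\MetricWhWd$ as an infimum) then upgrades this ball containment to the metric inequality $\MetricWWd[x][y]\le\xi_1^{-1}\MetricWhWd[x][y]$, valid whenever $x\in\Compact$, $y\in\ManifoldN$, and $\MetricWhWd[x][y]<\xi_1\delta_1$.

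To finish, I would localize at $x_0$. Since $\ManifoldM$ has no boundary, the $\MetricWhWd$-balls centered at $x_0$ form a neighborhood basis of $x_0$ in $\ManifoldM$ (Chow's theorem, i.e.\ Theorem \ref{Thm::Metrics::Results::GivesUsualTopology} applied to $\ManifoldM$, where every point is vacuously non-characteristic). As $\Compact$ contains an open $\ManifoldN$-neighborhood of $x_0$, I can pick $r<\xi_1\delta_1/2$ with $\BWhWd{x_0}{r}\cap\ManifoldN\subseteq\Compact$ and set $U:=\BWhWd{x_0}{r}\cap\ManifoldNncWWd$, an open neighborhood of $x_0$ in $\ManifoldNncWWd$. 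For $x,y\in U$ one has $x\in\Compact$, $y\in\ManifoldN$, and $\MetricWhWd[x][y]\le\MetricWhWd[x][x_0]+\MetricWhWd[x_0][y]<2r<\xi_1\delta_1$ by the triangle inequality, so the inequality of the previous paragraph applies and, together with $\MetricWhWd[x][y]\le\MetricWWd[x][y]$, yields Lipschitz equivalence of $\MetricWhWd$ and $\MetricWWd$ on $U$ with constant $\xi_1^{-1}$. Since $x_0\in\ManifoldNncWWd$ was arbitrary, this is exactly the conclusion demanded by Definition \ref{Defn::Metrics::Intro::LocallyEquivalent}.

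The only serious input is Theorem \ref{Thm::Scaling::MainResult}; everything else is bookkeeping with the two topologies. The step I expect to require the most care in writing is the localization in the last paragraph: $\Compact$ is a neighborhood of $x_0$ in $\ManifoldNncWWd$ rather than in $\ManifoldM$, whereas the ball $U=\BWhWd{x_0}{r}$ lives a priori in $\ManifoldM$, so one must intersect with $\ManifoldNncWWd$ and use the containment $\BWhWd{x_0}{r}\cap\ManifoldN\subseteq\Compact$ to guarantee that the base point of each invocation of the scaling containment actually lies in $\Compact$.
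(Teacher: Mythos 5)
Your proof is correct, and the core mechanism is the same as the paper's: the free inequality $\MetricWhWd[x][y]\le\MetricWWd[x][y]$, combined with the containment $\BWhWd{x}{\xi_1\delta}\cap\ManifoldN\subseteq\BWWd{x}{\delta}$ from Theorem \ref{Thm::Scaling::MainResult}\ref{Item::Scaling::MainResult::Containments}, and the usual infimum-over-$\delta$ argument giving $\MetricWWd[x][y]\le\xi_1^{-1}\MetricWhWd[x][y]$ whenever $x\in\Compact$ and $\MetricWhWd[x][y]<\xi_1\delta_1$. Where you diverge is the final upgrade to local Lipschitz equivalence. The paper fixes a compact $\Compact\Subset\ManifoldNncWWd$ in a single connected component, then invokes Lemma \ref{Lemma::Metrics::Lemmas::QuantLocalEst} (which requires $\Compact$ to be compact \emph{with respect to} $\MetricWWd$, supplied by Theorem \ref{Thm::Metrics::Results::GivesUsualTopology}) and finally Lemma \ref{Lemma::Metrics::Lemmas::LocallyEquivIffEquivOnCptSets}. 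You instead localize directly: shrink to a $\MetricWhWd$-ball $U=\BWhWd{x_0}{r}\cap\ManifoldNncWWd$ with $r<\xi_1\delta_1/2$ so the triangle inequality forces $\MetricWhWd[x][y]<\xi_1\delta_1$ for all $x,y\in U$, and the quantitative estimate then applies uniformly. This is a bit leaner: you avoid Lemma \ref{Lemma::Metrics::Lemmas::QuantLocalEst} altogether, and the only topological input you need is Chow's theorem on the boundaryless ambient $\ManifoldM$ (to see the $\MetricWhWd$-balls form an $\ManifoldM$-neighborhood basis of $x_0$), not the subtler fact that $\MetricWWd$ gives $\Compact$ its compact topology. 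The step you flagged is indeed the only delicate point, and it is handled correctly: since $\ManifoldNncWWd$ is open in $\ManifoldN$ and $\ManifoldN$ is closed embedded in $\ManifoldM$, the interior of $\Compact$ in $\ManifoldNncWWd$ equals $V\cap\ManifoldN$ for some $V$ open in $\ManifoldM$, and shrinking $r$ so $\BWhWd{x_0}{r}\subseteq V$ gives $\BWhWd{x_0}{r}\cap\ManifoldN\subseteq\Compact$, ensuring the basepoints of the scaling containment indeed lie in $\Compact$.
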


    \subsection{Some lemmas about metrics}
    In this section, we state and prove some simple lemmas about metrics which are used in the proofs of the results
from Section \ref{Section::Metrics::MainResults}.

\begin{lemma}\label{Lemma::Metrics::Lemmas::QuantLocalEst}
    Let \(X\) be a set and \(\rho_1,\rho_2:X\times X\rightarrow [0,\infty)\) 
    be two metrics on \(X\). Suppose 
    \begin{itemize}
        \item \(X\) is compact with respect to \(\rho_1\),
        \item \(\exists \delta_0>0\), \(\exists C_1\geq 1\), \(\forall x,y\in X\)
            \begin{equation*}
                \rho_2(x,y)<\delta_0\implies \rho_1(x,y)\leq C_1\rho_2(x,y).
            \end{equation*}
    \end{itemize}
    Then, \(\exists C_2\geq 1\), such that
    \begin{equation*}
        \rho_1(x,y)\leq C_2\rho_2(x,y),\quad \forall x,y\in X.
    \end{equation*}
\end{lemma}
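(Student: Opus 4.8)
The statement is a standard compactness-upgrades-local-to-global lemma. The idea is that a metric inequality which is only known to hold when $x,y$ are close can be chained along a path of intermediate points to cover all pairs, and compactness with respect to $\rho_1$ lets us do this with a uniformly bounded number of steps. The plan is as follows.

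\textbf{Step 1: reduce to pairs that are far apart.} If $\rho_2(x,y) < \delta_0$ we are already done (with $C_1$), so it suffices to bound $\rho_1(x,y)$ when $\rho_2(x,y) \geq \delta_0$. For such pairs it is enough to prove $\rho_1(x,y) \leq M$ for some finite constant $M$ depending only on the data, since then $\rho_1(x,y) \leq M = (M/\delta_0)\,\delta_0 \leq (M/\delta_0)\,\rho_2(x,y)$, and we may take $C_2 = \max\{C_1, M/\delta_0\}$.

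\textbf{Step 2: show $(X,\rho_1)$ has finite diameter.} This is where compactness enters. First observe that the hypothesis forces the $\rho_1$-topology and the $\rho_2$-topology to be comparable locally: the implication $\rho_2(x,y) < \delta_0 \Rightarrow \rho_1(x,y) \leq C_1 \rho_2(x,y)$ shows that $\rho_2$-balls of radius $< \delta_0$ are contained in $\rho_1$-balls, so the identity map $(X,\rho_2) \to (X,\rho_1)$ is continuous. Hence $X$ is also compact for $\rho_1$; alternatively, one can argue directly. Now cover $X$ by the $\rho_1$-open balls $B_{\rho_1}(x, \delta_0/2)$, $x \in X$, extract a finite subcover indexed by $x_1,\dots,x_N$, and let $M_0 := \max_{i,j} \rho_1(x_i,x_j) < \infty$. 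For arbitrary $x,y \in X$, pick $i,j$ with $\rho_1(x,x_i) < \delta_0/2$ and $\rho_1(y,x_j) < \delta_0/2$; then by the triangle inequality $\rho_1(x,y) \leq \delta_0/2 + M_0 + \delta_0/2 =: M$, a finite constant depending only on $\delta_0$, $C_1$, and $X$. This proves $\operatorname{diam}_{\rho_1}(X) \leq M < \infty$.

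\textbf{Step 3: combine.} With $M$ from Step 2, Step 1 gives $\rho_1(x,y) \leq C_2\,\rho_2(x,y)$ for all $x,y \in X$ with $C_2 = \max\{C_1, M/\delta_0\}$, which is the claim.

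\textbf{Main obstacle.} There is no serious obstacle; the only point requiring a little care is justifying that $X$ is compact (equivalently, has finite $\rho_1$-diameter) from the hypotheses as stated — we are told $X$ is compact with respect to $\rho_1$, so this is immediate, but one should make sure the finite subcover argument in Step 2 uses only that stated compactness and the one-sided implication (note we do \emph{not} get a reverse inequality $\rho_2 \lesssim \rho_1$, and indeed none is claimed). Everything else is routine triangle-inequality bookkeeping.
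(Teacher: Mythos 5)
Your proof is correct and follows essentially the same route as the paper: split into the cases $\rho_2(x,y)<\delta_0$ and $\rho_2(x,y)\geq\delta_0$, handle the first by the hypothesis, and handle the second by bounding $\rho_1(x,y)$ by the finite $\rho_1$-diameter of $X$ and converting that to a factor of $\rho_2(x,y)/\delta_0$. One minor remark: the digression at the start of Step 2 about the identity map $(X,\rho_2)\to(X,\rho_1)$ being continuous, ``hence $X$ is also compact for $\rho_1$,'' is both unnecessary (compactness with respect to $\rho_1$ is a hypothesis, as you note) and not a valid deduction as stated (continuity of that map would transfer compactness from $(X,\rho_2)$, which is not assumed, to $(X,\rho_1)$); likewise, the finite-subcover argument for finite $\rho_1$-diameter can be replaced by the one-line observation that a compact metric space is bounded, which is all the paper uses.
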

\begin{proof}
    Since \(X\) is compact with respect to
    \(\rho_1\), 
    \begin{equation*}
        C_3:=\left\{ \rho_1(x,y) :x,y\in X \right\}<\infty.
    \end{equation*}
    We have,
    \begin{equation*}
\begin{split}
            &\rho_1(x,y)
            \leq
            \begin{cases}
                C_1\rho_2(x,y), & \rho_2(x,y)<\delta_0\\
                C_3, &\rho_2(x,y)\geq \delta_0
            \end{cases}
            \\&\leq \max\left\{ C_1, C_3/\delta_0 \right\} \rho_2(x,y).
    \end{split}    
\end{equation*}
\end{proof}

\begin{lemma}\label{Lemma::Metrics::Lemmas::CompactLocalEquivMeansEquiv}
    Let \(X\) be a compact topological space and let \(\rho_1\) and \(\rho_2\) be two metrics on \(X\)
    such that the metric topology induced by either metric is the topology on \(X\).
    Suppose \(\rho_1\) and \(\rho_2\) are locally Lipschitz equivalent on \(X\). Then, \(\rho_1\)
    and \(\rho_2\) are Lipschitz equivalent on \(X\).
\end{lemma}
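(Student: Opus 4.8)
The plan is to reduce to Lemma~\ref{Lemma::Metrics::Lemmas::QuantLocalEst} by a compactness argument of Lebesgue-number type. First I would unwind the hypothesis of local Lipschitz equivalence: for every \(x\in X\) there is an open neighborhood \(U_x\) of \(x\) and a constant \(C_x\geq 1\) with \(C_x^{-1}\rho_2\leq \rho_1\leq C_x\rho_2\) on \(U_x\times U_x\). Since \(\rho_1\) induces the topology of \(X\) and \(U_x\) is open, I can choose \(r_x>0\) with \(B_{\rho_1}(x,2r_x)\subseteq U_x\). Compactness of \(X\) then yields finitely many points \(x_1,\ldots,x_N\) with \(X=\bigcup_{i=1}^N B_{\rho_1}(x_i,r_{x_i})\); I set \(\delta_0:=\min_i r_{x_i}>0\) and \(C:=\max_i C_{x_i}\).

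The key step is the local estimate: if \(\rho_1(x,y)<\delta_0\), then \(\rho_2(x,y)\leq C\,\rho_1(x,y)\). Indeed, \(x\) lies in some \(B_{\rho_1}(x_i,r_{x_i})\), and then the triangle inequality gives \(\rho_1(x_i,y)<r_{x_i}+\delta_0\leq 2r_{x_i}\), so both \(x\) and \(y\) lie in \(B_{\rho_1}(x_i,2r_{x_i})\subseteq U_{x_i}\); the local comparison on \(U_{x_i}\times U_{x_i}\) finishes it.

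With this in hand I would invoke Lemma~\ref{Lemma::Metrics::Lemmas::QuantLocalEst} with the roles of \(\rho_1\) and \(\rho_2\) interchanged: \((X,\rho_2)\) is compact because \(\rho_2\) metrizes the compact space \(X\), and the estimate just established is exactly the hypothesis ``\(\rho_1(x,y)<\delta_0\implies \rho_2(x,y)\leq C\,\rho_1(x,y)\)'' of that lemma (read with \(\rho_1,\rho_2\) swapped). The conclusion is a global bound \(\rho_2\leq C_2\,\rho_1\) on \(X\times X\). Since the hypotheses of the present lemma are symmetric in \(\rho_1\) and \(\rho_2\), running the same argument with the two metrics swapped gives \(\rho_1\leq C_2'\,\rho_2\) on \(X\times X\), and taking the larger of the two constants yields the asserted Lipschitz equivalence.

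I do not expect a serious obstacle here; the only point needing a little care is keeping straight which metric does which job --- \(\rho_1\) is used to shrink the neighborhoods \(U_x\) to metric balls so that the Lebesgue-number argument runs, while \(\rho_2\) supplies the compactness needed to apply Lemma~\ref{Lemma::Metrics::Lemmas::QuantLocalEst}. Both uses are legitimate precisely because each \(\rho_i\) induces the topology of the compact space \(X\), which also guarantees each metric is bounded (so that the constant \(C_3\) appearing in the proof of Lemma~\ref{Lemma::Metrics::Lemmas::QuantLocalEst} is finite).
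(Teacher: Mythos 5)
Your proof is correct and follows essentially the same strategy as the paper: obtain a uniform local comparison constant by compactness and then invoke Lemma~\ref{Lemma::Metrics::Lemmas::QuantLocalEst}. The only variation is in the local-to-uniform step: the paper covers $X$ by $\rho_2$-balls $B_{\rho_2}(x,\delta_x)$ inside the local-equivalence neighborhoods and appeals to a Lebesgue number for that finite subcover, whereas you cover by $\rho_1$-balls of half the radius and use the doubled-radius trick (if $x\in B_{\rho_1}(x_i,r_{x_i})$ and $\rho_1(x,y)<\delta_0\leq r_{x_i}$, then both points lie in $B_{\rho_1}(x_i,2r_{x_i})\subseteq U_{x_i}$). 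These are interchangeable; your version avoids invoking the Lebesgue number lemma and makes explicit that the local comparison is being used for the pair $(x,y)$ rather than for pairs centered at $x_i$, which the paper's phrasing leaves slightly implicit. You also apply Lemma~\ref{Lemma::Metrics::Lemmas::QuantLocalEst} with the roles of $\rho_1$ and $\rho_2$ swapped and then symmetrize, while the paper applies it directly to get $\rho_1\lesssim\rho_2$ and then symmetrizes; again the same content.
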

\begin{proof}
    We will show \(\rho_1(x,y)\lesssim \rho_2(x,y)\), \(\forall x,y\in X\); and the reverse inequality
    follows by symmetry.

    Since \(\rho_1\) and \(\rho_2\) are locally Lipschitz equivalent, \(\forall x\in X\), \(\exists \delta_x>0\), \(\exists C_x\geq 1\),
    such that \(\rho_2(x,y)<\delta_x\implies \rho_1(x,y)\leq C_x\rho_2(x,y)\).
    Extract from the cover \(B_{\rho_2}(x,\delta_x)\) a finite subcover
    \(B_{\rho_2}(x_j, \delta_{x_j})\), \(j=1,\ldots,L\).
    Set \(C_1=\max C_{x_j}\). Then if \(\delta_0>0\) is a Lebesgue number (with respect to \(\rho_2\)) for this subcover,
    we have \(\rho_2(x,y)<\delta_0\implies \rho_1(x,y)\leq C_1\rho_2(x,y)\).

    Since \(X\) is compact with respect to \(\rho_1\), by assumption, the claim \(\rho_1(x,y)\lesssim \rho_2(x,y)\), \(\forall x,y\in X\),
    follows from Lemma \ref{Lemma::Metrics::Lemmas::QuantLocalEst}.
\end{proof}

\begin{lemma}\label{Lemma::Metrics::Lemmas::LocallyEquivIffEquivOnCptSets}
    Let \(X\) be a locally compact topological space and let \(\rho_1\) and \(\rho_2\) be two extended metrics on \(X\)
    such that the topology induced by either extended metric
    is the topology on \(X\).
    Then, \(\rho_1\) and \(\rho_2\) are locally Lipschitz equivalent on \(X\)
    if and only if for each  compact set \(\Compact\Subset X\) lying in a single connected component of \(X\),
    \(\rho_1\) and \(\rho_2\) are Lipschitz equivalent on \(\Compact\).
\end{lemma}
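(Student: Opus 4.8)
The plan is to prove the two implications separately, in each case passing through a compact set contained in a single connected component as a bridge: on such a set the (a priori extended) metrics are genuine finite metrics by Remark \ref{Rmk::Metrics::lemmas::ExtendedMetricTop}, so the already-proved Lemma \ref{Lemma::Metrics::Lemmas::CompactLocalEquivMeansEquiv} becomes available.

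For the forward implication, suppose $\rho_1$ and $\rho_2$ are locally Lipschitz equivalent on $X$, and let $\Compact \Subset X$ be compact and contained in a single connected component $X_0$ of $X$. By Remark \ref{Rmk::Metrics::lemmas::ExtendedMetricTop} the restrictions $\rho_1|_{X_0}$ and $\rho_2|_{X_0}$ are finite, so $\rho_1|_{\Compact}$ and $\rho_2|_{\Compact}$ are honest metrics on the compact topological space $\Compact$, and each induces the subspace topology on $\Compact$ (a restriction of a metric inducing the topology of $X$ induces the subspace topology). Local Lipschitz equivalence on $X$ restricts to local Lipschitz equivalence on $\Compact$: given $y \in \Compact$, choose an $X$-open $U \ni y$ on which the two metrics are Lipschitz equivalent (Definition \ref{Defn::Metrics::Intro::LocallyEquivalent}); then $U \cap \Compact$ is a neighborhood of $y$ in $\Compact$ on which they are Lipschitz equivalent (Definition \ref{Defn::Metrics::Intro::Equivalent}). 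Now Lemma \ref{Lemma::Metrics::Lemmas::CompactLocalEquivMeansEquiv}, applied with the compact space taken to be $\Compact$, gives that $\rho_1$ and $\rho_2$ are Lipschitz equivalent on $\Compact$.

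For the converse, suppose $\rho_1$ and $\rho_2$ are Lipschitz equivalent on every compact subset of $X$ that lies in a single connected component, and fix $x \in X$. Using local compactness, choose a compact neighborhood $K$ of $x$; replacing $K$ by its intersection with the connected component $X_0$ of $x$ (which is closed, hence this intersection is again compact, and is a neighborhood of $x$ for the locally connected spaces $\ManifoldNncWWd$ to which we apply the lemma, where components are open), we may assume $K \subseteq X_0$. By hypothesis there is a constant $C \ge 1$ with $C^{-1}\rho_2(a,b) \le \rho_1(a,b) \le C \rho_2(a,b)$ for all $a,b \in K$, so $\rho_1$ and $\rho_2$ are Lipschitz equivalent on the open neighborhood $\mathrm{Int}(K)$ of $x$. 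Since $x$ was arbitrary, $\rho_1$ and $\rho_2$ are locally Lipschitz equivalent on $X$.

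The main obstacle is the neighborhood/component bookkeeping in the converse: the hypothesis controls only compact sets inside a single connected component, whereas local Lipschitz equivalence requires control on a genuine open neighborhood of each point. The resolution is to produce, around each $x$, a compact neighborhood lying in the connected component of $x$ — this is exactly where local compactness is used (together with local connectedness of the ambient manifold, so that components are open and the intersection with a component is still a neighborhood). The forward direction is comparatively routine once one notices that restricting to a compact set inside one component converts the extended metrics into metrics and makes Lemma \ref{Lemma::Metrics::Lemmas::CompactLocalEquivMeansEquiv} applicable.
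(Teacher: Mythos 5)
Your forward implication is essentially the paper's argument: take a compact $\Compact$ in a component $X_0$, use Remark \ref{Rmk::Metrics::lemmas::ExtendedMetricTop} to see $\rho_1,\rho_2$ restrict to genuine metrics inducing the subspace topology on $\Compact$, note that local Lipschitz equivalence passes to the compact subspace, and close with the compactness lemma. (You invoke Lemma \ref{Lemma::Metrics::Lemmas::CompactLocalEquivMeansEquiv}, which is indeed the content being used; the paper's printed proof cites Lemma \ref{Lemma::Metrics::Lemmas::QuantLocalEst} there, apparently a slip, since the hypotheses supplied are those of Lemma \ref{Lemma::Metrics::Lemmas::CompactLocalEquivMeansEquiv}.)

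For the converse you have done something the paper does not: you filled in the ``follows immediately from the definitions'' and, in doing so, correctly isolated a hypothesis that the lemma's statement omits. Local compactness yields a compact neighborhood $K$ of $x$, but nothing forces $K$ to lie in a single connected component, so the hypothesis cannot be applied to $K$ directly; passing to $K\cap X_0$ keeps compactness (components are closed) but yields a neighborhood of $x$ only if $X_0$ is open, i.e.\ only if $X$ is locally connected at $x$. This is not cosmetic: for $X=\{0\}\cup\{1/n : n\in\Zg\}\subset\R$ with $\rho_1$ the Euclidean metric and $\rho_2(1/n,1/m)=|n^{-2}-m^{-2}|$, $\rho_2(1/n,0)=n^{-2}$, both metrics induce the topology of $X$, the components are singletons so the ``Lipschitz equivalent on compacts in a component'' hypothesis is vacuous, yet $\rho_1$ and $\rho_2$ are not locally Lipschitz equivalent at $0$ since $\rho_1(1/n,1/(n{+}1))/\rho_2(1/n,1/(n{+}1))\to\infty$. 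So the ``if'' direction as stated needs $X$ locally connected (or, what amounts to the same thing, open components); this holds wherever the paper applies the lemma ($X$ is always an open subset of a manifold), so no downstream result is affected. Your proof, read as a proof for locally connected $X$, is complete and correct, and your parenthetical is exactly the right flag to raise.
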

\begin{proof}
    Suppose \(\rho_1\) and \(\rho_2\) are locally Lipschitz equivalent on \(X\),
    \(X_0\) is a connected component of \(X\),
    and \(\Compact\Subset X_0\)
    is a compact set. Then, \(\rho_1\) and \(\rho_2\) are both finite on \(X_0\) (see Remark \ref{Rmk::Metrics::lemmas::ExtendedMetricTop}).
    Thus, \(\rho_1\) and \(\rho_2\) are metrics on \(\Compact\), are locally Lipschitz equivalent, and the metric topologies
    induced \(\rho_1\) and \(\rho_2\) agree with the subspace topology on \(\Compact\).
    Lemma \ref{Lemma::Metrics::Lemmas::QuantLocalEst} shows \(\rho_1\) and \(\rho_2\)
    are Lipschitz equivalent on \(\Compact\).

    The reverse implication follows immediately from the definitions.
\end{proof}

    \subsection{Proof of \texorpdfstring{Theorem \ref{Thm::Metrics::Results::GivesUsualTopology}}{Theorem \ref*{Thm::Metrics::Results::GivesUsualTopology}}}
    Let \(U\subseteq \ManifoldNncWWd\) be open and let \(x\in U\).
The Picard-Lindel\"of Theorem shows \(\exists \delta>0\) with \(\BWWd{x}{\delta}\subseteq U\).
We conclude that the manifold topology on \(\ManifoldNncWWd\) is coarser than the topology
induced by the extended metric \(\MetricWWd\).

Let \(x\in \ManifoldNncWWd\) and \(\delta>0\). We wish to show \(\exists U\subseteq \ManifoldNncWWd\)
open (in the manifold topology) such that \(x\in U\subseteq \BWWd{x}{\delta}\);
it suffices to do this for \(\delta\) small.
The existence of \(U\) follows from Theorem \ref{Thm::Scaling::MainResult}\ref{Item::Scaling::MainResult::Diffeo},\ref{Item::Scaling::MainResult::Existencec0},\ref{Item::Scaling::MainResult::Containments}
(with \(\Compact=\{x\}\)):
here we take \(U= \Psi_{x,\delta}(\nUnitCubegeq[c_0])\).
See also Remark \ref{Rmk::Scaling::MainResult::DontNeedAmbientMfld}.
This shows that the manifold topology on \(\ManifoldNncWWd\) is finer than the topology
induced by the extended metric \(\MetricWWd\); and therefore the topologies are the same.

Alternatively, instead of applying Theorem \ref{Thm::Scaling::MainResult},
one could directly apply Propositions \ref{Prop::Scaling::EquivTop::Interior} and \ref{Prop::Scaling::EquivTop::Boundary}
in local coordinates (with \(y=0\))
to conclude the that the manifold topology on \(\ManifoldNncWWd\) is finer than the topology
induced by the extended metric \(\MetricWWd\). These propositions were used in the proof of Theorem \ref{Thm::Scaling::MainResult},
so this amounts to the same proof.

Finally, that \(\MetricWWd\) is a metric on each connected component (instead of merely an extended metric)
follows from Remark \ref{Rmk::Metrics::lemmas::ExtendedMetricTop}.

    \subsection{Proof of \texorpdfstring{Theorem \ref{Thm::Metrics::Results::ExtendedVFsGiveSameMetric}}{Theorem \ref*{Thm::Metrics::Results::ExtendedVFsGiveSameMetric}}}
    Fix \(\Compact\Subset \ManifoldNncWWd\) a compact set lying in a single connected component of \(\ManifoldNncWWd\).

By Lemma \ref{Lemma::Metrics::Lemmas::LocallyEquivIffEquivOnCptSets}, we wish to show
\(\MetricWWd\) and \(\MetricWhWd\) are Lipschitz equivalent on \(\Compact\).
It follows immediately from the definitions that \(\MetricWhWd[x][y]\leq \MetricWWd[x][y]\), \(\forall x,y\in \ManifoldN\),
so it suffices to show
\begin{equation}\label{Eqn::Metrics::Proof::IntrinsicVsExtrinsic::MainToShow}
    \MetricWWd[x][y]\leq C\MetricWhWd[x][y],\quad \forall x,y\in \Compact.
\end{equation}

We apply Theorem \ref{Thm::Scaling::MainResult} with this choice of \(\Compact\) (any \(\Omega\) will work).
Let \(\delta_1\) be as in that result and \(\xi_1\) be as in Theorem \ref{Thm::Scaling::MainResult} \ref{Item::Scaling::MainResult::Containments}
with \(\eta_1=1/4\).

Suppose \(x,y\in \Compact\) with \(\MetricWhWd[x][y]<\xi_1\delta_1\). Take any \(\delta_0\in (0,\delta_1]\)
with \(\MetricWhWd[x][y]<\xi_1\delta\). Then, by Theorem \ref{Thm::Scaling::MainResult} \ref{Item::Scaling::MainResult::Containments},
\begin{equation*}
    y\in \BWhWd{x}{\xi_1\delta}\cap \ManifoldN\subseteq \BWWd{x}{\delta},
\end{equation*}
and so \(\MetricWWd[x][y]<\delta\). Taking the infimum over such \(\delta\)
shows \(\MetricWWd[x][y]\leq \xi_1^{-1}\MetricWhWd[x][y]\).

Since \(\MetricWWd\) is a metric on \(\Compact\) and \(\Compact\) is compact with respect to \(\MetricWWd\)
(by Theorem \ref{Thm::Metrics::Results::GivesUsualTopology}),
Lemma \ref{Lemma::Metrics::Lemmas::QuantLocalEst} shows \eqref{Eqn::Metrics::Proof::IntrinsicVsExtrinsic::MainToShow}
holds, completing the proof.

    \subsection{Proof of \texorpdfstring{Theorem \ref{Thm::Metrics::Results::LocalWeakEquivImpliesEquiv}}{Theorem \ref*{Thm::Metrics::Results::LocalWeakEquivImpliesEquiv}}}
    Fix \(\Compact\Subset \ManifoldNt\) compact, lying in a single connected component of \(\ManifoldNt\).
In light of Lemma \ref{Lemma::Metrics::Lemmas::LocallyEquivIffEquivOnCptSets} we wish to show that
\(\MetricWWd\) and \(\MetricZZd\) are Lipschitz equivalent on \(\Compact\).
Note that \(\BoundaryNt\subseteq \BoundaryNncWWd\) and so every point of \(\BoundaryNt\) is \(\WWd\)-non-Characteristic.
Since \(\WWd\) and \(\ZZd\) are weakly locally equivalent on \(\ManifoldNt\), Remark \ref{Rmk::BasicDefns::NonCharDependsOnlyOnWeakEquiv}
shows that every point of \(\BoundaryNt\) is also \(\ZZd\)-non-characteristic.

Let \(\ManifoldM\) be a manifold without boundary, such that \(\ManifoldN\) is a closed, co-dimension \(0\)
embedded submanifold of \(\ManifoldM\) and such that there are H\"ormander vector fields with formal degrees
\(\WhWd=\left\{ (\Wh_1,\Wd_1),\ldots, (\Wh_r,\Wd_r) \right\}\subset \VectorFieldsM\times \Zg\)
such that \(\Wh_j\big|_{\ManifoldN}=W_j\). Such an \(\ManifoldM\) and \(\WhWd\)
always exist: see Remark \ref{Rmk::Scaling::MainResult::DontNeedAmbientMfld}.

Fix \(\Omega\Subset \ManifoldM\) open and relatively compact with \(\Compact\Subset \Omega\).
Since \(\overline{\Omega}\) is compact, there is \(m\in \Zg\) such that
\(\Wh_1,\ldots, \Wh_r\) satisfy H\"ormander's condition of order \(m\) on \(\overline{\Omega}\).
Let
\begin{equation*}
\begin{split}
     &\XhXd=\left\{ (\Xh_1,\Xd_1),\ldots, (\Xh_q,\Xd_q) \right\}
     :=\left\{ (Y,e) \in \GenWhWd : e\leq \max\{\Wd_j\} \right\}.
\end{split}
\end{equation*}
As in \eqref{Eqn::BoundaryVfs::NSW::Z},
\begin{equation}\label{Eqn::Metrics::Proofs::XNSW}
    [\Xh_j,\Xh_k]=\sum_{\Xd_l\leq \Xd_j+\Xd_k} c_{j,k}^l \Xh_l,\quad c_{j,k}^l\in \CinftySpace[\Omega].
\end{equation}

\begin{lemma}\label{Lemma::Metrics::Proofs::LocalWeakEquiv::WhWdBoundedByXhXd}
    \(\exists C\geq 1\), \(\forall x,y\in \Compact\), \(\MetricWhWd[x][y]\leq C\MetricXhXd[x][y]\).
\end{lemma}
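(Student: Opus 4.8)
The plan is to apply Theorem~\ref{Thm::Scaling::WithoutBoundary::MainThm} to the pair $(\WhWd,\XhXd)$ on $\Omega$ and then globalize with Lemma~\ref{Lemma::Metrics::Lemmas::QuantLocalEst}, following the same two steps as the proof of Theorem~\ref{Thm::Metrics::Results::ExtendedVFsGiveSameMetric}. First I would check that $(\WhWd,\XhXd)$ satisfies the hypotheses of Theorem~\ref{Thm::Scaling::WithoutBoundary::MainThm} with $\WWd$ and $\XXd$ there replaced by $\WhWd$ and $\XhXd$: the list $\XhXd$ is exactly of the form described in Remark~\ref{Rmk::Scaling::WithoutBoundary::XXdAlwaysExists}, so $\Xh_1(x),\dots,\Xh_q(x)$ span $\TangentSpace{\ManifoldM}{x}$ for every $x\in\Omega$ and the structure equations \eqref{Eqn::Metrics::Proofs::XNSW} hold. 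Since $(\Wh_j,\Wd_j)\in\GenWhWd$ with $\Wd_j$ below the degree cutoff, $\WhWd\subseteq\XhXd$, so $\XhXd$ locally strongly controls $\WhWd$ on $\Omega$ (each $\Wh_j$ is literally one of the $\Xh_l$); and since $\XhXd\subseteq\GenWhWd$, we get that $\GenWhWd$ locally strongly controls $\XhXd$ on $\Omega$, i.e.\ $\WhWd$ locally weakly controls $\XhXd$ on $\Omega$ (Definition~\ref{Defn::BasicDefns::StrongWeakEquivalnce}). Thus Theorem~\ref{Thm::Scaling::WithoutBoundary::MainThm} yields $\delta_0>0$, maps $\Phi_{x,\delta}$, and --- fixing $\eta_0=1/2$ --- a constant $\xi_0\in(0,1]$ such that $\BXhXd{x}{\xi_0\delta}\subseteq\Phi_{x,\delta}(\nBall{\eta_0})\subseteq\BWhWd{x}{\delta}$ for all $x\in\Compact$, $\delta\in(0,\delta_0]$, by part~\ref{Item::Scaling::WithoutBdry::HardContainment}.

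Next I would convert this ball containment into a local metric estimate on $\Compact$: if $x,y\in\Compact$ satisfy $\MetricXhXd[x][y]<\xi_0\delta_0$, then for every $\delta$ with $\xi_0^{-1}\MetricXhXd[x][y]<\delta\le\delta_0$ we have $y\in\BXhXd{x}{\xi_0\delta}\subseteq\BWhWd{x}{\delta}$, hence $\MetricWhWd[x][y]\le\delta$; letting $\delta$ decrease to $\xi_0^{-1}\MetricXhXd[x][y]$ gives $\MetricWhWd[x][y]\le\xi_0^{-1}\MetricXhXd[x][y]$. To pass from this to all of $\Compact$, I would invoke Lemma~\ref{Lemma::Metrics::Lemmas::QuantLocalEst} with $X=\Compact$, $\rho_1=\MetricWhWd$, $\rho_2=\MetricXhXd$, $C_1=\xi_0^{-1}$, and threshold $\xi_0\delta_0$. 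Both are genuine (finite) metrics on $\Compact$: $\Compact$ lies in a single connected component of $\ManifoldNt$, hence of $\ManifoldM$ and of $\Omega$, so both extended metrics are finite on $\Compact$; and by Theorem~\ref{Thm::Metrics::Results::GivesUsualTopology} applied to $\ManifoldM$ (which has no boundary) $\MetricWhWd$ induces the manifold topology, under which $\Compact$ is compact. Lemma~\ref{Lemma::Metrics::Lemmas::QuantLocalEst} then produces $C\ge1$ with $\MetricWhWd[x][y]\le C\MetricXhXd[x][y]$ for all $x,y\in\Compact$, which is the claim.

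The only real content is the observation that $(\WhWd,\XhXd)$ meets the hypotheses of Theorem~\ref{Thm::Scaling::WithoutBoundary::MainThm}; the control conditions are immediate from the chain $\WhWd\subseteq\XhXd\subseteq\GenWhWd$, and the one point not to overlook is that $\XhXd$ spans the tangent space on $\Omega$ --- which is exactly why the degree cutoff in its definition is taken large enough (cf.\ Remark~\ref{Rmk::Scaling::WithoutBoundary::XXdAlwaysExists} and \eqref{Eqn::Metrics::Proofs::XNSW}). The remaining manipulations, extracting a metric inequality from a ball containment and globalizing by compactness, are routine and parallel the proof of Theorem~\ref{Thm::Metrics::Results::ExtendedVFsGiveSameMetric}.
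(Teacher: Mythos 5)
Your proof is correct and follows essentially the same route as the paper: apply Theorem \ref{Thm::Scaling::WithoutBoundary::MainThm} \ref{Item::Scaling::WithoutBdry::HardContainment} with $\WhWd$, $\XhXd$ in place of $\WWd$, $\XXd$ to get a small-scale containment $\BXhXd{x}{\xi_0\delta}\subseteq\BWhWd{x}{\delta}$, convert it to the local estimate $\MetricWhWd\leq\xi_0^{-1}\MetricXhXd$ when the right side is small, and then globalize over $\Compact$ via Lemma \ref{Lemma::Metrics::Lemmas::QuantLocalEst} and the compactness supplied by Theorem \ref{Thm::Metrics::Results::GivesUsualTopology}. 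The only difference is that you spell out the verification of the hypotheses of Theorem \ref{Thm::Scaling::WithoutBoundary::MainThm} (via the chain $\WhWd\subseteq\XhXd\subseteq\GenWhWd$), which the paper leaves to the cited Remark \ref{Rmk::Scaling::WithoutBoundary::XXdAlwaysExists}.
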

\begin{proof}
    Theorem \ref{Thm::Scaling::WithoutBoundary::MainThm} applies with \(\WhWd\) and \(\XhXd\) in place
    of \(\WWd\) and \(\XXd\) in that result (see Remark \ref{Rmk::Scaling::WithoutBoundary::XXdAlwaysExists}).
    Let \(\delta_0>0\), be as in that theorem and \(\xi_0>0\) be as in Theorem \ref{Thm::Scaling::WithoutBoundary::MainThm} \ref{Item::Scaling::WithoutBdry::HardContainment}
    with \(\eta_0=1/2\).

    Suppose \(x,y\in \Compact\), \(\MetricXhXd[x][y]<\xi_0\delta_0\). Take \(\delta\in (0,\delta_0]\)
    with \(\MetricXhXd[x][y]<\xi_0\delta\). Then, by Theorem \ref{Thm::Scaling::WithoutBoundary::MainThm} \ref{Item::Scaling::WithoutBdry::HardContainment},
    \begin{equation*}
        y\in \BXhXd{x}{\xi_0\delta}\subseteq \BWhWd{x}{\delta},
    \end{equation*}
    and therefore \(\MetricWhWd[x][y]<\delta\). Taking the infimum over such \(\delta\)
    shows \(\MetricWhWd[x][y]\leq \xi_0^{-1}\MetricXhXd[x][y]\).

    Since \(\Compact\) is a compact subset of \(\ManifoldNt\), it is a compact subset of \(\ManifoldM\),
    and by Theorem \ref{Thm::Metrics::Results::GivesUsualTopology} it is compact with respect to
    \(\MetricWhWd\). From here, the result follows from Lemma \ref{Lemma::Metrics::Lemmas::QuantLocalEst}.
\end{proof}

\begin{lemma}\label{Lemma::Metrics::Proofs::LocalWeakEquiv::WWdBoundedByWhWd}
    \(\exists C\geq 1\), \(\forall x,y\in \Compact\), \(\MetricWWd[x][y]\leq C\MetricWhWd[x][y]\).
\end{lemma}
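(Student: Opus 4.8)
The statement is, restricted to the compact set $\Compact$, exactly the content of Theorem~\ref{Thm::Metrics::Results::ExtendedVFsGiveSameMetric}, which has already been proved above; so one clean option is simply to invoke it. That theorem gives that $\MetricWhWd$ and $\MetricWWd$ are locally Lipschitz equivalent on $\ManifoldNncWWd$. Since $\Compact$ is a compact subset of $\ManifoldNt$, hence of $\ManifoldNncWWd$, and since $\Compact$ lies in a single connected component of $\ManifoldNt$ while any connected subset of $\ManifoldNncWWd$ lies in a single connected component of $\ManifoldNncWWd$, Lemma~\ref{Lemma::Metrics::Lemmas::LocallyEquivIffEquivOnCptSets} applies (both extended metrics induce the manifold topology, by Theorem~\ref{Thm::Metrics::Results::GivesUsualTopology}) and yields that $\MetricWhWd$ and $\MetricWWd$ are Lipschitz equivalent on $\Compact$; in particular $\MetricWWd[x][y]\le C\MetricWhWd[x][y]$ for all $x,y\in\Compact$.

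To keep the argument self-contained and parallel with the proof of Lemma~\ref{Lemma::Metrics::Proofs::LocalWeakEquiv::WhWdBoundedByXhXd}, I would instead argue directly from the scaling theorem. Apply Theorem~\ref{Thm::Scaling::MainResult} with $\Compact$ as given and any relatively compact open $\Omega\Subset\ManifoldM$ containing $\Compact$, obtaining $\delta_1\in(0,1]$, and let $\xi_1\in(0,1]$ be the constant of Theorem~\ref{Thm::Scaling::MainResult}~\ref{Item::Scaling::MainResult::Containments} corresponding to $\eta_1=1/4$. If $x,y\in\Compact$ satisfy $\MetricWhWd[x][y]<\xi_1\delta_1$, choose $\delta\in(0,\delta_1]$ with $\MetricWhWd[x][y]<\xi_1\delta$; then $y\in\BWhWd{x}{\xi_1\delta}$, and since $y\in\Compact\subseteq\ManifoldN$, the containment $\BWhWd{x}{\xi_1\delta}\cap\ManifoldN\subseteq\BWWd{x}{\delta}$ from Theorem~\ref{Thm::Scaling::MainResult}~\ref{Item::Scaling::MainResult::Containments} gives $y\in\BWWd{x}{\delta}$, so $\MetricWWd[x][y]<\delta$. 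Taking the infimum over all such $\delta$ shows $\MetricWWd[x][y]\le\xi_1^{-1}\MetricWhWd[x][y]$ whenever $\MetricWhWd[x][y]<\xi_1\delta_1$.

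It then remains to upgrade this local estimate to a bound on all of $\Compact$. Since $\Compact$ lies in a single connected component of $\ManifoldNncWWd$, Theorem~\ref{Thm::Metrics::Results::GivesUsualTopology} shows that $\MetricWWd$ restricts to a genuine metric on $\Compact$ inducing the subspace topology, so $\Compact$ is compact with respect to $\MetricWWd$; and $\MetricWhWd$ restricts to a metric on $\Compact$ as well, since $\MetricWhWd[x][y]\le\MetricWWd[x][y]<\infty$ for $x,y\in\Compact$. Lemma~\ref{Lemma::Metrics::Lemmas::QuantLocalEst}, applied with $\rho_1=\MetricWWd$, $\rho_2=\MetricWhWd$, $\delta_0=\xi_1\delta_1$, and $C_1=\xi_1^{-1}$, then produces the desired constant $C\ge1$ with $\MetricWWd[x][y]\le C\MetricWhWd[x][y]$ for all $x,y\in\Compact$. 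I do not expect a real obstacle here: all the substance sits in Theorem~\ref{Thm::Scaling::MainResult} (and, alternatively, in the already-proved Theorem~\ref{Thm::Metrics::Results::ExtendedVFsGiveSameMetric}); the only thing to keep straight is that $\MetricWWd$ is defined with paths confined to $\ManifoldN$ whereas $\MetricWhWd$ allows paths into $\ManifoldM$, which is harmless because the inequality we need runs in the direction that makes the confined metric the larger one.
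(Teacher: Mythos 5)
Your first argument---invoke Theorem~\ref{Thm::Metrics::Results::ExtendedVFsGiveSameMetric} and then upgrade local Lipschitz equivalence to Lipschitz equivalence on the compact set---is essentially the paper's proof; the paper applies Lemma~\ref{Lemma::Metrics::Lemmas::CompactLocalEquivMeansEquiv} where you apply Lemma~\ref{Lemma::Metrics::Lemmas::LocallyEquivIffEquivOnCptSets}, a cosmetic difference. Your second, ``self-contained'' argument is also correct, but it amounts to unrolling the proof of Theorem~\ref{Thm::Metrics::Results::ExtendedVFsGiveSameMetric} (which in the paper is itself proved from Theorem~\ref{Thm::Scaling::MainResult}~\ref{Item::Scaling::MainResult::Containments} and Lemma~\ref{Lemma::Metrics::Lemmas::QuantLocalEst}), so the two options you give are really the same argument at two levels of abstraction.
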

\begin{proof}

    Since \(\Compact\) lies in a single connected component of \(\ManifoldNt\), it lies in a single connected
    component of \(\ManifoldM\), and by Theorem \ref{Thm::Metrics::Results::GivesUsualTopology}
    both \(\MetricWhWd\) and \(\MetricWWd\) are metrics on \(\Compact\), and the metric topology
    induced by both is the same as the topology of \(\Compact\) as a subset of \(\ManifoldNt\).

    By Theorem \ref{Thm::Metrics::Results::ExtendedVFsGiveSameMetric}, \(\MetricWWd\) and \(\MetricWhWd\)
    are locally Lipschitz equivalent on \(\Compact\), and therefore by Lemma \ref{Lemma::Metrics::Lemmas::CompactLocalEquivMeansEquiv}
    they are Lipschitz equivalent on \(\Compact\) (since it is compact). The result follows.
\end{proof}

\begin{lemma}\label{Lemma::Metrics::Proofs::LocalWeakEquiv::XhXdBoundedByZZd}
    \(\exists C\geq 1\), \(\forall x,y\in \Compact\), \(\MetricXhXd[x][y]\leq C \MetricZZd[x][y]\).
\end{lemma}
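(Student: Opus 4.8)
The plan is to deduce this from an elementary ``strong control implies one-sided metric domination'' estimate; no scaling theorem is needed here (and none could be used, since this lemma is a step inside the proof of Theorem~\ref{Thm::Metrics::Results::LocalWeakEquivImpliesEquiv}). The key algebraic input is the following chain of controls. Recall that $\XhXd=\{(\Xh_1,\Xd_1),\dots,(\Xh_q,\Xd_q)\}$ spans $\TangentSpace{\ManifoldM}{x}$ at every $x\in\Omega$, that its members lie in $\GenWhWd$, and that it contains \emph{every} member of $\GenWhWd$ of degree at most $N:=\max_j\Xd_j$. Hence $\XhXd$ locally strongly controls $\GenWhWd$ on $\Omega$: an element of $\GenWhWd$ of degree $e\le N$ is already in $\XhXd$, while one of degree $e>N$ lies in the $\CinftySpace$-module generated by all of $\XhXd$ (because $\XhXd$ spans), every degree of which is $\le N\le e$. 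Restricting to $\ManifoldN$ (commutators of $\Wh_j$ restrict to commutators of $W_j=\Wh_j\big|_{\ManifoldN}$), the family $\{(\Xh_j\big|_{\ManifoldN},\Xd_j)\}_j$ locally strongly controls $\GenWWd$ on $\Omega\cap\ManifoldN$. On the other hand, the hypothesis that $\WWd$ and $\ZZd$ are locally weakly equivalent on $\ManifoldNt$ means, by definition, that $\GenWWd$ locally strongly controls $\ZZd$ on $\ManifoldNt$; since strong control is transitive, $\{(\Xh_j\big|_{\ManifoldN},\Xd_j)\}_j$ locally strongly controls $\ZZd$ on $\ManifoldNt\cap\Omega$.

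Next I would make this uniform near $\Compact$ and run a path argument. Using that $\Compact\Subset\ManifoldNt\cap\Omega$ is compact, fix an open $\ManifoldNt'$ with $\Compact\Subset\ManifoldNt'\Subset\ManifoldNt\cap\Omega$, a finite cover $\ManifoldNt'\subseteq\bigcup_iU_i$, smooth functions with $Z_l\big|_{U_i}=\sum_{j\,:\,\Xd_j\le\Zd_l}f^i_{l,j}\,\Xh_j\big|_{U_i}$, and $M\ge1$ bounding every $|f^i_{l,j}|$ on a common compact neighborhood of $\ManifoldNt'$. By the Picard--Lindel\"of Theorem and compactness of $\Compact$, choose $\delta_0\in(0,1]$ so that $\BZZd{x}{\delta}\subseteq\ManifoldNt'$ for all $x\in\Compact$, $\delta\in(0,\delta_0]$. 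Now take $x,y\in\Compact$ with $\MetricZZd[x][y]<\delta_0$, pick $\delta\in(\MetricZZd[x][y],\delta_0]$ and an absolutely continuous curve $\gamma\colon[0,1]\to\ManifoldNt'$ realizing $y\in\BZZd{x}{\delta}$, i.e.\ $\gamma(0)=x$, $\gamma(1)=y$, $\gamma'(t)=\sum_l a_l(t)\,\delta^{\Zd_l}Z_l(\gamma(t))$ a.e., $\left\|\sum_l|a_l|^2\right\|_{\infty}<1$. Substituting the representations above and using $\delta\le1$ together with $\Xd_j\le\Zd_l$ in each term and $|f^i_{l,j}|\le M$ rewrites this as $\gamma'(t)=\sum_j b_j(t)\,\delta^{\Xd_j}\Xh_j(\gamma(t))$ a.e.\ with $\left\|\sum_j|b_j|^2\right\|_{\infty}$ bounded by a constant depending only on $M$, $q$ and $s$ (a Cauchy--Schwarz bound). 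Absorbing this constant into the scaling parameter — legitimate because $\Xd_j\ge1$ — shows the same curve is admissible for the fields $\{(K\delta)^{\Xd_j}\Xh_j\}_j$ for a suitable $K\ge1$ independent of $x$, $y$, $\delta$; since $\gamma$ stays in $\ManifoldNt'\subseteq\Omega$, this gives $y\in\BXhXd{x}{K\delta}$, hence $\MetricXhXd[x][y]\le K\delta$. Letting $\delta\downarrow\MetricZZd[x][y]$ yields $\MetricXhXd[x][y]\le K\,\MetricZZd[x][y]$ whenever $\MetricZZd[x][y]<\delta_0$.

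Finally I would globalize over $\Compact$. Since $\Compact$ lies in a single connected component of $\ManifoldNt$, every point of $\ManifoldNt$ is $\WWd$-non-characteristic (as $\ManifoldNt\subseteq\ManifoldNncWWd$) and, by the weak equivalence and Remark~\ref{Rmk::BasicDefns::NonCharDependsOnlyOnWeakEquiv}, $\ZZd$-non-characteristic, Theorem~\ref{Thm::Metrics::Results::GivesUsualTopology} shows $\MetricZZd$ restricts to a genuine metric on $\Compact$ compatible with its topology, and likewise $\MetricXhXd$ (which induces the manifold topology on $\Omega$, which we may take connected); so $\Compact$ is compact with respect to both. Lemma~\ref{Lemma::Metrics::Lemmas::QuantLocalEst} then promotes the local estimate of the previous paragraph to $\MetricXhXd[x][y]\le C\,\MetricZZd[x][y]$ for all $x,y\in\Compact$, which is the claim.

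I expect no deep obstacle: this is a routine control-implies-domination estimate, and in fact the role of $\XhXd$ in the proof of Theorem~\ref{Thm::Metrics::Results::LocalWeakEquivImpliesEquiv} is precisely to make the degree bookkeeping in the first paragraph work (a single iterated commutator of the $W_j$'s need not be a combination of the $W_j$'s themselves, but $\XhXd$ spans, so every such commutator is a combination of $\Xh_j$'s of no larger degree). The only mildly delicate point is arranging that the constant $K$ in the path argument is genuinely independent of $x$, $y$, and $\delta$, which the finite cover and the uniform bound $M$ take care of.
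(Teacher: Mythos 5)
Your proof is correct and follows essentially the same route as the paper: establish that $\XhXd$ strongly controls $\GenWhWd$ (the paper uses the NSW relation \eqref{Eqn::Metrics::Proofs::XNSW}, you use spanning plus the containment of low-degree commutators, which is the same content), chain this through $\GenWWd\to\ZZd$, make the coefficients uniformly bounded on a compact neighborhood of $\Compact$, run the path argument absorbing the bound into the dilation scale (the paper's $C_2$-device), and promote the small-scale bound via Lemma~\ref{Lemma::Metrics::Lemmas::QuantLocalEst}. Your parenthetical that ``no scaling theorem could be used here'' misreads the logical order — Theorems~\ref{Thm::Scaling::MainResult} and \ref{Thm::Scaling::WithoutBoundary::MainThm} are proved earlier and are in fact invoked in the companion Lemmas~\ref{Lemma::Metrics::Proofs::LocalWeakEquiv::WhWdBoundedByXhXd} and \ref{Lemma::Metrics::Proofs::LocalWeakEquiv::WWdBoundedByWhWd} — but this aside does not affect the validity of your argument.
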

\begin{proof}
    Since \(\XhXd\) satisfies \eqref{Eqn::Metrics::Proofs::XNSW}, it follows
    that \(\XhXd\) locally strongly controls \(\GenXhXd=\GenWhWd\) on \(\Omega\).
    Fix \(\Omega_1\Subset\Omega_2\Subset \Omega_3\Subset \Omega\) open in \(\ManifoldM\),
    with \(\Compact \Subset \Omega_1\) (recall, \(A\Subset B\) means \(A\) is relatively compact in \(B\)).

    Since \(\WWd\) locally weakly controls \(\ZZd\) on \(\ManifoldNt\), \(\GenWWd\) locally strongly
    controls \(\ZZd\) on \(\ManifoldN\). And so, for each \(k=1,\ldots,s\), we have 
    \begin{equation}\label{Eqn::Metrics::Proofs::LocalWeakEquiv::XhXdBoundedByZZd::Tmp1}
        Z_k\big|_{\Omega_3\cap \ManifoldNt} = \sum_{\substack{ e\leq \Zd_k \\ (Y,e)\in \GenWWd }} a_{(Y,e)} Y, \quad a_{(Y,e)}\in \CinftySpace[\Omega_3\cap \ManifoldNt].
    \end{equation}
    Extending \(a_{(Y,e)}\) to \(\at_{(Y,e)}\in \CinftySpace[\Omega_3]\), \eqref{Eqn::Metrics::Proofs::LocalWeakEquiv::XhXdBoundedByZZd::Tmp1}
    implies 
    \begin{equation}\label{Eqn::Metrics::Proofs::LocalWeakEquiv::XhXdBoundedByZZd::Tmp2}
        Z_k\big|_{\Omega_3\cap \ManifoldNt} = \sum_{\substack{ e\leq \Zd_k \\ (\Yh,e)\in \GenWhWd }} \at_{(Y,e)}\big|_{\Omega_3 \cap \ManifoldNt} \Yh\big|_{\Omega_3\cap\ManifoldNt}, \quad \at_{(Y,e)}\in \CinftySpace[\Omega_3\cap \ManifoldNt].
    \end{equation}
    Since \(\XXd\) locally strongly controls \(\WhWd\) on \(\Omega\), \eqref{Eqn::Metrics::Proofs::LocalWeakEquiv::XhXdBoundedByZZd::Tmp2}
    implies
    \begin{equation}\label{Eqn::Metrics::Proofs::LocalWeakEquiv::XhXdBoundedByZZd::Tmp3}
        Z_k\big|_{\Omega_2\cap \ManifoldNt} = \sum_{\Xd_l\leq \Zd_k} a_k^l\big|_{\Omega_2\cap \ManifoldNt} \Xh_l\big|_{\Omega_2\cap \ManifoldNt},
        \quad a_k^l\in \CinftySpace[\Omega_2\cap \ManifoldN].
    \end{equation}
    Set
    \begin{equation}\label{Eqn::Metrics::Proofs::LocalWeakEquiv::XhXdBoundedByZZd::Tmp4}
        C_1=\sup_{\substack{k,l \\ x\in \Omega_1}} |a_k^l(x)|<\infty.
    \end{equation}
    Since \(\Compact\) is compact in \(\ManifoldNt\) and \(\ManifoldNt\setminus \Omega_1\) is closed
    in \(\ManifoldNt\), the Picard--Lindel\"of Theorem shows
    \begin{equation*}
        \delta_3:=\inf\left\{ \MetricZZd[x][y] : x\in \Compact, y\in \ManifoldNt\setminus \Omega_1 \right\}>0.
    \end{equation*}

    Suppose \(x,y\in \Compact\), with \(\MetricZZd[x][y]<\min\{\delta_3,1\}\).
    Take and \(\delta\leq \min\{\delta_3,1\}\) with \(\MetricZZd[x][y]<\delta\).
    Then, \(y\in \BZZd{x}{\delta}\), and by definition (see \eqref{Eqn::BasicDefns::UnitCCBall} and \eqref{Eqn::BasicDefns::CCBallAndMetric})
    \(\exists \gamma:[0,1]\rightarrow \ManifoldNt\), absolutely continuous, \(\gamma(0)=x\), \(\gamma(1)=y\),
    \(\gamma'(t)=\sum_{k=1}^s b_k(t) \delta^{\Zd_k} Z_k(\gamma(t))\) for almost every \(t\),
    with \(\sum|b_k(t)|^2<1\) for almost every \(t\).
    Note that \(\gamma([0,1])\subseteq \BZZd{x}{\delta}\subseteq \Omega_1\).
    Combining this with \eqref{Eqn::Metrics::Proofs::LocalWeakEquiv::XhXdBoundedByZZd::Tmp3}, we see for \(C_2\geq 1\) to be chosen later,
    \begin{equation*}
    \begin{split}
         &\gamma'(t)
         =\sum_{k=1}^s \sum_{\Xd_l \leq \Zd_k} b_k(t) a_k^l(\gamma(t)) \delta^{\Zd_k} \Xh_l(\gamma(t))
         =\sum_{l=1}^q \sum_{\Xd_l\leq \Zd_k} \left( b_k(t) a_k^l(\gamma(t)) \delta^{\Zd_k-\Xd_l} \right) \delta^{\Xd_l}\Xh_l(\gamma(t))
         \\&=\sum_{l=1}^q \sum_{\Xd_l\leq \Zd_k} \left( C_2^{-\Xd_l} b_k(t) a_k^l(\gamma(t)) \delta^{\Zd_k-\Xd_l} \right) \left( C_2 \delta \right)^{\Xd_l}\Xh_l(\gamma(t)).
    \end{split}
    \end{equation*}
    By taking \(C_2=C_1^{-1}s^{-1}q^{-1/2}\) and using \eqref{Eqn::Metrics::Proofs::LocalWeakEquiv::XhXdBoundedByZZd::Tmp4}
    we have for almost every \(t\)
    \begin{equation*}
        \sum_{l=1}^q \left| \sum_{\Xd_l\leq \Zd_k} \left( C_2^{-\Xd_l} b_k(t) a_k^l(\gamma(t)) \delta^{\Zd_k-\Xd_l} \right) \right|^2
        < \sum_{l=1}^q \left|  C_2^{-\Xd_l} s C_1   \right|^2<1.
    \end{equation*}
    It follows that \(\MetricXhXd[x][y]< C_2 \delta\). Taking the infimum over such \(\delta\) shows
    \(\MetricXhXd[x][y]\leq C_2 \MetricZZd[x][y]\).

    Since \(\Compact\) is compact with respect to \(\MetricXhXd\) 
    and \(\MetricXhXd\) is a metric on \(\Compact\)
    (see Theorem \ref{Thm::Metrics::Results::GivesUsualTopology}),
    the result follows from Lemma \ref{Lemma::Metrics::Lemmas::QuantLocalEst}.
\end{proof}

\begin{lemma}\label{Lemma::Metrics::Proofs::LocalWeakEquiv::WWdBoundedByZZd}
    \(\exists C\geq 1\), \(\forall x,y\in \Compact\), \(\MetricWWd[x][y]\leq C \MetricZZd[x][y]\).
\end{lemma}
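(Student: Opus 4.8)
The plan is to simply chain together the three preceding lemmas. By Lemma~\ref{Lemma::Metrics::Proofs::LocalWeakEquiv::WWdBoundedByWhWd} there is a constant $C_1\geq 1$ with $\MetricWWd[x][y]\leq C_1\MetricWhWd[x][y]$ for all $x,y\in\Compact$; by Lemma~\ref{Lemma::Metrics::Proofs::LocalWeakEquiv::WhWdBoundedByXhXd} there is $C_2\geq 1$ with $\MetricWhWd[x][y]\leq C_2\MetricXhXd[x][y]$; and by Lemma~\ref{Lemma::Metrics::Proofs::LocalWeakEquiv::XhXdBoundedByZZd} there is $C_3\geq 1$ with $\MetricXhXd[x][y]\leq C_3\MetricZZd[x][y]$. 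Composing these inequalities gives $\MetricWWd[x][y]\leq C_1C_2C_3\,\MetricZZd[x][y]$ for all $x,y\in\Compact$, which is the claim with $C=C_1C_2C_3$.

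There is essentially no obstacle here: the real content has already been extracted into the three previous lemmas (the relationships between the intrinsic metric $\MetricWWd$, its ambient extension $\MetricWhWd$, the auxiliary generator metric $\MetricXhXd$, and the metric $\MetricZZd$ of the weakly equivalent list). The only thing to note is that all four of these are metrics on $\Compact$ (not merely extended metrics) and induce the subspace topology there, which follows from Theorem~\ref{Thm::Metrics::Results::GivesUsualTopology} together with the fact that $\Compact$ lies in a single connected component of $\ManifoldNt$; this was already used in establishing the three component lemmas, so no new argument is required.

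After this lemma, combining it with the trivial inequality $\MetricZZd[x][y]\leq C\MetricWWd[x][y]$ (which is proved symmetrically, interchanging the roles of $\WWd$ and $\ZZd$ and using that they are locally weakly equivalent so that each weakly controls the other) yields Lipschitz equivalence of $\MetricWWd$ and $\MetricZZd$ on $\Compact$. Then, since $\Compact\Subset\ManifoldNt$ was an arbitrary compact set lying in a single connected component, Lemma~\ref{Lemma::Metrics::Lemmas::LocallyEquivIffEquivOnCptSets} upgrades this to local Lipschitz equivalence on all of $\ManifoldNt$, completing the proof of Theorem~\ref{Thm::Metrics::Results::LocalWeakEquivImpliesEquiv}.
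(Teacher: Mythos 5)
Your proof is correct and is exactly the paper's argument: the paper proves this lemma by ``combining Lemmas \ref{Lemma::Metrics::Proofs::LocalWeakEquiv::WhWdBoundedByXhXd}, \ref{Lemma::Metrics::Proofs::LocalWeakEquiv::WWdBoundedByWhWd}, and \ref{Lemma::Metrics::Proofs::LocalWeakEquiv::XhXdBoundedByZZd},'' which is precisely the three-step chain $\MetricWWd\lesssim\MetricWhWd\lesssim\MetricXhXd\lesssim\MetricZZd$ you wrote out. The surrounding remarks about the reverse inequality and Lemma \ref{Lemma::Metrics::Lemmas::LocallyEquivIffEquivOnCptSets} are also how the paper completes Theorem \ref{Thm::Metrics::Results::LocalWeakEquivImpliesEquiv}.
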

\begin{proof}
    This follows by combining Lemmas \ref{Lemma::Metrics::Proofs::LocalWeakEquiv::WhWdBoundedByXhXd},
    \ref{Lemma::Metrics::Proofs::LocalWeakEquiv::WWdBoundedByWhWd}, and \ref{Lemma::Metrics::Proofs::LocalWeakEquiv::XhXdBoundedByZZd}.
\end{proof}

\begin{lemma}\label{Lemma::Metrics::Proofs::LocalWeakEquiv::ZZdBoundedByWWd}
    \(\exists C\geq 1\), \(\forall x,y\in \Compact\), \(\MetricZZd[x][y]\leq C \MetricWWd[x][y]\).
\end{lemma}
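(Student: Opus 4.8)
The plan is to obtain this as the reverse of Lemma~\ref{Lemma::Metrics::Proofs::LocalWeakEquiv::WWdBoundedByZZd}, by running the arguments of Lemmas~\ref{Lemma::Metrics::Proofs::LocalWeakEquiv::WhWdBoundedByXhXd}, \ref{Lemma::Metrics::Proofs::LocalWeakEquiv::WWdBoundedByWhWd}, and \ref{Lemma::Metrics::Proofs::LocalWeakEquiv::XhXdBoundedByZZd} with the roles of $\WWd$ and $\ZZd$ interchanged. Two observations make this legitimate. First, by Remark~\ref{Rmk::BasicDefns::NonCharDependsOnlyOnWeakEquiv} together with the local weak equivalence of $\WWd$ and $\ZZd$ on $\ManifoldNt$, every point of $\BoundaryNt$ is $\ZZd$-non-characteristic (already noted above), so $\ManifoldNt$ is its own non-characteristic part for $\ZZd$. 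Second, local weak equivalence includes that $\ZZd$ locally weakly controls $\WWd$ on $\ManifoldNt$, i.e.\ $\Gen{Z,\Zd}$ locally strongly controls $\WWd$ on $\ManifoldNt$.

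The one feature that is not literally symmetric is the ambient manifold: $\WWd$ was assumed to extend to $\WhWd$ on a boundaryless $\ManifoldM$ in which $\ManifoldN$ is closed and co-dimension $0$, while $\ManifoldNt$ need not be closed in $\ManifoldM$. I will remedy this by localizing about $\Compact$. Since $\ManifoldNt$ is open in $\ManifoldN$ and $\ManifoldN$ is closed in $\ManifoldM$, I will fix a connected, relatively compact open $\Omega'\subseteq\ManifoldM$ with $\Compact\Subset\Omega'$ and $\Omega'\cap\ManifoldN\subseteq\ManifoldNt$, and set $\ManifoldN':=\Omega'\cap\ManifoldN$, a closed co-dimension $0$ embedded submanifold of $\Omega'$ with $\partial\ManifoldN'=\Omega'\cap\BoundaryN\subseteq\BoundaryNt$; for $x\in\Compact$ and $\delta$ small, $\BZZd{x}{\delta}\subseteq\ManifoldN'$, so on $\Compact$ the metric $\MetricZZd$ agrees locally with the metric built from $\ZZd\big|_{\ManifoldN'}$ on $\ManifoldN'$ (the same restriction-of-domain point as in the remark following Corollary~\ref{Cor::GlobalCor::InsideAndOutsideMetricsAreTheSame}). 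Next I will extend $Z_1\big|_{\ManifoldN'},\ldots,Z_s\big|_{\ManifoldN'}$ to $\Zh_1,\ldots,\Zh_s\in\VectorFields{\Omega'}$ and, after shrinking $\Omega'$ about $\Compact$, arrange by continuity (as in Remark~\ref{Rmk::Scaling::MainResult::DontNeedAmbientMfld}) that they satisfy H\"ormander's condition of some order $m'$ on $\overline{\Omega'}$; put $\ZhZd:=\{(\Zh_1,\Zd_1),\ldots,(\Zh_s,\Zd_s)\}$ and $\XhXd:=\{(Y,e)\in\Gen{\Zh,\Zd}: e\le m'\max_k\Zd_k\}$, so that $\XhXd$ satisfies the analogue of \eqref{Eqn::BoundaryVfs::NSW::Z} and spans the tangent space of $\Omega'$ everywhere (cf.\ Remark~\ref{Rmk::Scaling::WithoutBoundary::XXdAlwaysExists}).

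With this data in place I will prove three estimates, valid for $x,y\in\Compact$ with constants independent of $x,y$. (a) $\MetricZZd[x][y]\le C\MetricZhZd[x][y]$: the analogue of Lemma~\ref{Lemma::Metrics::Proofs::LocalWeakEquiv::WWdBoundedByWhWd}, from Theorem~\ref{Thm::Metrics::Results::ExtendedVFsGiveSameMetric} applied to $\ManifoldN'\subseteq\Omega'$ with $\ZZd\big|_{\ManifoldN'}$ and $\ZhZd$ (using that $\partial\ManifoldN'$ consists of $\ZZd$-non-characteristic points), then Lemma~\ref{Lemma::Metrics::Lemmas::CompactLocalEquivMeansEquiv} and the local agreement of metrics above. (b) $\MetricZhZd[x][y]\le C\MetricXhXd[x][y]$: the analogue of Lemma~\ref{Lemma::Metrics::Proofs::LocalWeakEquiv::WhWdBoundedByXhXd}; apply Theorem~\ref{Thm::Scaling::WithoutBoundary::MainThm} on $\Omega'$ with $\ZhZd$ and $\XhXd$ playing the roles of $\WWd$ and $\XXd$ there, use part~\ref{Item::Scaling::WithoutBdry::HardContainment} to get $\BXhXd{x}{\xi_0\delta}\subseteq\BZhZd{x}{\delta}$, and conclude with Lemma~\ref{Lemma::Metrics::Lemmas::QuantLocalEst} (using that $\Compact$ is compact for $\MetricZhZd$ by Theorem~\ref{Thm::Metrics::Results::GivesUsualTopology}). (c) $\MetricXhXd[x][y]\le C\MetricWWd[x][y]$: the analogue of Lemma~\ref{Lemma::Metrics::Proofs::LocalWeakEquiv::XhXdBoundedByZZd} with $\WWd$ and $\ZZd$ swapped; one rewrites a short $\WWd$-path near $\Compact$ as an $\XhXd$-path with coefficients bounded uniformly in the path, using that $\Gen{Z,\Zd}$ locally strongly controls $\WWd$ on $\ManifoldNt$, that $\XhXd$ locally strongly controls $\Gen{\Zh,\Zd}$ on $\Omega'$, and that $\XhXd\big|_{\ManifoldN'}$ locally strongly controls $\ZhZd\big|_{\ManifoldN'}=\ZZd\big|_{\ManifoldN'}$; the coefficient bookkeeping, the confinement of the path via Picard--Lindel\"of, and the passage to all of $\Compact$ via Lemma~\ref{Lemma::Metrics::Lemmas::QuantLocalEst} go through as there. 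Chaining (a), (b), (c) gives $\MetricZZd[x][y]\le C\MetricWWd[x][y]$ on $\Compact$.

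I expect the only genuine work to be Step~(c): checking that the three strong-control relations compose so as to rewrite a $\WWd$-path as an $\XhXd$-path with path-independent coefficient bounds, which is the same computation as in Lemma~\ref{Lemma::Metrics::Proofs::LocalWeakEquiv::XhXdBoundedByZZd}; everything else is bookkeeping about the shrunken ambient manifold. Combined with Lemma~\ref{Lemma::Metrics::Proofs::LocalWeakEquiv::WWdBoundedByZZd} and Lemma~\ref{Lemma::Metrics::Lemmas::LocallyEquivIffEquivOnCptSets}, this will complete the proof of Theorem~\ref{Thm::Metrics::Results::LocalWeakEquivImpliesEquiv}.
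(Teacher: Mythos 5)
Your proposal is correct and realizes precisely what the paper's one-sentence proof asserts: that the argument for Lemma~\ref{Lemma::Metrics::Proofs::LocalWeakEquiv::WWdBoundedByZZd} goes through with $\WWd$ and $\ZZd$ interchanged. You correctly identify the one genuine asymmetry the paper leaves implicit --- that $\ZZd$ lives only on $\ManifoldNt$, which need not be closed in $\ManifoldM$ --- and your relocalization to $\ManifoldN'=\Omega'\cap\ManifoldN$ with an extension $\ZhZd$ on $\Omega'$ is exactly the reduction needed to make the roles symmetric.
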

\begin{proof}
    The same proof used to show Lemma \ref{Lemma::Metrics::Proofs::LocalWeakEquiv::WWdBoundedByZZd}
    goes through with the roles of \(\WWd\) and \(\ZZd\) reversed.
\end{proof}

The proof of Theorem \ref{Thm::Metrics::Results::LocalWeakEquivImpliesEquiv} is completed by combining
Lemmas \ref{Lemma::Metrics::Proofs::LocalWeakEquiv::WWdBoundedByZZd} and \ref{Lemma::Metrics::Proofs::LocalWeakEquiv::ZZdBoundedByWWd}.

\section{Sheaves and metrics}\label{Section::Sheaves}
Our results concerning metrics are more naturally stated in terms
of certain \(\Zg\)-filtrations of sheaves of \(\CinftySpace\)-modules of smooth vector fields.
This is particularly true for the boundary metric: we will see (in Section \ref{Section::Sheaves::Restriction::NCBdry}) that H\"ormander vector fields
with formal degrees naturally define a \(\Zg\)-filtration of sheaves of modules of vector fields on
the non-characeteristic boundary satisfying good properties. This will lead directly to a Lipschitz equivalence
class of metrics on the boundary--generalizing and making precise Corollary \ref{Cor::GlobalCor::EquivBoundary}.
Unfortunately, we require a number of definitions and simple lemmas before we can state the relevant results
in Section \ref{Section::Sheaves::Metrics}.
On a first reading, the reader may wish to directly skip to the results in Section \ref{Section::Sheaves::Metrics},
where the basic ideas may be clear, even if the reader has not seen all the relevant definitions.

\begin{remark}\label{Rmk::Sheaves::SheavesAndDistributionsEquiv}
    It is common in the literature on maximal subellipticity to work
    with globally defined \(\CinftySpace[\ManifoldN]\)-modules of vector fields
    on \(\ManifoldN\), instead of sheaves
    of vector fields as in Definition \ref{Defn::Sheaves::Sheaves::Sheaf}.  
    While we could proceed in this way, taking a globally defined
    \(\CinftySpace[\ManifoldN]\)-module of vector fields, \(\sD\),
    requires one to provide more information than we use.
    Indeed, given such a module \(\sD\), one could take the smallest
    sheaf of vector fields \(\SheafF\) on \(\ManifoldN\) with \(\sD\subseteq \SheafF[\ManifoldN]\),
    and apply our theory. Conversely, given a sheaf \(\SheafF\) one obtains a globally
    defined module \(\sD=\SheafF[\ManifoldN]\) which uniquely determines \(\SheafF\)
    (see Lemma \ref{Lemma::Sheaves::Sheaves::CheckGeneratorsAsGlobalSections}).
    However, (when the manifold is not compact) not every \(\sD\) arises as some \(\SheafF[\ManifoldN]\).\footnote{To understand
    this, consider the simpler case of functions. The sheaf, \(\CinftySpace\), of smooth functions on a manifold
    is the smallest sheaf with \(\CzinftySpace[\ManifoldN]\subseteq \CinftySpace[\ManifoldN]\).}
    Thus it is more natural to state our results in the language of sheaves,
    as that is the data which is provided in our applications, and the data which is used in 
    our proofs.
    %
    In any case, one can easily restate our results with modules, with the caveat
    that one would then often restrict the modules to open sets, 
    and work locally with some glueing properties,
    thereby treating the modules like sheaves.
\end{remark}

    \subsection{Sheaves}
    Let \(\ManifoldN\) be a smooth manifold (possibly with boundary).

\begin{definition}\label{Defn::Sheaves::Sheaves::Sheaf}
    We say \(\SheafF\) is a \textit{pre-sheaf of vector fields on \(\ManifoldN\)} if
    \(\SheafF\) is a pre-sheaf of \(\CinftySpace\)-modules of smooth vector fields in \(\ManifoldN\)
    (with the usual restriction map). In particular, for each \(\Omega\subseteq \ManifoldN\) open,
    \(\SheafF[\Omega]\) is a \(\CinftySpace\)-sub-module of \(\VectorFields{\Omega}\)
    such that \(X\in \SheafF[\Omega]\implies X\big|_{\Omega'}\in \SheafF[\Omega']\) for \(\Omega'\subseteq \Omega\)
    (see \cite[\href{https://stacks.math.columbia.edu/tag/006E}{Definition 006E} and \href{https://stacks.math.columbia.edu/tag/006Q}{Definition 006Q}]{stacks-project}).
    We say \(\SheafF\) is a \textit{sheaf of vector fields on \(\ManifoldN\)} if \(\SheafF\)
    is a pre-sheaf of vector fields on \(\ManifoldN\) which satisfies the sheaf gluing axiom
    (see \cite[\href{https://stacks.math.columbia.edu/tag/006T}{Definition 006T} and \href{https://stacks.math.columbia.edu/tag/0077}{Definition 0077}]{stacks-project}).
\end{definition}

\begin{remark}
    In this paper, sheaves are only used as a convenient packaging
    of the properties we need; we do not use any advanced results concerning
    general sheaves. Some of our lemmas could be concluded
    from abstract results in the literature.
    For example, because we have partitions of unity, the sheaves we consider are fine \cite[Chapter 2, Definition 3.3]{WellsDifferentialAnalysisOnComplexManifolds},
    and therefore are soft \cite[Chapter 2, Definition 3.1 and Proposition 3.5]{WellsDifferentialAnalysisOnComplexManifolds};
    however, such results are standard to an analyst in our setting, and
    so we opt to instead directly show what we require.
\end{remark}

Let \(\sI\) be an index set, and for each \(\iota\in \sI\), let \(X_\iota\in \VectorFields{U_\iota}\),
where \(U_\iota\subseteq \ManifoldN\) is open. By \cite[\href{https://stacks.math.columbia.edu/tag/01AP}{Lemma 01AP}]{stacks-project},
there exists a unique smallest sheaf of vector fields \(\SheafF\) with \(X_\iota\in \SheafF[U_\iota]\), \(\forall \iota\in \sI\).

\begin{definition}
    \(\SheafF\) as described in the previous paragraph is called the \textit{sheaf of vector fields generated by} \(\{X_\iota :\iota\in \sI\}\)
    (see \cite[\href{https://stacks.math.columbia.edu/tag/01AQ}{Definition 01AQ}]{stacks-project}).
\end{definition}

\begin{definition}\label{Defn::Sheaves::Sheaves::FiniteType}
    We say a sheaf of vector fields,
    \(\SheafF\),
    is of \textit{finite type} if \(\forall x\in \ManifoldN\), 
    \(\exists U\ni x\) open, and \(F\subset \VectorFields{U}\) finite such that
    \(\SheafF\big|_U\) is generated by \(F\) (see \cite[\href{https://stacks.math.columbia.edu/tag/01B5}{Definition 01B5}]{stacks-project}).\footnote{For an open set \(U\), \(\SheafF\big|_U\) is the sheaf of vector fields on \(U\) given by restricting \(\SheafF\) to open subsets of \(U\).}
\end{definition}

Sheaves of vector fields of finite type are locally easy to understand as the next lemma shows.

\begin{lemma}\label{Lemma::Sheaves::Sheaves::ModuleGenByFiniteSetIsSheaf}
    Let \(\sS\subset \VectorFieldsN\) be a finite set. For each \(\Omega\subseteq \ManifoldN\)
    open, set
    \begin{equation*}
        \SheafGenBy{\sS}[\Omega]:=\CinftySpace[\Omega]\text{-module generated by }\sS\subseteq \VectorFields{\Omega}.
    \end{equation*}
    Then, \(\SheafGenBy{\sS}\) is a sheaf of vector fields on \(\ManifoldN\) and is the sheaf of vector fields
    generated by \(\sS\).
\end{lemma}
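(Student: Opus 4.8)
The plan is to verify the sheaf axioms for $\SheafGenBy{\sS}$ directly and then to deduce the minimality (``generated by'') statement. The presheaf structure is immediate: each $\SheafGenBy{\sS}[\Omega]$ is by definition a $\CinftySpace[\Omega]$-submodule of $\VectorFields{\Omega}$, and if $X=\sum_{Y\in\sS}f_Y Y$ with $f_Y\in\CinftySpace[\Omega]$ and $\Omega'\subseteq\Omega$ is open, then $X\big|_{\Omega'}=\sum_{Y\in\sS}(f_Y\big|_{\Omega'})(Y\big|_{\Omega'})\in\SheafGenBy{\sS}[\Omega']$; so the usual restriction maps make $\SheafGenBy{\sS}$ a presheaf of vector fields.

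For the sheaf gluing axiom, separatedness is automatic, since the tangent sheaf $\VectorFields{\cdot}$ is itself a sheaf: a vector field is determined by its restrictions to any open cover. For gluing, let $\{\Omega_i\}_{i\in I}$ be an open cover of an open set $\Omega\subseteq\ManifoldN$ and let $X_i\in\SheafGenBy{\sS}[\Omega_i]$ agree on overlaps; since the tangent sheaf is a sheaf there is a unique $X\in\VectorFields{\Omega}$ with $X\big|_{\Omega_i}=X_i$, and the only thing to prove is that $X\in\SheafGenBy{\sS}[\Omega]$. Here I would use a partition of unity: writing $X_i=\sum_{Y\in\sS}f_{i,Y}Y$ on $\Omega_i$, choosing a locally finite partition of unity $\{\chi_i\}$ on $\Omega$ with $\supp[\chi_i]\subseteq\Omega_i$, and setting $f_Y:=\sum_i\chi_i f_{i,Y}$ (each summand extended by $0$, the sum locally finite hence an element of $\CinftySpace[\Omega]$), one checks pointwise that $\sum_{Y\in\sS}f_Y Y=X$: at $p\in\Omega$, $\chi_i(p)\neq 0$ forces $p\in\Omega_i$, hence $X_i(p)=X_j(p)$ for any $j$ with $p\in\Omega_j$ by the overlap compatibility, and therefore $\sum_{Y\in\sS}f_Y(p)Y(p)=\sum_i\chi_i(p)X_i(p)=X(p)$ using $\sum_i\chi_i\equiv 1$. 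Thus $X\in\SheafGenBy{\sS}[\Omega]$ and $\SheafGenBy{\sS}$ is a sheaf.

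Finally I would identify $\SheafGenBy{\sS}$ with the sheaf of vector fields generated by $\sS$, i.e.\ the smallest sheaf of vector fields $\SheafF$ with $\sS\subseteq\SheafF[\ManifoldN]$. Certainly $\sS\subseteq\SheafGenBy{\sS}[\ManifoldN]$. Conversely, if $\SheafF$ is any sheaf of vector fields with $\sS\subseteq\SheafF[\ManifoldN]$, then for each open $\Omega$ the restriction maps give $Y\big|_\Omega\in\SheafF[\Omega]$ for every $Y\in\sS$, and since $\SheafF[\Omega]$ is a $\CinftySpace[\Omega]$-module it contains the $\CinftySpace[\Omega]$-module they generate, namely $\SheafGenBy{\sS}[\Omega]$; hence $\SheafGenBy{\sS}\subseteq\SheafF$ as subsheaves of the tangent sheaf. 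By uniqueness of the generated sheaf (Stacks Project, Lemma 01AP, cited in the excerpt), $\SheafGenBy{\sS}$ is the sheaf generated by $\sS$. The one genuinely non-formal point in all of this is the gluing verification, and its essential ingredient is the existence of partitions of unity on $\ManifoldN$ (valid also when $\ManifoldN$ has boundary); the hypothesis that $\sS$ is finite is exactly what guarantees that $\sum_{Y\in\sS}f_Y Y$ is a genuine finite sum with smooth coefficients, so that no convergence or local-finiteness issue arises over the ``$Y$'' index.
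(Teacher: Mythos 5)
Your proof is correct and follows essentially the same route as the paper: the heart of the argument is the same partition-of-unity gluing computation, with the minimality statement deduced afterward from the module structure. The only difference is cosmetic — you index the partition of unity by the cover index $i$ with $\supp[\chi_i]\subseteq\Omega_i$, while the paper uses a separate index $\kappa$ together with a selection map $\alpha(\kappa)$ and compactly supported $\phi_\kappa\in\CzinftySpace[U_{\alpha(\kappa)}]$; both are standard formulations of a subordinate partition of unity and yield the identical computation.
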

\begin{proof}
    \(\SheafGenBy{\sS}\) is clearly contained in the sheaf generated by \(\sS\), and \(\sS\subseteq \SheafGenBy{\sS}[\ManifoldN]\),
    and so to prove the result it suffices to show \(\SheafGenBy{\sS}\) is a sheaf of vector fields.
    It is clearly a pre-sheaf of vector fields (see Definition \ref{Defn::Sheaves::Sheaves::Sheaf}),
    and we therefore only need to verify the gluing property (see \cite[\href{https://stacks.math.columbia.edu/tag/006T}{Definition 006T}]{stacks-project}).

    Let \(U_\alpha\subseteq \ManifoldN\) be a collection of open sets (for \(\alpha\) ranging over some index set),
    and let \(V_\alpha\in \SheafGenBy{\sS}[U_\alpha]\) satisfying
    \begin{equation*}
        V_\alpha\big|_{U_\alpha\cap U_\beta}=V_\beta\big|_{U_\alpha\cap U_\beta},\quad \forall \alpha,\beta.
    \end{equation*}
    Set \(U=\bigcup_{\alpha} U_\alpha\) and define \(V\in \VectorFields{U}\) by \(V\big|_{U_\alpha}=V_\alpha\).
    We wish to show \(V\in \SheafGenBy{S}[U]\).

    Enumerate \(S=\left\{ W_1,\ldots, W_r \right\}\). By hypothesis, we may write
    \begin{equation*}
        V_\alpha=\sum_{l=1}^r a_{\alpha,k} W_k, \quad a_{\alpha,k}\in \CinftySpace[U_\alpha].
    \end{equation*}
    Pick a partition of unity \(\phi_{\kappa}\) on \(U\) such that each \(\phi_{\kappa}\) lies 
    \(\CzinftySpace[U_{\alpha(\kappa)}]\), for some \(\alpha(\kappa)\), and only finitely many \(\phi_\kappa\)
    are non-zero on any compact subset of \(U\).  We have,
    \begin{equation*}
        \begin{split}
            &V=\sum_\kappa \phi_\kappa V = \sum_{\kappa} \phi_\kappa V_{\alpha(\kappa)} 
            =\sum_{\kappa}\sum_{j=1}^r \phi_\kappa a_{\alpha(\kappa),k} W_k
            = \sum_{k=1}^r \left( \sum_\kappa \phi_\kappa a_{\alpha(\kappa),k} \right) W_k \in \SheafGenBy{\sS}[U].
        \end{split}
    \end{equation*}
\end{proof}

\begin{lemma}\label{Lemma::Sheaves::Sheaves::CheckGeneratorsAsGlobalSections}
    Let \(\SheafF\) be a sheaf of vector fields on \(\ManifoldN\) and suppose \(S\subseteq \VectorFieldsN\) is such that
    \begin{equation*}
        \SheafF[\ManifoldN]=\CinftySpace[\ManifoldN]\text{-module generated by }\sS.
    \end{equation*}
    Then, \(\SheafF\) is the sheaf of vector fields generated be \(\sS\). In particular,
    \(\SheafF\) is the sheaf of vector fields generated by \(\SheafF[\ManifoldN]\).
    Moreover, if \(\sS\) is finite, then \(\SheafF=\SheafGenBy{\sS}\).
\end{lemma}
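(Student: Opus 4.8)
The plan is to let $\mathscr{G}$ denote the sheaf of vector fields generated by $\sS$ and to prove $\SheafF=\mathscr{G}$; the three assertions then follow at once, the ``in particular'' clause by taking $\sS=\SheafF[\ManifoldN]$ (which trivially generates the $\CinftySpace[\ManifoldN]$-module $\SheafF[\ManifoldN]$), and the ``moreover'' clause by combining $\SheafF=\mathscr{G}$ with Lemma \ref{Lemma::Sheaves::Sheaves::ModuleGenByFiniteSetIsSheaf}, which identifies the sheaf generated by a finite set $\sS$ with $\SheafGenBy{\sS}$. One inclusion is immediate: $\SheafF$ is a sheaf of vector fields with $\sS\subseteq\SheafF[\ManifoldN]$, so by minimality $\mathscr{G}(\Omega)\subseteq\SheafF[\Omega]$ for every open $\Omega\subseteq\ManifoldN$. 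Hence the work is to show $\SheafF[\Omega]\subseteq\mathscr{G}(\Omega)$ for all open $\Omega$.

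First I would fix an open $\Omega\subseteq\ManifoldN$ and $X\in\SheafF[\Omega]$, and—using that $\mathscr{G}$ is a sheaf—reduce to showing that every $x\in\Omega$ has an open neighborhood $U$ with $x\in U\subseteq\Omega$ and $X\big|_U\in\mathscr{G}(U)$. The obstacle, and the one point where the argument has content, is that $X$ need not be the restriction of a global section of $\SheafF$, so the hypothesis on $\SheafF[\ManifoldN]$ cannot be applied to $X$ directly. To get around this I would choose an open $U\ni x$ with $\overline U$ compact and $\overline U\subseteq\Omega$, and a cutoff $\chi\in\CzinftySpace[\Omega]$ with $\chi\equiv1$ on $U$. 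Then $\chi X\in\SheafF[\Omega]$, since $\SheafF[\Omega]$ is a $\CinftySpace[\Omega]$-module, and $\chi X$ vanishes identically on $\Omega\setminus\supp[\chi]$. As $\supp[\chi]$ is compact, hence closed in $\ManifoldN$, and contained in $\Omega$, the open sets $\Omega$ and $\ManifoldN\setminus\supp[\chi]$ cover $\ManifoldN$; the sections $\chi X\in\SheafF[\Omega]$ and $0\in\SheafF[\ManifoldN\setminus\supp[\chi]]$ agree on the overlap $\Omega\setminus\supp[\chi]$, so the gluing axiom for $\SheafF$ yields $Y\in\SheafF[\ManifoldN]$ with $Y\big|_\Omega=\chi X$.

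Finally I would apply the hypothesis to $Y$: write $Y$ as a finite $\CinftySpace[\ManifoldN]$-linear combination of elements of $\sS$, restrict that identity to $\Omega$, and use that each generator in $\sS$ restricts to a section of $\mathscr{G}$ over $\Omega$, to conclude $\chi X=Y\big|_\Omega\in\mathscr{G}(\Omega)$; restricting once more to $U$, where $\chi\equiv1$, gives $X\big|_U=(\chi X)\big|_U\in\mathscr{G}(U)$, as wanted. Since this holds for every $x\in\Omega$ and $\mathscr{G}$ is a sheaf, $X\in\mathscr{G}(\Omega)$, which completes the inclusion $\SheafF\subseteq\mathscr{G}$ and hence the proof. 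I expect the cutoff/gluing step of the second paragraph to be the only substantive one; the partition-of-unity choices and the bookkeeping with restriction maps are routine.
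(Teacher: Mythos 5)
Your proof is correct and takes essentially the same route as the paper's: both reduce to a local statement, both use a cutoff function to pass from a local section of $\SheafF$ to a global one so the hypothesis on $\SheafF[\ManifoldN]$ can be applied, and both then restrict back. The one small difference is that the paper invokes \cite[\href{https://stacks.math.columbia.edu/tag/01AN}{Lemma 01AN}]{stacks-project} to justify the local-to-global reduction, whereas you argue it directly from the sheaf axiom for $\mathscr{G}$; and the paper simply writes ``$\phi X\in\SheafF[\ManifoldN]$'' without comment, whereas you spell out the gluing of $\chi X$ with the zero section on $\ManifoldN\setminus\supp[\chi]$ — a detail the paper leaves implicit but which is indeed needed, so your version is, if anything, slightly more careful.
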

\begin{proof}
    In light of \cite[\href{https://stacks.math.columbia.edu/tag/01AN}{Lemma 01AN}]{stacks-project},
    to show that \(\SheafF\) is the sheaf generated by \(\sS\), it suffices to show:
    \begin{equation}\label{Eqn::Sheaves::Sheaves::ToShowLocalInTmp1}
        \forall x\in \ManifoldN,\text{ if }X\in \SheafF[U], x\in U,\text{ then } \exists V\ni x,
        \text{ with }X\in \CinftySpace[V]\text{-module generated by }\sS.
    \end{equation}
    But if \(X\in \SheafF[U]\), take \(\phi\in \CzinftySpace[U]\) such that \(\phi\equiv 1\) on a neighborhood
    \(V\) of \(x\). Then, \(\phi X\in \SheafF[\ManifoldN]\), and therefore
    \begin{equation*}
        \phi X= \sum_{j=1}^L a_j X_j,\quad X_1,\ldots, X_L\in S,\quad a_j\in \CinftySpace[\ManifoldN].
    \end{equation*}
    Restricting to \(V\) proves \eqref{Eqn::Sheaves::Sheaves::ToShowLocalInTmp1}.
    
    Finally, if \(\sS\) is finite, that \(\SheafF=\SheafGenBy{\sS}\) follows from Lemma \ref{Lemma::Sheaves::Sheaves::ModuleGenByFiniteSetIsSheaf}.
\end{proof}

    \subsection{Filtrations of sheaves}
    Let \(\ManifoldN\) be a smooth manifold (possibly with boundary).

\begin{definition}
    We say \(\FilteredSheafF\) is a \textit{filtration of sheaves of vector fields on \(\ManifoldN\)}
    if for each \(j\in \Zg\), \(\FilteredSheafNoSetF[j]\) is a sheaf of vector fields on \(\ManifoldN\),
    with \(\FilteredSheafNoSetF[j]\subseteq \FilteredSheafNoSetF[j+1]\).
\end{definition}

\begin{definition}
    Let \(\sI\) be an index set, and for \(\iota\in \sI\) let \(X_\iota\in \VectorFields{U_\iota}\), where \(U_\iota\subseteq \ManifoldN\)
    is open, and let \(d_\iota\in \Zg\). We say \(\FilteredSheafF\) is the \textit{filtration of sheaves of vector fields generated by}
    \(\left\{ (X_\iota, d_\iota) : \iota\in \sI \right\}\), if each \(\FilteredSheafNoSetF[j]\)
    is the sheaf generated by \(\left\{ X_\iota : d_\iota\leq j \right\}\).
\end{definition}

\begin{definition}
    We say a filtration of sheaves of vector fields, \(\FilteredSheafF\), is of \textit{finite type}
    if \(\forall x\in \ManifoldN\), \(\exists U\ni x\) open, \(\exists F\subset \VectorFields{U}\times \Zg\) finite,
    such that \(\FilteredSheafF\big|_{U}\) is the filtration of sheaves of vector fields generated by \(F\).
\end{definition}

Filtrations of sheaves of finite type are locally easy to understand as the next lemma shows.

\begin{lemma}\label{Lemma::Sheaves::Filtered::SheafGenByFiniteSet}
    Let \(\sS\subset \VectorFieldsN\times \Zg\) be a finite set. For each \(\Omega\subseteq \ManifoldN\)
    open set
    \begin{equation}\label{Eqn::Sheaves::Filtered::SheafGenByFiniteSet::DefnOfSheafGenByFiniteSet}
        \FilteredSheafGenBy{\sS}[\Omega][j]:=\CinftySpace[\Omega]\text{-module generated by }\left\{ X : \exists (X,k)\in \sS, k\leq j \right\}.
    \end{equation}
    Then, \(\FilteredSheafGenBy{\sS}\) is the filtration of sheaves of vector fields generated by \(\sS\).
\end{lemma}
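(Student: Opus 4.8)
The plan is to reduce the statement entirely to the already-proved unfiltered version, Lemma~\ref{Lemma::Sheaves::Sheaves::ModuleGenByFiniteSetIsSheaf}. For each $j\in\Zg$ put $\sS_j:=\{X\in\VectorFieldsN : \exists (X,k)\in\sS,\ k\leq j\}$; since $\sS$ is finite, each $\sS_j$ is a finite set of globally defined vector fields on $\ManifoldN$, and clearly $\sS_j\subseteq\sS_{j+1}$. Comparing \eqref{Eqn::Sheaves::Filtered::SheafGenByFiniteSet::DefnOfSheafGenByFiniteSet} with the definition of $\SheafGenBy{\sS_j}$ in Lemma~\ref{Lemma::Sheaves::Sheaves::ModuleGenByFiniteSetIsSheaf}, one sees $\FilteredSheafGenBy{\sS}[\Omega][j]=\SheafGenBy{\sS_j}[\Omega]$ for every open $\Omega\subseteq\ManifoldN$.

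From here the argument is short. First I would invoke Lemma~\ref{Lemma::Sheaves::Sheaves::ModuleGenByFiniteSetIsSheaf} for each fixed $j$: it gives that $\SheafGenBy{\sS_j}$ is a sheaf of vector fields on $\ManifoldN$ \emph{and} that it is the sheaf of vector fields generated by the finite set $\sS_j$. In particular, the degree-$j$ piece of $\FilteredSheafGenBy{\sS}$ is a sheaf of vector fields. Next I would note that, for any open $\Omega$, the $\CinftySpace[\Omega]$-module generated by $\sS_j$ is contained in the one generated by the larger set $\sS_{j+1}$, so the degree-$j$ piece is contained in the degree-$(j+1)$ piece; hence $\FilteredSheafGenBy{\sS}$ is a filtration of sheaves of vector fields. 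Finally, to identify it with the filtration generated by $\sS$, I would simply observe that the set $\{X : \exists (X,k)\in\sS,\ k\leq j\}$ appearing in the definition of the generated filtration is exactly $\sS_j$, and that the degree-$j$ piece $\SheafGenBy{\sS_j}$ was just shown to be the sheaf generated by $\sS_j$; this is precisely what the definition of ``filtration of sheaves of vector fields generated by $\sS$'' requires.

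There is no genuinely hard step here; the only thing needing care is the definitional bookkeeping — matching the indexing sets and remembering, via Lemma~\ref{Lemma::Sheaves::Sheaves::ModuleGenByFiniteSetIsSheaf}, that for a \emph{finite} generating set the sheaf generated coincides on the nose with the naive open-set-by-open-set module construction in \eqref{Eqn::Sheaves::Filtered::SheafGenByFiniteSet::DefnOfSheafGenByFiniteSet}, so that no sheafification step is silently being skipped when we pass between degrees.
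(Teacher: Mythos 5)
Your proposal is correct and follows exactly the same approach as the paper, which simply cites Lemma~\ref{Lemma::Sheaves::Sheaves::ModuleGenByFiniteSetIsSheaf} and leaves the degree-by-degree bookkeeping to the reader; you have filled in precisely that bookkeeping.
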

\begin{proof}
    This follows immediately from Lemma \ref{Lemma::Sheaves::Sheaves::ModuleGenByFiniteSetIsSheaf}.
\end{proof}

\begin{lemma}\label{Lemma::Sheaves::Filtered::CheckGeneratorsAsGlobalSections}
    Let \(\FilteredSheafF\) be a filtration of sheaves of vector fields on \(\ManifoldN\).
    Suppose \(\sS\subseteq \VectorFieldsN\times \Zg\) is such that
    \begin{equation*}
        \FilteredSheafF[\ManifoldN][j]=\CinftySpace[\ManifoldN]\text{-module generated by }\left\{ X : \exists (X,k)\in \sS, k\leq j \right\}.
    \end{equation*}
    Then, \(\FilteredSheafF\) is the filtration of sheaves of vector fields generated by \(\sS\).
    Moreover, if \(\sS\) is finite, then \(\FilteredSheafF=\FilteredSheafGenBy{\sS}\).
\end{lemma}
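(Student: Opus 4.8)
The plan is to reduce everything, one filtration level at a time, to the already-proven unfiltered statement, Lemma \ref{Lemma::Sheaves::Sheaves::CheckGeneratorsAsGlobalSections}. Fix \(j\in\Zg\) and set \(\sS_{\leq j}:=\{\, X\in\VectorFieldsN : \exists\,(X,k)\in\sS \text{ with } k\leq j \,\}\). The hypothesis says precisely that \(\FilteredSheafF[\ManifoldN][j]\) is the \(\CinftySpace[\ManifoldN]\)-module generated by \(\sS_{\leq j}\), so Lemma \ref{Lemma::Sheaves::Sheaves::CheckGeneratorsAsGlobalSections} applies and yields that the sheaf \(\FilteredSheafNoSetF[j]\) is the sheaf of vector fields generated by \(\sS_{\leq j}\).

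Since \(j\) was arbitrary, this holds for every \(j\in\Zg\); but ``for each \(j\), \(\FilteredSheafNoSetF[j]\) is the sheaf generated by \(\{X:\exists(X,k)\in\sS,\ k\leq j\}\)'' is exactly the definition of \(\FilteredSheafF\) being the filtration of sheaves of vector fields generated by \(\sS\) (viewing \(\sS\subseteq\VectorFieldsN\times\Zg\) as the indexed family \(\{(X,k):(X,k)\in\sS\}\), each with domain \(\ManifoldN\)). This proves the first assertion.

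For the ``moreover'', assume \(\sS\) is finite. By Lemma \ref{Lemma::Sheaves::Filtered::SheafGenByFiniteSet}, \(\FilteredSheafGenBy{\sS}\) is also the filtration of sheaves of vector fields generated by \(\sS\). Since the filtration generated by a given set is unique --- at each level \(j\) it is the unique smallest sheaf containing \(\sS_{\leq j}\) --- we conclude \(\FilteredSheafF=\FilteredSheafGenBy{\sS}\).

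The step carrying all the content is the appeal to Lemma \ref{Lemma::Sheaves::Sheaves::CheckGeneratorsAsGlobalSections}; the filtered statement itself is pure bookkeeping, so there is no real obstacle. The only point requiring a moment's care is matching conventions: one must check that the set of generators at level \(j\) named in the definition of ``filtration generated by \(\sS\)'' coincides with \(\sS_{\leq j}\), and that the unfiltered lemma is being applied with its ambient generating set taken to be \(\sS_{\leq j}\) for each fixed \(j\) separately.
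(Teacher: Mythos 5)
Your proof is correct and takes essentially the same approach as the paper: the paper's proof is simply the one-line remark that the lemma follows from Lemmas \ref{Lemma::Sheaves::Sheaves::CheckGeneratorsAsGlobalSections} and \ref{Lemma::Sheaves::Filtered::SheafGenByFiniteSet}, and you have spelled out exactly the level-by-level reduction that makes this work.
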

\begin{proof}
    This follows from Lemmas \ref{Lemma::Sheaves::Sheaves::CheckGeneratorsAsGlobalSections} 
    and \ref{Lemma::Sheaves::Filtered::SheafGenByFiniteSet}.
\end{proof}

\begin{lemma}\label{Lemma::Sheaves::Filtered::FiniteTypeOnCptSets}
    Let \(\FilteredSheafF\) be a filtration of sheaves of vector fields on \(\ManifoldN\).
    The following are equivalent:
    \begin{enumerate}[(i)]
        \item\label{Item::Sheaves::Filtered::FiniteTypeOnCptSets::FiniteType} \(\FilteredSheafF\) is of finite type.
        \item\label{Item::Sheaves::Filtered::FiniteTypeOnCptSets::FinGenOnCpt} \(\forall \Omega\Subset\ManifoldN \) open and relatively compact, \(\FilteredSheafF[\Omega]\) is finitely
            generated as a filtered \(\CinftySpace[\Omega]\)-module. More precisely, \(\exists F\subset \VectorFields{\Omega}\times \Zg\)
            finite such that
            \begin{equation*}
                \FilteredSheafF[\Omega][j]=\CinftySpace[\Omega]\text{-module generated by }\left\{ Y: \exists (Y,k)\in F, k\leq j \right\}.
            \end{equation*}
    \end{enumerate}
    Moreover, when \ref{Item::Sheaves::Filtered::FiniteTypeOnCptSets::FinGenOnCpt} holds, we have
    \begin{equation}\label{Eqn::Sheaves::Filtered::FiniteTypeOnCptSets::EquaLsFiniteGen}
        \FilteredSheafF\big|_{\Omega}=\FilteredSheafGenBy{F}.
    \end{equation}
\end{lemma}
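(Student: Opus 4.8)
The plan is to dispatch \ref{Item::Sheaves::Filtered::FiniteTypeOnCptSets::FinGenOnCpt}$\Rightarrow$\ref{Item::Sheaves::Filtered::FiniteTypeOnCptSets::FiniteType} (together with \eqref{Eqn::Sheaves::Filtered::FiniteTypeOnCptSets::EquaLsFiniteGen}) immediately from the lemmas already at hand, and then to prove \ref{Item::Sheaves::Filtered::FiniteTypeOnCptSets::FiniteType}$\Rightarrow$\ref{Item::Sheaves::Filtered::FiniteTypeOnCptSets::FinGenOnCpt} by a partition-of-unity argument over the compact set \(\overline{\Omega}\).

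For the easy direction, assume \ref{Item::Sheaves::Filtered::FiniteTypeOnCptSets::FinGenOnCpt}, fix \(\Omega\Subset\ManifoldN\) open and relatively compact, and let \(F\subset\VectorFields{\Omega}\times\Zg\) be the finite set it provides. Applying Lemma \ref{Lemma::Sheaves::Filtered::CheckGeneratorsAsGlobalSections} with \(\ManifoldN\) replaced by the manifold \(\Omega\) gives \(\FilteredSheafF\big|_{\Omega}=\FilteredSheafGenBy{F}\), which is \eqref{Eqn::Sheaves::Filtered::FiniteTypeOnCptSets::EquaLsFiniteGen}; in particular \(\FilteredSheafF\big|_{\Omega}\) is the filtration of sheaves generated by the finite set \(F\). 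Since a smooth manifold (with or without boundary) is locally compact, every point of \(\ManifoldN\) has a relatively compact open neighborhood, and taking \(U\) to be such a neighborhood \(\Omega\) (with its \(F\)) witnesses that \(\FilteredSheafF\) is of finite type.

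For the harder direction, fix \(\Omega\Subset\ManifoldN\), so \(\overline{\Omega}\) is compact. Using finite type, choose for each \(x\in\overline{\Omega}\) an open \(U_x\ni x\) with \(\FilteredSheafF\big|_{U_x}\) generated by a finite set, extract a finite subcover \(U_1,\dots,U_L\) of \(\overline{\Omega}\), and write \(\FilteredSheafF\big|_{U_i}=\FilteredSheafGenBy{F_i}\) with \(F_i=\{(Y_{i,1},k_{i,1}),\dots,(Y_{i,N_i},k_{i,N_i})\}\subset\VectorFields{U_i}\times\Zg\), using Lemma \ref{Lemma::Sheaves::Filtered::SheafGenByFiniteSet}. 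Choose \(\chi_i,\tilde\chi_i\in\CzinftySpace[U_i]\) with \(\sum_i\chi_i\equiv 1\) on a neighborhood of \(\overline{\Omega}\) and \(\tilde\chi_i\equiv1\) on \(\supp[\chi_i]\); extending by zero, each \(\tilde{Y}_{i,l}:=\tilde\chi_i Y_{i,l}\) is a smooth vector field on \(\ManifoldN\). The claim is that \(F:=\{(\tilde{Y}_{i,l}\big|_{\Omega},k_{i,l}):1\le i\le L,\ 1\le l\le N_i\}\) works, i.e.\ that for every \(j\in\Zg\), \(\FilteredSheafF[\Omega][j]\) is the \(\CinftySpace[\Omega]\)-module generated by \(\{\tilde{Y}_{i,l}\big|_{\Omega}:k_{i,l}\le j\}\).

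One inclusion rests on the sheaf gluing axiom: from \(\FilteredSheafF\big|_{U_i}=\FilteredSheafGenBy{F_i}\) one reads off \(Y_{i,l}\in\FilteredSheafF[U_i][k_{i,l}]\), hence \(\tilde\chi_i Y_{i,l}\in\FilteredSheafF[U_i][k_{i,l}]\); as this section vanishes off the compact set \(\supp[\tilde\chi_i]\subseteq U_i\), gluing it over \(\Omega=(\Omega\cap U_i)\cup(\Omega\setminus\supp[\tilde\chi_i])\) with the zero section yields \(\tilde{Y}_{i,l}\big|_{\Omega}\in\FilteredSheafF[\Omega][k_{i,l}]\). For the reverse inclusion, given \(X\in\FilteredSheafF[\Omega][j]\), restrict to \(\Omega\cap U_i\) and use \eqref{Eqn::Sheaves::Filtered::SheafGenByFiniteSet::DefnOfSheafGenByFiniteSet} to write \(X=\sum_{l:\,k_{i,l}\le j}a_{i,l}Y_{i,l}\) with \(a_{i,l}\in\CinftySpace[\Omega\cap U_i]\); the functions \(\chi_i a_{i,l}\), which vanish off \(\supp[\chi_i]\), glue with \(0\) to functions \(b_{i,l}\in\CinftySpace[\Omega]\), and since \(\tilde\chi_i\equiv1\) on \(\supp[\chi_i]\) one checks \(b_{i,l}\tilde{Y}_{i,l}=\chi_i a_{i,l}Y_{i,l}\) on \(\Omega\cap U_i\) while both sides vanish on \(\Omega\setminus\supp[\chi_i]\), so that \(\sum_l b_{i,l}\tilde{Y}_{i,l}=\chi_i X\) on \(\Omega\); summing over \(i\) and using \(\sum_i\chi_i\equiv1\) near \(\overline{\Omega}\) exhibits \(X\) as a \(\CinftySpace[\Omega]\)-combination of the claimed generators, proving \ref{Item::Sheaves::Filtered::FiniteTypeOnCptSets::FinGenOnCpt}. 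The argument is essentially routine; the only real care needed — and the main bookkeeping obstacle — is in these two gluing steps: one must verify that cutting off a local generator by \(\tilde\chi_i\) keeps it at filtration level \(k_{i,l}\), and that the locally defined coefficients \(a_{i,l}\), once multiplied by \(\chi_i\), reassemble into genuine global smooth functions on \(\Omega\). This is exactly where the sheaf (rather than merely presheaf) property of each \(\FilteredSheafNoSet{F}[j]\) is used.
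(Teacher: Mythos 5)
Your proof is correct.  The easy direction \ref{Item::Sheaves::Filtered::FiniteTypeOnCptSets::FinGenOnCpt}\(\Rightarrow\)\ref{Item::Sheaves::Filtered::FiniteTypeOnCptSets::FiniteType} and the derivation of \eqref{Eqn::Sheaves::Filtered::FiniteTypeOnCptSets::EquaLsFiniteGen} via Lemma~\ref{Lemma::Sheaves::Filtered::CheckGeneratorsAsGlobalSections} match the paper exactly.  For \ref{Item::Sheaves::Filtered::FiniteTypeOnCptSets::FiniteType}\(\Rightarrow\)\ref{Item::Sheaves::Filtered::FiniteTypeOnCptSets::FinGenOnCpt} the overall strategy is the same (finite subcover of \(\overline{\Omega}\), cut off the local generators by compactly supported bump functions so they make global sense), but the implementation of the final step differs.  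The paper cuts off each \(F_x\) by a single bump \(\phi_x\equiv 1\) near a shrunken neighborhood \(\overline{V_x}\Subset U_x\), takes the union \(H\) of the resulting finite lists, and then appeals to the abstract sheaf-generation fact (Stacks 01AN) together with Lemma~\ref{Lemma::Sheaves::Filtered::SheafGenByFiniteSet} to conclude \(\FilteredSheafF\big|_{\Omega}=\FilteredSheafGenBy{H}\).  You instead introduce a genuine partition of unity \(\chi_i\) together with the auxiliary bumps \(\tilde\chi_i\) and verify the two module inclusions on \(\Omega\) by hand: your \(\tilde\chi_i\) plays the role of \(\phi_x\), while the \(\chi_i\) absorb the gluing that the paper outsources to the Stacks lemma.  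Both arguments are valid; yours is more elementary and makes explicit exactly where the sheaf (as opposed to presheaf) property of each \(\FilteredSheafNoSet{F}[j]\) enters, at the cost of somewhat more bookkeeping, whereas the paper's version is shorter because the standard sheaf-theoretic lemma already packages the partition-of-unity argument.
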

\begin{proof}
    Suppose \ref{Item::Sheaves::Filtered::FiniteTypeOnCptSets::FinGenOnCpt} holds. 
    Lemma \ref{Lemma::Sheaves::Filtered::CheckGeneratorsAsGlobalSections} shows \eqref{Eqn::Sheaves::Filtered::FiniteTypeOnCptSets::EquaLsFiniteGen} holds.
    Since \(F\) is finite, \ref{Item::Sheaves::Filtered::FiniteTypeOnCptSets::FiniteType} follows.

    Suppose \ref{Item::Sheaves::Filtered::FiniteTypeOnCptSets::FiniteType} holds, so that \(\FilteredSheafF\)
    is of finite type and let \(\Omega\Subset \ManifoldN\) be open and relatively compact.
    For each \(x\in \overline{\Omega}\), let \(U_x\ni x\) be open such that
    \(\exists F_x\subset \VectorFields{U_x}\times \Zg\) finite with \(\FilteredSheafF\big|_{U_x}\)
    generated by \(F_x\). Let \(V_x\Subset U_x\) be a neighborhood with \(x\in V_x\) and let \(\phi_x\in \CzinftySpace[U_x]\)
    equal \(1\) on a neighborhood of \(\overline{V_x}\).  Set \(G_x:=\left\{ \left( \phi_x X, d \right) : (X,d)\in F_x \right\}\).
    It follows that \(\FilteredSheafF\big|_{V_x}\)
    is generated by \(G_x\).

    \(\left\{ V_x : x\in \overline{\Omega} \right\}\) is an open cover for \(\overline{\Omega}\); extract
    a finite sub-cover \(V_{x_1},\ldots, V_{x_L}\).
    Set \(H:=G_{x_1}\cup G_{x_2}\cup\cdots \cup G_{x_L}\).
    Since \(G_{x_j}\) generates \(\FilteredSheafF\big|_{V_{x_j}}\), we have \(H\)
    generates \(\FilteredSheafF\big|_{V_{x_j}}\), \(\forall j\). It follows from \cite[\href{https://stacks.math.columbia.edu/tag/01AN}{Lemma 01AN}]{stacks-project}
    that \(H\) generates \(\FilteredSheafF\big|_{\Omega}\).

    Since \(H\) is finite, Lemma \ref{Lemma::Sheaves::Filtered::SheafGenByFiniteSet}
    shows \(\FilteredSheafF\big|_{\Omega}= \FilteredSheafGenBy{H}\big|_{\Omega}\),
    and therefore \(\FilteredSheafF[\Omega]=\FilteredSheafGenBy{H}[\Omega]\),
    establishing \ref{Item::Sheaves::Filtered::FiniteTypeOnCptSets::FinGenOnCpt}.
\end{proof}

\begin{definition}\label{Defn::Sheaves::Filtered::SpansTangentSpace}
    We say \(\FilteredSheafF\) \textit{spans the tangent space at every point}
    if the sheafification of the pre-sheaf of vector fields on \(\ManifoldN\) given by
    \begin{equation*}
        \Omega\mapsto \bigcup_{j} \FilteredSheafF[\Omega][j]
    \end{equation*}
    equals \(\SheafVectorFields\).
\end{definition}

    \subsection{Lie algbera filtrations}
    Let \(\ManifoldN\) be a smooth manifold (possibly with boundary),
and \(\FilteredSheafF\) a filtration of sheaves of vector fields on \(\ManifoldN\).

\begin{definition}
    We say \(\FilteredSheafF\) is a \textit{Lie algebra filtration of sheaves of vector fields on \(\ManifoldN\)}
    if \(\forall \Omega\subseteq \ManifoldN\) open,
    \begin{equation*}
        X\in \FilteredSheafF[\Omega][j], Y\in \FilteredSheafF[\Omega][k]\implies [X,Y]\in \FilteredSheafF[\Omega][j+k].
    \end{equation*}
    We define a \textit{Lie algebra filtration of pre-sheaves of vector fields on \(\ManifoldN\)} by the same
    in the same way, when \(\FilteredSheafF\) is a filtration of presheaves of vector fields on \(\ManifoldN\).
\end{definition}

\begin{notation}
    We write \(\LieFilteredSheafF\) for the smallest Lie algebra filtration of sheaves of vector fields on \(\ManifoldN\)
    containing \(\FilteredSheafF\).
\end{notation}

\begin{definition}\label{Defn::Sheaves::LieAlg::HormandersCondition}
    We say \(\FilteredSheafF\) satisfies \textit{H\"ormander's condition} if \(\LieFilteredSheafF\)
    spans the tangent space at every point (see Definition \ref{Defn::Sheaves::Filtered::SpansTangentSpace}).
    This terminology is justified by Proposition \ref{Prop::Sheaves::LieAlg::HormandersCondiIsEquivToOtherNotions}.
\end{definition}

\begin{proposition}\label{Prop::Sheaves::LieAlg::HormandersCondiIsEquivToOtherNotions}
    The following are equivalent:
    \begin{enumerate}[(i)]
        \item\label{Item::::Sheaves::LieAlg::HormandersCondiIsEquivToOtherNotions::FiniteTypeAndHorCond} \(\FilteredSheafF\) is of finite type and satisfies H\"ormander's condition.
        \item\label{Item::::Sheaves::LieAlg::HormandersCondiIsEquivToOtherNotions::HorCondOnCptSets} \(\forall \Omega\Subset \ManifoldN\) open and relatively compact, there exists
            \(\WWd=\left\{ \left( W_1,\Wd_1 \right),\ldots, \left( W_r,\Wd_r \right) \right\}\subset \VectorFields{\Omega}\times \Zg\),
            H\"ormander vector fields with formal degrees on \(\Omega\), such that
            \begin{equation*}
                \FilteredSheafF\big|_{\Omega}=\FilteredSheafGenBy{\WWd}.
            \end{equation*}
        \item\label{Item::::Sheaves::LieAlg::HormandersCondiIsEquivToOtherNotions::LocallyHorCond} \(\forall x\in \ManifoldN\), \(\exists \Omega\ni x\) open, and H\"ormander vector fields with formal degrees on
            \(\Omega\), \(\WWd=\left\{ \left( W_1,\Wd_1 \right),\ldots, \left( W_r,\Wd_r \right) \right\}\subset \VectorFields{\Omega}\times \Zg\),
            such that
            \begin{equation*}
                \FilteredSheafF\big|_{\Omega}=\FilteredSheafGenBy{\WWd}.
            \end{equation*}
    \end{enumerate}
\end{proposition}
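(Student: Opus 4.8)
The plan is to prove the cycle of implications $\ref{Item::::Sheaves::LieAlg::HormandersCondiIsEquivToOtherNotions::FiniteTypeAndHorCond}\Rightarrow\ref{Item::::Sheaves::LieAlg::HormandersCondiIsEquivToOtherNotions::HorCondOnCptSets}\Rightarrow\ref{Item::::Sheaves::LieAlg::HormandersCondiIsEquivToOtherNotions::LocallyHorCond}\Rightarrow\ref{Item::::Sheaves::LieAlg::HormandersCondiIsEquivToOtherNotions::FiniteTypeAndHorCond}$. The implication $\ref{Item::::Sheaves::LieAlg::HormandersCondiIsEquivToOtherNotions::HorCondOnCptSets}\Rightarrow\ref{Item::::Sheaves::LieAlg::HormandersCondiIsEquivToOtherNotions::LocallyHorCond}$ is immediate: given $x\in\ManifoldN$, choose any relatively compact open $\Omega\ni x$ and invoke \ref{Item::::Sheaves::LieAlg::HormandersCondiIsEquivToOtherNotions::HorCondOnCptSets} directly. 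For $\ref{Item::::Sheaves::LieAlg::HormandersCondiIsEquivToOtherNotions::LocallyHorCond}\Rightarrow\ref{Item::::Sheaves::LieAlg::HormandersCondiIsEquivToOtherNotions::FiniteTypeAndHorCond}$: the hypothesis says that near each point $\FilteredSheafF$ agrees with $\FilteredSheafGenBy{\WWd}$ for a finite list, so $\FilteredSheafF$ is of finite type by definition. For H\"ormander's condition, one checks that on each such $\Omega$ the sheaf $\LieFilteredSheafF\big|_\Omega$ is the Lie algebra filtration generated by $\WWd$, whose union of terms contains (the sheaf generated by) all iterated brackets of $W_1,\dots,W_r$; since the $W_j$ satisfy H\"ormander's condition on $\Omega$, these brackets span $\TangentSpace{\ManifoldN}{y}$ at every $y\in\Omega$, so the sheafification of $\Omega'\mapsto\bigcup_j\LieFilteredSheafF[\Omega'][j]$ is all of $\SheafVectorFields$ locally, hence globally. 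This uses Lemma \ref{Lemma::Sheaves::Filtered::SheafGenByFiniteSet} to identify the generated filtration with the naive module description, plus the fact that being a full sheaf of vector fields is a local condition.

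The substantive implication is $\ref{Item::::Sheaves::LieAlg::HormandersCondiIsEquivToOtherNotions::FiniteTypeAndHorCond}\Rightarrow\ref{Item::::Sheaves::LieAlg::HormandersCondiIsEquivToOtherNotions::HorCondOnCptSets}$. First I would fix $\Omega\Subset\ManifoldN$ open and relatively compact. By Lemma \ref{Lemma::Sheaves::Filtered::FiniteTypeOnCptSets}, finite type gives a finite $F\subset\VectorFields{\Omega}\times\Zg$ with $\FilteredSheafF\big|_\Omega=\FilteredSheafGenBy{F}$; write $F=\{(W_1,\Wd_1),\dots,(W_r,\Wd_r)\}$. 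It remains to verify that $W_1,\dots,W_r$ satisfy H\"ormander's condition on $\Omega$, i.e.\ that iterated brackets span the tangent space at each $y\in\Omega$. This is where the hypothesis that $\FilteredSheafF$ satisfies H\"ormander's condition (Definition \ref{Defn::Sheaves::LieAlg::HormandersCondition}) enters: $\LieFilteredSheafF$ spans the tangent space at every point. The key point is to show that $\LieFilteredSheafF\big|_\Omega$ coincides with the filtration of sheaves generated by all iterated brackets of the $W_j$ (with added formal degrees), equivalently that $\bigcup_j\LieFilteredSheafF[\Omega][j]$ is, locally, the $\CinftySpace$-module generated by those brackets; then spanning the tangent space at $y$ forces finitely many such brackets, evaluated at $y$, to span $\TangentSpace{\ManifoldN}{y}$.

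The main obstacle — and the one step deserving care — is precisely this identification of $\LieFilteredSheafF\big|_\Omega$ with the sheaf/filtration generated by iterated brackets of the generators $W_j$. One must argue that forming the smallest Lie-algebra filtration of sheaves commutes with restriction to an open set and with the passage from a generating family to the sheaf it generates; concretely, that $\LieFilteredSheafGenBy{\WWd}$ is generated (as a filtration of sheaves) by $\Gen{\WWd}$ — the closure of $\WWd$ under brackets with added degrees — so that its stalk-wise spans are computed from the finitely many brackets up to a fixed order. The subtlety is that a bracket $[fX,gY]=fg[X,Y]+f(Xg)Y-g(Yf)X$ of module elements produces only module combinations of brackets of generators and the generators themselves, so the Lie closure of the module generated by a finite set is again the module generated by the (countable, but locally finitely-many-relevant-degrees) set of iterated generator-brackets; one then invokes Lemma \ref{Lemma::Sheaves::Sheaves::ModuleGenByFiniteSetIsSheaf} at each fixed filtration degree $j$ (where only finitely many brackets appear) to see this is a sheaf, and Lemma \ref{Lemma::Sheaves::Filtered::CheckGeneratorsAsGlobalSections} to conclude it is the generated filtration. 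Once this bookkeeping is in place, H\"ormander's condition for the sheaf translates verbatim into H\"ormander's condition of some finite order $m$ for $W_1,\dots,W_r$ on $\Omega$ (using compactness of $\overline{\Omega}$ to get a uniform order), completing the proof.
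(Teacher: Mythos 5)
Your proposal is correct and follows essentially the same route as the paper: the cycle (ii)$\Rightarrow$(iii)$\Rightarrow$(i)$\Rightarrow$(ii), using finite-type-on-compacts (Lemma \ref{Lemma::Sheaves::Filtered::FiniteTypeOnCptSets}) to extract a finite generating list on $\Omega$, and then translating between the sheaf-level and vector-field-level H\"ormander conditions. The only difference is that where you re-derive inline the identification of $\LieFilteredSheafF\big|_\Omega$ with the filtration generated by iterated brackets and the resulting equivalence of the two H\"ormander conditions, the paper has already isolated these as Lemma \ref{Lemma::Sheaves::LieAlg::LieAlgebraFilIsGenByXXd} and Lemma \ref{Lemma::Sheaves::LieAlg::TwoNotionsOfHormanderAreTheSame} and simply cites them, making for a shorter proof.
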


To prove Proposition \ref{Prop::Sheaves::LieAlg::HormandersCondiIsEquivToOtherNotions} we introduce two lemmas
which are useful in their own right.

\begin{lemma}\label{Lemma::Sheaves::LieAlg::LieAlgebraFilIsGenByXXd}
    Suppose \(\Omega\subseteq \ManifoldN\) is open, 
    \(\WWd=\left\{ \left( W_1,\Wd_1 \right),\ldots, (W_r,\Wd_r) \right\}\subset \VectorFields{\Omega}\times \Zg\)
    are such that \(W_1,\ldots, W_r\) satisfy H\"ormander's condition of order \(m\) on \(\Omega\),
    and such that \(\FilteredSheafF[\Omega]=\FilteredSheafGenBy{\WWd}[\Omega]\).
    Set
    \begin{equation}\label{Eqn::Sheaves::LieAlg::DefnXXd}
        \XXd=\left\{ (X_1,\Xd_1),\ldots (X_q,\Xd_q) \right\}:=
        \left\{ (Y,e)\in \GenWWd : e\leq m \max\{\Wd_j\} \right\}.
    \end{equation}
    Then, \(\LieFilteredSheafF\big|_{\Omega}=\FilteredSheafGenBy{\XXd}\).
\end{lemma}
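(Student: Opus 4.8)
The plan is to identify $\FilteredSheafGenBy{\XXd}$ as a Lie algebra filtration of sheaves of vector fields on $\Omega$, and then to pin it down by sandwiching it between $\FilteredSheafF\big|_{\Omega}$ and $\LieFilteredSheafF\big|_{\Omega}$. First some preliminaries. Since $\WWd$ is finite and $\FilteredSheafF[\Omega]=\FilteredSheafGenBy{\WWd}[\Omega]$, Lemma \ref{Lemma::Sheaves::Filtered::CheckGeneratorsAsGlobalSections} gives $\FilteredSheafF\big|_{\Omega}=\FilteredSheafGenBy{\WWd}$, and since $\WWd\subseteq\XXd$ we get $\FilteredSheafF\big|_{\Omega}=\FilteredSheafGenBy{\WWd}\subseteq\FilteredSheafGenBy{\XXd}$. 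Because $W_1,\dots,W_r$ satisfy H\"ormander's condition of order $m$, every commutator of the $W_j$ of order $\le m$ occurs (paired with its degree, which is at most $m\max\{\Wd_j\}$) as some $(X_l,\Xd_l)\in\XXd$; in particular $\XXd$ is finite (bracket length is bounded by degree), so $\FilteredSheafGenBy{\XXd}$ makes sense via Lemma \ref{Lemma::Sheaves::Filtered::SheafGenByFiniteSet}, and $X_1(x),\dots,X_q(x)$ span $\NTangentSpace{x}$ for every $x\in\Omega$.

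Next I would check that $\FilteredSheafGenBy{\XXd}$ is a Lie algebra filtration. Exactly as in the derivation of \eqref{Eqn::BoundaryVfs::NSW::Z}, for each $j,k$ one has $[X_j,X_k]=\sum_{\Xd_l\le\Xd_j+\Xd_k}c^l_{j,k}X_l$ with $c^l_{j,k}\in\CinftySpace[\Omega]$: if $\Xd_j+\Xd_k\le m\max\{\Wd_j\}$ then $([X_j,X_k],\Xd_j+\Xd_k)\in\GenWWd$ has degree $\le m\max\{\Wd_j\}$, hence lies in $\XXd$; otherwise $\Xd_j+\Xd_k$ exceeds every degree appearing in $\XXd$, and the spanning property lets one write $[X_j,X_k]$ as a $\CinftySpace[\Omega]$-combination of all of $X_1,\dots,X_q$. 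Given this, for open $\Omega'\subseteq\Omega$ and $X=\sum_{\Xd_l\le j}a_lX_l\in\FilteredSheafGenBy{\XXd}[\Omega'][j]$, $Y=\sum_{\Xd_{l'}\le k}b_{l'}X_{l'}\in\FilteredSheafGenBy{\XXd}[\Omega'][k]$, expanding $[X,Y]$ by the Leibniz rule produces terms $a_lX_l(b_{l'})X_{l'}$, $-b_{l'}X_{l'}(a_l)X_l$, and $a_lb_{l'}c^s_{l,l'}X_s$, all with the relevant generator of degree $\le j+k$, so $[X,Y]\in\FilteredSheafGenBy{\XXd}[\Omega'][j+k]$.

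Now for the two inclusions. For $\FilteredSheafGenBy{\XXd}\subseteq\LieFilteredSheafF\big|_{\Omega}$: each $(X_l,\Xd_l)\in\XXd\subseteq\GenWWd$ is built from $\WWd$ by finitely many bracket operations, and since $W_i\in\FilteredSheafGenBy{\WWd}[\Omega][\Wd_i]=\FilteredSheafF[\Omega][\Wd_i]\subseteq\LieFilteredSheafF[\Omega][\Wd_i]$ and $\LieFilteredSheafF$ is a Lie algebra filtration, an easy induction on bracket length gives $X_l\in\LieFilteredSheafF[\Omega][\Xd_l]$; restricting to $\Omega'\subseteq\Omega$ and using that $\LieFilteredSheafF[\Omega'][j]$ is a $\CinftySpace[\Omega']$-module yields $\FilteredSheafGenBy{\XXd}[\Omega'][j]\subseteq\LieFilteredSheafF[\Omega'][j]$. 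For the reverse inclusion I would first prove that $\LieFilteredSheafF\big|_{\Omega}$ is the smallest Lie algebra filtration of sheaves of vector fields on $\Omega$ containing $\FilteredSheafF\big|_{\Omega}$: given any such filtration $\mathscr{G}$ on $\Omega$, define a filtration $\mathscr{H}$ of sheaves on $\ManifoldN$ by letting $\mathscr{H}_j(\Omega')$ be the set of $X\in\LieFilteredSheafF[\Omega'][j]$ with $X\big|_{\Omega'\cap\Omega}\in\mathscr{G}_j(\Omega'\cap\Omega)$; one checks directly that the $\mathscr{H}_j$ form a Lie algebra filtration of sheaves of vector fields on $\ManifoldN$ (gluing and bracket-closure reduce to those for $\LieFilteredSheafF$ and for $\mathscr{G}$) which contains $\FilteredSheafF$ (using $\FilteredSheafF\big|_{\Omega}\subseteq\mathscr{G}$), so minimality of $\LieFilteredSheafF$ forces $\LieFilteredSheafF\subseteq\mathscr{H}\subseteq\LieFilteredSheafF$, whence $\LieFilteredSheafF[\Omega'][j]=\mathscr{H}_j(\Omega')\subseteq\mathscr{G}_j(\Omega')$ for $\Omega'\subseteq\Omega$. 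Applying this with $\mathscr{G}=\FilteredSheafGenBy{\XXd}$ (legitimate by the first two paragraphs) gives $\LieFilteredSheafF\big|_{\Omega}\subseteq\FilteredSheafGenBy{\XXd}$; combined with the previous inclusion this proves $\LieFilteredSheafF\big|_{\Omega}=\FilteredSheafGenBy{\XXd}$.

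The computations of the first three paragraphs are routine: the bracket-closure of $\FilteredSheafGenBy{\XXd}$ is the content already extracted in \eqref{Eqn::BoundaryVfs::NSW::Z}, and the induction on bracket length is immediate. The one genuinely delicate point is the last step, where the hypothesis $\FilteredSheafF[\Omega]=\FilteredSheafGenBy{\WWd}[\Omega]$ is local on $\Omega$ while $\LieFilteredSheafF$ is the global Lie hull on $\ManifoldN$; the auxiliary sheaf $\mathscr{H}$ (equal to $\LieFilteredSheafF$ but with sections over sets meeting $\Omega$ cut down to $\mathscr{G}$) is the device that transfers the global minimality back down to $\Omega$. If the statement ``$\mathrm{Lie}(-)$ commutes with restriction to open sets'' is available from the presheaf-Lie-hull-then-sheafify description of $\mathrm{Lie}$ (sheafification being local), one may quote it instead and shorten this last step.
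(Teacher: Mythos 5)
Your proof is correct and follows the same strategy as the paper's: show $\FilteredSheafF\big|_{\Omega}\subseteq\FilteredSheafGenBy{\XXd}\subseteq\LieFilteredSheafF\big|_{\Omega}$, verify $\FilteredSheafGenBy{\XXd}$ is a Lie algebra filtration via the Nagel--Stein--Wainger bracket relations, and conclude by minimality. The one point you flag as delicate — that $\LieFilteredSheafF\big|_{\Omega}$ is the smallest Lie algebra filtration of sheaves on $\Omega$ containing $\FilteredSheafF\big|_{\Omega}$, i.e.\ that $\mathrm{Lie}(\cdot)$ commutes with restriction to open sets — is indeed left implicit in the paper (which just says ``the result will follow''), and your auxiliary-sheaf argument for it is a correct way to supply the missing justification.
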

\begin{proof}
    By Lemma \ref{Lemma::Sheaves::Filtered::CheckGeneratorsAsGlobalSections},
    \(\FilteredSheafF\big|_{\Omega}=\FilteredSheafGenBy{\WWd}\) and therefore
    \(\FilteredSheafF\big|_{\Omega}\subseteq\FilteredSheafGenBy{\XXd}\subseteq \LieFilteredSheafF\big|_{\Omega}\).
    Lemma \ref{Lemma::Sheaves::Filtered::SheafGenByFiniteSet}
    shows \(\FilteredSheafGenBy{\XXd}\) is a filtration of sheaves of vector fields on \(\Omega\).
    All that remains to show is that \(\FilteredSheafGenBy{\XXd}\) is a Lie algebra filtration,
    and the result will follow.

    Just as in \eqref{Eqn::BoundaryVfs::NSW::Z}, we have
    \begin{equation}\label{Eqn::Sheaves::LieAlg::NSW}
        [X_j, X_k]=\sum_{\Xd_l\leq \Xd_j+\Xd_k} c_{j,k}^l X_l, \quad c_{j,k}^l \in \CinftySpace[\Omega].
    \end{equation}
    From \eqref{Eqn::Sheaves::LieAlg::NSW} is easily follows for \(\Omega'\subseteq \Omega\) open:
    \begin{equation*}
        Y\in \FilteredSheafGenBy{\XXd}[\Omega'][j],Z\in \FilteredSheafGenBy{\XXd}[\Omega'][k]
        \implies [Y,Z]\in \FilteredSheafGenBy{\XXd}[\Omega'][j+k],
    \end{equation*}
    completing the proof.
\end{proof}

\begin{lemma}\label{Lemma::Sheaves::LieAlg::TwoNotionsOfHormanderAreTheSame}
    Suppose \(\WWd=\left\{ \left( W_1,\Wd_1 \right),\ldots, \left( W_r, \Wd_r \right) \right\}\subset \VectorFieldsN\times \Zg\).
    Then \(\FilteredSheafGenBy{\WWd}\) satisfies H\"ormander's condition if and only if \(W_1,\ldots, W_r\)
    satisfy H\"ormander's condition on \(\ManifoldN\).
\end{lemma}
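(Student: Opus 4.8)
The plan is to reduce both conditions to the single statement that, for every $x\in\ManifoldN$, the set $\{Y(x):(Y,e)\in\GenWWd\}$ spans $\NTangentSpace{x}$. The first step is a concrete description of the Lie algebra filtration $\LieFilteredSheaf{\FilteredSheafGenBy{\WWd}}$. For $j\in\Zg$ let $\mathscr{G}_j$ be the sheaf of vector fields generated by $\{Y:(Y,e)\in\GenWWd,\ e\le j\}$; then $\mathscr{G}_\bullet$ is a filtration of sheaves of vector fields, and it contains $\FilteredSheafGenBy{\WWd}$ since each $W_k$ is the vector field component of $(W_k,\Wd_k)\in\GenWWd$. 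I would check $\mathscr{G}_\bullet$ is a Lie algebra filtration by working locally: a section of $\mathscr{G}_j$ near a point is a finite sum $\sum_a f_a Y_a$ with $(Y_a,e_a)\in\GenWWd$ and $e_a\le j$, so for $X=\sum_a f_a Y_a\in\mathscr{G}_j$ and $Z=\sum_b g_b Z_b\in\mathscr{G}_k$ the identity $[X,Z]=\sum_{a,b}\bigl(f_a g_b[Y_a,Z_b]+f_a(Y_a g_b)Z_b-g_b(Z_b f_a)Y_a\bigr)$ exhibits $[X,Z]$ as a section of $\mathscr{G}_{j+k}$, because $([Y_a,Z_b],e_a+e_b)\in\GenWWd$ with $e_a+e_b\le j+k$ and $Y_a,Z_b$ already lie in $\mathscr{G}_{j+k}$. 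Minimality is an induction on the bracket-complexity of elements of $\GenWWd$: any Lie algebra filtration of sheaves $\mathscr{H}_\bullet\supseteq\FilteredSheafGenBy{\WWd}$ has $Y\in\mathscr{H}_e[\ManifoldN]$ for every $(Y,e)\in\GenWWd$, whence $\mathscr{G}_j\subseteq\mathscr{H}_j$. Thus $\LieFilteredSheaf{\FilteredSheafGenBy{\WWd}}=\mathscr{G}_\bullet$. (On an open set where $W_1,\dots,W_r$ satisfy H\"ormander's condition of a fixed order, this is also a consequence of Lemma \ref{Lemma::Sheaves::LieAlg::LieAlgebraFilIsGenByXXd}.)

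Next I would translate the sheaf-theoretic condition. The pre-sheaf $\Omega\mapsto\bigcup_j\mathscr{G}_j[\Omega]$ assigns to $\Omega$ the $\CinftySpace[\Omega]$-module generated by $\{Y:(Y,e)\in\GenWWd\}$, and its sheafification is the sheaf $\mathscr{G}_\infty$ generated by this (possibly infinite) family of globally defined vector fields; by Definition \ref{Defn::Sheaves::Filtered::SpansTangentSpace}, $\FilteredSheafGenBy{\WWd}$ satisfies H\"ormander's condition exactly when $\mathscr{G}_\infty=\SheafVectorFields$. The stalk of $\mathscr{G}_\infty$ at $x$ is generated over the germs at $x$ by $\{Y_x:(Y,e)\in\GenWWd\}$, so evaluating at $x$ shows $\mathscr{G}_\infty=\SheafVectorFields$ forces $\Span\{Y(x):(Y,e)\in\GenWWd\}=\NTangentSpace{x}$ for all $x$; conversely, if that pointwise span condition holds then near each point finitely many of the $Y$'s have nonvanishing determinant, hence form a frame, and a partition-of-unity argument as in the proof of Lemma \ref{Lemma::Sheaves::Sheaves::ModuleGenByFiniteSetIsSheaf} gives $\mathscr{G}_\infty=\SheafVectorFields$ there. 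Hence $\FilteredSheafGenBy{\WWd}$ satisfies H\"ormander's condition if and only if $\Span\{Y(x):(Y,e)\in\GenWWd\}=\NTangentSpace{x}$ for every $x\in\ManifoldN$.

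Finally I would match this with Definition \ref{Defn::BasicDefns::HormandersCondition}. The vector fields appearing in $\GenWWd$ are precisely the iterated brackets of $W_1,\dots,W_r$ taken with all bracketings, so $\Span\{Y:(Y,e)\in\GenWWd\}$ is the Lie subalgebra of $\VectorFieldsN$ generated by $W_1,\dots,W_r$; by the Jacobi identity and antisymmetry this $\R$-linear span equals that of the right-nested brackets listed in Definition \ref{Defn::BasicDefns::HormandersCondition}. Evaluating at $x$ and using that $\NTangentSpace{x}$ is finite-dimensional, the span condition at $x$ holds if and only if the right-nested brackets up to some finite order $m=m(x)$ already span $\NTangentSpace{x}$, which is exactly H\"ormander's condition at $x$; quantifying over $x$ yields H\"ormander's condition on $\ManifoldN$, and combining with the previous paragraph gives the stated equivalence.

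I expect the main obstacle to be the bookkeeping in the first step --- verifying that $\mathscr{G}_\bullet$ is genuinely a filtration of \emph{sheaves} and is the minimal Lie algebra filtration containing $\FilteredSheafGenBy{\WWd}$, the local-to-global point being mildly delicate because $\GenWWd$ may be infinite when $\ManifoldN$ is noncompact --- while the free-Lie-algebra fact (right-nested brackets span each homogeneous component) used in the last step is entirely standard.
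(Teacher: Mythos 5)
Your proof is correct, and its organization differs from the paper's. The paper argues the two implications separately: for the ``if'' direction it invokes Lemma~\ref{Lemma::Sheaves::LieAlg::LieAlgebraFilIsGenByXXd} (which already presupposes H\"ormander's condition of some order on a relatively compact neighborhood) to show that, locally, $\LieFilteredSheaf{\FilteredSheafGenBy{\WWd}}$ exhausts $\SheafVectorFields$; for the ``only if'' direction it passes to stalks via \cite[\href{https://stacks.math.columbia.edu/tag/01AN}{Lemma 01AN}]{stacks-project}. You instead first establish, \emph{unconditionally}, the identification $\LieFilteredSheaf{\FilteredSheafGenBy{\WWd}}=\mathscr{G}_\bullet$ where $\mathscr{G}_j$ is generated by $\{Y:(Y,e)\in\GenWWd,\ e\le j\}$ --- a mild strengthening of Lemma~\ref{Lemma::Sheaves::LieAlg::LieAlgebraFilIsGenByXXd} that does not require H\"ormander's condition --- and then reduce both directions at once to the single pointwise condition $\Span\{Y(x):(Y,e)\in\GenWWd\}=\NTangentSpace{x}$. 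The paper's converse direction implicitly relies on essentially this description of the sheafification without isolating it, so the underlying ingredients (stalk evaluation, iterated commutators, the free-Lie-algebra reduction of arbitrary bracketings to the right-nested ones appearing in Definition~\ref{Defn::BasicDefns::HormandersCondition}) coincide; your version buys a cleaner symmetry and a reusable intermediate lemma, at the cost of the bookkeeping you flag in checking that $\mathscr{G}_\bullet$ is the minimal Lie algebra filtration. One small remark: the worry you raise about $\GenWWd$ being infinite on a noncompact $\ManifoldN$ is harmless, because $\Wd_k\ge1$ for all $k$ forces $\{(Y,e)\in\GenWWd:e\le j\}$ to be finite for each fixed $j$, which is precisely what lets Lemma~\ref{Lemma::Sheaves::Sheaves::ModuleGenByFiniteSetIsSheaf} apply level by level and makes every local section of $\mathscr{G}_j$ a genuine finite sum.
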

\begin{proof}
    Suppose \(W_1,\ldots, W_r\) satisfy H\"ormander's condition on \(\ManifoldN\).
    We will show \(\forall x\in \ManifoldN\), \(\exists \Omega\ni x\) open with
    \(\VectorFields{\Omega}\subseteq \bigcup_{j} \LieFilteredSheaf{\FilteredSheafGenBy{\WWd}}[\Omega][j]\), and then it will follow that the sheaf generated
    by \(\Omega\mapsto \bigcup_{j} \LieFilteredSheaf{\FilteredSheafGenBy{\WWd}}[\Omega][j]\) equals \(\SheafVectorFields\), as desired.

    Indeed, fix \(x\in \ManifoldN\), and take \(\Omega\ni x\) open and relatively compact.
    By assumption, \(W_1,\ldots, W_r\) satisfy H\"ormander's condition of some order
    \(m\in \Zg\) on \(\overline{\Omega}\).
    Let \(\XXd\) be as in \eqref{Eqn::Sheaves::LieAlg::DefnXXd}.
    Then, \(X_1(y),\ldots, X_q(y)\) span \(\TangentSpace{\ManifoldN}{y}\), \(\forall y\in \Omega\)
    and therefore
    \begin{equation*}
    \begin{split}
         &\VectorFields{\Omega}\subseteq \CinftySpace[\Omega]\text{-module generated by }X_1,\ldots, X_q
         =\FilteredSheafGenBy{\XXd}[\Omega][m\max{\{\Wd_j\}}]
         =\LieFilteredSheafF[\Omega][m\max\{\Wd_j\}],
    \end{split}
    \end{equation*}
    where the last equality follows from Lemma \ref{Lemma::Sheaves::LieAlg::LieAlgebraFilIsGenByXXd},
    which completes the proof that \(\FilteredSheafGenBy{\WWd}\) satisfies H\"ormander's condition.

    Conversely, suppose  \(\FilteredSheafGenBy{\WWd}\) satisfies H\"ormander's condition.
    The infinite set of vector fields
    \begin{equation}\label{Eqn::Sheaves::LieAlg::TwoNotionsOfHormanderAreTheSame::Tmp1}
        W_1,\ldots, W_r, \ldots, [W_i,W_j],\ldots,\ldots, [W_i,[W_j,W_k]],\ldots,\ldots,\ldots
    \end{equation}
    generates the sheafification of
    \(\Omega\mapsto \bigcup_{j} \LieFilteredSheaf{\FilteredSheafGenBy{\WWd}}[\Omega][j]\),
    which (by hypothesis) equals \(\SheafVectorFields\).

    We wish to show the collection \eqref{Eqn::Sheaves::LieAlg::TwoNotionsOfHormanderAreTheSame::Tmp1}
    spans the tangent space to \(\ManifoldN\) at each point.

    Fix \(x\in \ManifoldN\). By \cite[\href{https://stacks.math.columbia.edu/tag/01AN}{Lemma 01AN}]{stacks-project},
    the stalk \(\SheafVectorFields_x\) equals the \(\CinftySpace_x\)-module generated by the germs of
    \eqref{Eqn::Sheaves::LieAlg::TwoNotionsOfHormanderAreTheSame::Tmp1} at \(x\). It follows that
    the collection \eqref{Eqn::Sheaves::LieAlg::TwoNotionsOfHormanderAreTheSame::Tmp1}
    spans \(\TangentSpace{\ManifoldN}{x}\), completing the proof.
\end{proof}

\begin{proof}[Proof of Proposition \ref{Prop::Sheaves::LieAlg::HormandersCondiIsEquivToOtherNotions}]
    \ref{Item::::Sheaves::LieAlg::HormandersCondiIsEquivToOtherNotions::HorCondOnCptSets}\(\Rightarrow\)\ref{Item::::Sheaves::LieAlg::HormandersCondiIsEquivToOtherNotions::LocallyHorCond}
    is clear.

    \ref{Item::::Sheaves::LieAlg::HormandersCondiIsEquivToOtherNotions::LocallyHorCond}\(\Rightarrow\)\ref{Item::::Sheaves::LieAlg::HormandersCondiIsEquivToOtherNotions::FiniteTypeAndHorCond}:
    That \(\FilteredSheafF\) is of finite type follows immediately. Combining \ref{Item::::Sheaves::LieAlg::HormandersCondiIsEquivToOtherNotions::LocallyHorCond}
    with Lemma \ref{Lemma::Sheaves::LieAlg::TwoNotionsOfHormanderAreTheSame}
    shows \(\forall x\in \ManifoldN\), \(\exists \Omega\ni x\) open such that
    \(\FilteredSheafF\big|_{\Omega}\) satisfies H\"ormander's condition. However, H\"ormander's condition
    is a local property (this follows from the definition and \cite[\href{https://stacks.math.columbia.edu/tag/01AN}{Lemma 01AN}]{stacks-project}).
    We conclude \(\FilteredSheafF\) satisfies H\"ormander's condition.

    \ref{Item::::Sheaves::LieAlg::HormandersCondiIsEquivToOtherNotions::FiniteTypeAndHorCond}\(\Rightarrow\)\ref{Item::::Sheaves::LieAlg::HormandersCondiIsEquivToOtherNotions::HorCondOnCptSets}:
    Let \(\Omega\Subset\ManifoldN\) be open an relatively compact.
    By Lemma \ref{Lemma::Sheaves::Filtered::FiniteTypeOnCptSets}, there exists a finite set
    \(\WWd\subset \VectorFields{\Omega}\times \Zg\) such that
    \begin{equation*}
        \FilteredSheafF\big|_{\Omega}=\FilteredSheafGenBy{\WWd}.
    \end{equation*}
    Lemma \ref{Lemma::Sheaves::LieAlg::TwoNotionsOfHormanderAreTheSame} shows \(\WWd\)
    are H\"ormander vector fields with formal degrees.
\end{proof}

\begin{proposition}\label{Prop::Sheaves::LieAlg::FFiniteTypeAndHorImpliesSameForLie}
    Suppose \(\FilteredSheafF\) is of finite type and satisfies H\"ormander's condition.
    Then, \(\LieFilteredSheafF\) is of finite type and spans the tangent space at every point
    (and therefore satisfies H\"ormander's condition).
\end{proposition}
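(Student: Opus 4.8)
The plan is to reduce everything to Proposition~\ref{Prop::Sheaves::LieAlg::HormandersCondiIsEquivToOtherNotions} and Lemma~\ref{Lemma::Sheaves::LieAlg::LieAlgebraFilIsGenByXXd}. The assertion that $\LieFilteredSheafF$ spans the tangent space at every point is immediate: by Definition~\ref{Defn::Sheaves::LieAlg::HormandersCondition} this is precisely the statement that $\FilteredSheafF$ satisfies H\"ormander's condition, which is part of the hypothesis. Since $\LieFilteredSheafF$ is already a Lie algebra filtration, $\LieFilteredSheaf{\LieFilteredSheafF}=\LieFilteredSheafF$, so $\LieFilteredSheafF$ itself satisfies H\"ormander's condition. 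Thus the only real content is that $\LieFilteredSheafF$ is of finite type, and I would prove this locally.

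Fix $x\in\ManifoldN$. By Proposition~\ref{Prop::Sheaves::LieAlg::HormandersCondiIsEquivToOtherNotions} (the implication \ref{Item::::Sheaves::LieAlg::HormandersCondiIsEquivToOtherNotions::FiniteTypeAndHorCond}$\Rightarrow$\ref{Item::::Sheaves::LieAlg::HormandersCondiIsEquivToOtherNotions::LocallyHorCond}), there are an open $\Omega\ni x$ and H\"ormander vector fields with formal degrees $\WWd=\left\{(W_1,\Wd_1),\ldots,(W_r,\Wd_r)\right\}\subset\VectorFields{\Omega}\times\Zg$ with $\FilteredSheafF\big|_{\Omega}=\FilteredSheafGenBy{\WWd}$. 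Next I would pass to a smaller open neighborhood $\Omega_0\ni x$ with $\overline{\Omega_0}\subset\Omega$ compact; since the least order for which H\"ormander's condition holds at a point is upper semi-continuous, $W_1,\ldots,W_r$ satisfy H\"ormander's condition of some order $m\in\Zg$ on $\Omega_0$, and, by Lemma~\ref{Lemma::Sheaves::Filtered::SheafGenByFiniteSet}, restriction still gives that $\FilteredSheafF\big|_{\Omega_0}$ is the filtration generated by $\left\{(W_j\big|_{\Omega_0},\Wd_j):1\leq j\leq r\right\}$. Now the hypotheses of Lemma~\ref{Lemma::Sheaves::LieAlg::LieAlgebraFilIsGenByXXd} are met on $\Omega_0$, so with $\XXd:=\left\{(Y,e)\in\GenWWd:e\leq m\max\{\Wd_j\}\right\}$ we obtain $\LieFilteredSheafF\big|_{\Omega_0}=\FilteredSheafGenBy{\XXd}$. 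Since $\XXd$ is finite — each pair in $\GenWWd$ consists of an iterated bracket $Y$ of the $W_j$ together with the sum of the corresponding formal degrees, and a bracket of length $L$ has degree $\geq L$, so only finitely many such pairs have degree $\leq m\max\{\Wd_j\}$ — this exhibits $\LieFilteredSheafF$ as generated near $x$ by a finite subset of $\VectorFields{\Omega_0}\times\Zg$. As $x$ was arbitrary, $\LieFilteredSheafF$ is of finite type.

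The steps needing a little care, though none genuinely difficult, are: compatibility of the identity $\FilteredSheafF\big|_{\Omega}=\FilteredSheafGenBy{\WWd}$ with further restriction (handled by Lemma~\ref{Lemma::Sheaves::Filtered::SheafGenByFiniteSet}, since $\FilteredSheafGenBy{\WWd}$ is by definition the filtration generated by $\WWd$, whose restriction to $\Omega_0$ is generated by the restricted list); the reduction to a uniform finite order $m$ on a relatively compact neighborhood (compactness plus upper semi-continuity of the order); and the finiteness of $\XXd$. I do not expect a substantive obstacle here: the real content is already packaged in Proposition~\ref{Prop::Sheaves::LieAlg::HormandersCondiIsEquivToOtherNotions} and Lemma~\ref{Lemma::Sheaves::LieAlg::LieAlgebraFilIsGenByXXd}, and what remains is routine sheaf-theoretic bookkeeping.
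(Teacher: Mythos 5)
Your proposal is correct and follows essentially the same route as the paper: reduce to a local statement via Proposition~\ref{Prop::Sheaves::LieAlg::HormandersCondiIsEquivToOtherNotions}, shrink to a relatively compact neighborhood to get a uniform H\"ormander order $m$, and then invoke Lemma~\ref{Lemma::Sheaves::LieAlg::LieAlgebraFilIsGenByXXd} to identify $\LieFilteredSheafF$ locally with $\FilteredSheafGenBy{\XXd}$ for a finite $\XXd$. The only differences are cosmetic — you spell out the finiteness of $\XXd$ and the identity $\LieFilteredSheaf{\LieFilteredSheafF}=\LieFilteredSheafF$, both of which the paper leaves implicit.
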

\begin{proof}
    That \(\LieFilteredSheafF\) spans the tangent space at every point
    (and therefore satisfies H\"ormander's condition) is just a restatement of the definitions
    (see Definition \ref{Defn::Sheaves::LieAlg::HormandersCondition}).

    We turn to showing \(\LieFilteredSheafF\) is of finite type.
    Fix \(x\in \ManifoldN\). 
    By Proposition \ref{Prop::Sheaves::LieAlg::HormandersCondiIsEquivToOtherNotions},
    \(\exists \Omega\ni x\) open and \(\WWd=\left\{ \left( W_1,\Wd_1 \right),\ldots, \left( W_r,\Wd_r \right) \right\}\subset \VectorFields{\Omega}\times\Zg\)
    such that \(\WWd\) are H\"ormander vector fields with formal degrees on \(\Omega\)
    and
    \(\FilteredSheafF\big|_{\Omega}=\FilteredSheafGenBy{\WWd}\).


    Let \(\Omega_1\Subset \Omega\) be an open, relatively compact neighborhood of \(x\).
    By compactness, \(W_1,\ldots, W_r\) satisfy H\"ormander's condition of order \(m\) on \(\Omega_1\)
    for some \(m\in \Zg\).
    Lemma \ref{Lemma::Sheaves::LieAlg::LieAlgebraFilIsGenByXXd} 
    shows \(\LieFilteredSheafF\big|_{\Omega_1}\) is finitely generated, completing the proof.
\end{proof}

    \subsection{Control}\label{Section::Sheaves::Control}
    The notions of local strong/weak control/equivalence from Definition \ref{Defn::BasicDefns::StrongWeakEquivalnce}
can be more simply stated using filtrations of sheaves of vector fields as the next two results show;
\(\ManifoldN\) is a smooth manifold (possibly with boundary).

\begin{proposition}\label{Prop::Sheaves::Control::GlobalEquivWithPreviousDefns}
    Let \(\WWd=\left\{ \left( W_1,\Wd_1 \right),\ldots, \left( W_r,\Wd_r \right) \right\}\subset \VectorFieldsN\times \Zg\)
    and \(\ZZd=\left\{ \left( Z_1,\Zd_1 \right),\ldots, \left( Z_s,\Zd_s \right) \right\}\subset \VectorFieldsN\times \Zg\)
    be two list of H\"ormander vector fields with formal degrees on \(\ManifoldN\).
    Then,
    \begin{enumerate}[(i)]
        \item\label{Item::Sheaves::Control::GlobalEquivWithPreviousDefns::StrongControl} \(\WWd\) locally strongly controls \(\ZZd\) on \(\ManifoldN\) if and only if
            \(\FilteredSheafGenBy{\ZZd}\subseteq \FilteredSheafGenBy{\WWd}\).
        \item\label{Item::Sheaves::Control::GlobalEquivWithPreviousDefns::StrongEquiv} \(\WWd\) and \(\ZZd\) are locally strongly equivalent on \(\ManifoldN\)
             if and only if \(\FilteredSheafGenBy{\WWd}= \FilteredSheafGenBy{\ZZd}\).
        \item\label{Item::Sheaves::Control::GlobalEquivWithPreviousDefns::WeakControl} \(\WWd\) locally weakly controls \(\ZZd\) on \(\ManifoldN\) if and only if
            \(\FilteredSheafGenBy{\ZZd}\subseteq \LieFilteredSheaf{\FilteredSheafGenBy{\WWd}}\).
        \item\label{Item::Sheaves::Control::GlobalEquivWithPreviousDefns::WeakEquiv} \(\WWd\) and \(\ZZd\) are locally weakly equivalent if and only if
            \(\LieFilteredSheaf{\FilteredSheafGenBy{\WWd}}=\LieFilteredSheaf{\FilteredSheafGenBy{\ZZd}}\).
    \end{enumerate}
\end{proposition}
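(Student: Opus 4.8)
The plan is to establish the four equivalences in the order \ref{Item::Sheaves::Control::GlobalEquivWithPreviousDefns::StrongControl}, \ref{Item::Sheaves::Control::GlobalEquivWithPreviousDefns::StrongEquiv}, \ref{Item::Sheaves::Control::GlobalEquivWithPreviousDefns::WeakControl}, \ref{Item::Sheaves::Control::GlobalEquivWithPreviousDefns::WeakEquiv}, with \ref{Item::Sheaves::Control::GlobalEquivWithPreviousDefns::StrongControl} doing essentially all the work and the other three being short formal consequences. The main obstacle I anticipate is the reverse direction of \ref{Item::Sheaves::Control::GlobalEquivWithPreviousDefns::WeakControl}: passing from the sheaf inclusion $\FilteredSheafGenBy{\ZZd}\subseteq\LieFilteredSheaf{\FilteredSheafGenBy{\WWd}}$ back to a concrete, finite expression of the $Z_l$ in terms of finitely many iterated brackets of the $W_i$, even though $\GenWWd$ is infinite and $\LieFilteredSheaf{\FilteredSheafGenBy{\WWd}}$ need not be finitely generated over $\ManifoldN$.

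For \ref{Item::Sheaves::Control::GlobalEquivWithPreviousDefns::StrongControl}: since $\WWd$ and $\ZZd$ are finite, Lemma \ref{Lemma::Sheaves::Filtered::SheafGenByFiniteSet} gives, for every open $\Omega\subseteq\ManifoldN$ and every $j$, the explicit description of $\FilteredSheafGenBy{\WWd}[\Omega][j]$ (resp.\ $\FilteredSheafGenBy{\ZZd}[\Omega][j]$) as the $\CinftySpace[\Omega]$-module generated by $\{W_i\big|_\Omega : \Wd_i\leq j\}$ (resp.\ $\{Z_l\big|_\Omega : \Zd_l\leq j\}$). Testing the inclusion $\FilteredSheafGenBy{\ZZd}\subseteq\FilteredSheafGenBy{\WWd}$ of subsheaves of $\SheafVectorFields$ on these generators (and noting $Z_l$ generates $\FilteredSheafGenBy{\ZZd}[\Omega][j]$ exactly when $\Zd_l\leq j$), this inclusion is equivalent to: for every open $\Omega$ and every $l$, $Z_l\big|_\Omega$ lies in the $\CinftySpace[\Omega]$-module generated by $\{W_i\big|_\Omega : \Wd_i\leq\Zd_l\}$. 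By Lemma \ref{Lemma::Sheaves::Sheaves::ModuleGenByFiniteSetIsSheaf} the assignment $\Omega\mapsto$ that module is itself a sheaf, so ``for every $\Omega$'' may be replaced by ``near every point of $\ManifoldN$'' — the forward direction is trivial restriction, the reverse a partition-of-unity gluing as in the proof of Lemma \ref{Lemma::Sheaves::Sheaves::ModuleGenByFiniteSetIsSheaf} — and the resulting local statement, using a single neighborhood that works for the finitely many $Z_l$ at once, is exactly the definition of ``$\WWd$ locally strongly controls $\ZZd$'' from Definition \ref{Defn::BasicDefns::StrongWeakEquivalnce}. Then \ref{Item::Sheaves::Control::GlobalEquivWithPreviousDefns::StrongEquiv} is immediate: local strong equivalence is mutual local strong control, hence by \ref{Item::Sheaves::Control::GlobalEquivWithPreviousDefns::StrongControl} the two inclusions $\FilteredSheafGenBy{\ZZd}\subseteq\FilteredSheafGenBy{\WWd}$ and $\FilteredSheafGenBy{\WWd}\subseteq\FilteredSheafGenBy{\ZZd}$, i.e.\ $\FilteredSheafGenBy{\WWd}=\FilteredSheafGenBy{\ZZd}$.

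For \ref{Item::Sheaves::Control::GlobalEquivWithPreviousDefns::WeakControl}, recall $\WWd$ locally weakly controls $\ZZd$ means $\GenWWd$ locally strongly controls $\ZZd$. For the forward implication I would argue as in \ref{Item::Sheaves::Control::GlobalEquivWithPreviousDefns::StrongControl}: near a point, each $Z_l$ is a finite $\CinftySpace$-combination of vector fields $Y$ with $(Y,\Yd)\in\GenWWd$ and $\Yd\leq\Zd_l$; since $\LieFilteredSheaf{\FilteredSheafGenBy{\WWd}}$ contains $\FilteredSheafGenBy{\WWd}$ and is closed under brackets with addition of degrees, an easy induction on the construction of $\GenWWd$ shows each such $Y$ lies (over any open set) in the degree-$\Yd$ part of $\LieFilteredSheaf{\FilteredSheafGenBy{\WWd}}$, hence $Z_l$ lies locally in its degree-$\Zd_l$ part; gluing (as that part is a sheaf) yields $\FilteredSheafGenBy{\ZZd}\subseteq\LieFilteredSheaf{\FilteredSheafGenBy{\WWd}}$. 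For the reverse implication — the hard part — fix $x$, take $\Omega\ni x$ open and relatively compact on which $W_1,\dots,W_r$ satisfy H\"ormander's condition of some order $m$ (available because $\WWd$ are H\"ormander vector fields), and set $\XXd:=\{(Y,e)\in\GenWWd : e\leq m\max\{\Wd_j\}\}$, a finite set. Since $\FilteredSheafGenBy{\WWd}$ restricted to $\Omega$ is the filtration generated by the $W_i\big|_\Omega$ (Lemma \ref{Lemma::Sheaves::Filtered::SheafGenByFiniteSet}), Lemma \ref{Lemma::Sheaves::LieAlg::LieAlgebraFilIsGenByXXd} gives $\LieFilteredSheaf{\FilteredSheafGenBy{\WWd}}\big|_\Omega=\FilteredSheafGenBy{\XXd}$; combining with $\FilteredSheafGenBy{\ZZd}\subseteq\LieFilteredSheaf{\FilteredSheafGenBy{\WWd}}$ and the explicit description of $\FilteredSheafGenBy{\XXd}[\Omega][j]$ (Lemma \ref{Lemma::Sheaves::Filtered::SheafGenByFiniteSet} again) writes each $Z_l\big|_\Omega$ as a $\CinftySpace[\Omega]$-combination of those $X_k$ with $\Xd_k\leq\Zd_l$, each of which lies in $\GenWWd$ with degree $\leq\Zd_l$; as $\Omega$ is independent of $l$, this says $\GenWWd$ locally strongly controls $\ZZd$.

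Finally, \ref{Item::Sheaves::Control::GlobalEquivWithPreviousDefns::WeakEquiv} follows from \ref{Item::Sheaves::Control::GlobalEquivWithPreviousDefns::WeakControl} and the minimality of the Lie-algebra closure. Local weak equivalence is, by \ref{Item::Sheaves::Control::GlobalEquivWithPreviousDefns::WeakControl}, the conjunction of $\FilteredSheafGenBy{\ZZd}\subseteq\LieFilteredSheaf{\FilteredSheafGenBy{\WWd}}$ and $\FilteredSheafGenBy{\WWd}\subseteq\LieFilteredSheaf{\FilteredSheafGenBy{\ZZd}}$; since $\LieFilteredSheaf{\FilteredSheafGenBy{\WWd}}$ is a Lie algebra filtration containing $\FilteredSheafGenBy{\ZZd}$ and $\LieFilteredSheaf{\FilteredSheafGenBy{\ZZd}}$ is the smallest such, the first inclusion forces $\LieFilteredSheaf{\FilteredSheafGenBy{\ZZd}}\subseteq\LieFilteredSheaf{\FilteredSheafGenBy{\WWd}}$, the second forces the reverse inclusion, and so $\LieFilteredSheaf{\FilteredSheafGenBy{\WWd}}=\LieFilteredSheaf{\FilteredSheafGenBy{\ZZd}}$. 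Conversely, if the two Lie closures coincide then $\FilteredSheafGenBy{\ZZd}\subseteq\LieFilteredSheaf{\FilteredSheafGenBy{\ZZd}}=\LieFilteredSheaf{\FilteredSheafGenBy{\WWd}}$ and symmetrically $\FilteredSheafGenBy{\WWd}\subseteq\LieFilteredSheaf{\FilteredSheafGenBy{\ZZd}}$, so weak equivalence holds by \ref{Item::Sheaves::Control::GlobalEquivWithPreviousDefns::WeakControl}.
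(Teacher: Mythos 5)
Your proof is correct and follows essentially the same route as the paper: the heavy lifting in part (iii) is done exactly as in the paper, by fixing a relatively compact $\Omega$, invoking H\"ormander's condition to extract the finite set $\XXd$, applying Lemma \ref{Lemma::Sheaves::LieAlg::LieAlgebraFilIsGenByXXd} to identify $\LieFilteredSheaf{\FilteredSheafGenBy{\WWd}}\big|_\Omega$ with $\FilteredSheafGenBy{\XXd}$, and then reducing to part (i); and part (iv) is handled identically via minimality of the Lie-algebra closure. The only cosmetic differences are that you give a more explicit sheaf-gluing justification for part (i) (which the paper dispatches with ``follows from the definitions''), and you split part (iii) into two implications with a direct bracket induction for the forward one rather than running both directions through the $\XXd$ reduction as a single chain of equivalences -- both are fine.
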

\begin{proof}
    \ref{Item::Sheaves::Control::GlobalEquivWithPreviousDefns::StrongControl} follows from the definitions and
    \ref{Item::Sheaves::Control::GlobalEquivWithPreviousDefns::StrongEquiv} follows from 
    \ref{Item::Sheaves::Control::GlobalEquivWithPreviousDefns::StrongControl}.

    \ref{Item::Sheaves::Control::GlobalEquivWithPreviousDefns::WeakControl}:
    Fix \(\Omega\Subset \ManifoldN\) open an relatively compact. By the compactness of \(\overline{\Omega}\),
    \(W_1,\ldots, W_r\) satisfy H\"ormander's condition of some order \(m\in \Zg\) on \(\Omega\).
    With this choice of \(m\), define \(\XXd=\left\{ \left( X_1,\Xd_1 \right),\ldots, \left( X_q,\Xd_q \right) \right\}\subset \VectorFields{\Omega}\times \Zg\)
    as in \eqref{Eqn::Sheaves::LieAlg::DefnXXd}, 
    so that
    \begin{equation*}
        \LieFilteredSheaf{\FilteredSheafGenBy{\WWd}}\big|_{\Omega}=\FilteredSheafGenBy{\XXd}\big|_{\Omega}.
    \end{equation*}
    Because \(\XXd\) satisfies \eqref{Eqn::Sheaves::LieAlg::NSW}, \(\XXd\) locally strongly
    controls \(\GenXXd=\GenWWd\) on \(\Omega\).
    Thus, \(\XXd\) locally strongly controls \(\ZZd\) on \(\Omega\) if and only if \(\WWd\)
    locally weakly controls \(\ZZd\) on \(\Omega\).

    Using \ref{Item::Sheaves::Control::GlobalEquivWithPreviousDefns::StrongControl}, we conclude
    \(\WWd\) locally weakly controls \(\ZZd\) on \(\Omega\) if and only if
    \begin{equation*}
        \FilteredSheafGenBy{\ZZd}\big|_{\Omega}\subseteq \FilteredSheafGenBy{\XXd}\big|_{\Omega}
        =\LieFilteredSheaf{\FilteredSheafGenBy{\WWd}}\big|_{\Omega}.
    \end{equation*}
    Since \(\Omega\Subset \ManifoldN\) was arbitrary, the result follows.

    \ref{Item::Sheaves::Control::GlobalEquivWithPreviousDefns::WeakEquiv}: By \ref{Item::Sheaves::Control::GlobalEquivWithPreviousDefns::WeakControl},
    \(\WWd\) and \(\ZZd\) are locally weakly equivalent on \(\ManifoldN\) if and only if
    \(\FilteredSheafGenBy{\ZZd}\subseteq \LieFilteredSheaf{\FilteredSheafGenBy{\WWd}}\)
    and 
    \(\FilteredSheafGenBy{\WWd}\subseteq \LieFilteredSheaf{\FilteredSheafGenBy{\ZZd}}\).
    Since \(\LieFilteredSheaf{\FilteredSheafGenBy{\WWd}}\) and \(\LieFilteredSheaf{\FilteredSheafGenBy{\ZZd}}\)
    are Lie algebra filtrations of sheaves of vector fields, we see
    \(\FilteredSheafGenBy{\ZZd}\subseteq \LieFilteredSheaf{\FilteredSheafGenBy{\WWd}}\Leftrightarrow \LieFilteredSheaf{\FilteredSheafGenBy{\ZZd}}\subseteq \LieFilteredSheaf{\FilteredSheafGenBy{\WWd}}\)
    and 
    \(\FilteredSheafGenBy{\WWd}\subseteq \LieFilteredSheaf{\FilteredSheafGenBy{\ZZd}}\Leftrightarrow \LieFilteredSheaf{\FilteredSheafGenBy{\WWd}}\subseteq \LieFilteredSheaf{\FilteredSheafGenBy{\ZZd}}\).
    We conclude \(\WWd\) and \(\ZZd\) are locally weakly equivalent on \(\ManifoldN\) if and only if
     \(\LieFilteredSheaf{\FilteredSheafGenBy{\WWd}}=\LieFilteredSheaf{\FilteredSheafGenBy{\ZZd}}\).
\end{proof}

\begin{proposition}\label{Proposition::Sheaves::Control::LocalEquivWithPreviousDefns}
    Let \(\FilteredSheafF\) and \(\FilteredSheafG\) be filtrations of sheaves of vector fields on \(\ManifoldN\)
    which are finite type and satisfy H\"ormander's condition.
    \begin{enumerate}[label=(\Alph*)]
        \item\label{Item::Sheaves::Control::LocalEquivWithPreviousDefns::StrongControl} The following are equivalent.
        \begin{enumerate}[label=(\Alph{enumi}.\arabic*)]
            \item\label{Item::Sheaves::Control::LocalEquivWithPreviousDefns::StrongControl::Sheaves} \(\FilteredSheafG\subseteq \FilteredSheafF\).
            \item\label{Item::Sheaves::Control::LocalEquivWithPreviousDefns::StrongControl::RelCpt}  \(\forall \Omega\subseteq \ManifoldN\) open, \(\forall F,G\subset \VectorFields{\Omega}\times \Zg\)
                finite sets such that \(\FilteredSheafF\big|_{\Omega}=\FilteredSheafGenBy{F}\) and \(\FilteredSheafG\big|_{\Omega}=\FilteredSheafGenBy{G}\),
                we have \(F\) locally strongly controls \(G\) on \(\Omega\).
            \item\label{Item::Sheaves::Control::LocalEquivWithPreviousDefns::StrongControl::Local} \(\forall x\in \ManifoldN\), 
                \(\exists \Omega\ni x\) open, and \(F,G\subset \VectorFields{\Omega}\times \Zg\) finite
                such that \(\FilteredSheafF\big|_{\Omega}=\FilteredSheafGenBy{F}\) and \(\FilteredSheafG\big|_{\Omega}=\FilteredSheafGenBy{G}\)
                and \(F\) locally strongly controls \(G\) on \(\Omega\).
        \end{enumerate}
        \item\label{Item::Sheaves::Control::LocalEquivWithPreviousDefns::StrongEquiv} The following are equivalent.
        \begin{enumerate}[label=(\Alph{enumi}.\arabic*)]
            \item\label{Item::Sheaves::Control::LocalEquivWithPreviousDefns::StrongEquiv::Sheaves} \(\FilteredSheafF= \FilteredSheafG\).
            \item\label{Item::Sheaves::Control::LocalEquivWithPreviousDefns::StrongEquiv::RelCpt}  \(\forall \Omega\subseteq \ManifoldN\) open, \(\forall F,G\subset \VectorFields{\Omega}\times \Zg\)
                finite sets such that \(\FilteredSheafF\big|_{\Omega}=\FilteredSheafGenBy{F}\) and \(\FilteredSheafG\big|_{\Omega}=\FilteredSheafGenBy{G}\),
                we have \(F\) and \(G\) are locally strongly equivalent on \(\Omega\).
            \item\label{Item::Sheaves::Control::LocalEquivWithPreviousDefns::StrongEquiv::Local} \(\forall x\in \ManifoldN\), 
                \(\exists \Omega\ni x\) open, and \(F,G\subset \VectorFields{\Omega}\times \Zg\) finite
                such that \(\FilteredSheafF\big|_{\Omega}=\FilteredSheafGenBy{F}\) and \(\FilteredSheafG\big|_{\Omega}=\FilteredSheafGenBy{G}\)
                and \(F\) and \(G\) are locally strongly equivalent on \(\Omega\).
        \end{enumerate}

        \item\label{Item::Sheaves::Control::LocalEquivWithPreviousDefns::WeakControl} The following are equivalent.
        \begin{enumerate}[label=(\Alph{enumi}.\arabic*)]
            \item\label{Item::Sheaves::Control::LocalEquivWithPreviousDefns::WeakControl::Sheaves} \(\FilteredSheafG\subseteq \LieFilteredSheaf{\FilteredSheafF}\).
            \item\label{Item::Sheaves::Control::LocalEquivWithPreviousDefns::WeakControl::RelCpt}  \(\forall \Omega\subseteq \ManifoldN\) open, \(\forall F,G\subset \VectorFields{\Omega}\times \Zg\)
                finite sets such that \(\FilteredSheafF\big|_{\Omega}=\FilteredSheafGenBy{F}\) and \(\FilteredSheafG\big|_{\Omega}=\FilteredSheafGenBy{G}\),
                we have \(F\) locally weakly controls \(G\) on \(\Omega\).
            \item\label{Item::Sheaves::Control::LocalEquivWithPreviousDefns::WeakControl::Local} \(\forall x\in \ManifoldN\), 
                \(\exists \Omega\ni x\) open, and \(F,G\subset \VectorFields{\Omega}\times \Zg\) finite
                such that \(\FilteredSheafF\big|_{\Omega}=\FilteredSheafGenBy{F}\) and \(\FilteredSheafG\big|_{\Omega}=\FilteredSheafGenBy{G}\)
                and \(F\) locally weakly controls \(G\) on \(\Omega\).
        \end{enumerate}
        \item\label{Item::Sheaves::Control::LocalEquivWithPreviousDefns::WeakEquiv} The following are equivalent.
        \begin{enumerate}[label=(\Alph{enumi}.\arabic*)]
            \item\label{Item::Sheaves::Control::LocalEquivWithPreviousDefns::WeakEquiv::Sheaves} \(\LieFilteredSheaf{\FilteredSheafF}= \LieFilteredSheaf{\FilteredSheafG}\).
            \item\label{Item::Sheaves::Control::LocalEquivWithPreviousDefns::WeakEquiv::RelCpt}  \(\forall \Omega\subseteq \ManifoldN\) open, \(\forall F,G\subset \VectorFields{\Omega}\times \Zg\)
                finite sets such that \(\FilteredSheafF\big|_{\Omega}=\FilteredSheafGenBy{F}\) and \(\FilteredSheafG\big|_{\Omega}=\FilteredSheafGenBy{G}\),
                we have \(F\) and \(G\) are locally weakly equivalent on \(\Omega\).
            \item\label{Item::Sheaves::Control::LocalEquivWithPreviousDefns::WeakEquiv::Local} \(\forall x\in \ManifoldN\), 
                \(\exists \Omega\ni x\) open, and \(F,G\subset \VectorFields{\Omega}\times \Zg\) finite
                such that \(\FilteredSheafF\big|_{\Omega}=\FilteredSheafGenBy{F}\) and \(\FilteredSheafG\big|_{\Omega}=\FilteredSheafGenBy{G}\)
                and \(F\) and \(G\) are locally weakly equivalent on \(\Omega\).
        \end{enumerate}
    \end{enumerate}
\end{proposition}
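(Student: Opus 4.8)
The plan is to prove part \ref{Item::Sheaves::Control::LocalEquivWithPreviousDefns::StrongControl} by establishing the cycle \ref{Item::Sheaves::Control::LocalEquivWithPreviousDefns::StrongControl::Sheaves}$\Rightarrow$\ref{Item::Sheaves::Control::LocalEquivWithPreviousDefns::StrongControl::RelCpt}$\Rightarrow$\ref{Item::Sheaves::Control::LocalEquivWithPreviousDefns::StrongControl::Local}$\Rightarrow$\ref{Item::Sheaves::Control::LocalEquivWithPreviousDefns::StrongControl::Sheaves}, then to obtain part \ref{Item::Sheaves::Control::LocalEquivWithPreviousDefns::WeakControl} by running exactly the same cycle with $\FilteredSheafF$ replaced by $\LieFilteredSheafF$ and with Proposition \ref{Prop::Sheaves::Control::GlobalEquivWithPreviousDefns}\ref{Item::Sheaves::Control::GlobalEquivWithPreviousDefns::StrongControl} replaced by Proposition \ref{Prop::Sheaves::Control::GlobalEquivWithPreviousDefns}\ref{Item::Sheaves::Control::GlobalEquivWithPreviousDefns::WeakControl}, and finally to deduce parts \ref{Item::Sheaves::Control::LocalEquivWithPreviousDefns::StrongEquiv} and \ref{Item::Sheaves::Control::LocalEquivWithPreviousDefns::WeakEquiv} by applying \ref{Item::Sheaves::Control::LocalEquivWithPreviousDefns::StrongControl} and \ref{Item::Sheaves::Control::LocalEquivWithPreviousDefns::WeakControl} twice, with $\FilteredSheafF$ and $\FilteredSheafG$ interchanged. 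For that last reduction I would use that $\FilteredSheafF=\FilteredSheafG$ iff $\FilteredSheafG\subseteq\FilteredSheafF$ and $\FilteredSheafF\subseteq\FilteredSheafG$; that $\LieFilteredSheafF=\LieFilteredSheaf{\FilteredSheafG}$ iff $\FilteredSheafG\subseteq\LieFilteredSheafF$ and $\FilteredSheafF\subseteq\LieFilteredSheaf{\FilteredSheafG}$ (the non-obvious direction because $\LieFilteredSheafF$ is itself a Lie algebra filtration of sheaves of vector fields and $\LieFilteredSheaf{\FilteredSheafG}$ is the smallest such containing $\FilteredSheafG$, so $\FilteredSheafG\subseteq\LieFilteredSheafF\Leftrightarrow\LieFilteredSheaf{\FilteredSheafG}\subseteq\LieFilteredSheafF$); and that ``locally strongly/weakly equivalent'' unwinds, by Definition \ref{Defn::BasicDefns::StrongWeakEquivalnce}, to ``mutually locally strongly/weakly controlling''.

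The engine behind the cycle is a single observation: if $\Omega\subseteq\ManifoldN$ is open and $F\subset\VectorFields{\Omega}\times\Zg$ is finite with $\FilteredSheafF\big|_{\Omega}=\FilteredSheafGenBy{F}$, then $F$ is automatically a list of H\"ormander vector fields with formal degrees on $\Omega$: since $\FilteredSheafF$ satisfies H\"ormander's condition and this is a local property (recorded in the proof of Proposition \ref{Prop::Sheaves::LieAlg::HormandersCondiIsEquivToOtherNotions}), $\FilteredSheafGenBy{F}=\FilteredSheafF\big|_{\Omega}$ satisfies H\"ormander's condition, and Lemma \ref{Lemma::Sheaves::LieAlg::TwoNotionsOfHormanderAreTheSame} then forces the underlying vector fields of $F$ to be H\"ormander vector fields on $\Omega$. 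Hence Proposition \ref{Prop::Sheaves::Control::GlobalEquivWithPreviousDefns} (with $\ManifoldN$ replaced by $\Omega$) legitimately applies to any finite generating sets $F,G$ of $\FilteredSheafF\big|_{\Omega},\FilteredSheafG\big|_{\Omega}$. Given this, \ref{Item::Sheaves::Control::LocalEquivWithPreviousDefns::StrongControl::Sheaves}$\Rightarrow$\ref{Item::Sheaves::Control::LocalEquivWithPreviousDefns::StrongControl::RelCpt} is immediate: restrict $\FilteredSheafG\subseteq\FilteredSheafF$ to $\Omega$ to get $\FilteredSheafGenBy{G}\subseteq\FilteredSheafGenBy{F}$, which by Proposition \ref{Prop::Sheaves::Control::GlobalEquivWithPreviousDefns}\ref{Item::Sheaves::Control::GlobalEquivWithPreviousDefns::StrongControl} is exactly the statement that $F$ locally strongly controls $G$ on $\Omega$. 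For \ref{Item::Sheaves::Control::LocalEquivWithPreviousDefns::StrongControl::RelCpt}$\Rightarrow$\ref{Item::Sheaves::Control::LocalEquivWithPreviousDefns::StrongControl::Local} I use only that $\FilteredSheafF,\FilteredSheafG$ are of finite type: near any $x$ choose one open $\Omega\ni x$ carrying finite generating sets for both restrictions (intersect the two finite-type neighborhoods and restrict the generators, using Lemma \ref{Lemma::Sheaves::Filtered::SheafGenByFiniteSet}), then quote \ref{Item::Sheaves::Control::LocalEquivWithPreviousDefns::StrongControl::RelCpt}. Finally \ref{Item::Sheaves::Control::LocalEquivWithPreviousDefns::StrongControl::Local}$\Rightarrow$\ref{Item::Sheaves::Control::LocalEquivWithPreviousDefns::StrongControl::Sheaves}: from each $\Omega_x\ni x$ with generating sets $F_x,G_x$ and $F_x$ locally strongly controlling $G_x$ on $\Omega_x$, Proposition \ref{Prop::Sheaves::Control::GlobalEquivWithPreviousDefns}\ref{Item::Sheaves::Control::GlobalEquivWithPreviousDefns::StrongControl} gives $\FilteredSheafG\big|_{\Omega_x}=\FilteredSheafGenBy{G_x}\subseteq\FilteredSheafGenBy{F_x}=\FilteredSheafF\big|_{\Omega_x}$; since the $\Omega_x$ cover $\ManifoldN$ and containment of subsheaves is tested on an open cover (by the gluing axiom, or \cite[\href{https://stacks.math.columbia.edu/tag/01AN}{Lemma 01AN}]{stacks-project}), $\FilteredSheafG\subseteq\FilteredSheafF$ follows.

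Transplanting the cycle to part \ref{Item::Sheaves::Control::LocalEquivWithPreviousDefns::WeakControl} needs only one extra fact: $\mathrm{Lie}(\cdot)_\bullet$ commutes with restriction to opens, i.e. $\LieFilteredSheafF\big|_{\Omega}$ is the smallest Lie algebra filtration of sheaves of vector fields on $\Omega$ containing $\FilteredSheafF\big|_{\Omega}$. I would verify this by covering $\Omega$ by relatively compact opens $\Omega'$ on which $\FilteredSheafF\big|_{\Omega'}$ is generated by a finite list $\WWd$ of H\"ormander vector fields with formal degrees satisfying H\"ormander's condition of some order $m$, and applying Lemma \ref{Lemma::Sheaves::LieAlg::LieAlgebraFilIsGenByXXd}, which identifies both candidates on $\Omega'$ with $\FilteredSheafGenBy{\XXd}$ for $\XXd=\left\{(Y,e)\in\GenWWd:e\leq m\max\{\Wd_j\}\right\}$; agreement on these pieces patches. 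With this in hand the four implications go through verbatim, now using Proposition \ref{Prop::Sheaves::Control::GlobalEquivWithPreviousDefns}\ref{Item::Sheaves::Control::GlobalEquivWithPreviousDefns::WeakControl}, and then parts \ref{Item::Sheaves::Control::LocalEquivWithPreviousDefns::StrongEquiv} and \ref{Item::Sheaves::Control::LocalEquivWithPreviousDefns::WeakEquiv} follow as indicated above.

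The principal difficulty is not conceptual but lies in this localization bookkeeping: every invocation of Proposition \ref{Prop::Sheaves::Control::GlobalEquivWithPreviousDefns} must be justified by knowing that the ad hoc finite generating sets appearing on a given open set really are H\"ormander vector fields with formal degrees there, which rests on finite type and H\"ormander's condition being genuinely local and on restrictions of finite generating sets still generating; and, for the weak statements, on the compatibility of $\mathrm{Lie}(\cdot)_\bullet$ with restriction. Once those routine sheaf-theoretic facts are pinned down, the logical skeleton --- a three-term cycle for control, then a doubled application for equivalence --- is straightforward.
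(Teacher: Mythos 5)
Your argument is essentially the same as the paper's: both pass through the observation (via Lemma \ref{Lemma::Sheaves::LieAlg::TwoNotionsOfHormanderAreTheSame}) that any finite generating set on an open subset is automatically a list of H\"ormander vector fields with formal degrees, so that Proposition \ref{Prop::Sheaves::Control::GlobalEquivWithPreviousDefns} applies; both obtain (A) and (C) by a three-term cycle with finite type supplying the existence of local finite generating sets; and both reduce (B) and (D) to two applications of (A) and (C) together with the closure property of \(\LieFilteredSheaf{\cdot}\). The one place where you are more explicit than the paper is in isolating the fact that \(\LieFilteredSheaf{\cdot}\) commutes with restriction to open sets, i.e.\ \(\LieFilteredSheafF\big|_{\Omega}=\LieFilteredSheaf{\FilteredSheafF\big|_\Omega}\), which is genuinely needed to apply Proposition \ref{Prop::Sheaves::Control::GlobalEquivWithPreviousDefns}\ref{Item::Sheaves::Control::GlobalEquivWithPreviousDefns::WeakControl} on \(\Omega\) in the weak cases; the paper uses this silently, while you correctly flag it and sketch a proof via Lemma \ref{Lemma::Sheaves::LieAlg::LieAlgebraFilIsGenByXXd} on a cover by relatively compact opens, which is the right justification. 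So the proposal is sound and, if anything, slightly more carefully bookkept than the source.
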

\begin{proof}
    \ref{Item::Sheaves::Control::LocalEquivWithPreviousDefns::StrongControl::Sheaves}\(\Rightarrow\)\ref{Item::Sheaves::Control::LocalEquivWithPreviousDefns::StrongControl::RelCpt}
    and
    \ref{Item::Sheaves::Control::LocalEquivWithPreviousDefns::WeakControl::Sheaves}\(\Rightarrow\)\ref{Item::Sheaves::Control::LocalEquivWithPreviousDefns::WeakControl::RelCpt}:
    if \(\Omega\subseteq \ManifoldN\) is open and \(F,G\subset \VectorFields{\Omega}\times \Zg\)
                are finite sets with \(\FilteredSheafF\big|_{\Omega}=\FilteredSheafGenBy{F}\) and \(\FilteredSheafG\big|_{\Omega}=\FilteredSheafGenBy{G}\),
                then Lemma \ref{Lemma::Sheaves::LieAlg::TwoNotionsOfHormanderAreTheSame} shows that
                \(F\) and \(G\) are both lists of H\"ormander vector fields with formal degrees on \(\Omega\).
    Thus, if \ref{Item::Sheaves::Control::LocalEquivWithPreviousDefns::StrongControl::Sheaves} (respectively, \ref{Item::Sheaves::Control::LocalEquivWithPreviousDefns::WeakControl::Sheaves})
    holds, then Proposition \ref{Prop::Sheaves::Control::GlobalEquivWithPreviousDefns} \ref{Item::Sheaves::Control::GlobalEquivWithPreviousDefns::StrongControl} (respectively, \ref{Item::Sheaves::Control::GlobalEquivWithPreviousDefns::WeakControl})
    shows \ref{Item::Sheaves::Control::LocalEquivWithPreviousDefns::StrongControl::RelCpt} (respectively, \ref{Item::Sheaves::Control::LocalEquivWithPreviousDefns::WeakControl::RelCpt}) holds.

    Since \(\FilteredSheafF\) and \(\FilteredSheafG\) are both finite type, \(\forall x\in \ManifoldN\), 
                \(\exists \Omega\ni x\) open, and \(F,G\subset \VectorFields{\Omega}\times \Zg\) finite
                such that \(\FilteredSheafF\big|_{\Omega}=\FilteredSheafGenBy{F}\) and \(\FilteredSheafG\big|_{\Omega}=\FilteredSheafGenBy{G}\) (see Lemma \ref{Lemma::Sheaves::Filtered::SheafGenByFiniteSet}).
    From this, 
    \ref{Item::Sheaves::Control::LocalEquivWithPreviousDefns::StrongControl::RelCpt}\(\Rightarrow\)\ref{Item::Sheaves::Control::LocalEquivWithPreviousDefns::StrongControl::Local},
    \ref{Item::Sheaves::Control::LocalEquivWithPreviousDefns::StrongEquiv::RelCpt}\(\Rightarrow\)\ref{Item::Sheaves::Control::LocalEquivWithPreviousDefns::StrongEquiv::Local},
    \ref{Item::Sheaves::Control::LocalEquivWithPreviousDefns::WeakControl::RelCpt}\(\Rightarrow\)\ref{Item::Sheaves::Control::LocalEquivWithPreviousDefns::WeakControl::Local},
    and
    \ref{Item::Sheaves::Control::LocalEquivWithPreviousDefns::WeakEquiv::RelCpt}\(\Rightarrow\)\ref{Item::Sheaves::Control::LocalEquivWithPreviousDefns::WeakEquiv::Local}    
    are immediate.

    \ref{Item::Sheaves::Control::LocalEquivWithPreviousDefns::StrongControl::Local}\(\Rightarrow\)\ref{Item::Sheaves::Control::LocalEquivWithPreviousDefns::StrongControl::Sheaves}
    and
    \ref{Item::Sheaves::Control::LocalEquivWithPreviousDefns::WeakControl::Local}\(\Rightarrow\)\ref{Item::Sheaves::Control::LocalEquivWithPreviousDefns::WeakControl::Sheaves}:
    By Proposition \ref{Prop::Sheaves::LieAlg::HormandersCondiIsEquivToOtherNotions}, if \(F\) and \(G\)
    are as in either \ref{Item::Sheaves::Control::LocalEquivWithPreviousDefns::StrongControl::Local}
    or \ref{Item::Sheaves::Control::LocalEquivWithPreviousDefns::WeakControl::Local}, then \(F\)
    and \(G\) are H\"ormander vector fields with formal degrees.
    Thus, by Proposition \ref{Prop::Sheaves::Control::GlobalEquivWithPreviousDefns} \ref{Item::Sheaves::Control::GlobalEquivWithPreviousDefns::StrongControl} and \ref{Item::Sheaves::Control::GlobalEquivWithPreviousDefns::WeakControl},
    we see
    \(\forall x\in \ManifoldN\), \(\exists \Omega\ni x\) open with
    \begin{equation*}
    \begin{split}
         & \FilteredSheafG\big|_{\Omega}\subseteq \FilteredSheafF\big|_{\Omega}, \quad \text{if \ref{Item::Sheaves::Control::LocalEquivWithPreviousDefns::StrongControl::Local} holds,}
         \\& \FilteredSheafG\big|_{\Omega}\subseteq \LieFilteredSheaf{\FilteredSheafF}\big|_{\Omega}, \quad \text{if \ref{Item::Sheaves::Control::LocalEquivWithPreviousDefns::WeakControl::Local} holds.}
    \end{split}
    \end{equation*}
    \ref{Item::Sheaves::Control::LocalEquivWithPreviousDefns::StrongControl::Local}\(\Rightarrow\)\ref{Item::Sheaves::Control::LocalEquivWithPreviousDefns::StrongControl::Sheaves}
    and
    \ref{Item::Sheaves::Control::LocalEquivWithPreviousDefns::WeakControl::Local}\(\Rightarrow\)\ref{Item::Sheaves::Control::LocalEquivWithPreviousDefns::WeakControl::Sheaves}
    follow.

    \ref{Item::Sheaves::Control::LocalEquivWithPreviousDefns::StrongEquiv::Sheaves}\(\Rightarrow\)\ref{Item::Sheaves::Control::LocalEquivWithPreviousDefns::StrongEquiv::RelCpt}
    follows from 
    \ref{Item::Sheaves::Control::LocalEquivWithPreviousDefns::StrongControl::Sheaves}\(\Rightarrow\)\ref{Item::Sheaves::Control::LocalEquivWithPreviousDefns::StrongControl::RelCpt},
    \ref{Item::Sheaves::Control::LocalEquivWithPreviousDefns::WeakEquiv::Sheaves}\(\Rightarrow\)\ref{Item::Sheaves::Control::LocalEquivWithPreviousDefns::WeakEquiv::RelCpt}
    follows from 
    \ref{Item::Sheaves::Control::LocalEquivWithPreviousDefns::WeakControl::Sheaves}\(\Rightarrow\)\ref{Item::Sheaves::Control::LocalEquivWithPreviousDefns::WeakControl::RelCpt},
    and
    \ref{Item::Sheaves::Control::LocalEquivWithPreviousDefns::StrongEquiv::Local}\(\Rightarrow\)\ref{Item::Sheaves::Control::LocalEquivWithPreviousDefns::StrongEquiv::Sheaves}
    follows from
    \ref{Item::Sheaves::Control::LocalEquivWithPreviousDefns::StrongControl::Local}\(\Rightarrow\)\ref{Item::Sheaves::Control::LocalEquivWithPreviousDefns::StrongControl::Sheaves}.

    \ref{Item::Sheaves::Control::LocalEquivWithPreviousDefns::WeakEquiv::Local}\(\Rightarrow\)\ref{Item::Sheaves::Control::LocalEquivWithPreviousDefns::WeakEquiv::Sheaves}:
    Suppose \ref{Item::Sheaves::Control::LocalEquivWithPreviousDefns::WeakEquiv::Local} holds.
    By \ref{Item::Sheaves::Control::LocalEquivWithPreviousDefns::WeakControl::Local}\(\Rightarrow\)\ref{Item::Sheaves::Control::LocalEquivWithPreviousDefns::WeakControl::Sheaves},
    we have \(\FilteredSheafG\subseteq \LieFilteredSheaf{\FilteredSheafF}\)
    and \(\FilteredSheafF\subseteq \LieFilteredSheaf{\FilteredSheafG}\).
    Since \(\LieFilteredSheaf{\FilteredSheafF}\) and \(\LieFilteredSheaf{\FilteredSheafG}\)
    are Lie algebra filtrations of sheaves of vector fields, this implies
    \(\LieFilteredSheaf{\FilteredSheafG}\subseteq \LieFilteredSheaf{\FilteredSheafF}\) and 
    and \(\LieFilteredSheaf{\FilteredSheafF}\subseteq \LieFilteredSheaf{\FilteredSheafG}\),
    establishing \ref{Item::Sheaves::Control::LocalEquivWithPreviousDefns::WeakEquiv::Sheaves}.
\end{proof}

    \subsection{Filtrations of sheaves on submanifolds and non-characteristic points}\label{Section::Sheaves::Restriction}
    Let \(S\subseteq \ManifoldN\) be a submanifold.
We wish to restrict a filtration of sheaves of vector fields to \(S\),
keeping only those vector fields which are tangent to \(S\).
In this section, we introduce a general definition, and then study two
special cases which are of interest to us, and in which this definition
becomes particularly simple and satisfies good properties:
co-dimension \(0\), embedded submanifolds (see Section \ref{Section::Sheaves::Restriction::CoDim0}),
and the non-characteristic boundary (see Section \ref{Section::Sheaves::Restriction::NCBdry}).


We begin with the general definition.

\begin{definition}
    Let \(\ManifoldN\) be a smooth manifold (possibly with boundary) and \(S\subseteq \ManifoldN\) an embedded submanifold.
    \begin{enumerate}[(i)]
        \item Let \(\SheafF\) be a pre-sheaf of vector fields on \(\ManifoldN\). We let
            \(\RestrictPreSheafF{S}\) be the presheaf of vector fields on \(S\) given by, for \(U\subseteq S\) open,
            \begin{equation}\label{Eqn::Sheaves::Restrict::DefnOfRestriction}
                \RestrictPreSheafF{S}[U]:=
                \left\{ X\big|_{U} : \exists \Omega\subseteq \ManifoldN\text{ open}, \Omega\cap S=U, X\in \SheafF[\Omega], X(x)\in \TangentSpace{S}{x}, \forall x\in U \right\}.
            \end{equation}
            We let \(\RestrcitSheafF{S}\) be the sheafification of \(\RestrictPreSheafF{S}\).

        \item Let \(\FilteredSheafF\) be a filtration of presheaves of vector fields on \(\ManifoldN\).
            We let \(\RestrictFilteredPreSheafF{S}\) be the filtration of presheaves of vector fields
            on \(S\) given by \(d\mapsto \RestrictPreSheaf{\FilteredSheafNoSetF[d]}{S}\)
            and \(\RestrictFilteredSheaf{\FilteredSheafF}{S}\) the filtration of sheaves of vector fields given by
            \(d\mapsto \RestrictFilteredSheaf{\FilteredSheafNoSetF[d]}{S}\).
    \end{enumerate}
\end{definition}

\begin{remark}\label{Rmk::Sheves::Restrcit::BasicRemarksOnRestricDefn}
    \begin{enumerate}[(i)]
        \item In \eqref{Eqn::Sheaves::Restrict::DefnOfRestriction} we restricted attention to only those \(X\)
            which were tangent to \(S\). Thus, unless \(\ManifoldN\) is co-dimension \(0\), this may not include
            all \(X\in \SheafF[\Omega]\).
        \item\label{Item::Sheves::Restrcit::BasicRemarksOnRestricDefn::CommutesWithRestriction} For \(U\subseteq S\) open, we have
            \(\RestrictFilteredPreSheafF{S}\big|_{U}=\RestrictFilteredPreSheafF{S\cap U}\)
            and \(\RestrictFilteredSheaf{\FilteredSheafF}{S}\big|_{U}=\RestrictFilteredSheaf{\FilteredSheafF}{S\cap U}\).

        \item If \(\SheafF\) is a sheaf, then one can show via a partition of unity argument that
            \(\RestrictPreSheafF{S}\) is a sheaf, and therefore
            \(\RestrictPreSheafF{S}=\RestrcitSheafF{S}\). In the special cases we are interested in this follows
            from the results below, so we do not include the proof of the general case.
    \end{enumerate}
\end{remark}

\begin{lemma}\label{Lemma::Sheaves::Restrict::RestrictionOfLieAlgebraIsLieAlgebra}
    Let \(\ManifoldN\) be a smooth manifold (possibly with boundary) and \(S\subseteq \ManifoldN\) an embedded submanifold.
    If \(\FilteredSheafF\) ie a Lie algebra filtration of pre-sheaves of vector fields on \(\ManifoldN\),
    then \(\RestrictFilteredPreSheafF{S}\) is a Lie algebra filtration of pre-sheaves of vector fields on \(S\)
    and \(\RestrictFilteredSheaf{\FilteredSheafF}{S}\) is a Lie algebra filtration of sheaves of vector fields on \(S\).
\end{lemma}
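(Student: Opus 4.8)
The plan is to prove the pre-sheaf statement by an explicit computation and then deduce the sheaf statement by localizing, using the description of the sheafification of $\RestrictFilteredPreSheafF{S}$ as the subsheaf of $\SheafVectorFields$ on $S$ generated by it.

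For the pre-sheaf statement, the only thing to check beyond the filtration inclusions $\RestrictPreSheaf{\FilteredSheafNoSetF[j]}{S}\subseteq\RestrictPreSheaf{\FilteredSheafNoSetF[j+1]}{S}$ (which are immediate from \eqref{Eqn::Sheaves::Restrict::DefnOfRestriction}) is the bracket condition. So I would fix $U\subseteq S$ open, $j,k\in\Zg$, and sections $X\in\RestrictPreSheaf{\FilteredSheafNoSetF[j]}{S}[U]$, $Y\in\RestrictPreSheaf{\FilteredSheafNoSetF[k]}{S}[U]$. By \eqref{Eqn::Sheaves::Restrict::DefnOfRestriction} there are open sets $\Omega_1,\Omega_2\subseteq\ManifoldN$ with $\Omega_i\cap S=U$ and vector fields $\widetilde X\in\FilteredSheafF[\Omega_1][j]$, $\widetilde Y\in\FilteredSheafF[\Omega_2][k]$ that are tangent to $S$ along $U$ and satisfy $\widetilde X\big|_U=X$, $\widetilde Y\big|_U=Y$. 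Set $\Omega:=\Omega_1\cap\Omega_2$; then $U\subseteq\Omega$ and $\Omega\cap S=U$, and $\widetilde X\big|_{\Omega}\in\FilteredSheafF[\Omega][j]$, $\widetilde Y\big|_{\Omega}\in\FilteredSheafF[\Omega][k]$. Since $\FilteredSheafF$ is a Lie algebra filtration, $[\widetilde X\big|_{\Omega},\widetilde Y\big|_{\Omega}]\in\FilteredSheafF[\Omega][j+k]$. The standard fact that vector fields tangent to an embedded submanifold are closed under the Lie bracket, with restriction to the submanifold intertwining the brackets, gives $[\widetilde X\big|_{\Omega},\widetilde Y\big|_{\Omega}](x)\in\TangentSpace{S}{x}$ for every $x\in U$ and $[\widetilde X\big|_{\Omega},\widetilde Y\big|_{\Omega}]\big|_U=[X,Y]$. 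Using $\Omega$ and $[\widetilde X\big|_{\Omega},\widetilde Y\big|_{\Omega}]$ as witnesses in \eqref{Eqn::Sheaves::Restrict::DefnOfRestriction}, I conclude $[X,Y]\in\RestrictPreSheaf{\FilteredSheafNoSetF[j+k]}{S}[U]$, which is exactly the assertion that $\RestrictFilteredPreSheafF{S}$ is a Lie algebra filtration of pre-sheaves of vector fields on $S$.

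For the sheaf statement, each $\RestrictPreSheaf{\FilteredSheafNoSetF[d]}{S}$ is a sub-pre-sheaf of the sheaf $\SheafVectorFields$ of vector fields on $S$, so its sheafification $\RestrictFilteredSheaf{\FilteredSheafNoSetF[d]}{S}$ may be identified with the subsheaf of $\SheafVectorFields$ it generates; by \cite[\href{https://stacks.math.columbia.edu/tag/01AN}{Lemma 01AN}]{stacks-project}, a section $Z\in\SheafVectorFields[U]$ lies in $\RestrictFilteredSheaf{\FilteredSheafNoSetF[d]}{S}[U]$ iff every point of $U$ has an open neighbourhood $V$ with $Z\big|_V\in\RestrictPreSheaf{\FilteredSheafNoSetF[d]}{S}[V]$. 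Now I would take $X\in\RestrictFilteredSheaf{\FilteredSheafNoSetF[j]}{S}[U]$, $Y\in\RestrictFilteredSheaf{\FilteredSheafNoSetF[k]}{S}[U]$, and $x\in U$; choosing a common neighbourhood $V\ni x$ with $X\big|_V\in\RestrictPreSheaf{\FilteredSheafNoSetF[j]}{S}[V]$ and $Y\big|_V\in\RestrictPreSheaf{\FilteredSheafNoSetF[k]}{S}[V]$ and applying the pre-sheaf case on $V$, I get $[X,Y]\big|_V=[X\big|_V,Y\big|_V]\in\RestrictPreSheaf{\FilteredSheafNoSetF[j+k]}{S}[V]$. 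Since $x\in U$ was arbitrary, $[X,Y]\in\RestrictFilteredSheaf{\FilteredSheafNoSetF[j+k]}{S}[U]$, so $\RestrictFilteredSheaf{\FilteredSheafF}{S}$ is a Lie algebra filtration of sheaves of vector fields on $S$.

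There is no serious obstacle here: the single non-formal input is the pointwise fact that commutators of vector fields tangent to an embedded submanifold stay tangent (and restrict compatibly), and everything else is bookkeeping. The one place to be careful is the handling of domains in \eqref{Eqn::Sheaves::Restrict::DefnOfRestriction} — one must pass to the intersection $\Omega_1\cap\Omega_2$, verify that it still meets $S$ in exactly $U$, and keep in mind that the tangency condition there is imposed only along $U$ and not on all of $\Omega$, which is harmless since the bracket computation takes place in $\FilteredSheafF[\Omega][j+k]$ while tangency is only needed on $U$.
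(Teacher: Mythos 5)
Your argument is correct and matches the paper's proof in its essentials: both establish the pre-sheaf case by taking local extensions on $\Omega_1,\Omega_2$, intersecting, using that $\FilteredSheafF$ is a Lie algebra filtration, and invoking the standard tangency/restriction facts for brackets of vector fields tangent along an embedded submanifold, then pass to the sheaf statement. The only difference is cosmetic — where the paper closes with a one-line appeal to ``the sheafification of a Lie algebra filtration of pre-sheaves is a Lie algebra filtration of sheaves,'' you flesh that out explicitly via the description of the generated subsheaf of $\SheafVectorFields$ and a localization argument; that is a reasonable unpacking of the same step, not a different route.
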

\begin{proof}
    Let \(V_1\in \RestrictFilteredPreSheafF{S}[U][d_1]\) and \(V_2\in \RestrictFilteredPreSheafF{S}[U][d_2]\).
    Then, \(\exists \Omega_1,\Omega_2\subseteq \ManifoldN\) open, with \(\Omega_1\cap S=U=\Omega_2\cap S\),
    and \(X_1\in \FilteredSheafF[\Omega_1][d_1]\), \(X_2\in \FilteredSheafF[\Omega_2][d_1]\)
    with \(X_j\big|_U=V_j\). By hypothesis, \([X_1,X_2]\in \FilteredSheafF[\Omega_1\cap\Omega_2][d_1+d_2]\)
    and therefore,
    \begin{equation*}
        [V_1,V_2]=[X_1,X_2]\big|_{U} \in \RestrictFilteredPreSheafF{S}[U][d_1+d_2],    
    \end{equation*}
    establishing that \(\RestrictFilteredPreSheafF{S}\) is a Lie algebra filtration of pre-sheaves of vector fields on \(S\).
    Since the sheafification of a Lie algebra filtration of pre-sheaves of vector fields is a Lie algebra filtration
    of sheaves of vector fields, the result follows.
\end{proof}

        \subsubsection{Co-dimension \texorpdfstring{\(0\)}{0} submanifolds}\label{Section::Sheaves::Restriction::CoDim0}
        We are mainly concerned with filtered sheaves of vector fields of finite type satisfying H\"ormander's condition.
The next proposition shows that these properties are preserved when restricting to a co-dimension \(0\), embedded submanifold.

\begin{proposition}\label{Prop::Sheaves::Restrict::Codim0::MainRestrictionResult}
    Suppose \(\ManifoldM\) is a smooth manifold (possibly with boundary) and \(\ManifoldN\subseteq \ManifoldM\)
    is a co-dimension \(0\) embedded submanifold (possibly with boundary). Let \(\FilteredSheafF\)
    be a filtration of sheaves of vector fields on \(\ManifoldM\).
    \begin{enumerate}[(i)]
        \item\label{Item::Sheaves::Restrict::Codim0::MainRestrictionResult::GloballyFG} Suppose \(\exists F\subset \VectorFieldsM\times \Zg\) finite with
            \(\FilteredSheafF=\FilteredSheafGenBy{F}\). Then,
            \(\RestrictFilteredPreSheafF{\ManifoldN}=\FilteredSheafGenBy{F\big|_\ManifoldN}\),
            where 
            \begin{equation}\label{Eqn::Sheaves::Restrict::Codim0::RestrictFSet}
                F\big|_{\ManifoldN}=\left\{ (X\big|_{\ManifoldN},d) :(X,d)\in F \right\}.
            \end{equation}
            In particular, \(\RestrictFilteredPreSheafF{\ManifoldN}\) is a filtration of sheaves
            of vector fields on \(\ManifoldN\) 
            (i.e., \(\RestrictFilteredPreSheafF{\ManifoldN}=\RestrictFilteredSheaf{\FilteredSheafF}{\ManifoldN}\)).
        \item\label{Item::Sheaves::Restrict::Codim0::MainRestrictionResult::FGOnSet} Let \(\Omega\subseteq \ManifoldM\) be an open set such that there exists \(F\subset \VectorFieldsM\times \Zg\)
            finite with \(\FilteredSheafF[\Omega]=\FilteredSheafGenBy{F}[\Omega]\). Then,
            \begin{equation*}
                \RestrictFilteredSheaf{\FilteredSheafF}{\ManifoldN}\big|_{\Omega\cap \ManifoldN}
                =\RestrictFilteredSheaf{\FilteredSheafF}{\Omega\cap \ManifoldN}
                =\RestrictFilteredPreSheafF{\Omega\cap \ManifoldN}
                =\RestrictFilteredPreSheafF{\ManifoldN}\big|_{\Omega}
                =\FilteredSheafGenBy{F\big|_{\Omega\cap \ManifoldN}},
            \end{equation*}
            where \(F\big|_{\Omega\cap \ManifoldN}\) is defined by \eqref{Eqn::Sheaves::Restrict::Codim0::RestrictFSet}.

        \item\label{Item::Sheaves::Restrict::Codim0::MainRestrictionResult::FG} If \(\FilteredSheafF\) is of finite type, then \(\RestrictFilteredSheaf{\FilteredSheafF}{\ManifoldN}\)
            is of finite type.

        \item\label{Item::Sheaves::Restrict::Codim0::MainRestrictionResult::Hormander} If \(\FilteredSheafF\) satisfies H\"ormander's condition, then \(\RestrictFilteredSheaf{\FilteredSheafF}{\ManifoldN}\)
            satisfies H\"ormander's condition.
    \end{enumerate}
\end{proposition}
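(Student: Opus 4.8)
The four parts should be established in order, with part (i) doing the main work and parts (ii)--(iv) following by localization. For part (i), suppose $\FilteredSheafF = \FilteredSheafGenBy{F}$ with $F \subset \VectorFieldsM \times \Zg$ finite. The key observation is that since $\ManifoldN$ is a co-dimension $0$ embedded submanifold, every vector field on (an open subset of) $\ManifoldN$ is automatically tangent to $\ManifoldN$, so the tangency condition in \eqref{Eqn::Sheaves::Restrict::DefnOfRestriction} is vacuous. Thus for $U \subseteq \ManifoldN$ open, $\RestrictFilteredPreSheafF{\ManifoldN}[U][d]$ consists of all $X|_U$ where $X \in \FilteredSheafNoSetF[d](\Omega)$ for some open $\Omega \subseteq \ManifoldM$ with $\Omega \cap \ManifoldN = U$. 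Using that $\ManifoldN$ is also \emph{closed} in $\ManifoldM$—wait, it need not be; $\ManifoldN \subseteq \ManifoldM$ is only assumed to be a co-dimension $0$ embedded submanifold. But an embedded submanifold of co-dimension $0$ is an open submanifold (possibly with boundary, where its boundary meets $\ManifoldM$'s boundary appropriately), so $\Omega = U$ works directly when $U$ is open in $\ManifoldM$; in general one takes $\Omega$ to be any open subset of $\ManifoldM$ restricting to $U$. The plan is: first show $\RestrictFilteredPreSheafF{\ManifoldN}[U][d] \supseteq \FilteredSheafGenBy{F|_\ManifoldN}[U][d]$ by using a cutoff $\phi \in \CzinftySpace[\Omega]$ with $\phi \equiv 1$ near a point to extend local sections of $\FilteredSheafGenBy{F|_\ManifoldN}$ to global sections on $\ManifoldM$ lying in $\FilteredSheafNoSetF[d]$, exactly as in the proof of Lemma \ref{Lemma::Sheaves::Sheaves::CheckGeneratorsAsGlobalSections}; second show the reverse inclusion, which requires knowing that if $X \in \FilteredSheafNoSetF[d](\Omega)$ then $X|_{\Omega \cap \ManifoldN}$ lies in the $\CinftySpace$-module generated by $F|_{\Omega \cap \ManifoldN}$, and this uses Lemma \ref{Lemma::Sheaves::Sheaves::CheckGeneratorsAsGlobalSections} locally (restrict to $\Omega$, where $\FilteredSheafNoSetF[d]|_\Omega = \SheafGenBy{\{Y : (Y,k)\in F, k \le d\}}$ by Lemma \ref{Lemma::Sheaves::Sheaves::CheckGeneratorsAsGlobalSections}, then restrict coefficients to $\ManifoldN$). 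Since $F|_\ManifoldN$ is finite, Lemma \ref{Lemma::Sheaves::Filtered::SheafGenByFiniteSet} gives that $\FilteredSheafGenBy{F|_\ManifoldN}$ is a genuine filtration of sheaves, so $\RestrictFilteredPreSheafF{\ManifoldN}$ is already a sheaf and equals its sheafification $\RestrictFilteredSheaf{\FilteredSheafF}{\ManifoldN}$.

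For part (ii), the statement is local over $\Omega$, so one first uses Remark \ref{Rmk::Sheves::Restrcit::BasicRemarksOnRestricDefn::CommutesWithRestriction}\ref{Item::Sheves::Restrcit::BasicRemarksOnRestricDefn::CommutesWithRestriction} (commutation of restriction-to-$S$ with restriction to an open subset) to identify $\RestrictFilteredSheaf{\FilteredSheafF}{\ManifoldN}\big|_{\Omega \cap \ManifoldN}$ with $\RestrictFilteredSheaf{\FilteredSheafF\big|_\Omega}{\Omega \cap \ManifoldN}$ and similarly for the presheaf version, reducing to the case $\FilteredSheafF\big|_\Omega = \FilteredSheafGenBy{F}$, which is exactly part (i) applied with $\ManifoldM$ replaced by $\Omega$ and $\ManifoldN$ replaced by $\Omega \cap \ManifoldN$. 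One should also invoke Lemma \ref{Lemma::Sheaves::Filtered::CheckGeneratorsAsGlobalSections} to pass between $\FilteredSheafF[\Omega] = \FilteredSheafGenBy{F}[\Omega]$ (the hypothesis) and $\FilteredSheafF\big|_\Omega = \FilteredSheafGenBy{F}$ (what's needed for part (i)).

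For part (iii), finite type is a local condition: given $x \in \ManifoldN \subseteq \ManifoldM$, pick $U \ni x$ open in $\ManifoldM$ and $F \subset \VectorFields{U} \times \Zg$ finite with $\FilteredSheafF\big|_U = \FilteredSheafGenBy{F}$; then part (ii) gives $\RestrictFilteredSheaf{\FilteredSheafF}{\ManifoldN}\big|_{U \cap \ManifoldN} = \FilteredSheafGenBy{F|_{U \cap \ManifoldN}}$ with $F|_{U \cap \ManifoldN}$ finite, so $\RestrictFilteredSheaf{\FilteredSheafF}{\ManifoldN}$ is of finite type at $x$. For part (iv), H\"ormander's condition is likewise local (as noted in the proof of Proposition \ref{Prop::Sheaves::LieAlg::HormandersCondiIsEquivToOtherNotions}): near $x$, write $\FilteredSheafF\big|_\Omega = \FilteredSheafGenBy{\WWd}$ with $\WWd = \{(W_1,\Wd_1),\ldots,(W_r,\Wd_r)\}$ H\"ormander vector fields with formal degrees (by Proposition \ref{Prop::Sheaves::LieAlg::HormandersCondiIsEquivToOtherNotions}, using that $\FilteredSheafF$ is of finite type—which is part of the hypothesis of (iv) only if we also assume finite type; but in fact the proposition's (iv) is stated for $\FilteredSheafF$ satisfying H\"ormander's condition, and H\"ormander's condition already forces us to work with $\LieFilteredSheafF$ spanning, so we can just use the local generation). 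Then $W_1|_{\Omega \cap \ManifoldN}, \ldots, W_r|_{\Omega \cap \ManifoldN}$ span $\TangentSpace{\ManifoldN}{y}$ at every $y \in \Omega \cap \ManifoldN$ because $\TangentSpace{\ManifoldN}{y} = \MTangentSpace{y}$ (co-dimension $0$) and the $W_j$ satisfy H\"ormander's condition on $\Omega$—actually one needs the iterated commutators, but by Lemma \ref{Lemma::Sheaves::LieAlg::TwoNotionsOfHormanderAreTheSame} and part (ii) $\RestrictFilteredSheaf{\FilteredSheafF}{\ManifoldN}\big|_{\Omega \cap \ManifoldN} = \FilteredSheafGenBy{\WWd|_{\Omega \cap \ManifoldN}}$, and restricting the iterated-commutator spanning relation from $\Omega$ to $\Omega \cap \ManifoldN$ shows $\WWd|_{\Omega \cap \ManifoldN}$ are H\"ormander vector fields with formal degrees, so Lemma \ref{Lemma::Sheaves::LieAlg::TwoNotionsOfHormanderAreTheSame} gives the conclusion. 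The main obstacle is purely bookkeeping in part (i): carefully matching the presheaf definition \eqref{Eqn::Sheaves::Restrict::DefnOfRestriction} (with its choice of ambient open $\Omega$) against the module-generated description, and correctly handling the cutoff/partition-of-unity extension both ways; the rest is routine localization once part (i) is in place.
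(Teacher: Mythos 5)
Your treatment of parts (i)--(iii) is essentially the paper's own argument: for (i) you prove both containments, with the reverse containment coming from writing $X\in\FilteredSheafGenBy{F}[\Omega][d]$ as a finite $\CinftySpace[\Omega]$-combination of the generators and restricting; then you invoke Lemma~\ref{Lemma::Sheaves::Filtered::SheafGenByFiniteSet} to see that $\FilteredSheafGenBy{F\big|_\ManifoldN}$ is already a sheaf filtration, identifying $\RestrictFilteredPreSheafF{\ManifoldN}$ with its sheafification. For (ii) you localize via Remark~\ref{Rmk::Sheves::Restrcit::BasicRemarksOnRestricDefn}\ref{Item::Sheves::Restrcit::BasicRemarksOnRestricDefn::CommutesWithRestriction} and apply (i); your explicit mention of Lemma~\ref{Lemma::Sheaves::Filtered::CheckGeneratorsAsGlobalSections} to pass from $\FilteredSheafF[\Omega]=\FilteredSheafGenBy{F}[\Omega]$ to $\FilteredSheafF\big|_\Omega=\FilteredSheafGenBy{F}$ is a fine addition that the paper leaves implicit. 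Part (iii) is exactly the paper's ``follows immediately from (ii).''

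Part (iv) is where you diverge, and where the gap lies. The hypothesis of (iv) is \emph{only} that $\FilteredSheafF$ satisfies H\"ormander's condition; finite type is \emph{not} assumed. Your argument begins by writing $\FilteredSheafF\big|_\Omega=\FilteredSheafGenBy{\WWd}$ for a finite $\WWd$ near each point, citing Proposition~\ref{Prop::Sheaves::LieAlg::HormandersCondiIsEquivToOtherNotions}; but that proposition requires finite type, which you cannot assume here. You notice this yourself and say ``we can just use the local generation,'' but you never actually construct such a local finite generating set without the finite-type hypothesis, and the subsequent step (restricting $\WWd$ to $\Omega\cap\ManifoldN$ and invoking Lemma~\ref{Lemma::Sheaves::LieAlg::TwoNotionsOfHormanderAreTheSame}) depends on it. The paper avoids the issue entirely with a stalk argument: it forms the presheaf $\SheafGh[\Omega]:=\bigcup_j\LieFilteredSheafF[\Omega][j]$, notes by \cite[007Z]{stacks-project} that H\"ormander's condition forces $\SheafGh_x=\SheafVectorFields_x$ for every $x$, observes directly from the definitions that restriction to $\Omega\cap\ManifoldN$ carries $\LieFilteredSheafF[\Omega][j]$ into $\LieFilteredSheaf{\RestrictFilteredSheaf{\FilteredSheafF}{\ManifoldN}}[\Omega\cap\ManifoldN][j]$, and then runs the stalk argument once more on $\ManifoldN$ to conclude. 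If you want to keep your route, the correct replacement for the missing step is this stalk argument: from $\SheafGh_x=\SheafVectorFields_x$ one gets a neighborhood $\Omega\ni x$ and finitely many vector fields in $\bigcup_j\LieFilteredSheafF[\Omega][j]$ spanning the tangent space near $x$ \emph{without} invoking finite type or a local H\"ormander presentation of $\FilteredSheafF\big|_\Omega$. As written, your (iv) proves a weaker statement than what is claimed.
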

\begin{proof}
    \ref{Item::Sheaves::Restrict::Codim0::MainRestrictionResult::GloballyFG}:
    Clearly \(\FilteredSheafGenBy{F\big|_\ManifoldN}\subseteq \RestrictFilteredPreSheafF{\ManifoldN}\).
    For the reverse containment, take \(U\subseteq \ManifoldN\) open and
    \(V\in \RestrictFilteredPreSheafF{\ManifoldN}[U][d]\). By definition, there exists
    \(\Omega\subseteq \ManifoldM\) open and \((X,d)\in \FilteredSheafF[\Omega][d]=\FilteredSheafGenBy{F}[\Omega][d]\)
    with \(\Omega\cap \ManifoldN=U\) and \(X\big|_{U}=V\).
    Since \((X,d)\in \FilteredSheafGenBy{F}[\Omega][d]\), using the definition (see \eqref{Eqn::Sheaves::Filtered::SheafGenByFiniteSet::DefnOfSheafGenByFiniteSet}),
    we have
    \begin{equation}\label{Eqn::Sheaves::Restrict::Codim0::MainRestrictionResult::Tmp1}
        X=\sum_{\substack{e\leq d \\ (Y,e)\in F}} a_{(Y,e)} Y, \quad a_{(Y,e)}\in \CinftySpace[\Omega].
    \end{equation}
    Restricting \eqref{Eqn::Sheaves::Restrict::Codim0::MainRestrictionResult::Tmp1} to \(U\)
    shows \(V\in \FilteredSheafGenBy{F\big|_\ManifoldN}[U][d]\), as desired.

    We have shown \(\FilteredSheafGenBy{F\big|_\ManifoldN}= \RestrictFilteredPreSheafF{\ManifoldN}\).
    Since \(\FilteredSheafGenBy{F\big|_\ManifoldN}\) is a filtration sheaves of vector fields on \(\ManifoldN\)
    (see Lemma \ref{Lemma::Sheaves::Filtered::SheafGenByFiniteSet}), we conclude
    \(\RestrictFilteredPreSheafF{\ManifoldN}=\RestrictFilteredSheaf{\FilteredSheafF}{\ManifoldN}=\FilteredSheafGenBy{F\big|_\ManifoldN}\),
    completing the proof of \ref{Item::Sheaves::Restrict::Codim0::MainRestrictionResult::GloballyFG}.

    \ref{Item::Sheaves::Restrict::Codim0::MainRestrictionResult::FGOnSet}:
    Let \(\Omega\) and \(F\) be as in \ref{Item::Sheaves::Restrict::Codim0::MainRestrictionResult::FGOnSet}.
    By Remark \ref{Rmk::Sheves::Restrcit::BasicRemarksOnRestricDefn} \ref{Item::Sheves::Restrcit::BasicRemarksOnRestricDefn::CommutesWithRestriction},
    we have \(\RestrictFilteredSheaf{\FilteredSheafF}{\ManifoldN}\big|_{\Omega\cap \ManifoldN}=\RestrictFilteredSheaf{\FilteredSheafF}{\Omega\cap \ManifoldN}\)
    and \(\RestrictFilteredPreSheafF{\Omega\cap \ManifoldN}=\RestrictFilteredPreSheafF{\ManifoldN}\big|_{\Omega}\).
    By \ref{Item::Sheaves::Restrict::Codim0::MainRestrictionResult::GloballyFG} with \(\ManifoldN\)
    replaced by \(\Omega\cap \ManifoldN\), we have
    \(
        \RestrictFilteredSheaf{\FilteredSheafF}{\Omega\cap \ManifoldN}
        =\RestrictFilteredPreSheafF{\Omega\cap \ManifoldN}
        =\FilteredSheafGenBy{F\big|_{\Omega\cap \ManifoldN}},
    \)
    establishing \ref{Item::Sheaves::Restrict::Codim0::MainRestrictionResult::FGOnSet}.

    \ref{Item::Sheaves::Restrict::Codim0::MainRestrictionResult::FG}
    follows immediately from \ref{Item::Sheaves::Restrict::Codim0::MainRestrictionResult::FGOnSet}.

    \ref{Item::Sheaves::Restrict::Codim0::MainRestrictionResult::Hormander}:
    Let \(\SheafGh\) be the presheaf of vector fields on \(\ManifoldM\) defined by
    \begin{equation*}
        \SheafGh[\Omega]:=\bigcup_{j} \LieFilteredSheaf{\FilteredSheafF}[\Omega][j].
    \end{equation*}
    By H\"ormander's condition (see Definition \ref{Defn::Sheaves::LieAlg::HormandersCondition}), the sheafification
    of \(\SheafGh\) equals the sheaf of all smooth vector fields on \(\ManifoldM\), which we denote by \(\SheafVectorFields\). 
    By \cite[\href{https://stacks.math.columbia.edu/tag/007Z}{Lemma 007Z}]{stacks-project}, the stalk of \(\SheafGh\)
    at each point \(x\in \ManifoldM\) equals the stalk of \(\SheafVectorFields\) at \(x\): \(\SheafGh_x=\SheafVectorFields_x\).
    It follows directly from the definitions that, for \(\Omega\subseteq \ManifoldM\) open,
    \begin{equation*}
        \left\{ X\big|_{\Omega\cap \ManifoldN} : X\in \LieFilteredSheafF[\Omega][j] \right\}
        \subseteq \LieFilteredSheaf{\RestrictFilteredPreSheafF{\ManifoldN}}[\Omega\cap \ManifoldN][j]
        \subseteq \LieFilteredSheaf{\RestrictFilteredSheaf{\FilteredSheafF}{\ManifoldN}}[\Omega\cap \ManifoldN][j].
    \end{equation*}
    Thus, if we define the presheaf of vector fields on \(\ManifoldN\) by
    \begin{equation*}
        \SheafG[U]:=\bigcup_{j} \LieFilteredSheaf{\RestrictFilteredSheaf{\FilteredSheafF}{\ManifoldN}}[U][j],
    \end{equation*}
    we have
    \begin{equation*}
        \left\{ X\big|_{\Omega\cap \ManifoldN} : X\in \SheafGh[\Omega] \right\}
        \subseteq \SheafG[\Omega\cap \ManifoldN].
    \end{equation*}
    For \(x\in \ManifoldN\),
    since the stalk \(\SheafGh_x\) contains the germs at \(x\) of all smooth vector fields on \(\ManifoldM\) near \(x\),
    we conclude that the stalk \(\SheafG_x\) contains the germs of all smooth vector fields on \(\ManifoldN\) near \(x\).
    It follows from \cite[\href{https://stacks.math.columbia.edu/tag/007Z}{Lemma 007Z}]{stacks-project}
    that the sheafification of \(\SheafG\) equals the sheaf of all smooth vector fields on \(\ManifoldN\),
    which is the definition of \(\RestrictFilteredSheaf{\FilteredSheafF}{\ManifoldN}\)
    satisfying H\"ormander's condition.
\end{proof}

Proposition \ref{Prop::Sheaves::Restrict::Codim0::MainRestrictionResult} shows that restricting
a filtration of sheaves of finite type satisfying H\"ormander's condition to a co-dimension \(0\),
embedded submanifold with boundary again yields a filtration of sheaves of finite type satisfying H\"ormander's condition.
The next result shows that every such filtration of sheaves of finite type on \(\ManifoldN\) satisfying H\"ormander's 
condition arises in this way.

\begin{proposition}
    Let \(\ManifoldN\) be a smooth manifold with boundary and \(\FilteredSheafF\) a filtration of sheaves
    of vector fields on \(\ManifoldN\) which is of finite type.
    Then, there exists a smooth manifold \(\ManifoldM\) (without boundary), such that
    \(\ManifoldN\subseteq \ManifoldM\) is a closed, co-dimension \(0\) embedded submanifold,
    and \(\FilteredSheafFh\) a filtration of sheaves of vector fields on \(\ManifoldM\) which is of finite
    type and satisfies \(\RestrictFilteredSheaf{\FilteredSheafFh}{\ManifoldN}=\FilteredSheafF\).
    If \(\FilteredSheafF\) satisfies H\"ormander's condition, then \(\ManifoldM\) and \(\FilteredSheafFh\)
    can be chosen so that \(\FilteredSheafFh\) also satisfies H\"ormander's condition.
\end{proposition}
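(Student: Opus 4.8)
The plan is to take for \(\ManifoldM\) any boundaryless manifold in which \(\ManifoldN\) sits as a closed, co-dimension \(0\) embedded submanifold --- such an \(\ManifoldM\) exists, e.g.\ the double of \(\ManifoldN\), as recalled in Remark~\ref{Rmk::Scaling::MainResult::DontNeedAmbientMfld} --- and then to construct \(\FilteredSheafFh\) by a local-to-global procedure, using a partition of unity to turn non-canonical local extensions of vector fields across \(\BoundaryN\) into honest global vector fields on \(\ManifoldM\). The one structural fact about \(\ManifoldM\) I would use is that \(\BoundaryN\) is a two-sided embedded hypersurface admitting a collar, so every smooth vector field on \(\ManifoldN\) (smooth up to the boundary) is the restriction of a smooth vector field on some open \(\ManifoldM\)-neighborhood of \(\ManifoldN\): in collar coordinates apply a Seeley-type extension to the coefficient functions and patch with the given field on \(\InteriorN\) by a partition of unity.

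First I would fix local data. Since \(\FilteredSheafF\) is of finite type, choose an open cover \(\{U_\alpha\}_{\alpha\in A}\) of \(\ManifoldN\) with \(U_\alpha=V_\alpha\cap\ManifoldN\) for open \(V_\alpha\subseteq\ManifoldM\), and finite sets \(F_\alpha\subset\VectorFields{U_\alpha}\times\Zg\) with \(\FilteredSheafF\big|_{U_\alpha}=\FilteredSheafGenBy{F_\alpha}\) (Lemma~\ref{Lemma::Sheaves::Filtered::SheafGenByFiniteSet}); shrinking \(V_\alpha\), arrange that every vector field occurring in \(F_\alpha\) extends to \(\widehat Y^{(\alpha)}\in\VectorFields{V_\alpha}\) by the extension fact above. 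Refine \(\{V_\alpha\}_\alpha\cup\{\ManifoldM\setminus\ManifoldN\}\) to a locally finite open cover \(\{W_k\}_{k\in K}\) of \(\ManifoldM\) with a choice function \(\tau\colon K\to A\cup\{\ast\}\) such that \(W_k\subseteq V_{\tau(k)}\) when \(\tau(k)\in A\), and \(\tau(k)=\ast\) only when \(W_k\subseteq\ManifoldM\setminus\ManifoldN\); note \(W_k\cap\ManifoldN\neq\emptyset\) forces \(\tau(k)\in A\) and then \(W_k\cap\ManifoldN\subseteq U_{\tau(k)}\). Pick a partition of unity \(\{\chi_k\}\) subordinate to \(\{W_k\}\). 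For \(k\) with \(\tau(k)\in A\) and \((Y,e)\in F_{\tau(k)}\), let \(Z_{k,Y}\in\VectorFieldsM\) be \(\chi_k\widehat Y^{(\tau(k))}\) extended by zero off \(\supp[\chi_k]\); these are global, and by local finiteness of \(\{\supp[\chi_k]\}\) only finitely many are nonzero near any point of \(\ManifoldM\). Define \(\FilteredSheafFh\) to be the filtration of sheaves of vector fields on \(\ManifoldM\) generated by \(\{(Z_{k,Y},e)\colon\tau(k)\in A,\ (Y,e)\in F_{\tau(k)}\}\); by Lemma~\ref{Lemma::Sheaves::Filtered::SheafGenByFiniteSet} applied on a small neighborhood of each point (where only finitely many generators survive), \(\FilteredSheafFh\) is of finite type.

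Next I would verify \(\RestrictFilteredSheaf{\FilteredSheafFh}{\ManifoldN}=\FilteredSheafF\); both sides are sheaves on \(\ManifoldN\), so this is local. Near \(x_0\in\ManifoldN\) take a small \(\Omega\subseteq\ManifoldM\) on which all but finitely many \(Z_{k,Y}\) vanish; then \(\widehat{\mathscr F}_j[\Omega]\) is the \(\CinftySpace[\Omega]\)-module generated by the surviving \(Z_{k,Y}\big|_\Omega\) with \(e\le j\), so Proposition~\ref{Prop::Sheaves::Restrict::Codim0::MainRestrictionResult}\ref{Item::Sheaves::Restrict::Codim0::MainRestrictionResult::FGOnSet} applies and yields \(\RestrictFilteredSheaf{\FilteredSheafFh}{\ManifoldN}\big|_{\Omega\cap\ManifoldN}=\FilteredSheafGenBy{\{(Z_{k,Y}\big|_{\Omega\cap\ManifoldN},e)\}}\). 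Since \(\Omega\cap\ManifoldN\subseteq U_{\tau(k)}\) for the relevant \(k\), one has \(Z_{k,Y}\big|_{\Omega\cap\ManifoldN}=\chi_kY\) (zero-extended), so it remains to check that \(\FilteredSheafGenBy{\{(\chi_kY,e)\}}=\FilteredSheafF\) near \(x_0\); the inclusion \(\subseteq\) is immediate, and for \(\supseteq\), given a section \(X\) of \(\FilteredSheafNoSetF[j]\) over a small \(U'\ni x_0\), write \(X=\sum_k\chi_kX\) and expand \(X=\sum_{(Y,e)\in F_{\tau(k)},\ e\le j}a_{k,Y}Y\) on \(U_{\tau(k)}\), so that \(\chi_kX=\sum_Y a_{k,Y}(\chi_kY)\) after cutting the coefficients off to be defined on \(U'\); summing over the finitely many relevant \(k\) exhibits \(X\) as a \(\CinftySpace[U']\)-combination of the \(\chi_kY\). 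This gives the equality.

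Finally, for H\"ormander's condition: if \(\FilteredSheafF\) satisfies it, then near each \(x_0\in\ManifoldN\) the family \(\{\chi_kY\}\) generates \(\FilteredSheafF\) and hence (Lemma~\ref{Lemma::Sheaves::LieAlg::TwoNotionsOfHormanderAreTheSame}) is a list of H\"ormander vector fields on a neighborhood of \(x_0\) in \(\ManifoldN\); since the Lie bracket commutes with restriction to the co-dimension \(0\) submanifold \(\ManifoldN\), the iterated brackets of the \(Z_{k,Y}\) restrict to those of the \(\chi_kY\) and span \(\MTangentSpace{x}=\NTangentSpace{x}\) for \(x\in\ManifoldN\) near \(x_0\), so a suitable \(n\times n\) sub-determinant of those brackets is nonzero at \(x_0\) and, by continuity, on an \(\ManifoldM\)-neighborhood of \(x_0\). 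Thus \(\FilteredSheafFh\) already satisfies H\"ormander's condition on an open \(\ManifoldN'\supseteq\ManifoldN\). On the closed set \(\ManifoldM\setminus\ManifoldN'\), which is disjoint from \(\ManifoldN\), I would adjoin to the generating family the pairs \((\phi_\beta\,\partial_{x_i}^{(\beta)},1)\), where \(\{P_\beta\}\) is a locally finite cover by coordinate charts with \(\overline{P_\beta}\cap\ManifoldN=\emptyset\) and \(\{\phi_\beta\}\) a subordinate partition of unity; this keeps \(\FilteredSheafFh\) of finite type, does not change it near \(\ManifoldN\) (so the previous step survives), and makes \(\widehat{\mathscr F}_j\) span the tangent space on \(\ManifoldM\setminus\ManifoldN'\) for \(j\ge1\), whence \(\FilteredSheafFh\) satisfies H\"ormander's condition everywhere (Definition~\ref{Defn::Sheaves::LieAlg::HormandersCondition}). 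I expect the main obstacle to be precisely the gluing in the second and third steps: there is no smooth retraction of an \(\ManifoldM\)-neighborhood onto \(\ManifoldN\), so \(\FilteredSheafF\) cannot be extended by one global operation and the local extensions \(\widehat Y^{(\alpha)}\) disagree off \(\ManifoldN\); the partition-of-unity device sidesteps this while preserving local finiteness, and Proposition~\ref{Prop::Sheaves::Restrict::Codim0::MainRestrictionResult} is what certifies that the zero-extended fields \(\chi_kY\) are exactly what the restriction functor produces.
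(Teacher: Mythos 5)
Your construction is correct and follows essentially the same line as the paper's proof: cover a neighborhood of \(\ManifoldN\) by sets on which \(\FilteredSheafF\) is generated by finitely many vector fields, extend those generators locally across \(\BoundaryN\), and glue the non-canonical local extensions into global vector fields on the ambient manifold using a partition of unity, then verify the restriction identity locally via Proposition \ref{Prop::Sheaves::Restrict::Codim0::MainRestrictionResult} \ref{Item::Sheaves::Restrict::Codim0::MainRestrictionResult::FGOnSet}. There is one genuine (though minor) difference, in the final step: to secure H\"ormander's condition for \(\FilteredSheafFh\) on all of \(\ManifoldM\), the paper first builds \(\FilteredSheafFt\) on the full double \(\ManifoldMt\), notes by continuity that the generators satisfy H\"ormander's condition on some open neighborhood \(\ManifoldM \supseteq \ManifoldN\), and then simply restricts to that neighborhood; you instead keep \(\ManifoldM\) as the whole double and \emph{enlarge} the generating family, adjoining cut-off coordinate vector fields \((\phi_\beta\,\partial^{(\beta)}_{x_i},1)\) supported away from \(\ManifoldN\). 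Both devices are valid; the paper's is slightly more economical since it does not alter \(\FilteredSheafFh\) at all, whereas yours requires the extra observation that the added generators vanish near \(\ManifoldN\) and hence leave the restriction unchanged.

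One place where you should be a bit more careful in a final write-up is the step ``so that \(\chi_k X = \sum_Y a_{k,Y}(\chi_k Y)\) after cutting the coefficients off to be defined on \(U'\).'' The coefficients \(a_{k,Y}\) live only on \(U'\cap U_{\tau(k)}\), and the cut-off is what makes them honest sections of \(\CinftySpace[U']\). The paper handles this by carrying around a second family \(\phit_\alpha\) of bump functions, each equal to \(1\) on a neighborhood of \(\supp[\phi_\alpha]\) and compactly supported in \(U_{x(\alpha)}\); multiplying \(a_{k,Y}\) by such a bump, supported in \(U'\cap U_{\tau(k)}\) and identically \(1\) on \(\supp[\chi_k]\cap U'\), produces a globally defined coefficient without changing \(a_{k,Y}(\chi_k Y)\). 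You gesture at this but do not exhibit the bump; the argument is standard and not a real gap, but it is the one spot where the paper's two-cutoff bookkeeping is doing work that your single partition \(\{\chi_k\}\) leaves implicit.
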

\begin{proof}
    Let \(\ManifoldMt\) be the double of \(\ManifoldN\).
    Since \(\FilteredSheafF\) is finite type, for each \(x\in \InteriorN\), there exists \(U_x\subseteq \InteriorN\), an open neighborhood of \(x\),
    such that and \(\WWdx:=\left\{ (\Wx_1,\Wdx_1),\ldots, (\Wx_{r_x}, \Wdx_{r_x}) \right\}\subset \VectorFields{U_x}\times \Zg\)
    a finite set with
    \begin{equation*}
        \FilteredSheafF\big|_{U_x} = \FilteredSheafGenBy{\WWdx}.
    \end{equation*}
    Moreover, by Lemma \ref{Lemma::Sheaves::LieAlg::TwoNotionsOfHormanderAreTheSame}, \(\Wx_1,\ldots, \Wx_{r_x}\)
    satisfy H\"ormander's condition on \(U_x\).
    Similarly, for each \(x\in \BoundaryN\) we may choose a neighborhood \(U_x\subseteq \ManifoldMt\) of \(x\)
    and \(\WWdx:=\left\{ (\Wx_1,\Wdx_1),\ldots, (\Wx_{r_x}, \Wdx_{r_x}) \right\}\subset \VectorFields{U_x\cap \ManifoldM}\times \Zg\)
    such that
    \begin{equation*}
        \FilteredSheafF\big|_{U_x\cap \ManifoldM} = \FilteredSheafGenBy{\WWdx},
    \end{equation*}
    and such that there exist \(\Whx_1,\ldots, \Whx_{r_x}\in \VectorFields{U_x}\) with \(\Whx_j\big|_{U_x\cap \ManifoldN}= \Wx_j\).
    By Lemma \ref{Lemma::Sheaves::LieAlg::TwoNotionsOfHormanderAreTheSame}, \(\Wx_1,\ldots, \Wx_{r_x}\)
    satisfy H\"ormander's condition on \(U_x\cap \ManifoldN\), and by possibly shrinking \(U_x\) if necessary,
    we may assume \(\Whx_1,\ldots, \Whx_{r_x}\) satisfy H\"ormander's condition on \(U_x\).

    Pick \(\phi_\alpha,\phit_\alpha\in \CzinftySpace[\ManifoldMt]\), where \(\alpha\) ranges over some index set \(\sI\),
    such that:
    \begin{itemize}
        \item \(\phi_\alpha,\phit_\alpha\in \CzinftySpace[U_{x(\alpha)}]\) for some \(x(\alpha)\in \ManifoldN\).
        \item \(\phit_\alpha=1\) on a neighborhood of the \(\supp[\phi_\alpha]\).
        \item On each compact subset of \(\ManifoldMt\), only finitely many \(\phit_\alpha\) are nonzero.
        \item \(\sum_{\alpha} \phi_\alpha=1\) on a neighborhood of \(\ManifoldN\).
    \end{itemize}
    Let \(\FilteredSheafFt\) be the filtration of sheaves of vector fields on \(\ManifoldMt\)
    generated by
    \begin{equation}\label{Eqn::Sheaves::Restrict::Codim0::LocalizedGeneratorsTmp1}
        \sS:=\left\{ (\phi_{\alpha} \Wh_1^{x(\alpha)}, \Wdxalpha_1), \ldots, (\phi_\alpha \Wh_{r_{x(\alpha)}}^{x(\alpha)}, \Wdxalpha_{r_{x(\alpha)}}) \right\}.
    \end{equation}
    On any compact set, only finitely many of the vector fields appearing in \eqref{Eqn::Sheaves::Restrict::Codim0::LocalizedGeneratorsTmp1}
    are non-zero, so \(\FilteredSheafFt\) is of finite type.
    Proposition \ref{Prop::Sheaves::Restrict::Codim0::MainRestrictionResult} \ref{Item::Sheaves::Restrict::Codim0::MainRestrictionResult::FG}
    shows \(\RestrictFilteredSheaf{\FilteredSheafFt}{\ManifoldN}\) is also of finite type.

    We claim \(\RestrictFilteredSheaf{\FilteredSheafFt}{\ManifoldN}=\FilteredSheafF\).
    We first show \(\RestrictFilteredSheaf{\FilteredSheafFt}{\ManifoldN}\subseteq \FilteredSheafF\).
    Fix \(x\in \ManifoldN\) and take \(\Omega\Subset \ManifoldMt\) a relatively compact open set
    with \(x\in \Omega\).
    Let \(F:=\left\{ (X,d)\in \sS : \supp[X]\cap \Omega=\emptyset \right\}\),
    so that \(F\) is a finite set and \(\FilteredSheafFt\big|_{\Omega}=\FilteredSheafGenBy{F}\big|_{\Omega}\).
    By Proposition \ref{Prop::Sheaves::Restrict::Codim0::MainRestrictionResult} \ref{Item::Sheaves::Restrict::Codim0::MainRestrictionResult::FGOnSet},
    \(\RestrictFilteredSheaf{\FilteredSheafF}{\Omega\cap \ManifoldN}=\FilteredSheafGenBy{F\big|_{\Omega\cap \ManifoldN}}\).
    But it is clear from the construction that
    \(\FilteredSheafGenBy{F\big|_{\Omega\cap \ManifoldN}}\subseteq \FilteredSheafF\big|_{\Omega\cap \ManifoldN}\),
    and so \(\RestrictFilteredSheaf{\FilteredSheafF}{\Omega\cap \ManifoldN}\subseteq \FilteredSheafF\big|_{\Omega\cap \ManifoldN}\).
    Since \(x\in \ManifoldN\) was arbitrary, we see \(\RestrictFilteredSheaf{\FilteredSheafFt}{\ManifoldN}\subseteq \FilteredSheafF\).

    Next we show \(\FilteredSheafF\subseteq \RestrictFilteredSheaf{\FilteredSheafFt}{\ManifoldN}\).
    Fix \(x\in \ManifoldN\), take \(\Omega\subseteq \ManifoldN\) open with \(x\in \Omega\), \(d\in Zg\),
    and \(X\in \FilteredSheafF[\Omega][d]\). 
    Let \(\phi\in \CzinftySpace[\ManifoldMt]\)
    equal \(1\) on a neighborhood of \(x\), and such that \(\supp[\phi]\cap \ManifoldN\subseteq \Omega\).
    We will show \(\phi X\in \RestrictFilteredSheaf{\FilteredSheafNoSetFt[d]}{\ManifoldN}[\ManifoldN]\) and it will follow that
    \(\FilteredSheafF\subseteq \RestrictFilteredSheaf{\FilteredSheafFt}{\ManifoldN}\).

    For each \(\alpha\in \sI\), \(\phit_\alpha \phi X\in \FilteredSheafF[U_{\alpha(x)}][d]\) and so
    \(\phit_\alpha \phi = \sum_{j=1}^{r_{\alpha(x)}} a_{\alpha,j} W_j^{\alpha(x)}\), where
    \(a_\alpha\in \CinftySpace[U_{\alpha(x)}\cap \ManifoldN]\) and \(\phit_\alpha \phi X=0\) for all but finitely many \(\alpha\).

    We have (for some finite subset \(\sI_0\subseteq \sI\)):
    \begin{equation*}
        \phi X = \sum_{\alpha\in \sI_0} \phi_{\alpha} \phit_\alpha \phi X = \sum_{\alpha\in \sI_0} \sum_{j=1}^{r_{\alpha(x)}} \phi_\alpha a_{\alpha,j} W_j^{\alpha(x)}.
    \end{equation*}
    Extend each \(a_{\alpha,j}\) to a function \(\at_{\alpha,j}\in \CinftySpace[U_{\alpha(x)}]\)
    and set
    \begin{equation*}
        Z:=\sum_{\alpha\in \sI_0} \sum_{j=1}^{r_{\alpha(x)}} \phi_{\alpha} \at_{\alpha} \Wh_j^{\alpha(x)}.
    \end{equation*}
    Then, \(Z\in \FilteredSheafFt[\ManifoldMt][d]\) and \(Z\big|_{\ManifoldN}=\phi X\).
    We conclude \(\phi X \in \RestrictFilteredPreSheaf{\FilteredSheafFt}{\ManifoldN}[\ManifoldN][d]   \subseteq \RestrictFilteredSheaf{\FilteredSheafNoSetFt[d]}{\ManifoldN}[\ManifoldN]\).
    This completes the proof that \(\RestrictFilteredSheaf{\FilteredSheafFt}{\ManifoldN}=\FilteredSheafF\).

    \(\FilteredSheafFt\) and \(\ManifoldMt\) satisfy the conclusions of the proposition (in place of \(\FilteredSheafFh\) and \(\ManifoldM\)),
    except for the claim about H\"ormander's condition.
    If \(\FilteredSheafF\) satisfies H\"ormander's condition, then by construction the vector fields in
    \eqref{Eqn::Sheaves::Restrict::Codim0::LocalizedGeneratorsTmp1} also satisfy H\"ormander's condition at every point
    of \(\ManifoldN\). By continuity, the same is true on an open neighborhood \(\ManifoldM\subseteq \ManifoldMt\) of \(\ManifoldN\).
    We set \(\FilteredSheafFh:=\FilteredSheafFt\big|_{\ManifoldM}\); so that \(\FilteredSheafFh\)
    is of finite type and \(\RestrictFilteredSheaf{\FilteredSheafFh}{\ManifoldN}=\FilteredSheafF\).
    By Proposition \ref{Prop::Sheaves::LieAlg::HormandersCondiIsEquivToOtherNotions}, \(\FilteredSheafFh\)
    satisfies H\"ormander's condition, completing the proof.
\end{proof}

        \subsubsection{The non-characteristic boundary}\label{Section::Sheaves::Restriction::NCBdry}
        Throughout this section, \(\ManifoldN\) is a smooth manifold with boundary, and
\(\FilteredSheafF\) is a filtration of sheaves of vector fields on \(\ManifoldN\).

\begin{definition}
    We define \(\degBoundaryNF:\BoundaryN\rightarrow \Zg\cup\{\infty\}\) by,
    for \(x_0\in \BoundaryN\),
    \begin{equation*}
        \degBoundaryNF[x_0]:=\inf \left\{ d\in \Zg : \exists \Omega\subseteq \ManifoldN\text{ open}, x_0\in \Omega, X\in \LieFilteredSheafF[\Omega][d], X(x_0)\not\in \TangentSpace{\BoundaryN}{x_0} \right\}.
    \end{equation*}
\end{definition}

If \(\FilteredSheafF\) satisfies H\"ormander's condition, \(\degBoundaryNF[x]\) is always finite.
\(\degBoundaryNF:\BoundaryN\rightarrow \Zg\cup\{\infty\}\) is clearly upper semi-continuous, and depends on
\(\FilteredSheafF\) only through \(\LieFilteredSheafF\).

\begin{definition}
    We say \(x_0\in \BoundaryN\) is \(\FilteredSheafF\)-non-characteristic if \(x\mapsto \degBoundaryNF(x)\),
    \(\BoundaryN\rightarrow \Zg\),
    is continuous at \(x=x_0\). I.e., if \(\degBoundaryNF\) is constant on a \(\BoundaryN\)-neighborhood of \(x_0\).
\end{definition}

Let \(\BoundaryNncF:=\left\{ x\in \BoundaryN: x\text{ is }\FilteredSheafF\text{-non-characteristic} \right\}\),
which is an open subset of \(\BoundaryN\). Set \(\ManifoldNncF:=\BoundaryNncF\cup \InteriorN\) with the manifold
structure given as an open submanifold of \(\ManifoldN\). Every point of \(\BoundaryNncF\) is
\(\FilteredSheafF\)-non-characteristic.
The next proposition shows that these notions correspond to the similar definitions from
Section \ref{Section::Defns::NonChar}.

\begin{proposition}\label{Prop::Sheaves::Restrict::Bndry::NonCharIsNonChar}
    Let \(x_0\in \BoundaryN\) be such that \(\exists \Omega\subseteq \ManifoldN\) open with \(x_0\in \ManifoldN\),
    and \(\WWd=\left\{ \left( W_1,\Wd_1 \right),\ldots, \left( W_r, \Wd_r \right) \right\}\subset \VectorFields{\Omega}\times \Zg\)
    which are H\"ormander vector fields with formal degrees on \(\Omega\), such that
    \(\FilteredSheafF\big|_{\Omega}=\FilteredSheafGenBy{\WWd}\). Then,
    \begin{enumerate}[(i)]
        \item\label{Item::Sheaves::Restrict::Bndry::NonCharIsNonChar::DegIsSame} \(\degBoundaryOmegaWWd[x]=\degBoundaryNF[x]\), \(\forall x\in \BoundaryOmega\).
        \item\label{Item::Sheaves::Restrict::Bndry::NonCharIsNonChar::NonCharIsSame} \(x\in \BoundaryOmega\) is \(\WWd\)-non-characteristic if and only if \(x\) is \(\FilteredSheafF\)-non-characteristic.
    \end{enumerate}
\end{proposition}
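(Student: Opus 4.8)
The plan is to reduce to the concrete description of $\LieFilteredSheafF$ provided by Lemma \ref{Lemma::Sheaves::LieAlg::LieAlgebraFilIsGenByXXd} and then to compare the two boundary degrees directly. Both assertions are local near a point $x\in\BoundaryOmega$, so I would fix such an $x$ and shrink $\Omega$ to a relatively compact open neighborhood of $x$; this alters neither $\degBoundaryNF[y]$ nor $\degBoundaryOmegaWWd[y]$ for $y$ near $x$ (both are defined through germs at $y$, and $\TangentSpace{\BoundaryOmega}{y}=\TangentSpace{\BoundaryN}{y}$ there), nor whether $x$ is $\WWd$- or $\FilteredSheafF$-non-characteristic, and it preserves the hypothesis $\FilteredSheafF\big|_{\Omega}=\FilteredSheafGenBy{\WWd}$ (with the restricted list). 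After shrinking, $W_1,\dots,W_r$ satisfy H\"ormander's condition of some order $m\in\Zg$ on $\Omega$. Setting
\begin{equation*}
    \XXd:=\left\{ (Y,e)\in \GenWWd : e\leq m\max\{\Wd_j\} \right\}=\left\{ (X_1,\Xd_1),\ldots,(X_q,\Xd_q) \right\},
\end{equation*}
Lemma \ref{Lemma::Sheaves::LieAlg::LieAlgebraFilIsGenByXXd} gives $\LieFilteredSheafF\big|_{\Omega}=\FilteredSheafGenBy{\XXd}$, and $X_1(y),\dots,X_q(y)$ span $\TangentSpace{\ManifoldN}{y}$ for every $y\in\Omega$; moreover $\FilteredSheafF\big|_{\Omega}$ satisfies H\"ormander's condition by Lemma \ref{Lemma::Sheaves::LieAlg::TwoNotionsOfHormanderAreTheSame}, so $\degBoundaryNF$ is finite on $\BoundaryOmega$.

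For part \ref{Item::Sheaves::Restrict::Bndry::NonCharIsNonChar::DegIsSame} I would prove the two inequalities at each $y\in\BoundaryOmega$. The inequality $\degBoundaryNF[y]\leq\degBoundaryOmegaWWd[y]$ follows by taking $(Z,\Zd)\in\GenWWd$ with $\Zd=\degBoundaryOmegaWWd[y]$ and $Z(y)\notin\TangentSpace{\BoundaryN}{y}$, and observing that $Z\in\LieFilteredSheafF[\Omega][\Zd]$: this is an induction on the recursive construction of $\GenWWd$, with base case $W_j\in\FilteredSheafGenBy{\WWd}[\Omega][\Wd_j]=\FilteredSheafF[\Omega][\Wd_j]\subseteq\LieFilteredSheafF[\Omega][\Wd_j]$ and inductive step the fact that $\LieFilteredSheafF$ is a Lie algebra filtration; then $Z$ witnesses $\degBoundaryNF[y]\leq\Zd$. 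For the reverse inequality, since $X_1(y),\dots,X_q(y)$ span $\TangentSpace{\ManifoldN}{y}$, which strictly contains the codimension-one subspace $\TangentSpace{\BoundaryN}{y}$, some $X_j(y)\notin\TangentSpace{\BoundaryN}{y}$; hence $\degBoundaryOmegaWWd[y]\leq m\max\{\Wd_k\}$, the minimum defining it is attained within $\XXd$, and $\degBoundaryOmegaWWd[y]=\min\{\Xd_j:X_j(y)\notin\TangentSpace{\BoundaryN}{y}\}$. Now for any open $\Omega'\ni y$ and $X\in\LieFilteredSheafF[\Omega'][d]$ with $X(y)\notin\TangentSpace{\BoundaryN}{y}$, I would intersect $\Omega'$ with $\Omega$ (permissible because each graded piece of $\LieFilteredSheafF$ is a sheaf, so $X$ restricts, and $X(y)$ is unchanged), write $X=\sum_{\Xd_j\leq d}a_jX_j$ with $a_j\in\CinftySpace[\Omega']$, and evaluate at $y$: from $X(y)=\sum_{\Xd_j\leq d}a_j(y)X_j(y)\notin\TangentSpace{\BoundaryN}{y}$ some index $j$ with $\Xd_j\leq d$ has $X_j(y)\notin\TangentSpace{\BoundaryN}{y}$, so $\degBoundaryOmegaWWd[y]\leq\Xd_j\leq d$; taking the infimum over $X$ gives $\degBoundaryOmegaWWd[y]\leq\degBoundaryNF[y]$.

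Part \ref{Item::Sheaves::Restrict::Bndry::NonCharIsNonChar::NonCharIsSame} is then immediate: by \ref{Item::Sheaves::Restrict::Bndry::NonCharIsNonChar::DegIsSame} the $\Zg$-valued functions $\degBoundaryOmegaWWd$ and $\degBoundaryNF$ agree on $\BoundaryOmega=\Omega\cap\BoundaryN$, a $\BoundaryN$-open neighborhood of $x$, and continuity of such a function at $x$—equivalently, it being constant on some neighborhood of $x$—depends only on its restriction to a neighborhood of $x$. Hence $\degBoundaryNF$ is continuous at $x$ if and only if $\degBoundaryOmegaWWd$ is, which by Definition \ref{Defn::BasicDefns::NoncharteristicPoint} and the definition of $\FilteredSheafF$-non-characteristic is exactly the equivalence to be proved. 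The points needing care are purely bookkeeping: that the initial shrinking of $\Omega$ leaves both degrees (and the non-characteristic conditions) unchanged near $x$, so that Lemma \ref{Lemma::Sheaves::LieAlg::LieAlgebraFilIsGenByXXd}—which requires a uniform H\"ormander order—applies, and that $\degBoundaryNF[y]$ may be computed using only neighborhoods contained in $\Omega$; I do not anticipate any genuine obstacle beyond this.
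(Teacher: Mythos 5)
Your proof is correct and follows essentially the same route as the paper: shrink to a relatively compact neighbourhood where H\"ormander's condition holds with a uniform order $m$, apply Lemma \ref{Lemma::Sheaves::LieAlg::LieAlgebraFilIsGenByXXd} to identify $\LieFilteredSheafF$ there with $\FilteredSheafGenBy{\XXd}$, and read off both boundary degrees from the generators $\XXd$ (evaluating an arbitrary section of $\LieFilteredSheafF[\Omega'][d]$ at $y$ after restricting to $\Omega'\cap\Omega$ gives the nontrivial inequality $\degBoundaryOmegaWWd[y]\leq\degBoundaryNF[y]$). One small slip: after intersecting $\Omega'$ with $\Omega$ the coefficients $a_j$ lie in $\CinftySpace[\Omega'\cap\Omega]$, not $\CinftySpace[\Omega']$.
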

\begin{proof}
    \ref{Item::Sheaves::Restrict::Bndry::NonCharIsNonChar::DegIsSame}: It follows directly from the definitions
    that \(\degBoundaryNF[x]\leq \degBoundaryOmegaWWd[x]\) so we focus on the reverse inequality. 
    Fix 
    \(x_0\in \BoundaryOmega\) and
    \(\Omegat\Subset \Omega\) a neighborhood of \(x_0\) which is relatively compact in \(\Omega\).
    It follows immediately from the definitions that
    \(
        \degBoundaryNF[x_0]\leq \degParams{\partial \Omegat}{\FilteredSheafF\big|_{\Omegat}}[x_0],
    \)
    in fact, a simple argument shows equality holds but we do not require it.
    It also follows immediately from the definitions that
    \(\degBoundaryOmegaWWd[x_0] = \degParams{\partial \Omegat}{\WWd}[x_0]\).
    Thus, we will show
    \begin{equation}\label{Eqn::Sheaves::Restrict::Bndry::NonCharIsNonChar::ToShow}
        \degParams{\partial \Omegat}{\FilteredSheafF\big|_{\Omegat}}[x_0]= \degParams{\partial \Omegat}{\WWd}[x_0],
    \end{equation}
    which will complete the proof.

    Since \(\Omegat\Subset \Omega\), \(W_1,\ldots, W_r\) satisfy H\"ormander's condition or order \(m\in \Zg\)
    for some \(m\).  Define \(\XXd=\left\{ (X_1,\Xd_1),\ldots, (X_q,\Xd_q) \right\}\subset \VectorFields{\Omegat}\times \Zg\)
    as in \eqref{Eqn::Sheaves::LieAlg::DefnXXd}
    so that by Lemma \ref{Lemma::Sheaves::LieAlg::LieAlgebraFilIsGenByXXd} we have
    \begin{equation}\label{Eqn::Sheaves::Restrict::Bndry::NonCharIsNonChar::Tmp1}
        \LieFilteredSheafF\big|_{\Omegat}=\FilteredSheafGenBy{\XXd}.
    \end{equation}
    \(\XXd\) satisfies \eqref{Eqn::Sheaves::LieAlg::NSW}, and it follows that
    \begin{equation*}
        \degParams{\partial \Omegat}{\WWd}[x_0]=\degParams{\partial \Omegat}{\XXd}[x_0]
        =\min \left\{ \Xd_j : X_j(x_0)\not \in \TangentSpace{\BoundaryN}{x_0} \right\}
        =\degParams{\partial \Omegat}{\FilteredSheafF\big|_{\Omegat}}[x_0],
    \end{equation*}
    where the last equality follows by \eqref{Eqn::Sheaves::Restrict::Bndry::NonCharIsNonChar::Tmp1}.
    This proves \eqref{Eqn::Sheaves::Restrict::Bndry::NonCharIsNonChar::ToShow}
    and completes the proof of \ref{Item::Sheaves::Restrict::Bndry::NonCharIsNonChar::DegIsSame}.

    \ref{Item::Sheaves::Restrict::Bndry::NonCharIsNonChar::NonCharIsSame} is an immediate consequence of 
    \ref{Item::Sheaves::Restrict::Bndry::NonCharIsNonChar::DegIsSame}.
\end{proof}

\begin{construction}\label{Construction::Sheaves::Restrict::Bndry::VVd}
    Let \(x_0\in \BoundaryNncF\). Suppose \(\Omega\subseteq \ManifoldN\) is an open set with
    \(\FilteredSheafF\big|_{\Omega}=\FilteredSheafGenBy{\WWd}\) where
    \(\WWd=\left\{ \left( W_1,\Wd_1 \right),\ldots,\left( W_r,\Wd_r \right) \right\}\subset \VectorFields{\Omega}\times \Zg\)
    are H\"ormander vector fields with formal degrees on \(\Omega\).
    On a small neighborhood \(U\subseteq \BoundaryNncF\) of \(x_0\),
    we construct a list of vector fields with formal degrees 
    \(\VVd=\left\{ (V_1,\Vd_1),\ldots, (V_q,\Vd_q) \right\}\subset \VectorFields{U}\times \Zg\)
    such that
    \begin{equation*}
        \RestrictFilteredSheaf{\LieFilteredSheafF}{U}=\FilteredSheafGenBy{\VVd}
    \end{equation*}
    by the procedure in Section \ref{Section::BndryVfsWithFormaLDegrees}. More precisely,
    \begin{enumerate}[(A)]
        \item Let \(\Omega_1\subseteq \Omega\) be an \(\ManifoldN\)-neighborhood of \(x_0\) and \(m\in \Zg\)
            be such that \(W_1,\ldots,W_r\) satisfy H\"ormander's condition of order \(m\) on \(\Omega_1\) (such an \(\Omega_1\)
            always exists; for example, one could take \(\Omega_1\Subset \Omega\)).
        
        \item Define \(\ZZd\) as in Section \ref{Section::BndryVfsWithFormaLDegrees}, Step \ref{Item::BoundaryVfs::DefineZZd}.
            By Lemma \ref{Lemma::Sheaves::LieAlg::LieAlgebraFilIsGenByXXd},
            \(\LieFilteredSheafF\big|_{\Omega_1}=\FilteredSheafGenBy{\ZZd}\big|_{\Omega_1}\).

        \item Define \(\XXd\)
        on some \(\ManifoldN\)-neighborhood \(\Omega_2\subseteq \Omega_1\) of \(x_0\),
        by Section \ref{Section::BndryVfsWithFormaLDegrees}, 
        Steps \ref{Item::BoundaryVfs::FindWj0}-\ref{Item::BoundaryVfs::DefineXj}.
        By the formula for \(\XXd\), we have \(\FilteredSheafGenBy{\XXd}\big|_{\Omega_2}=\FilteredSheafGenBy{\ZZd}\big|_{\Omega_2}\)
        and therefore, \(\LieFilteredSheafF\big|_{\Omega_2}=\FilteredSheafGenBy{\XXd}\big|_{\Omega_2}\).

        \item 
        Let \(U=\Omega_2\cap \BoundaryN\) and
        define \(\VVd=\left\{ (V_1,\Vd_1),\ldots, (V_q,\Vd_q) \right\}\subset \VectorFields{U}\times \Zg\) as in 
        Section \ref{Section::BndryVfsWithFormaLDegrees}, Step \ref{Item::BoundaryVfs::DefineVj}. 
        It follows from the definition of \(\VVd\) that
        \begin{equation}\label{Eqn::Sheaves::Restrict::Bndry::ConstructVVd::PreSheaf}
            \RestrictFilteredPreSheaf{\LieFilteredSheafF}{U}=\FilteredSheafGenBy{\VVd}.
        \end{equation}

        \item Lemma \ref{Lemma::Sheaves::Filtered::SheafGenByFiniteSet} shows \(\FilteredSheafGenBy{\VVd}\) is a
            filtration of sheaves of vector fields on \(U\), and \eqref{Eqn::Sheaves::Restrict::Bndry::ConstructVVd::PreSheaf}
            then implies \(\RestrictFilteredSheaf{\LieFilteredSheafF}{U}=\FilteredSheafGenBy{\VVd}\).
    \end{enumerate}
\end{construction}

\begin{proposition}\label{Prop::Sheaves::Restrict::Bndry::RestritionIsFiniteTypeAndSpandTangent}
    Let \(\FilteredSheafF\) be a filtration of sheaves of vector fields on \(\ManifoldN\)
    which is of finite type and satisfies H\"ormander's condition. Then,
    \(\RestrictFilteredSheaf{\LieFilteredSheafF}{\BoundaryNncF}\) is a Lie algebra filtration of sheaves of vector fields
    on \(\BoundaryNncF\) which is of finite type and spans the tangent space at every point (and therefore satisfies H\"ormander's condition).
\end{proposition}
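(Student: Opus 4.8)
The plan is to assemble three facts, each of which by this point is either immediate or already packaged in the machinery above; there is essentially no new computation. First I would record that $\RestrictFilteredSheaf{\LieFilteredSheafF}{\BoundaryNncF}$ is a Lie algebra filtration of sheaves: by Proposition~\ref{Prop::Sheaves::LieAlg::FFiniteTypeAndHorImpliesSameForLie}, $\LieFilteredSheafF$ is of finite type and is, by construction, a Lie algebra filtration of sheaves of vector fields on $\ManifoldN$; and since $\BoundaryNncF$, being an open subset of $\BoundaryN$, is an embedded submanifold of $\ManifoldN$, Lemma~\ref{Lemma::Sheaves::Restrict::RestrictionOfLieAlgebraIsLieAlgebra} applied with $S=\BoundaryNncF$ gives that $\RestrictFilteredSheaf{\LieFilteredSheafF}{\BoundaryNncF}$ is a Lie algebra filtration of sheaves of vector fields on $\BoundaryNncF$.

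Next I would reduce the remaining two assertions, finite type and spanning the tangent space at every point, to statements near a fixed point, as both are local: finite type by definition, and spanning because, by \cite[\href{https://stacks.math.columbia.edu/tag/007Z}{Lemma~007Z}]{stacks-project}, the sheafification of a presheaf of vector fields equals $\SheafVectorFields$ if and only if all of its stalks do. So fix $x_0\in\BoundaryNncF$. By Proposition~\ref{Prop::Sheaves::LieAlg::HormandersCondiIsEquivToOtherNotions} there are an open $\Omega\ni x_0$ and H\"ormander vector fields with formal degrees $\WWd=\left\{(W_1,\Wd_1),\ldots,(W_r,\Wd_r)\right\}$ on $\Omega$ with $\FilteredSheafF\big|_\Omega=\FilteredSheafGenBy{\WWd}$; shrinking $\Omega$ we may assume $x_0\in\BoundaryOmega$, and then $x_0$ is $\WWd$-non-characteristic by Proposition~\ref{Prop::Sheaves::Restrict::Bndry::NonCharIsNonChar}. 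Construction~\ref{Construction::Sheaves::Restrict::Bndry::VVd} now applies and yields an open $\BoundaryNncF$-neighborhood $U$ of $x_0$ and a \emph{finite} list $\VVd=\left\{(V_1,\Vd_1),\ldots,(V_q,\Vd_q)\right\}\subset\VectorFields{U}\times\Zg$ with
\[
  \RestrictFilteredSheaf{\LieFilteredSheafF}{\BoundaryNncF}\big|_{U}
  =\RestrictFilteredSheaf{\LieFilteredSheafF}{U}
  =\FilteredSheafGenBy{\VVd},
\]
the first equality coming from Remark~\ref{Rmk::Sheves::Restrcit::BasicRemarksOnRestricDefn}\ref{Item::Sheves::Restrcit::BasicRemarksOnRestricDefn::CommutesWithRestriction}. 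Since such a $U$ surrounds every point of $\BoundaryNncF$ and each $\VVd$ is finite, $\RestrictFilteredSheaf{\LieFilteredSheafF}{\BoundaryNncF}$ is of finite type.

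For the spanning claim I would use that, in Step~\ref{Item::BoundaryVfs::DefineVj} of Section~\ref{Section::BndryVfsWithFormaLDegrees} (which feeds Construction~\ref{Construction::Sheaves::Restrict::Bndry::VVd}), the vector fields $V_1,\ldots,V_q$ span $\TangentSpace{\BoundaryN}{x'}$ at \emph{every} $x'\in U$. A locally finite partition of unity subordinate to a cover of $U$ by open sets on which $n-1$ of the $V_k$ form a frame then shows that for every open $U'\subseteq U$ the $\CinftySpace[U']$-module generated by $V_1,\ldots,V_q$ is all of $\VectorFields{U'}$; equivalently, $\FilteredSheafGenBy{\VVd}[U'][j]=\VectorFields{U'}$ for all sufficiently large $j$. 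Hence, for every open $U'\subseteq U$, the union over $j$ of the degree-$j$ part of $\RestrictFilteredSheaf{\LieFilteredSheafF}{\BoundaryNncF}$ on $U'$ equals $\VectorFields{U'}$; that is, the presheaf of vector fields appearing in Definition~\ref{Defn::Sheaves::Filtered::SpansTangentSpace} for $\RestrictFilteredSheaf{\LieFilteredSheafF}{\BoundaryNncF}$ coincides with $\SheafVectorFields$ over $U$ (in particular is already a sheaf there), so its stalk at every point of $U$ is that of $\SheafVectorFields$. Covering $\BoundaryNncF$ by such $U$'s and invoking \cite[\href{https://stacks.math.columbia.edu/tag/007Z}{Lemma~007Z}]{stacks-project} once more, the sheafification of that presheaf equals $\SheafVectorFields$ on $\BoundaryNncF$, i.e.\ $\RestrictFilteredSheaf{\LieFilteredSheafF}{\BoundaryNncF}$ spans the tangent space at every point; and a filtration of sheaves that spans the tangent space at every point a fortiori satisfies H\"ormander's condition, since its Lie hull then does as well (Definition~\ref{Defn::Sheaves::LieAlg::HormandersCondition}).

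The genuinely substantive content --- manufacturing a finite list of vector fields with formal degrees on the boundary that simultaneously generates $\RestrictFilteredSheaf{\LieFilteredSheafF}{\BoundaryNncF}$ locally and spans the boundary tangent space --- has already been carried out in Construction~\ref{Construction::Sheaves::Restrict::Bndry::VVd} and the procedure of Section~\ref{Section::BndryVfsWithFormaLDegrees}; what remains here is only the passage from these local statements to the global sheaf-theoretic conclusions. The one point I expect to require a little care is confirming that restriction is compatible with passing from $\BoundaryN$ to its open submanifold $\BoundaryNncF$ --- so that the Construction, carried out on $U=\Omega_2\cap\BoundaryN$, genuinely computes $\RestrictFilteredSheaf{\LieFilteredSheafF}{\BoundaryNncF}\big|_{U}$ --- which follows from Remark~\ref{Rmk::Sheves::Restrcit::BasicRemarksOnRestricDefn}\ref{Item::Sheves::Restrcit::BasicRemarksOnRestricDefn::CommutesWithRestriction} together with the observation that $\BoundaryN\setminus\BoundaryNncF$ is closed in $\ManifoldN$ (so that the open sets $\Omega\subseteq\ManifoldN$ used to define the restriction can be taken disjoint from it).
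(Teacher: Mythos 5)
Your proof is correct and follows essentially the same route as the paper: invoke Lemma~\ref{Lemma::Sheaves::Restrict::RestrictionOfLieAlgebraIsLieAlgebra} for the Lie algebra filtration claim, then localize via Proposition~\ref{Prop::Sheaves::LieAlg::HormandersCondiIsEquivToOtherNotions} and Construction~\ref{Construction::Sheaves::Restrict::Bndry::VVd}, and read off finite type from finiteness of $\VVd$ and spanning from Step~\ref{Item::BoundaryVfs::DefineVj}. The extra invocations (Proposition~\ref{Prop::Sheaves::LieAlg::FFiniteTypeAndHorImpliesSameForLie}, the partition-of-unity argument, and the closedness of $\BoundaryN\setminus\BoundaryNncF$) are harmless but unnecessary: the Lie algebra property of $\LieFilteredSheafF$ holds by definition regardless of finite type, and Remark~\ref{Rmk::Sheves::Restrcit::BasicRemarksOnRestricDefn}\ref{Item::Sheves::Restrcit::BasicRemarksOnRestricDefn::CommutesWithRestriction} with $S=\BoundaryNncF$ already gives $\RestrictFilteredSheaf{\LieFilteredSheafF}{\BoundaryNncF}\big|_{U}=\RestrictFilteredSheaf{\LieFilteredSheafF}{U}$ directly.
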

\begin{proof}
    Lemma \ref{Lemma::Sheaves::Restrict::RestrictionOfLieAlgebraIsLieAlgebra} shows
    \(\RestrictFilteredSheaf{\LieFilteredSheafF}{\BoundaryNncF}\) is a Lie algebra filtration of sheaves of vector fields.,
    so we need to show that it is of finite type and spans the tangent space at every point (once we show
    it spans the tangent space at every point, it trivially satisfies H\"ormander's condition--see Definition \ref{Defn::Sheaves::LieAlg::HormandersCondition}).

    Fix \(x_0\in \BoundaryNncF\). Because \(\FilteredSheafF\) is finite type and satisfies H\"ormander's condition, 
    Proposition \ref{Prop::Sheaves::LieAlg::HormandersCondiIsEquivToOtherNotions} shows that
    there exists an open neighborhood \(\Omega\subseteq \ManifoldN\)
    of \(x_0\) and a finite set \(\WWd\subset \VectorFields{\Omega}\times \Zg\) such that
    \begin{equation*}
        \FilteredSheafF\big|_{\Omega}=\FilteredSheafGenBy{\WWd}.
    \end{equation*}
    and \(\WWd\) are H\"ormander vector fields with formal degrees on \(\Omega\).

    Letting 
    \(U\subseteq \BoundaryNncF\) and 
    \(\VVd=\left\{ (V_1,\Vd_1),\ldots, (V_q,\Vd_q) \right\}\subset \VectorFields{U}\times \Zg\) be as in Construction \ref{Construction::Sheaves::Restrict::Bndry::VVd}, with this choice of \(x_0\),
    we have
    \(\RestrictFilteredSheaf{\LieFilteredSheafF}{U}=\FilteredSheafGenBy{\VVd}\).
    Since \(U\) is a neighborhood of \(x_0\), \(\VVd\) is finite,
    and \(x_0\in \BoundaryNncF\) was arbitrary,
    this shows \(\RestrictFilteredSheaf{\LieFilteredSheafF}{\BoundaryNncF}\) is of finite type.
    Moreover, by Section \ref{Section::BndryVfsWithFormaLDegrees}, Step \ref{Item::BoundaryVfs::DefineVj},
    \(V_1(x),\ldots, V_q(x)\) span \(\TangentSpace{\BoundaryN}{x}\), \(\forall x\in U\), and it follows that
    \(\RestrictFilteredSheaf{\LieFilteredSheafF}{\BoundaryNncF}\) spans the tangent space at every point.
\end{proof}

    \subsection{Metrics}\label{Section::Sheaves::Metrics}
    Let \(\ManifoldN\) be a smooth manifold with boundary and \(\FilteredSheafF\) a filtration
of sheaves of vector fields on \(\ManifoldN\) which is of finite type and satisfies H\"ormander's condition.
Our first main result shows that \(\FilteredSheafF\) induces a unique Lipschitz equivalence class of Carnot-Carath\'eodory
metrics on each compact subset of \(\ManifoldNncF\).
For an extended metric \(\rho\), we write \(\rho\wedge 1\) for the metric \((x,y)\mapsto \min\{\rho(x,y),1\}\).

\begin{theorem}\label{Thm::Sheaves::Metrics::ExistsEquivalenceClass}
    For each compact set, \(\Compact\Subset \ManifoldNncF\) there exists a metric \(\rho_\Compact\)
    on \(\Compact\) such that:
    \begin{enumerate}[label=(\roman*), ref=(\roman*)]
        \item\label{Item::Sheaves::Metrics::ExistsEquivalenceClass::AgreesWithCCInducedByF} If \(\Omega\Subset \ManifoldNncF\) is a relatively compact, open set with \(\Compact\Subset \Omega\),
            and \(\FilteredSheafF\big|_{\Omega}=\FilteredSheafGenBy{\WWd}\), where \(\WWd\subset \VectorFields{\Omega}\times \Zg\)
            is a finite set, then 
            \begin{enumerate}[label=(\roman{enumi}.\alph*), ref=(\roman{enumi}.\alph*)]
                \item\label{Item::Sheaves::Metrics::ExistsEquivalenceClass::AgreesWithCCInducedByF::LocallyEquiv} \(\rho_\Compact\) is locally Lipschitz equivalent to \(\rho_{\WWd}\big|_{\Compact\times \Compact}\), and
                \item\label{Item::Sheaves::Metrics::ExistsEquivalenceClass::AgreesWithCCInducedByF::Equiv} \(\rho_\Compact\) is Lipschitz equivalent to \(\rho_{\WWd}\wedge 1\big|_{\Compact\times \Compact}\).
            \end{enumerate}

        \item\label{Item::Sheaves::Metrics::ExistsEquivalenceClass::AgreesWithCCInducedByG} More generally, if \(\FilteredSheafG\) is a filtration of sheaves of vector fields on 
            \(\ManifoldN\) which is finite type and satisfies \(\LieFilteredSheaf{\FilteredSheafG}=\LieFilteredSheaf{\FilteredSheafF}\),
            and if \(\Omega\Subset \ManifoldNncF\) is a relatively compact, open set with \(\Compact\Subset \Omega\),
            and \(\FilteredSheafG\big|_{\Omega}=\FilteredSheafGenBy{\WWd}\), where \(\WWd\subset \VectorFields{\Omega}\times \Zg\)
            is a finite set, then 
            \begin{enumerate}[label=(\roman{enumi}.\alph*), ref=(\roman{enumi}.\alph*)]
                \item\label{Item::Sheaves::Metrics::ExistsEquivalenceClass::AgreesWithCCInducedByG::LocallyEquiv} \(\rho_\Compact\) is locally Lipschitz equivalent to \(\rho_{\WWd}\big|_{\Compact\times \Compact}\), and
                \item\label{Item::Sheaves::Metrics::ExistsEquivalenceClass::AgreesWithCCInducedByG::Equiv} \(\rho_\Compact\) is Lipschitz equivalent to \(\rho_{\WWd}\wedge 1\big|_{\Compact\times \Compact}\).
            \end{enumerate}

        \item\label{Item::Sheaves::Metrics::ExistsEquivalenceClass::SameTop} The metric topology on \(\Compact\) induced by \(\rho_\Compact\) agrees with the subspace topology on \(\Compact\)
            as a subspace of \(\ManifoldN\).

        \item\label{Item::Sheaves::Metrics::ExistsEquivalenceClass::Unique} If \(\rhot_\Compact\) is any other extended metric on \(\Compact\) satisfying 
        \ref{Item::Sheaves::Metrics::ExistsEquivalenceClass::AgreesWithCCInducedByF::LocallyEquiv},
            then \(\rhot_\Compact\) is locally Lipschitz equivalent to \(\rho_\Compact\). If, in addition, \(\rhot_\Compact\)
            is a metric and \ref{Item::Sheaves::Metrics::ExistsEquivalenceClass::SameTop} holds for \(\rhot_\Compact\), then \(\rhot_\Compact\) is Lipschitz equivalent to \(\rho_\Compact\).

        \item\label{Item::Sheaves::Metrics::ExistsEquivalenceClass::Restrict} \(\Compact_1\Subset \Compact_2\Subset\ManifoldNncF\) are two compact sets, then
            \(\rho_{\Compact_2}\big|_{\Compact_1\times \Compact_1}\) is Lipschitz equivalent to \(\rho_{\Compact_1}\).
        
    \end{enumerate}
\end{theorem}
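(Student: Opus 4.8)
The plan is to build $\rho_\Compact$ from one convenient choice of local generators, prove that every admissible choice yields a metric that is (locally) Lipschitz equivalent to it, and then read off all the listed properties; essentially everything reduces to a single comparison lemma that translates between the sheaf language of this section and the vector-field language of Sections \ref{Section::Scaling} and \ref{Section::Metrics::MainResults}. For the construction, since $\Compact$ is a compact subset of the manifold $\ManifoldNncF$, choose a relatively compact open set $\Omega_0$ with $\Compact\Subset\Omega_0\Subset\ManifoldNncF$. As $\FilteredSheafF$ is of finite type, Lemma \ref{Lemma::Sheaves::Filtered::FiniteTypeOnCptSets} supplies a finite set $\WWd\subset\VectorFields{\Omega_0}\times\Zg$ with $\FilteredSheafF\big|_{\Omega_0}=\FilteredSheafGenBy{\WWd}$, and Proposition \ref{Prop::Sheaves::LieAlg::HormandersCondiIsEquivToOtherNotions} shows these are H\"ormander vector fields with formal degrees on $\Omega_0$. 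Because every point of $\BoundaryNncF$ is $\FilteredSheafF$-non-characteristic, Proposition \ref{Prop::Sheaves::Restrict::Bndry::NonCharIsNonChar} shows every boundary point of $\Omega_0$ is $\WWd$-non-characteristic, so Theorems \ref{Thm::Metrics::Results::GivesUsualTopology} and \ref{Thm::Metrics::Results::LocalWeakEquivImpliesEquiv} are available on $\Omega_0$ and on every open subset of it. I then set $\rho_\Compact:=(\rho_{\WWd}\wedge 1)\big|_{\Compact\times\Compact}$, a genuine (finite-valued) metric; property \ref{Item::Sheaves::Metrics::ExistsEquivalenceClass::SameTop} is immediate from Theorem \ref{Thm::Metrics::Results::GivesUsualTopology} since truncation does not change the topology.

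The heart of the argument is the following comparison lemma: if $\FilteredSheafG$ is any finite-type filtration with $\LieFilteredSheaf{\FilteredSheafG}=\LieFilteredSheaf{\FilteredSheafF}$, $\Omega$ is relatively compact open with $\Compact\Subset\Omega\Subset\ManifoldNncF$, and $\FilteredSheafG\big|_{\Omega}=\FilteredSheafGenBy{\ZZd}$ for a finite set $\ZZd$, then $\rho_{\ZZd}$ and $\rho_{\WWd}$ are locally Lipschitz equivalent on $\Compact$ and $\rho_{\ZZd}\wedge 1$ and $\rho_{\WWd}\wedge 1$ are Lipschitz equivalent on $\Compact$. To prove it I restrict both generating sets to $\Omega\cap\Omega_0$ (a finite restricted set still generates the restricted filtration, by Lemma \ref{Lemma::Sheaves::Filtered::CheckGeneratorsAsGlobalSections}); on $\Omega\cap\Omega_0$ the Lie algebra filtrations generated by $\WWd$ and by $\ZZd$ both equal $\LieFilteredSheaf{\FilteredSheafF}\big|_{\Omega\cap\Omega_0}=\LieFilteredSheaf{\FilteredSheafG}\big|_{\Omega\cap\Omega_0}$, so by Proposition \ref{Prop::Sheaves::Control::GlobalEquivWithPreviousDefns}\ref{Item::Sheaves::Control::GlobalEquivWithPreviousDefns::WeakEquiv} the restricted lists are locally weakly equivalent there, and since their boundary points are non-characteristic (again Proposition \ref{Prop::Sheaves::Restrict::Bndry::NonCharIsNonChar}), Theorem \ref{Thm::Metrics::Results::LocalWeakEquivImpliesEquiv} gives that the CC metrics of these restricted lists are locally Lipschitz equivalent on $\Omega\cap\Omega_0$, hence on $\Compact$. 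It then remains to compare the CC metric of $\WWd$ viewed on $\Omega\cap\Omega_0$ with the one viewed on $\Omega_0$ (and similarly for $\ZZd$): these differ only in whether admissible paths may leave $\Omega\cap\Omega_0$, and by the Picard--Lindel\"of Theorem there is $\epsilon>0$ with $\BWWd{x}{\epsilon}\subseteq\Omega\cap\Omega_0$ for all $x\in\Compact$, so the two metrics coincide for distances $<\epsilon$ and in particular are locally Lipschitz equivalent on $\Compact$. Chaining these equivalences proves the first claim; for the second, both $\rho_{\ZZd}\wedge 1$ and $\rho_{\WWd}\wedge 1$ induce the subspace topology on the compact set $\Compact$ (Theorem \ref{Thm::Metrics::Results::GivesUsualTopology}) and are locally Lipschitz equivalent, so Lemma \ref{Lemma::Metrics::Lemmas::CompactLocalEquivMeansEquiv} upgrades this to Lipschitz equivalence on $\Compact$.

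With the comparison lemma in hand the rest is bookkeeping. Taking $\FilteredSheafG=\FilteredSheafF$ gives \ref{Item::Sheaves::Metrics::ExistsEquivalenceClass::AgreesWithCCInducedByF::LocallyEquiv} and \ref{Item::Sheaves::Metrics::ExistsEquivalenceClass::AgreesWithCCInducedByF::Equiv} (using that $\rho_{\WWd}$ and $\rho_{\WWd}\wedge 1$ are themselves locally Lipschitz equivalent, since near any point of $\Compact$ distances are small by Theorem \ref{Thm::Metrics::Results::GivesUsualTopology}), and the general case gives \ref{Item::Sheaves::Metrics::ExistsEquivalenceClass::AgreesWithCCInducedByG::LocallyEquiv}--\ref{Item::Sheaves::Metrics::ExistsEquivalenceClass::AgreesWithCCInducedByG::Equiv}. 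For \ref{Item::Sheaves::Metrics::ExistsEquivalenceClass::Unique}, any $\rhot_\Compact$ satisfying \ref{Item::Sheaves::Metrics::ExistsEquivalenceClass::AgreesWithCCInducedByF::LocallyEquiv} is in particular locally Lipschitz equivalent to $\rho_{\WWd}\big|_{\Compact\times\Compact}$ for the fixed choice above, hence to $\rho_\Compact$; if in addition it is a metric inducing the topology of $\Compact$, Lemma \ref{Lemma::Metrics::Lemmas::CompactLocalEquivMeansEquiv} gives global Lipschitz equivalence. For \ref{Item::Sheaves::Metrics::ExistsEquivalenceClass::Restrict}, choose $\Omega_0$ containing $\Compact_2$ (hence also $\Compact_1$) and apply \ref{Item::Sheaves::Metrics::ExistsEquivalenceClass::AgreesWithCCInducedByF::Equiv} to both $\Compact_1$ and $\Compact_2$ with the \emph{same} list $\WWd$: then $\rho_{\Compact_2}\big|_{\Compact_1\times\Compact_1}$ and $\rho_{\Compact_1}$ are each Lipschitz equivalent to $(\rho_{\WWd}\wedge 1)\big|_{\Compact_1\times\Compact_1}$.

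The main obstacle I anticipate is the ``paths leaving the open set'' point in the comparison lemma: one must verify carefully that the CC metric of a list of vector fields on an open set $\Omega$, restricted to a compact subset, is locally unchanged when the ambient set is shrunk, so that the various local-to-$\Compact$ equivalences (Theorem \ref{Thm::Metrics::Results::LocalWeakEquivImpliesEquiv} on $\Omega\cap\Omega_0$ versus the intrinsic metrics on $\Omega$ and on $\Omega_0$) can be composed. Everything else is routine translation via the sheaf/vector-field dictionary (Propositions \ref{Prop::Sheaves::Control::GlobalEquivWithPreviousDefns}, \ref{Prop::Sheaves::LieAlg::HormandersCondiIsEquivToOtherNotions}, \ref{Prop::Sheaves::Restrict::Bndry::NonCharIsNonChar}) and the compactness lemmas \ref{Lemma::Metrics::Lemmas::QuantLocalEst} and \ref{Lemma::Metrics::Lemmas::CompactLocalEquivMeansEquiv}.
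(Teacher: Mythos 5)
Your proposal is correct and is essentially the paper's proof: both fix one generating list $\WWd$ on a relatively compact neighborhood of $\Compact$, define $\rho_\Compact=\rho_{\WWd}\wedge 1$, prove a comparison between any two admissible choices by restricting both generating lists to a common open set, passing through local weak equivalence and Theorem \ref{Thm::Metrics::Results::LocalWeakEquivImpliesEquiv}, and then upgrading local to global Lipschitz equivalence via Lemma \ref{Lemma::Metrics::Lemmas::CompactLocalEquivMeansEquiv}. The only noteworthy organizational difference is that you isolate this comparison as an explicit lemma and, for the step where one must check that shrinking the ambient open set from $\Omega$ (or $\Omega_0$) to $\Omega\cap\Omega_0$ does not change the CC metric locally, you give a direct Picard--Lindel\"of argument; the paper cites Theorem \ref{Thm::Metrics::Results::ExtendedVFsGiveSameMetric} at this point with ``$\ZZd$ replaced by $(W|_{\Omega_1},\Wd)$,'' but that theorem has no $\ZZd$ and concerns closed co-dimension-$0$ submanifolds, so the intended reference is almost certainly Theorem \ref{Thm::Metrics::Results::LocalWeakEquivImpliesEquiv} with $\ManifoldNt=\Omega_1$; your direct argument and this citation reach the same conclusion. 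Your handling of \ref{Item::Sheaves::Metrics::ExistsEquivalenceClass::SameTop}, \ref{Item::Sheaves::Metrics::ExistsEquivalenceClass::Unique}, and \ref{Item::Sheaves::Metrics::ExistsEquivalenceClass::Restrict} matches the paper's.
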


\begin{notation}\label{Notation::Sheaves::Metrics::WellDefined}
    Theorem \ref{Thm::Sheaves::Metrics::ExistsEquivalenceClass} picks out a unique Lipschitz equivalence class of metrics
    on each compact set in \(\ManifoldNncF\)
    (see Theorem \ref{Thm::Sheaves::Metrics::ExistsEquivalenceClass} \ref{Item::Sheaves::Metrics::ExistsEquivalenceClass::Unique}). Thus, if one only cares about the Lipschitz equivalence class of the metric on a given compact
    set \(\Compact\Subset \ManifoldNncF\), it makes sense to write \(\rhoF\big|_{\Compact\times \Compact}\)
    for a choice of metric from this equivalence class.
    Theorem \ref{Thm::Sheaves::Metrics::ExistsEquivalenceClass} \ref{Item::Sheaves::Metrics::ExistsEquivalenceClass::Restrict}
    shows that this equivalence class behaves as one would expect with respect to restrictions:
    if \(\Compact_1\Subset \Compact_2\Subset \ManifoldNncF\) are compact sets, then
    \(\rhoF\big|_{\Compact_2\times \Compact_2}\big|_{\Compact_1\times \Compact_1}\)
    is Lipschitz equivalent to \(\rhoF\big|_{\Compact_1\times \Compact_1}\).
\end{notation}

\begin{proof}[Proof of Theorem \ref{Thm::Sheaves::Metrics::ExistsEquivalenceClass}]
    Fix \(\Omega\Subset \ManifoldNncF\) open an relatively compact,  with \(\Compact\Subset \Omega\),
    and take
    H\"ormander vector fields with formal degrees on \(\Omega\)
    \(\WWd=\left\{ (W_1,\Wd_1),\ldots, (W_r,\Wd_r) \right\}\subset \VectorFields{\Omega}\times \Zg\)
    with \(\FilteredSheafF\big|_{\Omega}=\FilteredSheafGenBy{\WWd}\) (this is always possible;
    see Proposition \ref{Prop::Sheaves::LieAlg::HormandersCondiIsEquivToOtherNotions}).
    For \(x,y\in \Compact\), set \(\rho_{\Compact}(x,y):=\rho_{\WWd}(x,y)\wedge 1\).
    By Theorem \ref{Thm::Metrics::Results::GivesUsualTopology}, \(\rho_{\WWd}\) is a metric on each connected
    component of \(\Omega\), and the topology induced by \(\rho_{\WWd}\) agrees with the topology on \(\Omega\)
    as a subspace of \(\ManifoldN\). It follows that \(\rho_{\Compact}\) is a metric and \ref{Item::Sheaves::Metrics::ExistsEquivalenceClass::SameTop} holds.

    \ref{Item::Sheaves::Metrics::ExistsEquivalenceClass::AgreesWithCCInducedByG}:
    Since \(\LieFilteredSheaf{\FilteredSheafG}=\LieFilteredSheaf{\FilteredSheafF}\), \(\FilteredSheafG\)
    also satisfies H\"ormander's condition and \(\ManifoldNncF=\ManifoldNncG\).
    Take \(\Omega'\Subset \ManifoldNncG=\ManifoldNncF\) open and relatively compact, and let
    \(\ZZd=\left\{ (Z_1,\Zd_1),\ldots, (Z_s,\Zd_s) \right\}\subset \VectorFields{\Omega'}\times \Zg\) 
    be such that \(\FilteredSheafG\big|_{\Omega'}=\FilteredSheafGenBy{\ZZd}\).
    In light of Lemma \ref{Lemma::Sheaves::LieAlg::TwoNotionsOfHormanderAreTheSame}, \(\ZZd\)
    are H\"ormander vector fields with formal degrees on \(\Omega'\).

    Set \(\Omega_1:=\Omega\cap \Omega'\) and let
    \begin{equation*}
        (W\big|_{\Omega_1},\Wd):=\left\{ (W_1\big|_{\Omega_1},\Wd_1),\ldots, (W_r\big|_{\Omega_1},\Wd_r) \right\}, \quad
        (Z\big|_{\Omega_1},\Zd):=\left\{ (Z_1\big|_{\Omega_1},\Zd_1),\ldots, (Z_s\big|_{\Omega_1},\Zd_s) \right\}.
    \end{equation*}
    Theorem \ref{Thm::Metrics::Results::ExtendedVFsGiveSameMetric} (with \(\ZZd\) replaced by \((W\big|_{\Omega_1},\Wd)\))
    shows \(\rho_{\WWd}\) and \(\rho_{(W\big|_{\Omega_1},\Wd)}\) are locally Lipschitz equivalent on \(\Omega_1\).
    Similarly, Theorem \ref{Thm::Metrics::Results::ExtendedVFsGiveSameMetric}
    also shows \(\rho_{\ZZd}\) and \(\rho_{(Z\big|_{\Omega_1},\Zd)}\) are locally Lipschitz equivalent on \(\Omega_1\).

    Proposition \ref{Proposition::Sheaves::Control::LocalEquivWithPreviousDefns} 
    \ref{Item::Sheaves::Control::LocalEquivWithPreviousDefns::WeakEquiv::Sheaves}\(\Rightarrow\)\ref{Item::Sheaves::Control::LocalEquivWithPreviousDefns::WeakEquiv::RelCpt}
    shows \((W\big|_{\Omega_1},\Wd)\) and \((Z\big|_{\Omega_1},\Zd)\) are locally weakly equivalent on \(\Omega_1\)
    and Theorem \ref{Thm::Metrics::Results::ExtendedVFsGiveSameMetric} shows \(\rho_{(W\big|_{\Omega_1},\Wd)}\)
    and \(\rho_{(Z\big|_{\Omega_1},\Zd)}\) are locally Lipschitz equivalent on \(\Omega_1\).
    We conclude \(\rho_{\WWd}\) and \(\rho_{\ZZd}\) are locally Lipschitz equivalent on \(\Omega_1\)
    and therefore \(\rho_{\ZZd}\big|_{\Compact\times \Compact}\) is locally Lipschitz equivalent to \(\rho_\Compact\);
    in particular the topologies on \(\Compact\) generated by \(\rho_\Compact\), \(\rho_{\ZZd}\), and \(\rho_{\ZZd}\wedge 1\)
    are all the same.
    By \ref{Item::Sheaves::Metrics::ExistsEquivalenceClass::SameTop}, \(\Compact\) is compact with respect to
    this common topology and Lemma \ref{Lemma::Metrics::Lemmas::CompactLocalEquivMeansEquiv} applies
    to show that \(\rho_{\Compact}\) and \(\rho_{\ZZd}\wedge 1\big|_{\Compact}\) are Lipschitz equivalent.
    This establishes \ref{Item::Sheaves::Metrics::ExistsEquivalenceClass::AgreesWithCCInducedByG}.

    \ref{Item::Sheaves::Metrics::ExistsEquivalenceClass::AgreesWithCCInducedByF} follows by taking \(\FilteredSheafG=\FilteredSheafF\)
    in  \ref{Item::Sheaves::Metrics::ExistsEquivalenceClass::AgreesWithCCInducedByG}.

    \ref{Item::Sheaves::Metrics::ExistsEquivalenceClass::Unique}: Suppose \(\rhot_\Compact\)
    is any other extended metric on \(\Compact\) satisfying \ref{Item::Sheaves::Metrics::ExistsEquivalenceClass::AgreesWithCCInducedByF::LocallyEquiv}.
    Then, \(\rhot_\Compact\) is locally Lipschitz equivalent to \(\rho_{\WWd}\), where \(\WWd\) is as chosen in the start of the proof.
    We conclude \(\rhot_\Compact\) is locally Lipschitz equivalent to \(\rho_{\Compact}\).  Now suppose \(\rhot_\Compact\)
    is also a metric such that \ref{Item::Sheaves::Metrics::ExistsEquivalenceClass::SameTop} holds. 
    It follows from Lemma \ref{Lemma::Metrics::Lemmas::CompactLocalEquivMeansEquiv}
    (using \ref{Item::Sheaves::Metrics::ExistsEquivalenceClass::SameTop} for both \(\rho_\Compact\) and \(\rhot_\Compact\))
    that
    that \(\rho_\Compact\) and \(\rhot_\Compact\) are Lipschitz equivalent.

    \ref{Item::Sheaves::Metrics::ExistsEquivalenceClass::Restrict}:
    Take \(\Omega\Subset \ManifoldNncF\) open and relatively compact with \(\Compact_2\Subset \Omega\),
    and take \(\WWd=\left\{ (W_1,\Wd_1),\ldots, (W_r,\Wd_r) \right\}\subset \VectorFields{\Omega}\times \Zg\)
    H\"ormander vector fields with formal degrees on \(\Omega\) (this is always possible;
    see Proposition \ref{Prop::Sheaves::LieAlg::HormandersCondiIsEquivToOtherNotions}). Then, by
    \ref{Item::Sheaves::Metrics::ExistsEquivalenceClass::AgreesWithCCInducedByF::LocallyEquiv},
    \(\rho_{\Compact_1}\) is locally Lipschitz equivalent to \(\rho_{\WWd}\big|_{\Compact_1\times \Compact_1}\)
    and \(\rho_{\Compact_2}\) is locally Lipschitz equivalent to \(\rho_{\WWd}\big|_{\Compact_2\times \Compact_2}\).
    We conclude \(\rho_{\Compact_1}\) is locally Lipschitz equivalent to \(\rho_{\Compact_2}\big|_{\Compact_1\times \Compact_1}\).
    By \ref{Item::Sheaves::Metrics::ExistsEquivalenceClass::SameTop}, 
    \(\rho_{\Compact_1}\) and \(\rho_{\Compact_2}\) both generate the topology of \(\Compact_1\) (which is compact)
    and it follow from Lemma \ref{Lemma::Metrics::Lemmas::CompactLocalEquivMeansEquiv}
    that \(\rho_{\Compact_1}\) and \(\rho_{\Compact_2}\big|_{\Compact_1\times \Compact_1}\) are Lipschitz equivalent.
\end{proof}

As described in Notation \ref{Notation::Sheaves::Metrics::WellDefined},
Theorem \ref{Thm::Sheaves::Metrics::ExistsEquivalenceClass} gives a well-defined Lipschitz equivalence class
of metrics on each compact subset of \(\ManifoldNncF\). The next result shows that this equivalence class
can be either defined intrinsically on \(\ManifoldN\), or extrinsically on an ambient space (i.e., we can
let the paths defining the Carnot--Carath\'eodory metric leave \(\ManifoldN\)).

\begin{theorem}\label{Thm::Sheaves::Metrics::SameMetricFromAmbientSpace}
    Suppose \(\ManifoldM\) is a smooth manifold without boundary with \(\ManifoldN\subseteq \ManifoldM\)
    a closed, co-dimension \(0\) embedded submanifold (with boundary). Let \(\FilteredSheafFh\)
    be a filtration of sheaves of vector fields on \(\ManifoldM\) which is of finite type, satisfies H\"ormander's condition,
    and such that
    \(\RestrictFilteredSheaf{\FilteredSheafFh}{\ManifoldN}=\FilteredSheafF\).
    Then, for any compact set \(\Compact\Subset \ManifoldNncF\), 
    \(\rho_{\FilteredSheafF}\big|_{\Compact\times \Compact}\) is Lipschitz equivalent to
    \(\rho_{\FilteredSheafFh}\big|_{\Compact\times \Compact}\)
    (see Notation \ref{Notation::Sheaves::Metrics::WellDefined} for this notation).
\end{theorem}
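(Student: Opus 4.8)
The plan is to reduce everything, via Theorem~\ref{Thm::Sheaves::Metrics::ExistsEquivalenceClass}, to a comparison of two explicit Carnot--Carath\'eodory metrics, one of which is the restriction of the other, and then to invoke the scaling theorem. Fix a compact set $\Compact\Subset\ManifoldNncF$. First I would choose the local data compatibly. Since $\ManifoldN\setminus\ManifoldNncF=\BoundaryN\setminus\BoundaryNncF$ is closed in $\ManifoldN$ and $\ManifoldN$ is closed in $\ManifoldM$, this set is closed in $\ManifoldM$, so I can pick an open, relatively compact $\Omega_0\Subset\ManifoldM$ with $\Compact\subseteq\Omega_0$ and $\overline{\Omega_0}\cap\ManifoldN\subseteq\ManifoldNncF$. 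Because $\FilteredSheafFh$ is of finite type, Lemmas~\ref{Lemma::Sheaves::Filtered::FiniteTypeOnCptSets} and~\ref{Lemma::Sheaves::LieAlg::TwoNotionsOfHormanderAreTheSame} give H\"ormander vector fields with formal degrees $\WhWd=\{(\Wh_1,\Wd_1),\ldots,(\Wh_r,\Wd_r)\}$ on $\Omega_0$ with $\FilteredSheafFh\big|_{\Omega_0}=\FilteredSheafGenBy{\WhWd}$. Set $\Omega:=\Omega_0\cap\ManifoldN$ (an open, relatively compact subset of $\ManifoldNncF$ containing $\Compact$) and $W_j:=\Wh_j\big|_{\Omega}$, $\WWd:=\{(W_1,\Wd_1),\ldots,(W_r,\Wd_r)\}$. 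By Proposition~\ref{Prop::Sheaves::Restrict::Codim0::MainRestrictionResult}\ref{Item::Sheaves::Restrict::Codim0::MainRestrictionResult::FGOnSet} and the hypothesis $\RestrictFilteredSheaf{\FilteredSheafFh}{\ManifoldN}=\FilteredSheafF$, we get $\FilteredSheafF\big|_{\Omega}=\FilteredSheafGenBy{\WWd}$; since $\WhWd$ satisfy H\"ormander's condition on $\Omega_0$, the restrictions $W_j$ satisfy it on $\Omega$, so $\WWd$ are H\"ormander vector fields with formal degrees on $\Omega$.

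With this data, I would apply Theorem~\ref{Thm::Sheaves::Metrics::ExistsEquivalenceClass}\ref{Item::Sheaves::Metrics::ExistsEquivalenceClass::AgreesWithCCInducedByF::Equiv} twice: once to $\FilteredSheafF$ on $\ManifoldN$ with the set $\Omega$ and generators $\WWd$, and once to $\FilteredSheafFh$ on $\ManifoldM$ with the set $\Omega_0$ and generators $\WhWd$ (here $\ManifoldM$ has no boundary, so $\Compact\Subset\ManifoldM=\ManifoldM^{\mathrm{nc}}_{\FilteredSheafFh}$ automatically). This yields that $\rho_{\FilteredSheafF}\big|_{\Compact\times\Compact}$ is Lipschitz equivalent to $(\rho_{\WWd}\wedge 1)\big|_{\Compact\times\Compact}$ and that $\rho_{\FilteredSheafFh}\big|_{\Compact\times\Compact}$ is Lipschitz equivalent to $(\rho_{\WhWd}\wedge 1)\big|_{\Compact\times\Compact}$. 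Hence it suffices to prove that $(\rho_{\WWd}\wedge 1)\big|_{\Compact\times\Compact}$ and $(\rho_{\WhWd}\wedge 1)\big|_{\Compact\times\Compact}$ are Lipschitz equivalent.

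For the comparison, one inequality is immediate: any absolutely continuous path in $\Omega$ subordinate to $\delta^{\Wd}W$ is, since $\Wh_j\big|_{\Omega}=W_j$, also such a path in $\Omega_0$ subordinate to $\delta^{\Wd}\Wh$, so $\BWWd{x}{\delta}\subseteq\BWhWd{x}{\delta}\cap\Omega$ and $\rho_{\WhWd}(x,y)\leq\rho_{\WWd}(x,y)$, whence $\rho_{\WhWd}\wedge 1\leq\rho_{\WWd}\wedge 1$ on $\Compact$. For the reverse, I would apply Theorem~\ref{Thm::Scaling::MainResult} with $\ManifoldN$ and $\ManifoldM$ there replaced by $\Omega$ and $\Omega_0$ (and any density); note $\Compact\Subset\Omega^{\mathrm{nc}}_{\WWd}$ because every point of $\Compact\cap\BoundaryN$ is $\FilteredSheafF$-non-characteristic, hence $\WWd$-non-characteristic by Proposition~\ref{Prop::Sheaves::Restrict::Bndry::NonCharIsNonChar}. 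Taking $\eta_1=1/4$ in part~\ref{Item::Scaling::MainResult::Containments} produces $\delta_1\in(0,1]$ and $\xi_1\in(0,1]$ with $\BWhWd{x}{\xi_1\delta}\cap\Omega\subseteq\BWWd{x}{\delta}$ for all $x\in\Compact$, $\delta\in(0,\delta_1]$. Consequently, if $x,y\in\Compact$ and $\rho_{\WhWd}(x,y)<\xi_1\delta_1$, choosing $\delta\leq\delta_1$ with $\rho_{\WhWd}(x,y)<\xi_1\delta$ gives $y\in\BWhWd{x}{\xi_1\delta}\cap\Omega\subseteq\BWWd{x}{\delta}$, so $\rho_{\WWd}(x,y)<\delta$; taking the infimum over such $\delta$ yields $\rho_{\WWd}(x,y)\leq\xi_1^{-1}\rho_{\WhWd}(x,y)$. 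Setting $c:=\min\{\xi_1\delta_1,1\}$, this gives $\rho_{\WWd}\wedge 1\leq\xi_1^{-1}(\rho_{\WhWd}\wedge 1)$ when $\rho_{\WhWd}(x,y)<\xi_1\delta_1$; and if $\rho_{\WhWd}(x,y)\geq\xi_1\delta_1$ then $\rho_{\WhWd}\wedge 1\geq c$, so $\rho_{\WWd}\wedge 1\leq 1\leq c^{-1}(\rho_{\WhWd}\wedge 1)$. Thus $\rho_{\WWd}\wedge 1\leq\max\{\xi_1^{-1},c^{-1}\}(\rho_{\WhWd}\wedge 1)$ throughout $\Compact\times\Compact$, completing the proof.

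The main obstacle here is not conceptual — the heart is Theorem~\ref{Thm::Scaling::MainResult}\ref{Item::Scaling::MainResult::Containments} (equivalently Theorem~\ref{Thm::Metrics::Results::ExtendedVFsGiveSameMetric}), which is already established — but organizational: arranging that $\WWd$ is literally the restriction of $\WhWd$ so that the co-dimension-$0$ restriction machinery of Proposition~\ref{Prop::Sheaves::Restrict::Codim0::MainRestrictionResult} identifies $\FilteredSheafF\big|_{\Omega}$ with $\FilteredSheafGenBy{\WWd}$; placing the relatively compact open sets correctly inside $\ManifoldNncF$ and $\ManifoldM$ as demanded by Theorem~\ref{Thm::Sheaves::Metrics::ExistsEquivalenceClass}; and translating $\FilteredSheafF$-non-characteristicity into $\WWd$-non-characteristicity via Proposition~\ref{Prop::Sheaves::Restrict::Bndry::NonCharIsNonChar}. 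The one genuine subtlety is that $\rho_{\WhWd}$ may stay finite between points lying in different connected components of $\ManifoldN$ (the competing paths are allowed to leave $\ManifoldN$); truncating at $1$ together with the uniform containment $\BWhWd{x}{\xi_1\delta}\cap\Omega\subseteq\BWWd{x}{\delta}$ for small $\delta$ is exactly what absorbs this. Alternatively, one could deduce local Lipschitz equivalence of $\rho_{\WWd}$ and $\rho_{\WhWd}$ on $\Omega^{\mathrm{nc}}_{\WWd}$ from Theorem~\ref{Thm::Metrics::Results::ExtendedVFsGiveSameMetric} and then upgrade it on $\Compact$ using Lemmas~\ref{Lemma::Metrics::Lemmas::LocallyEquivIffEquivOnCptSets} and~\ref{Lemma::Metrics::Lemmas::QuantLocalEst} together with a compactness argument bounding $\rho_{\WhWd}$ from below on pairs drawn from distinct components.
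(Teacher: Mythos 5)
Your proposal is correct and follows essentially the same route as the paper: both pass to a finite generating set $\WhWd$ for $\FilteredSheafFh$ on a relatively compact $\Omega\supseteq\Compact$, set $W_j:=\Wh_j\big|_{\Omega\cap\ManifoldN}$ and use Proposition~\ref{Prop::Sheaves::Restrict::Codim0::MainRestrictionResult}~\ref{Item::Sheaves::Restrict::Codim0::MainRestrictionResult::FGOnSet} to identify $\FilteredSheafF\big|_{\Omega\cap\ManifoldN}=\FilteredSheafGenBy{\WWd}$, reduce via Theorem~\ref{Thm::Sheaves::Metrics::ExistsEquivalenceClass} to comparing $\rho_{\WWd}$ with $\rho_{\WhWd}$ on $\Compact$, invoke Proposition~\ref{Prop::Sheaves::Restrict::Bndry::NonCharIsNonChar} to translate $\FilteredSheafF$-non-characteristicity into $\WWd$-non-characteristicity, and conclude from the ball containment supplied by the scaling theorem. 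The only cosmetic differences are that the paper cites Theorem~\ref{Thm::Metrics::Results::ExtendedVFsGiveSameMetric} (which is exactly your ``alternative'' route) rather than unwinding Theorem~\ref{Thm::Scaling::MainResult}~\ref{Item::Scaling::MainResult::Containments} directly, and the paper works with local Lipschitz equivalence (part~\ref{Item::Sheaves::Metrics::ExistsEquivalenceClass::AgreesWithCCInducedByF::LocallyEquiv}) and upgrades via Lemma~\ref{Lemma::Metrics::Lemmas::CompactLocalEquivMeansEquiv} at the end, whereas you use the truncated metrics $\rho\wedge 1$ from part~\ref{Item::Sheaves::Metrics::ExistsEquivalenceClass::AgreesWithCCInducedByF::Equiv} to absorb the large-distance case directly.
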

\begin{proof}
    Let \(\Compact\Subset \ManifoldNncF\) be compact
    and fix \(\Omega\Subset \ManifoldM\) open and relatively compact
    with \(\Compact\Subset \Omega\) and \(\Omega\cap \ManifoldN\subseteq \ManifoldNncF\),
    and
    \(\WhWd=\left\{ (\Wh_1,\Wd_1),\ldots, (\Wh_r,\Wd_r) \right\}\subset \VectorFields{\Omega}\times \Zg\)
    H\"ormander vector fields with formal degrees such that
    \(\FilteredSheafFh\big|_{\Omega}=\FilteredSheafGenBy{\WhWd}\)
    (this is always possible;
    see Proposition \ref{Prop::Sheaves::LieAlg::HormandersCondiIsEquivToOtherNotions}).
    Theorem \ref{Thm::Sheaves::Metrics::ExistsEquivalenceClass} \ref{Item::Sheaves::Metrics::ExistsEquivalenceClass::AgreesWithCCInducedByF::LocallyEquiv}
    shows \(\rho_{\FilteredSheafFh}\big|_{\Compact\times\Compact}\)
    is locally Lipschitz equivalent to \(\rho_{\WhWd}\big|_{\Compact\times \Compact}\).

    Set \(W_j:=\Wh_j\big|_{\ManifoldN\cap \Omega}\)
    and \(\WWd=\left\{ (W_1,\Wd_1),\ldots, (W_r,\Wd_r) \right\}\), so that by 
    Proposition \ref{Prop::Sheaves::Restrict::Codim0::MainRestrictionResult} \ref{Item::Sheaves::Restrict::Codim0::MainRestrictionResult::FGOnSet}
    we have \(\FilteredSheafF[\Omega\cap\ManifoldN]=\RestrictFilteredSheaf{\FilteredSheafFh}{\ManifoldN}[\Omega\cap\ManifoldN]=\FilteredSheafGenBy{\WWd} \),
    and therefore by 
    Theorem \ref{Thm::Sheaves::Metrics::ExistsEquivalenceClass} \ref{Item::Sheaves::Metrics::ExistsEquivalenceClass::AgreesWithCCInducedByF}
    \(\rho_{\FilteredSheafF}\big|_{\Compact\times \Compact}\) is locally Lipschitz equivalent
    to \(\rho_{\WWd}\big|_{\Compact\times \Compact}\).

    Every point of \(\partial (\ManifoldN\cap \Omega)=\BoundaryN\cap \Omega\) is contained in \(\BoundaryNncF\)
    and is therefore \(\FilteredSheafF\)-non-characteristic; and by 
    Proposition \ref{Prop::Sheaves::Restrict::Bndry::NonCharIsNonChar} \ref{Item::Sheaves::Restrict::Bndry::NonCharIsNonChar::NonCharIsSame}
    it follows that every point of \(\partial (\ManifoldN\cap \Omega)\) is \(\WWd\)-non-characteristic.
    We apply Theorem \ref{Thm::Metrics::Results::ExtendedVFsGiveSameMetric} with \(\ManifoldM\) replaced by \(\Omega\)
    and \(\ManifoldN\) replaced by \(\ManifoldN\cap \Omega\) to see that
    \(\rho_{\WhWd}\) and \(\rho_{\WWd}\) are locally Lipschitz equivalent on \(\Omega\cap \ManifoldN\);
    and therefore are locally Lipschitz equivalent on \(\Compact\).
    We conclude \(\rho_{\FilteredSheafFh}\big|_{\Compact\times\Compact}\) and \(\rho_{\FilteredSheafF}\big|_{\Compact\times \Compact}\)
    are locally Lipschitz equivalent on \(\Compact\).

    By Theorem \ref{Thm::Sheaves::Metrics::ExistsEquivalenceClass} \ref{Item::Sheaves::Metrics::ExistsEquivalenceClass::SameTop},
    the topology on \(\Compact\)
    is induced by both \(\rho_{\FilteredSheafFh}\big|_{\Compact\times\Compact}\) and \(\rho_{\FilteredSheafF}\big|_{\Compact\times \Compact}\)
    and it follows from Lemma \ref{Lemma::Metrics::Lemmas::CompactLocalEquivMeansEquiv}
    that \(\rho_{\FilteredSheafFh}\big|_{\Compact\times\Compact}\) and \(\rho_{\FilteredSheafF}\big|_{\Compact\times \Compact}\)
    are Lipschitz equivalent.
\end{proof}

There are two natural ways to define a Lipschitz equivalence class of Carnot--Carath\'eodory metrics
on compact subsets of the non-characteristic boundary:
by thinking of them as compact subsets of \(\ManifoldNncF\) and using Theorem \ref{Thm::Sheaves::Metrics::ExistsEquivalenceClass}
applied to \(\FilteredSheafF\); or by thinking of them as compact subsetes of \(\BoundaryNncF\)
and using Theorem \ref{Thm::Sheaves::Metrics::ExistsEquivalenceClass}
applied to \(\RestrictFilteredSheaf{\LieFilteredSheafF}{\BoundaryNncF}\)
(which is of finite type and satisfies H\"ormander's condition by Proposition \ref{Prop::Sheaves::Restrict::Bndry::RestritionIsFiniteTypeAndSpandTangent}).
The next result shows that these two ways yield the same Lipschitz equivalence class.

\begin{theorem}\label{Thm::Sheaves::Metrics::BoundaryMetricEquivalence}
    Let \(\Compact\Subset \BoundaryNncF\) be compact. Then,
    \(\rho_{\FilteredSheafF}\big|_{\Compact\times \Compact}\)
    is Lipschitz equivalent to
    \(\rho_{\RestrictFilteredSheaf{\LieFilteredSheafF}{\BoundaryNncF}}\big|_{\Compact\times \Compact}\).
\end{theorem}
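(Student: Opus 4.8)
The plan is to reduce to a local statement and then quote Lemma~\ref{Lemma::Scaling::NearBdry::VVdLessThanXXd}. Fix a compact set $\Compact\Subset\BoundaryNncF$. By Proposition~\ref{Prop::Sheaves::Restrict::Bndry::RestritionIsFiniteTypeAndSpandTangent}, $\RestrictFilteredSheaf{\LieFilteredSheafF}{\BoundaryNncF}$ is of finite type and satisfies H\"ormander's condition, so Theorem~\ref{Thm::Sheaves::Metrics::ExistsEquivalenceClass} applies to it as well; in particular, by Theorem~\ref{Thm::Sheaves::Metrics::ExistsEquivalenceClass}~\ref{Item::Sheaves::Metrics::ExistsEquivalenceClass::SameTop}, both $\rho_{\FilteredSheafF}\big|_{\Compact\times\Compact}$ and $\rho_{\RestrictFilteredSheaf{\LieFilteredSheafF}{\BoundaryNncF}}\big|_{\Compact\times\Compact}$ induce the subspace topology on $\Compact$. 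Since $\Compact$ is compact, Lemma~\ref{Lemma::Metrics::Lemmas::CompactLocalEquivMeansEquiv} reduces us to showing that these two metrics are \emph{locally} Lipschitz equivalent on $\Compact$.

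So fix $x_0\in\Compact$. Using Proposition~\ref{Prop::Sheaves::LieAlg::HormandersCondiIsEquivToOtherNotions}, choose an open neighborhood $\Omega$ of $x_0$ in $\ManifoldN$, small enough that $\Omega\cap\BoundaryN\subseteq\BoundaryNncF$, and H\"ormander vector fields with formal degrees $\WWd\subset\VectorFields{\Omega}\times\Zg$ with $\FilteredSheafF\big|_{\Omega}=\FilteredSheafGenBy{\WWd}$; by Proposition~\ref{Prop::Sheaves::Restrict::Bndry::NonCharIsNonChar}, $x_0$ is $\WWd$-non-characteristic. Carrying out Construction~\ref{Construction::Sheaves::Restrict::Bndry::VVd} (equivalently, the procedure of Section~\ref{Section::BndryVfsWithFormaLDegrees}) produces, on a smaller neighborhood $\Omega_2\ni x_0$, vector fields with formal degrees $\XXd$ satisfying \eqref{Eqn::BoundaryVfs::NSW::X} with $\LieFilteredSheafF\big|_{\Omega_2}=\FilteredSheafGenBy{\XXd}$ (Lemma~\ref{Lemma::Sheaves::LieAlg::LieAlgebraFilIsGenByXXd}), and, on $U:=\Omega_2\cap\BoundaryN$, vector fields $\VVd$ with $V_j=X_j\big|_{U}$, $\Vd_j=\Xd_j$, and $\RestrictFilteredSheaf{\LieFilteredSheafF}{U}=\FilteredSheafGenBy{\VVd}$. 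Because $\WWd$ and $\XXd$ are locally weakly equivalent on $\Omega_2$ (indeed $\LieFilteredSheaf{\FilteredSheafGenBy{\XXd}}=\LieFilteredSheaf{\FilteredSheafGenBy{\WWd}}$ there), Theorems~\ref{Thm::Metrics::Results::LocalWeakEquivImpliesEquiv} and~\ref{Thm::Sheaves::Metrics::ExistsEquivalenceClass}~\ref{Item::Sheaves::Metrics::ExistsEquivalenceClass::AgreesWithCCInducedByF::LocallyEquiv} (the latter applied on a compact neighborhood of $x_0$ in $\Omega$) show that $\rho_{\FilteredSheafF}\big|_{\Compact\times\Compact}$ is locally Lipschitz equivalent to $\rho_{\XXd}$ near $x_0$; and Theorem~\ref{Thm::Sheaves::Metrics::ExistsEquivalenceClass}~\ref{Item::Sheaves::Metrics::ExistsEquivalenceClass::AgreesWithCCInducedByF::LocallyEquiv} applied to $\RestrictFilteredSheaf{\LieFilteredSheafF}{\BoundaryNncF}$ shows that $\rho_{\RestrictFilteredSheaf{\LieFilteredSheafF}{\BoundaryNncF}}\big|_{\Compact\times\Compact}$ is locally Lipschitz equivalent to $\rho_{\VVd}$ near $x_0$. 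It therefore suffices to show that $\rho_{\VVd}$ and $\rho_{\XXd}\big|_{U\times U}$ are locally Lipschitz equivalent near $x_0$.

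One inequality is immediate: since $V_j=X_j\big|_{U}$ with $\Vd_j=\Xd_j$ and each $X_j$ is tangent to $\BoundaryN$ along $U$, every admissible $\delta^{\Vd}V$-path in $U$ is also an admissible $\delta^{\Xd}X$-path, whence $\BVVd{x}{\delta}\subseteq\BXXd{x}{\delta}\cap\BoundaryN$ and so $\MetricXXd[x][y]\leq\MetricVVd[x][y]$ for all $x,y\in U$. For the reverse inequality I would pass to a boundary coordinate chart around $x_0$ — as in the chart construction in the proof of Theorem~\ref{Thm::Scaling::MainResult}, after extending the $W_j$ across $\BoundaryN$ as in Remark~\ref{Rmk::Scaling::MainResult::DontNeedAmbientMfld} — so that the pulled-back data satisfies the hypotheses of Section~\ref{Section::Scaling::NearBoundary}, with the degree-$\degBoundaryNF[x_0]$ non-characteristic generator playing the role of $\Wh_1$ there. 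The lists $\XXd$ and $\VVd$ built in that section are, up to local strong equivalence — hence their Carnot--Carath\'eodory metrics up to local Lipschitz equivalence, by Theorem~\ref{Thm::Metrics::Results::LocalWeakEquivImpliesEquiv} — the chart pushforwards of the $\XXd$ and $\VVd$ above, so Lemma~\ref{Lemma::Scaling::NearBdry::VVdLessThanXXd} supplies $\delta_1>0$ and $C_0\geq 1$ with $\MetricXXd[x][y]<\delta_1\implies\MetricVVd[x][y]\leq C_0\MetricXXd[x][y]$ on the relevant piece of $\BoundaryN$. Shrinking the neighborhood of $x_0$ so that its $\rho_{\XXd}$-diameter is $<\delta_1$ (possible by the Picard--Lindel\"of Theorem) and combining with the trivial inequality yields the desired local Lipschitz equivalence, completing the proof.

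The main obstacle is exactly this reverse inequality $\rho_{\VVd}\lesssim\rho_{\XXd}$ — travelling between nearby non-characteristic boundary points using only the boundary-tangent vector fields of the prescribed degrees without the path length blowing up — which is precisely the content of Lemma~\ref{Lemma::Scaling::NearBdry::VVdLessThanXXd} and ultimately rests on the near-boundary scaling maps of Theorem~\ref{Thm::Scaling::NearBdry::MainThm}. Everything else is the routine bookkeeping of reducing to a chart and tracking Carnot--Carath\'eodory metrics through their local Lipschitz equivalence classes.
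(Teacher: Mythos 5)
Your proposal is correct and follows essentially the paper's own proof: reduce by compactness and Lemma~\ref{Lemma::Metrics::Lemmas::CompactLocalEquivMeansEquiv} to a local statement near $x_0\in\BoundaryNncF$, run Construction~\ref{Construction::Sheaves::Restrict::Bndry::VVd} to produce $\XXd$ and $\VVd$, observe that $\rho_{\XXd}\leq\rho_{\VVd}$ on the boundary trivially, and obtain the reverse inequality from Lemma~\ref{Lemma::Scaling::NearBdry::VVdLessThanXXd} in a boundary chart. The only cosmetic difference is that the paper first replaces $\rho_{\FilteredSheafF}$ with $\rho_{\LieFilteredSheafF}$ via Theorem~\ref{Thm::Sheaves::Metrics::ExistsEquivalenceClass}~\ref{Item::Sheaves::Metrics::ExistsEquivalenceClass::AgreesWithCCInducedByG::Equiv} and then identifies $\rho_{\LieFilteredSheafF}$ with $\rho_{\XXd}$ locally, whereas you go directly from $\rho_{\FilteredSheafF}$ to $\rho_{\WWd}$ and then to $\rho_{\XXd}$ via Theorem~\ref{Thm::Metrics::Results::LocalWeakEquivImpliesEquiv}; these are equivalent bookkeeping routes through the same ingredients.
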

\begin{proof}
    By Proposition \ref{Prop::Sheaves::LieAlg::FFiniteTypeAndHorImpliesSameForLie},
    \(\LieFilteredSheafF\) is of finite type and satisfies H\"ormander's condition,
    and therefore Theorem \ref{Thm::Sheaves::Metrics::ExistsEquivalenceClass} \ref{Item::Sheaves::Metrics::ExistsEquivalenceClass::AgreesWithCCInducedByG::Equiv}
    shows \(\rho_{\FilteredSheafF}\big|_{\Compact\times \Compact}\) is Lipschitz equivalent to
    \(\rho_{\LieFilteredSheafF}\big|_{\Compact\times \Compact}\). Thus, it suffices to show
    \(\rho_{\LieFilteredSheafF}\big|_{\Compact\times \Compact}\) is Lipschitz equivalent to
    \(\rho_{\RestrictFilteredSheaf{\LieFilteredSheafF}{\BoundaryNncF}}\big|_{\Compact\times \Compact}\).

    By Theorem \ref{Thm::Sheaves::Metrics::ExistsEquivalenceClass} \ref{Item::Sheaves::Metrics::ExistsEquivalenceClass::SameTop},
    the topology on \(\Compact\) in induced by both  \(\rho_{\LieFilteredSheafF}\big|_{\Compact\times \Compact}\)
    and \(\rho_{\RestrictFilteredSheaf{\LieFilteredSheafF}{\BoundaryNncF}}\big|_{\Compact\times \Compact}\),
    and therefore by Lemma \ref{Lemma::Metrics::Lemmas::CompactLocalEquivMeansEquiv} it suffices to show
    that they are locally Lipschitz equivalent.
    Thus, in light of Theorem \ref{Thm::Sheaves::Metrics::ExistsEquivalenceClass} \ref{Item::Sheaves::Metrics::ExistsEquivalenceClass::Restrict}
    it suffices to show \(\forall x_0\in \BoundaryNncF\), \(\exists\) a compact neighborhood \(\Compact_0\Subset \BoundaryNncF\) of \(x_0\)
    such that \(\rho_{\LieFilteredSheafF}\big|_{\Compact_0\times \Compact_0}\)
    and \(\rho_{\RestrictFilteredSheaf{\LieFilteredSheafF}{\BoundaryNncF}}\big|_{\Compact_0\times \Compact_0}\)
    are equivalent.

    Fix \(x_0\in \BoundaryNncF\).
    Let \(\Omega\Subset \ManifoldN\) be a neighborhood of \(x_0\) such that \(\FilteredSheafF\big|_{\Omega}=\FilteredSheafGenBy{\WWd}\)
    for some H\"ormander vector fields with formal degrees on \(\Omega\) (this is always possible;
    see Proposition \ref{Prop::Sheaves::LieAlg::HormandersCondiIsEquivToOtherNotions}).
    Let \(\Omega_2\Subset \ManifoldNncF\), \(\XXd\), and \(\VVd\) be as in Construction \ref{Construction::Sheaves::Restrict::Bndry::VVd},
    so that \(\Omega_2\) is a neighborhood of \(x_0\) and
    \begin{equation*}
        \LieFilteredSheafF\big|_{\Omega_2}=\FilteredSheafGenBy{\XXd}\big|_{\Omega_2}, \quad
        \RestrictFilteredSheaf{\LieFilteredSheafF}{\BoundaryNncF}\big|_{\Omega_2\cap \ManifoldN}=\FilteredSheafGenBy{\VVd}\big|_{\Omega_2\cap \BoundaryN}.
    \end{equation*}
    For \(\Compact_1\subseteq \Omega_2\cap \BoundaryN\subseteq \BoundaryNncF\) compact,
    Theorem \ref{Thm::Sheaves::Metrics::ExistsEquivalenceClass} \ref{Item::Sheaves::Metrics::ExistsEquivalenceClass::AgreesWithCCInducedByF::LocallyEquiv}
    shows
    \(\rho_{\LieFilteredSheafF}\big|_{\Compact_1\times \Compact_1}\) is locally Lipschitz equivalent to
    \(\rho_{\XXd}\big|_{\Compact_1\times \Compact_1}\) (here we are using that \(\BoundaryNncF=\BoundaryNnc_{\LieFilteredSheafF}\) by the definition)
    and \(\rho_{\RestrictFilteredSheaf{\LieFilteredSheafF}{\BoundaryNncF}}\big|_{\Compact_1\times \Compact_1}\)
    is locally Lipschitz equivalent to \(\rho_{\VVd}\big|_{\Compact_1\times \Compact_1}\).
    Thus, it suffices to show that there is a compact neighborhood \(\Compact_0\Subset \BoundaryNncF\cap \Omega_2\) of \(x_0\)
    such that \(\rho_{\XXd}\big|_{\Compact_0\times \Compact_0}\) is locally Lipschitz equivalent to
    \(\rho_{\VVd}\big|_{\Compact_0\times \Compact_0}\).

    It is clear from the construction (see Section \ref{Section::BndryVfsWithFormaLDegrees}) that \(\rho_{\XXd}(x,y)\leq \rho_{\VVd}(x,y)\), \(\forall x,y\in \BoundaryNncF\cap \Omega_2\),
    so we need only show \(\rho_{\VVd}(x,y)\lesssim \rho_{\XXd}(x,y)\) for \(x,y\) in some \(\BoundaryN\)-neighborhood of \(x_0\).
    In local coordinates, this follows from Lemma \ref{Lemma::Scaling::NearBdry::VVdLessThanXXd}
    (take the neighborhood to be contained in \(\left\{ y : \rho_{\XXd}(x,y)<\delta_1 \right\}\)
    where \(\delta_1>0\) is as in that lemma).
\end{proof}

\begin{example}\label{Example::Sheaves::Metrics::CompactWithNCBdry}
    Suppose \(\ManifoldN\) is a connected, compact manifold with boundary and \(\FilteredSheafF\)
    is a filtration of sheaves of vector fields on \(\ManifoldN\) which is of finite type and satisfies H\"ormander's condition.
    Assume that \textbf{every point of \(\BoundaryN\) is \(\FilteredSheafF\)-non-characteristic}; i.e., assume
    \(\BoundaryN=\BoundaryNncF\).
    \begin{enumerate}[(i)]
        \item\label{Item::Sheaves::Metrics::CompactWithNCBdry::HasMetric} Because \(\ManifoldN\) is compact, Theorem \ref{Thm::Sheaves::Metrics::ExistsEquivalenceClass}
            (see, also, Notation \ref{Notation::Sheaves::Metrics::WellDefined}) shows \(\FilteredSheafF\)
            induces a Lipschitz equivalence class of metrics on \(\ManifoldN\); we denote any choice of such a metric by
            \(\rho_{\FilteredSheafF}\).

        \item\label{Item::Sheaves::Metrics::CompactWithNCBdry::EquivToCCMetric}  By Proposition \ref{Prop::Sheaves::LieAlg::HormandersCondiIsEquivToOtherNotions}, \(\FilteredSheafF=\FilteredSheafGenBy{\WWd}\),
            where \(\WWd=\left\{ (W_1,\Wd_1),\ldots, (W_r,\Wd_r) \right\}\subset \VectorFieldsN\times \Zg\) are 
            H\"ormander vector fields with formal degrees on \(\ManifoldN\).
            Theorem \ref{Thm::Sheaves::Metrics::ExistsEquivalenceClass} \ref{Item::Sheaves::Metrics::ExistsEquivalenceClass::AgreesWithCCInducedByF}
            shows \(\rho_{\FilteredSheafF}\) and \(\rho_{\WWd}\) are locally Lipschitz equivalent; and by
            Theorem \ref{Thm::Metrics::Results::GivesUsualTopology} and Theorem \ref{Thm::Sheaves::Metrics::ExistsEquivalenceClass} \ref{Item::Sheaves::Metrics::ExistsEquivalenceClass::SameTop}
            they both give the usual topology on \(\ManifoldN\).
            Since \(\ManifoldN\) is assumed to be connected, \(\rho_{\WWd}\) is a metric, and Lemma \ref{Lemma::Metrics::Lemmas::CompactLocalEquivMeansEquiv}
            implies \(\rho_{\FilteredSheafF}\) and \(\rho_{\WWd}\) are Lipschitz equivalent.

        \item By Proposition \ref{Prop::Sheaves::Restrict::Bndry::RestritionIsFiniteTypeAndSpandTangent},
        \(\RestrictFilteredSheaf{\LieFilteredSheafF}{\BoundaryN}\) is of finite type and satisfies H\"ormander's condition,
        and therefore using that \(\BoundaryN\) is compact and Proposition \ref{Prop::Sheaves::LieAlg::HormandersCondiIsEquivToOtherNotions},
        we have
        \(\RestrictFilteredSheaf{\LieFilteredSheafF}{\BoundaryN}=\FilteredSheafGenBy{\VVd}\), where \(\VVd=\left\{ (V_1,\Vd_1),\ldots, (V_q,\Vd_q) \right\}\)
        are H\"ormander vector fields with formal degrees on \(\BoundaryN\).

        \item As in \ref{Item::Sheaves::Metrics::CompactWithNCBdry::HasMetric}, \(\RestrictFilteredSheaf{\LieFilteredSheafF}{\BoundaryN}\)
            induces a Lipschitz equivalence class of metrics on \(\BoundaryN\), and we denote any choice of such a metric by
            \(\rho_{\RestrictFilteredSheaf{\LieFilteredSheafF}{\BoundaryN}}\).

        \item As in \ref{Item::Sheaves::Metrics::CompactWithNCBdry::EquivToCCMetric}, \(\rho_{\RestrictFilteredSheaf{\LieFilteredSheafF}{\BoundaryN}}\)
            and \(\rho_{\VVd}\) are locally Lipschitz equivalent. Since \(\BoundaryN\) may not be connected,
            \(\rho_{\VVd}\) might be merely an extended metric and not a metric. It is a metric on connected components
            of \(\BoundaryN\) (see Remark \ref{Rmk::Metrics::lemmas::ExtendedMetricTop}) and as in 
            \ref{Item::Sheaves::Metrics::CompactWithNCBdry::EquivToCCMetric}, \(\rho_{\VVd}\) and \(\rho_{\RestrictFilteredSheaf{\LieFilteredSheafF}{\BoundaryN}}\)
            are Lipschitz equivalent on connected components of \(\BoundaryN\).

        \item By Theorem \ref{Thm::Sheaves::Metrics::BoundaryMetricEquivalence}
        \(\rho_{\FilteredSheafF}\big|_{\BoundaryN}\) and \(\rho_{\RestrictFilteredSheaf{\LieFilteredSheafF}{\BoundaryN}}\)
        are Lipschitz equivalent and therefore \(\rho_{\WWd}\) and \(\rho_{\VVd}\) Lipschitz equivalent on connected components
        of \(\BoundaryN\).
    \end{enumerate}
\end{example}

\bibliographystyle{amsalpha}

\bibliography{bibliography}

\providecommand{\bysame}{\leavevmode\hbox to3em{\hrulefill}\thinspace}
\providecommand{\MR}{\relax\ifhmode\unskip\space\fi MR }
\providecommand{\MRhref}[2]{%
  \href{http://www.ams.org/mathscinet-getitem?mr=#1}{#2}
}
\providecommand{\href}[2]{#2}
\begin{thebibliography}{{Sta}25}

\bibitem[AES97]{AgranovichEgorovShubinPartialDifferentialEquationsIX}
M.~S. Agranovich, Yu.~V. Egorov, and M.~A. Shubin (eds.), \emph{Partial
  differential equations. {IX}}, Encyclopaedia of Mathematical Sciences,
  vol.~79, Springer-Verlag, Berlin, 1997, Elliptic boundary value problems, A
  translation of {\it Partial differential equations. 9 (Russian)}, Ross. Akad.
  Nauk, Vseross. Inst. Nauchn. i Tekhn. Inform., Moscow. \MR{1481214}

\bibitem[BB23]{BramantiBrandoliniHormanderOperators}
Marco Bramanti and Luca Brandolini, \emph{H\"ormander operators}, World
  Scientific Publishing Co. Pte. Ltd., Hackensack, NJ, [2023] \copyright 2023,
  With a foreword by Ermanno Lanconelli. \MR{4544986}

\bibitem[Bra14]{BramantiAnInvitationToHypoellipticOperators}
Marco Bramanti, \emph{An invitation to hypoelliptic operators and
  {H}\"ormander's vector fields}, SpringerBriefs in Mathematics, Springer,
  Cham, 2014. \MR{3154431}

\bibitem[Cho39]{ChowUberSystemeVonLinearenPartiellenDifferentialgleichungenErsterOrdnung}
Wei-Liang Chow, \emph{\"{U}ber {S}ysteme von linearen partiellen
  {D}ifferentialgleichungen erster {O}rdnung}, Math. Ann. \textbf{117} (1939),
  98--105. \MR{1880}

\bibitem[CW71]{CoifmanWeissAnalyseHarmoniqueNonCommutative}
Ronald~R. Coifman and Guido Weiss, \emph{Analyse harmonique non-commutative sur
  certains espaces homog\`enes}, Lecture Notes in Mathematics, vol. Vol. 242,
  Springer-Verlag, Berlin-New York, 1971, \'{E}tude de certaines int\'{e}grales
  singuli\`eres. \MR{499948}

\bibitem[Der72]{DerridjSurUnTheoremeDeTraces}
Maklouf Derridj, \emph{Sur un th\'{e}or\`eme de traces}, Ann. Inst. Fourier
  (Grenoble) \textbf{22} (1972), no.~2, 73--83. \MR{343011}

\bibitem[Fol75]{FollandSubellipticEstimatesAndFunctionSpacesOnNilpotentLieGroups}
G.~B. Folland, \emph{Subelliptic estimates and function spaces on nilpotent
  {L}ie groups}, Ark. Mat. \textbf{13} (1975), no.~2, 161--207. \MR{494315}

\bibitem[FP83]{FeffermanPhongSubellipticEigenvalueProblems}
C.~Fefferman and D.~H. Phong, \emph{Subelliptic eigenvalue problems},
  Conference on harmonic analysis in honor of {A}ntoni {Z}ygmund, {V}ol. {I},
  {II} ({C}hicago, {I}ll., 1981), Wadsworth Math. Ser., Wadsworth, Belmont, CA,
  1983, pp.~590--606. \MR{730094}

\bibitem[FS74]{FollandSteinEstimatesForTheBarPartialBComplex}
G.~B. Folland and E.~M. Stein, \emph{Estimates for the {$\bar \partial \sb{b}$}
  complex and analysis on the {H}eisenberg group}, Comm. Pure Appl. Math.
  \textbf{27} (1974), 429--522. \MR{367477}

\bibitem[FSC86]{FeffermanSanchezCalleFundamentalSolutionsForSecondOrderSubellipticOperators}
Charles~L. Fefferman and Antonio S\'anchez-Calle, \emph{Fundamental solutions
  for second order subelliptic operators}, Ann. of Math. (2) \textbf{124}
  (1986), no.~2, 247--272. \MR{855295}

\bibitem[HN79]{HelfferNourrigatHypoellipticiteMaximalePourDesOperateursPolynomesDeChampsDeVecteurs}
Bernard Helffer and Jean Nourrigat, \emph{Hypoellipticit\'e{} maximale pour des
  op\'erateurs polyn\^omes de champs de vecteurs}, C. R. Acad. Sci. Paris
  S\'er. A-B \textbf{289} (1979), no.~16, A775--A778. \MR{558795}

\bibitem[H{\"o}r67]{HormanderHypoellipticSecondOrderDifferentialEquations}
Lars H{\"o}rmander, \emph{Hypoelliptic second order differential equations},
  Acta Math. \textbf{119} (1967), 147--171. \MR{222474}

\bibitem[Jer81a]{JerisonDirichletProblemForTheKohnLaplacianI}
David~S. Jerison, \emph{The {D}irichlet problem for the {K}ohn {L}aplacian on
  the {H}eisenberg group. {I}}, J. Functional Analysis \textbf{43} (1981),
  no.~1, 97--142. \MR{639800}

\bibitem[Jer81b]{JerisonDirichletProblemForTheKohnLaplacianII}
\bysame, \emph{The {D}irichlet problem for the {K}ohn {L}aplacian on the
  {H}eisenberg group. {II}}, J. Functional Analysis \textbf{43} (1981), no.~2,
  224--257. \MR{633978}

\bibitem[KN65]{KohnNirenbergNonCoerciveBoundaryValueProblems}
J.~J. Kohn and L.~Nirenberg, \emph{Non-coercive boundary value problems}, Comm.
  Pure Appl. Math. \textbf{18} (1965), 443--492. \MR{181815}

\bibitem[Koe02]{KoenigMaximalSobolevAndHolderEstimatesForTheTangentialCauchyRiemannOperatorAndBoundaryLaplacian}
Kenneth~D. Koenig, \emph{On maximal {S}obolev and {H}\"{o}lder estimates for
  the tangential {C}auchy-{R}iemann operator and boundary {L}aplacian}, Amer.
  J. Math. \textbf{124} (2002), no.~1, 129--197. \MR{1879002}

\bibitem[MM12]{MontanariMorbidelliNonsmoothHormanderVectorFieldsAndTheirControlBalls}
Annamaria Montanari and Daniele Morbidelli, \emph{Nonsmooth {H}\"ormander
  vector fields and their control balls}, Trans. Amer. Math. Soc. \textbf{364}
  (2012), no.~5, 2339--2375. \MR{2888209}

\bibitem[NSW85]{NagelSteinWaingerBallsAndMetricsDefinedByVectorFieldsI}
Alexander Nagel, Elias~M. Stein, and Stephen Wainger, \emph{Balls and metrics
  defined by vector fields. {I}. {B}asic properties}, Acta Math. \textbf{155}
  (1985), no.~1-2, 103--147. \MR{793239}

\bibitem[RS76]{RothschildSteinHypoellipticDifferentialOperatorsAndNilpotentGroups}
Linda~Preiss Rothschild and E.~M. Stein, \emph{Hypoelliptic differential
  operators and nilpotent groups}, Acta Math. \textbf{137} (1976), no.~3-4,
  247--320. \MR{436223}

\bibitem[SS18]{StovaStreetCoordinatesAdaptedToVectorFieldsCanonicalCoordinates}
Betsy Stovall and Brian Street, \emph{Coordinates adapted to vector fields:
  canonical coordinates}, Geom. Funct. Anal. \textbf{28} (2018), no.~6,
  1780--1862. \MR{3881835}

\bibitem[{Sta}25]{stacks-project}
The {Stacks project authors}, \emph{The stacks project},
  \url{https://stacks.math.columbia.edu}, 2025.

\bibitem[Str20a]{StreetCoordinatesAdaptedToVectorFieldsIII}
Brian Street, \emph{Coordinates adapted to vector fields {III}: real
  analyticity}, Asian J. Math. \textbf{24} (2020), no.~6, 1029--1078.
  \MR{4312758}

\bibitem[Str20b]{StreetSubHermitianGeometryAndTheQuantitativeNewlanderNirenbergTheorem}
\bysame, \emph{Sub-{H}ermitian geometry and the quantitative
  {N}ewlander-{N}irenberg theorem}, Adv. Math. \textbf{368} (2020), 107137, 87.
  \MR{4085140}

\bibitem[Str21]{StreetCoordinatesAdaptedToVectorFieldsII}
\bysame, \emph{Coordinates adapted to vector fields {II}: sharp results}, Amer.
  J. Math. \textbf{143} (2021), no.~6, 1791--1840. \MR{4349133}

\bibitem[Str23]{StreetMaximalSubellipticity}
\bysame, \emph{Maximal subellipticity}, De Gruyter Studies in Mathematics,
  vol.~93, De Gruyter, Berlin, [2023] \copyright 2023. \MR{4649120}

\bibitem[Str25]{StreetFunctionSpacesAndTraceTheoremsForMaximallySubellipticBoundaryValueProblems}
\bysame, \emph{Function spaces and trace theorems for maximally subelliptic
  boundary value problems}, In preparation, 2025.

\bibitem[Tay23a]{TaylorPartialDifferentialEquationsI}
Michael~E. Taylor, \emph{Partial differential equations {I}. {B}asic theory},
  third ed., Applied Mathematical Sciences, vol. 115, Springer, Cham, [2023]
  \copyright 2023. \MR{4703940}

\bibitem[Tay23b]{TaylorPartialDifferentialEquationsII}
\bysame, \emph{Partial differential equations {II}. {Q}ualitative studies of
  linear equations}, third ed., Applied Mathematical Sciences, vol. 116,
  Springer, Cham, [2023] \copyright 2023. \MR{4703939}

\bibitem[Tay23c]{TaylorPartialDifferentialEquationsIII}
\bysame, \emph{Partial differential equations {III}. {N}onlinear equations},
  third ed., Applied Mathematical Sciences, vol. 117, Springer, Cham, [2023]
  \copyright 2023. \MR{4703941}

\bibitem[Tri83]{TriebelTheoryOfFunctionSpaces}
Hans Triebel, \emph{Theory of function spaces}, Monographs in Mathematics,
  vol.~78, Birkh\"{a}user Verlag, Basel, 1983. \MR{781540}

\bibitem[TW03]{TaoWrightLpImprovingBoundsForAveragesAlongCurves}
Terence Tao and James Wright, \emph{{$L^p$} improving bounds for averages along
  curves}, J. Amer. Math. Soc. \textbf{16} (2003), no.~3, 605--638.
  \MR{1969206}

\bibitem[Wel08]{WellsDifferentialAnalysisOnComplexManifolds}
Raymond~O. Wells, Jr., \emph{Differential analysis on complex manifolds}, third
  ed., Graduate Texts in Mathematics, vol.~65, Springer, New York, 2008, With a
  new appendix by Oscar Garcia-Prada. \MR{2359489}

\end{thebibliography}

\center{\it{University of Wisconsin-Madison, Department of Mathematics, 480 Lincoln Dr., Madison, WI, 53706}}

\center{\it{street@math.wisc.edu}}

\center{MSC 2020: 53C17 (Primary), 51F30 and 42B99 (Secondary)}


\end{document}